\title{The Franz--Parisi Criterion and Computational Trade-offs in High Dimensional Statistics}
\author[Bandeira]{Afonso S.~Bandeira}
\address[Bandeira]{Department of Mathematics, ETH Z\"urich}\email{bandeira@math.ethz.ch}
\author[El Alaoui]{Ahmed El Alaoui}
\address[El Alaoui]{Department of Statistics and Data Science, Cornell University}\email{ae333@cornell.edu}
\author[Hopkins]{Samuel B.~Hopkins}
\address[Hopkins]{EECS, MIT, Cambridge, MA}\email{samhop@mit.edu}
\author[Schramm]{Tselil Schramm}
\address[Schramm]{Department of Statistics, Stanford University}\email{tselil@stanford.edu}
\author[Wein]{Alexander S.~Wein}
\address[Wein]{Department of Mathematics, University of California, Davis}\email{aswein@ucdavis.edu}
\author[Zadik]{Ilias Zadik}
\address[Zadik]{Department of Mathematics, MIT}\email{izadik@mit.edu}
\date{}
\begin{document}

\maketitle

\begin{abstract}
Many high-dimensional statistical inference problems are believed to possess inherent computational hardness.
Various frameworks have been proposed to give rigorous evidence for such hardness, including lower bounds against restricted models of computation (such as low-degree functions), as well as methods rooted in statistical physics that are based on free energy landscapes.
This paper aims to make a rigorous connection between the seemingly different low-degree and free-energy based approaches.
We define a free-energy based criterion for hardness and formally connect it to the well-established notion of low-degree hardness for a broad class of statistical problems, namely all Gaussian additive models and certain models with a sparse planted signal.
By leveraging these rigorous connections we are able to: establish that for Gaussian additive models the ``algebraic'' notion of low-degree hardness implies failure of ``geometric'' local MCMC algorithms, and provide new low-degree lower bounds for sparse linear regression which seem difficult to prove directly.
These results provide both conceptual insights into the connections between different notions of hardness, as well as concrete technical tools such as new methods for proving low-degree lower bounds.
\end{abstract}

\maketitle

\tableofcontents

\section{Introduction}
Many inference problems in high dimensional statistics appear to exhibit an \emph{information-computation gap}, wherein at some values of the signal-to-noise ratio, inference is information-theoretically possible, but no (time-)efficient algorithm is known. 
A wealth of central inference problems exhibit such gaps, including sparse linear regression, sparse principal component analysis (PCA), tensor PCA, planted clique, community detection, graph coloring, and many others (we point the reader to the survey references~\cite{Lenka-Florent-Notes,bandeira2018notes,SOS-survey-ICM,ld-notes,gamarnik-survey} and references therein for many examples). 

A priori, it is unclear whether such gaps are a symptom of the inherent computational intractability of these problems, or whether they instead reflect a limitation of our algorithmic ingenuity.
One of the main goals in this field is to provide, and understand, {\em rigorous evidence} for the existence of an information-computation gap.
Indeed, there are several mature tools to establish {\em statistical} or {\em information-theoretic lower bounds}, and these often sharply characterize the signal-to-noise ratio at which inference is possible.
We have relatively fewer tools for establishing {\em computational} lower bounds in statistical settings, and the study of such tools is still in its early days.
Broadly, there are three approaches: (i) establishing computational equivalence between suspected-to-be-hard problems via reductions, (ii) proving lower bounds within restricted models of computation, or in other words, ruling out families of known algorithms, and (iii) characterizing geometric properties of the problem that tend to correspond to computational hardness, often by studying a corresponding energy or free energy landscape of the posterior distribution of the signal given the data, and establishing the existence of `large barriers' in this landscape.
In some cases it can be rigorously shown that these properties impede the success of certain families of algorithms (notable examples include the work of Jerrum~\cite{jerrum1992large} and Gamarnik and Sudan~\cite{Gamarnik-Suday-AoP}; see also references therein for other instances).

These complementary approaches give us a richer understanding of the computational landscape of high-dimensional statistical inference.
Reductions contribute to establishing equivalence classes of (conjectured hard) problems, and lower bounds against restricted models, or characterizations of the problem geometry, give concrete evidence for computational hardness within the current limits of known algorithms. There have been considerable recent advances in many of these approaches (see for example \cite{brennan2020reducibility}, the surveys~\cite{Lenka-Florent-Notes,bandeira2018notes,SOS-survey-ICM,ld-notes,gamarnik-survey}, and references therein).
One particularly exciting direction, which is the topic of this paper, is the pursuit of rigorous connections between different computational lower bound approaches.
For instance, a recent result shows (under mild assumptions) that lower bounds against statistical query algorithms imply lower bounds against low-degree polynomials and vice versa~\cite{sq-ld}.
Results of this type help to unify our ideas about what makes problems hard, and reduce the number of different lower bound frameworks to study for each new problem that comes along.

Following the work of \cite{sos-clique,sos-detect,HS-bayesian} in the context of the sum-of-squares hierarchy of algorithms, a conjecture was put forth that there is a large and easy-to-characterize {\em universality class} of intractable problems~\cite{hopkins-thesis}: all of those for which low-degree statistics cannot distinguish data with a planted signal from (suitably defined) random noise.
These problems are called ``hard for the low-degree likelihood ratio'' or ``low-degree hard,'' a term which we will define precisely below.
Many of the problems mentioned above fall into this universality class precisely in the regime of their information-computation gaps.

Another successful approach to understand computational hardness of statistical problems borrows tools from statistical physics: tools such as the \emph{cavity method} and \emph{replica method} can be used to make remarkably precise predictions of both statistical and computational thresholds, essentially by studying properties of free energy potentials associated to the problem in question, or by studying related iterative algorithms such as belief propagation or approximate message passing (see e.g.~\cite{amp,decelle-sbm,LKZ-1,LKZ-2,replica-proof}).
We will discuss some of these ideas further in Section~\ref{sec:phys}.

\paragraph{\bf Main Contributions}
This paper aims to make a rigorous connection between the low-degree and free-energy based approaches in the setting of statistical inference. (We note that this setting differs from that of random optimization problems with no planted signal, where a connection of this nature has already been established~\cite{GJW-opt,W-opt,BH-ksat}.) We start by defining a free-energy based criterion, the \emph{Franz--Parisi criterion} (Definition~\ref{def:fp}) inspired by the so-called Franz--Parisi potential~\cite{fp} (see~Section~\ref{sec:phys} for more on the connection with statistical physics). We formally connect this criterion to low-degree hardness for a broad class of statistical problems, namely all Gaussian additive models (Theorems~\ref{thm:equiv-easy} and~\ref{thm:equiv-hard}) and certain sparse planted models (Theorem~\ref{thm:fp_ld_sparse}). By leveraging these rigorous connections we are able to (i) establish that in the context of Gaussian additive models, low-degree hardness implies failure of local MCMC algorithms (Corollary~\ref{cor:MCMClowerbound_LD}), and (ii) provide new low-degree lower bounds for sparse linear regression which seem difficult to prove directly (Theorem~\ref{thm:sparse-reg}). We also include some examples that illustrate that this equivalence between different forms of hardness does not hold for all inference problems (see Section~\ref{sec:counterexamples}), leaving as an exciting future direction the problem of determining under which conditions it does hold, and investigating what other free-energy based criteria may be more suitable in other inference problems.

\subsection{Setting and Definitions}
\label{sec:setting}

We will focus on problems in which there is a signal vector of interest $u \in \R^n$, drawn from a prior distribution $\mu$ over such signals, and the data observed is a sample from a distribution $\PP_u$ on $\R^N$ that depends on the signal $u$. 
One natural problem in this setting is {\em estimation}: given a sample from $\PP_u$ with the promise that $u \sim \mu$, the goal is to estimate $u$ (different estimation error targets correspond to different versions of this problem, often referred to \emph{weak/approximate recovery} or \emph{exact recovery}).
This roughly corresponds to the ``search'' version of the problem, but just as in classical complexity theory, it is productive to instead study a ``decision'' version of the problem, {\em hypothesis testing}: we are given a sample generated either from the ``planted'' distribution $\PP = \E_{u \sim \mu}\PP_u$ (a mixture model were the data is drawn from $\PP_u$ for a random $u\sim \mu$) or from a ``null'' reference distribution $\QQ$ representing pure noise, and the goal is to decide whether it is more likely that the sample came from $\PP$ or $\QQ$. 

\begin{problem}[High Dimensional Inference: Hypothesis Testing]\label{def:HDI-HT}
Given positive integers $n,N$, a distribution $\mu$ on $\R^n$, and a distribution $\PP_u$ on $\R^N$ for each $u \in \supp(\mu)$, the goal is to perform simple hypothesis testing between
\begin{align*}
\mathbf{H_0}: &\qquad \ Y \sim \QQ \qquad\qquad &\text{(Null model)} \, ,\\ 
\mathbf{H_1}: &\qquad \ Y \sim  \PP = \EE_{u\sim \mu}\PP_u   &\text{(Planted model)} \, .
\end{align*}
\end{problem}

We will be especially interested in asymptotic settings where $n \to \infty$ and the other parameters scale with $n$ in some prescribed way: $N = N_n$, $\mu = \mu_n$, $\PP = \PP_n$, $\QQ = \QQ_n$. In this setting, we focus on the following two objectives.
\begin{definition}[Strong/Weak Detection]
\label{def:detection}
\phantom{}
\begin{itemize}
    \item {\bf Strong detection}: we say \emph{strong detection} is achieved if the sum of type I and type II errors\footnote{Type I error is the probability of outputting ``$\PP$'' when given a sample from $\QQ$. Type II error is the probability of outputting ``$\QQ$'' when given a sample from $\PP$.} tends to $0$ as $n \to \infty$.
    \item {\bf Weak detection}: we say \emph{weak detection} is achieved if the sum of type I and type II errors is at most $1-\eps$ for some fixed $\eps > 0$ (not depending on $n$).
\end{itemize}
\end{definition}
\noindent In other words, strong detection means the test succeeds with high probability, while weak detection means the test has some non-trivial advantage over random guessing.

While our main focus will be on the testing problem, we remark that computational hardness of strong detection often implies that estimating $u$ is hard as well.\footnote{There is no formal reduction from hypothesis testing to estimation at this level of generality (see Section~3.4 of~\cite{banks2017information} for a pathological counterexample) but it is typically straightforward to give such a reduction for the types of testing problems we will consider in this paper (see e.g.\ Section~5.1 of~\cite{ma-wu-reduction}).
On the other hand, estimation can sometimes be strictly harder than the associated testing problem (see e.g.~\cite{SW-recovery}).} 

Throughout, we will work in the Hilbert space $L^2(\QQ)$ of (square integrable) functions $\R^N \to \R$ with inner product $\langle f,g \rangle_{\QQ} := \EE_{Y \sim \QQ} [f(Y) g(Y)]$ and corresponding norm $\|f\|_{\QQ} := \iprod{f,f}_{\QQ}^{1/2}$. 
For a function $f: \R^N \to \R$ and integer $D \in \NN$, we let $f^{\le D}$ denote the orthogonal (w.r.t.\ $\langle \cdot,\cdot\rangle_{\QQ}$) projection of $f$ onto the subspace of polynomials of degree at most $D$.
We will assume that $\PP_u$ is absolutely continuous with respect to $\QQ$ for all $u \in \supp(\mu)$, use $L_u := \frac{\rmd \PP_u}{\rmd \QQ}$ to denote the likelihood ratio, and assume that $L_u \in L^2(\QQ)$ for all $u \in \supp(\mu)$. The likelihood ratio between $\PP$ and $\QQ$ is denoted by $L := \frac{\rmd \PP}{\rmd \QQ} = \E_{u \sim \mu}L_{u}$.

A key quantity of interest is the (squared) norm of the likelihood ratio, which is related to the \emph{chi-squared divergence} $\chi^2(\PP \,\|\, \QQ)$ as
\[
\|L\|_\QQ^2 = \left\|\EE_{u \sim \mu}L_{u}\right\|_{\QQ}^2 = \chi^2(\PP \,\|\, \QQ) + 1 \, .
\]
This quantity has the following standard implications for \emph{information-theoretic} impossibility of testing, in the asymptotic regime $n \to \infty$. The proofs can be found in e.g.~\cite[Lemma~2]{MRZ-limit}.
\begin{itemize}
    \item If $\|L\|_\QQ^2 = O(1)$ (equivalently, $\limsup_{n \to \infty} \|L\|_\QQ^2 < \infty$) then strong detection is impossible. (This is a classical second moment method associated with Le Cam's notion of \emph{contiguity}~\cite{LeCam}.)
    \item If $\|L\|_\QQ^2 = 1+o(1)$ (equivalently, $\lim_{n \to \infty} \|L\|_\QQ^2 = 1$) then weak detection is impossible. (It is always true that $\|L\|_\QQ^2 \ge 1$, by Jensen's inequality and the fact $\E_{Y \sim \QQ} L(Y) = 1$.)
\end{itemize}

We will study two different ``predictors'' of computational complexity of hypothesis testing, both of which can be seen as different ``restrictions'' of $\|L\|_\QQ^2$. Our first predictor is based on the \emph{low-degree likelihood ratio} $L^{\le D}$, which recall means the projection of the likelihood ratio onto the subspace of degree-at-most-$D$ polynomials. This is already a well-established framework for computational lower bounds~\cite{HS-bayesian,sos-detect,hopkins-thesis} (we point the reader to the thesis~\cite{hopkins-thesis} or the survey~\cite{ld-notes} for a pedagogical exposition).

\begin{definition}[Low-Degree Likelihood Ratio]
Define the squared norm of the degree-$D$ likelihood ratio (also called the ``low-degree likelihood ratio'') to be the quantity
\begin{equation}\label{eq:ld-defn}
\ld(D) := \|L^{\le D}\|_\QQ^2 = \left\| \left(\EE_{u \sim \mu}L_{u}\right)^{\le D}\right\|_{\QQ}^2 = \EE_{u,v \sim \mu} \left[\langle L_u^{\le D},L_v^{\le D} \rangle_\QQ\right] \, ,
\end{equation}
where the last equality follows from linearity of the projection operator, and where $u,v$ are drawn independently from $\mu$.
For some increasing sequence $D = D_n$, we say that the hypothesis testing problem above is {\em hard for the degree-$D$ likelihood} or simply {\em low-degree hard} if $\ld(D) = O(1)$.
\end{definition}

Heuristically speaking, the interpretation of $\ld(D)$ should be thought of as analogous to that of $\|L\|_\QQ^2$ but for computationally-bounded tests: if $\ld(D) = O(1)$ this suggests computational hardness of strong detection, and if $\ld(D) = 1+o(1)$ this suggests computational hardness of weak detection (it is always the case that $\ld(D) \ge 1$; see~\eqref{eq:ld-var}.).
The parameter $D = D_n$ should be loosely thought of as a proxy for the runtime allowed for our testing algorithm, where $D = O(\log n)$ corresponds to polynomial time and more generally, larger values of $D$ correspond to runtime $\exp(\tilde\Theta(D))$ where $\tilde\Theta$ hides factors of $\log n$ (or equivalently, $\log N$, since we will always take $N$ and $n$ to be polynomially-related). In Section~\ref{sec:ld-sos} we further discuss the significance of low-degree hardness, including its formal implications for failure of certain tests based on degree-$D$ polynomials, as well as the more conjectural connection to the sum-of-squares hierarchy.

We now introduce our second predictor, which we call the \emph{Franz--Parisi criterion}. On a conceptual level, it is inspired by well-established ideas rooted in statistical physics, which we discuss further in Section~\ref{sec:phys}. However, the precise definition we use here has not appeared before (to our knowledge). Throughout this paper we will argue for the significance of this definition in a number of ways: its conceptual link to physics (Section~\ref{sec:phys}), its provable equivalence to the low-degree criterion for Gaussian additive models (Section~\ref{sec:gam-equiv}), its formal connection to MCMC methods for Gaussian additive models (Section~\ref{sec:mcmc}), and its usefulness as a tool for proving low-degree lower bounds (Section~\ref{sec:sparse-reg}).

\begin{definition}[Low-Overlap Likelihood Norm]
We define the {\em low-overlap likelihood norm} at overlap $\delta\geq 0$ as
\begin{equation}\label{eq:lo-defn}
\lo(\delta) := \EE_{u,v \sim \mu} \left[\Ind_{|\iprod{u,v}|\le \delta} \cdot \Iprod{L_{u},L_{v}}_{\QQ} \right],
\end{equation}
where $u,v$ are drawn independently from $\mu$.
\end{definition}

\begin{definition}[Franz--Parisi Criterion]
\label{def:fp}
We define the {\em Franz--Parisi Criterion  at $D$ deviations} to be the quantity
\begin{equation}\label{eq:fp-defn}
\fp(D) := \lo(\delta), \quad \text{ for } \delta = \delta(D) := \sup \, \{ \eps \geq 0 \text{ s.t.\ }\Pr_{u,v \sim \mu} \left(|\iprod{u,v}| \ge \eps\right)\ge e^{-D} \} .\
\end{equation}
For some increasing sequence $D=D_n$, we say a problem is \emph{FP-hard at $D$ deviations} if $\fp(D) = O(1)$.
\end{definition}

\begin{remark}\label{rem:continuity}
Two basic properties of the quantity $\delta$ defined in~\eqref{eq:fp-defn} are $\Pr(|\langle u,v \rangle| \ge \delta) \ge e^{-D}$ (in particular, the supremum in~\eqref{eq:fp-defn} is attained) and $\Pr(|\langle u,v \rangle| > \delta) \le e^{-D}$. These follow from continuity of measure and are proved in Section~\ref{sec:basic-facts}.
\end{remark}

\begin{remark}
To obtain a sense of the order of magnitudes, let us assume that the product $\langle u , v\rangle$ is centered and sub-Gaussian with parameter $\sigma^2 n$. This is for instance the case if the prior distribution is a product measure: $\mu = \mu_0^n$, and the distribution of the product $xx'$ of two independent samples $x,x' \sim \mu_0$ is sub-Gaussian with parameter $\sigma^2$. Then $\Pr( |\langle u , v\rangle| \ge \delta) \le 2e^{- \delta^2/(2n\sigma^2)}$ for all $\delta$, so $\delta(D)^2 \le 2n\sigma^2(D+\log 2)$. 
\end{remark}

Heuristically speaking, $\fp(D)$ should be thought of as having a similar interpretation as $\ld(D)$: if $\fp(D) = O(1)$ this suggests hardness of strong detection, and if $\fp(D) = 1+o(1)$ this suggests hardness of weak detection. The parameter $D$ is a proxy for runtime and corresponds to the parameter $D$ in $\ld(D)$, as we justify in Section~\ref{sec:phys}.

We remark that $\ld(D)$ and $\fp(D)$ can be thought of as different ways of ``restricting'' the quantity
\begin{equation}\label{eq:L^2} 
\|L\|_\QQ^2 = \EE_{u,v \sim \mu} \left[\langle L_u,L_v \rangle_\QQ\right], 
\end{equation}
which recall is related to information-theoretic \emph{impossibility} of testing. For $\ld$, the restriction takes the form of low-degree projection on each $L_u$, while for $\fp$ it takes the form of excluding pairs $(u,v)$ of high overlap. Our results will show that (in some settings) these two types of restriction are nearly equivalent.

Finally, we note that the quantities $\|L\|_\QQ^2$, $\ld(D)$, $\fp(D)$ should be thought of primarily as \emph{lower bounds} that imply/suggest impossibility or hardness. If one of these quantities does \emph{not} remain bounded as $n \to \infty$, it does not necessarily mean the problem is possible/tractable. We will revisit this issue again in Section~\ref{sec:sparse-reg}, where a \emph{conditional} low-degree calculation will be used to prove hardness even though the standard LD blows up (akin to the conditional versions of $\|L\|_\QQ^2$ that are commonly used to prove information-theoretic lower bounds, e.g.~\cite{bmnn,banks2017information,PWB-tensor,perry2018optimality}).

\subsection{Relation of LD to Low-Degree Algorithms}
\label{sec:ld-sos}

We now give a brief overview of why the low-degree likelihood ratio is meaningful as a predictor of computational hardness, referring the reader to~\cite{hopkins-thesis,ld-notes} for further discussion.
Notably, bounds on $\ld(D)$ imply failure of tests based on degree-$D$ polynomials in the following specific sense.

\begin{definition}[Strong/Weak Separation]\label{def:separation}
For a polynomial $f: \R^N \to \R$ and two distributions $\PP,\QQ$ on $\R^N$ (where $N,f,\PP,\QQ$ may all depend on $n$),
\begin{itemize}
    \item we say $f$ \emph{strongly separates} $\PP$ and $\QQ$ if, as $n\to\infty$,
    \[ \sqrt{\max\left\{\Var_\PP[f], \Var_\QQ[f]\right\}} = o\left(\left|\EE_\PP[f] - \EE_\QQ[f]\right|\right), \]
    \item we say $f$ \emph{weakly separates} $\PP$ and $\QQ$ if, as $n\to\infty$,
    \[ \sqrt{\max\left\{\Var_\PP[f], \Var_\QQ[f]\right\}} = O\left(\left|\EE_\PP[f] - \EE_\QQ[f]\right|\right). \]
\end{itemize}
\end{definition}
\noindent These are natural \emph{sufficient} conditions for strong and weak detection, respectively: strong separation implies (by Chebyshev's inequality) that strong detection is achievable by thresholding $f$, and weak separation implies that weak detection is possible using the value of $f$ (see Proposition~\ref{prop:weak-sep}).
The quantity $\ld(D)$ can be used to formally rule out such low-degree tests. Namely, Proposition~\ref{prop:ld-cond} implies that, for any $D = D_n$,
\begin{itemize}
    \item if $\ld(D) = O(1)$ then no degree-$D$ polynomial strongly separates $\PP$ and $\QQ$;
    \item if $\ld(D) = 1+o(1)$ then no degree-$D$ polynomial weakly separates $\PP$ and $\QQ$.
\end{itemize}

While one can think of $\ld(D)$ as simply a tool for rigorously ruling out certain polynomial-based tests as above, it is productive to consider the following heuristic correspondence between polynomial degree and runtime.
\begin{itemize}
    \item We expect the class of degree-$D$ polynomials to be as powerful as all $\exp(\tilde\Theta(D))$-time tests (which is the runtime needed to naively evaluate the polynomial term-by-term). Thus, if $\ld(D) = O(1)$ (or $1+o(1)$), we take this as evidence that strong (or weak, respectively) detection requires runtime $\exp(\tilde\Omega(D))$; see Hypothesis~2.1.5 of~\cite{hopkins-thesis}.
    \item On a finer scale, we expect the class of degree-$O(\log n)$ polynomials to be at least as powerful as all polynomial-time tests. This is because it is typical for the best known efficient test to be implementable as a spectral method and computed as an $O(\log n)$-degree polynomial using power iteration on some matrix; see e.g.\ Section~4.2.3 of~\cite{ld-notes}. Thus, if $\ld(D) = O(1)$ (or $1+o(1)$) for some $D = \omega(\log n)$, we take this as evidence that strong (or weak, respectively) detection cannot be achieved in polynomial time; see Conjecture~2.2.4 of~\cite{hopkins-thesis}.
\end{itemize}
We emphasize that the above statements are not true in general (see for instance~\cite{ld-counterex,morris,KM-trees,LLL1,LLL2} for some discussion of counterexamples) and depend on the choice of $\PP$ and $\QQ$, yet remarkably often appear to hold up for a broad class of distributions arising in high-dimensional statistics.

\subsubsection*{Low-degree hardness and sum-of-squares algorithms.}
An intriguing conjecture posits that $\ld(D)$ characterizes the limitations of algorithms for hypothesis testing in the powerful sum-of-squares (SoS) hierarchy.
In particular, when $\ld(D) = 1+o(1)$, this implies a canonical construction of a candidate sum-of-squares lower bound via a process called pseudo-calibration.
We refer the reader to~\cite{hopkins-thesis} for a thorough discussion on connections between LD and SoS via pseudo-calibration, and~\cite{sos-detect,ld-notes} for connections through spectral algorithms.

\subsection{Relation of FP to Statistical Physics}
\label{sec:phys}

\subsubsection{The Franz--Parisi potential}
Our definition of the low-overlap likelihood norm has a close connection to the Franz--Parisi potential in the statistical physics of glasses~\cite{fp}. 
We explain here the connection, together with a simple derivation of the formula in Eq.~\eqref{eq:lo-defn}. For additional background, we refer the reader to~\cite{MM-book,ZK-survey} for exposition on the well-explored connections between statistical physics and Bayesian inference.

Given a system with (random) Hamiltonian $H: \R^n \to \R$, whose configurations are denoted by vectors $u \in \R^n$, we let $u_0$ be a reference configuration, and consider the free energy of a new configuration $u$ drawn from the Gibbs measure $\rmd\nu_{\beta}(u) = Z^{-1} \exp(-\beta H(u)) \rmd \mu(u)$ constrained to be at a fixed distance $r$ from $u_0$. 
This free energy is 
\begin{align} 
F_{\beta}( u_0 , r) &=  \log \int \Ind_{d(u,u_0) =  r}  \, e^{-\beta H(u)} \rmd \mu(u)\, . 
\end{align}
(For simplicity, we proceed with equalities inside the indicator in this discussion; but for the above to be meaningful, unless the prior is supported on a discrete set, one has to consider events of the form $d(u,u_0) \in (r-\delta,r+\delta)$.)
The Franz--Parisi potential is the average of the above free energy when $u_0$ is drawn from the Gibbs measure $\nu_{\beta'}$, at a possibly different temperature $\beta'$:
\begin{equation}\label{FPstatphys}
f_{\beta,\beta'}(r) := \E \Big[\E_{u_0 \sim \nu_{\beta'}} \big[F_{\beta}( u_0 , r) \big] \Big] \, , 
\end{equation}
where the outer expectation is with respect to the randomness (or disorder) of the Hamiltonian $H$. 
This potential contains information about the free energy landscape of $\nu_{\beta}$ seen locally from a reference configuration $u_0$ `equilibrated' at temperature $\beta'$, and allows to probe the large deviation properties of $\nu_\beta$ as one changes $\beta$. The appearance of local maxima separated by `free energy barriers' in this potential is interpreted as a sign of appearance of `metastable states' trapping the Langevin or Glauber dynamics, when initialized from a configuration at equilibrium at temperature $\beta'$, for long periods of time. This observation, which is reminiscent of standard `bottleneck' arguments for Markov chains~\cite{LP-book17}, has been made rigorous in some cases; see for instance~\cite{arous2020algorithmic,arous2020free}.

In a statistical context, the Gibbs measure $\nu_{\beta}$ corresponds to the posterior measure of the signal vector $u$ given the observations $Y$, and $\beta$ plays the role of the signal-to-noise ratio. 
Using Bayes' rule we can write
\[\rmd \nu_{\beta}(u) = \frac{\rmd \P_u}{\rmd \P}(Y) \, \rmd \mu(u) = \frac{L_u(Y)}{L(Y)} \, \rmd \mu(u) \, .\]
From the above formula we make the correspondence $L(Y) = Z$, $L_u(Y) = e^{-\beta H(u)}$, and $Y$ is the source of randomness of $H$.
Letting $\beta'=\beta$, and then omitting the temperatures from our notation, the FP potential~\eqref{FPstatphys} becomes
\begin{align}\label{FPstatphys2}
f(r) &= \E_{Y\sim \P} \E_{u_0 \sim \P(\cdot | Y)}  \log \E_{u \sim \mu} \big[\Ind_{d(u,u_0) =  r} \, L_u(Y)\big] \, \nonumber \\
&= \E_{u_0 \sim \mu} \E_{Y \sim \P_{u_0}}  \log \E_{u \sim \mu} \big[\Ind_{d(u,u_0) =  r} \, L_u(Y)\big]
\end{align}
where the second line follows from Bayes' rule. 
It is in general extremely difficult to compute the exact asymptotics of the FP potential $f$, save for the simplest models\footnote{For instance, if both $u$ and $Y$ have i.i.d.\ components, then computing $f$ boils down to a classical large deviation analysis.}. 
Physicists have used the replica method together with structural assumptions about the Gibbs measure to produce approximations of this potential, which are then used in lieu of the true potential~\cite{fp,franz1998effective}. 
One such approximation is given by the so-called \emph{replica-symmetric} potential which describes the behavior of \emph{Approximate Message Passing (AMP)} algorithms; see for instance~\cite{LKZ-1,LKZ-2,replica-proof,bandeira2018notes,alaoui2018estimation} (for an illustration, see Figure~1 of~\cite{replica-proof} or Figure~1 of~\cite{bandeira2018notes}).    
But the simplest approximation is the \emph{annealed approximation} which can be obtained by Jensen's inequality: $f(r) \le f^{\text{ann}}(r)$, where
\begin{align}\label{FPstatphys3}
f^{\text{ann}}(r) :=&  \log \E_{u,u_0 \sim \mu} \E_{Y \sim \P_{u_0}} \big[\Ind_{d(u,u_0) =  r} \, L_u(Y)\big] \, \nonumber \\
=&  \log \E_{u,u_0 \sim \mu} \big[\Ind_{d(u,u_0) =  r} \, \langle L_u , L_{u_0} \rangle_{\Q} \big] \, .
\end{align}
(We note that our notion of annealed FP is not quite the same as the one in~\cite{franz1998effective}.) We see a similarity between Eq.~\eqref{FPstatphys3} and our Low-Overlap Likelihood Norm, Eq.~\eqref{eq:lo-defn}, where the distance has been replaced by the inner product, or overlap of $u$ and $u_0$. 
This parametrization turns out to be convenient in the treatment of Gaussian models, as we do in this paper\footnote{Of course, the two parametrizations are equivalent if $\mu$ is supported on a subset of a sphere.}.   

In many scenarios of interest, the annealed potential has the same qualitative properties as the quenched potential. We consider below the example of the spiked Wigner model and show that the annealed FP potential has the expected behavior as one changes the signal-to-noise ratio.          

In this paper we are interested in the behavior of this annealed potential near overlap zero, see $\lo(\delta)$. 
More specifically, we consider the annealed potential in a window of size $\delta$, where $\delta$ is related to the parameter $D$ via the entropy of the overlap of two copies from the prior: $D = - \log \P( |\langle u , v\rangle| \ge \delta)$;  see Definition~\ref{def:fp}. As previously explained in the context of the low degree method, $D$ is a proxy for runtime (there are about $n^D$ terms in a multivariate polynomial in $n$ variables of degree $D$). 
A similar heuristic can be made on the FP side: one needs to draw on average $1/\P( |\langle u , v\rangle| \ge \delta) = e^D$-many samples $(u,v)$ from the prior distribution to realize the event $|\langle u , v\rangle| \ge \delta$.
This means that for a typical choice of the true signal $u_0$, one needs to draw about $e^{D}$ samples $u$ from the prior before finding one whose overlap with the truth is $|\langle u,u_0 \rangle| \ge \delta$.
Once such an initialization is found then one can `climb' the free energy curve to the closest local maximum  to achieve higher overlap values. (See Figure~\ref{fig:fp_spiked_wigner} for an illustration.)    
So replacing the $D$ by $D \log n$ in the previous expression provides a rough correspondence of runtime between the LD and FP approaches, up to a logarithmic factor.

The FP criterion, Definition~\ref{def:fp}, is whether $\fp(D)$ stays bounded or diverges as $n \to \infty$ for some choice of an increasing sequence $D = D_n$. A heuristic justification of this criterion is as follows: 
We should first note that since $L_u \ge 0$, we have $\fp(D) \le \|L\|_{\QQ}^2$. Thus if $\|L\|_{\QQ} \to \infty$ but $\fp(D) = O(1)$, then the divergence of $\|L\|_{\QQ}$ must be due to contributions to the sum Eq.~\eqref{eq:L^2} with high overlap values: $|\langle u , v \rangle| \gg \delta(D)$.
Suppose now there is a free energy barrier separating  small overlaps $|\langle u , v \rangle| \le \delta(D)$ from larger ones $|\langle u , v \rangle| \gg \delta(D)$. For instance, suppose $\langle u,v \rangle=0$ is a local maximum of the potential, separated by a barrier from a global maximum located at $\langle u , v \rangle \gg \delta(D)$ (see Fig.~\ref{fig:fp_spiked_wigner}, Panel $(b)$), then 
one needs much more than $e^{O(D)}$ samples to guess an overlap value on the other side of the barrier and land in the basin of attraction of the global maximum.   
This suggests that tests distinguishing $\PP$ and $\QQ$ cannot be constructed in time $e^{O(D)}$.       
One of our main results (see Section~\ref{sec:gam}) is an equivalence relation between the $\fp(D)$ criterion and the $\ld(D')$ criterion for Gaussian models, where $D' = \tilde{\Theta}(D)$, therefore grounding this heuristic in a rigorous statement.

\subsubsection{Example: The spiked Wigner model}
As a concrete example, let us consider the spiked Wigner model with sparse Rademacher prior: 
The signal vector $u$ has i.i.d.\ entries drawn from a three-point prior $\mu_0 = \frac{\rho}{2}\delta_{+1/\sqrt{\rho}} + (1-\rho)\delta_{0} + \frac{\rho}{2}\delta_{-1/\sqrt{\rho}}$, and for $1 \le i \le j \le n$ we let $Y_{ij} = \frac{\lambda}{\sqrt{n}}u_i u_j + Z_{ij}$, where $Z$ is drawn from the Gaussian Orthogonal Ensemble: $Z_{ij} \sim N(0,1)$ for $i<j$ and $Z_{ii} \sim N(0,2)$. The null distribution is pure Gaussian noise: $Y = Z$.
In this case, known efficient algorithms succeed at detecting/estimating the signal $u$ if and only if $\lambda >1$~\cite{benaych2011eigenvalues,lelarge2019fundamental,chung2019weak}. Furthermore, the threshold $\lambda_{\sALG}=1$ is information-theoretically tight when $\rho=1$ (or more generally, if $\rho$ is larger than a known absolute constant). On the other hand if $\rho$ is small enough, then detection becomes information-theoretically possible for some $\lambda < 1$ but no known polynomial-time algorithm succeeds in the regime~\cite{banks2017information,perry2018optimality,alaoui2020fundamental}. 
Let us check that the behavior of the annealed potential is qualitatively consistent with these facts.
As we will see in Section~\ref{sec:gam}, a small computation leads to the expression
\[\langle L_u , L_v \rangle_{\Q} = \exp\left({\frac{\lambda^2}{2n} \langle u,v\rangle^2}\right) \, ,\]
and the annealed FP potential (as a function of the overlap instead of the distance) is 
\[f^{\text{ann}}(k/\rho) = \log \Pr\big(\langle u, v \rangle = k/\rho \big) + \frac{\lambda^2 k^2}{2n \rho^2} \, . \] 
Letting $k = \lfloor n x \rfloor$, and using Stirling's formula, we obtain a variational formula for the annealed FP potential: $f^{\text{ann}}(\lfloor n x \rfloor/\rho) = n \phi(x) + o(n)$ where 
\begin{equation}\label{eq:var_fp_spiked}
\phi(x) = \max_{p} \big\{h(p) + (1-p_0) \log (\rho^2/2) + p_0 \log(1-\rho^2) \big\} + \frac{\lambda^2 x^2}{2\rho^2},  \quad x \in [-1,1]\, .
\end{equation}
The maximization is over probability vectors $p = (p_{-1},p_0,p_1)$ satisfying $p_{1}-p_{-1}=x$, and $h(p) = - p_{-1}\log p_{-1}-p_0\log p_0 - p_1\log p_1$. 
\begin{figure}
\centering
\includegraphics[width=0.32\textwidth]{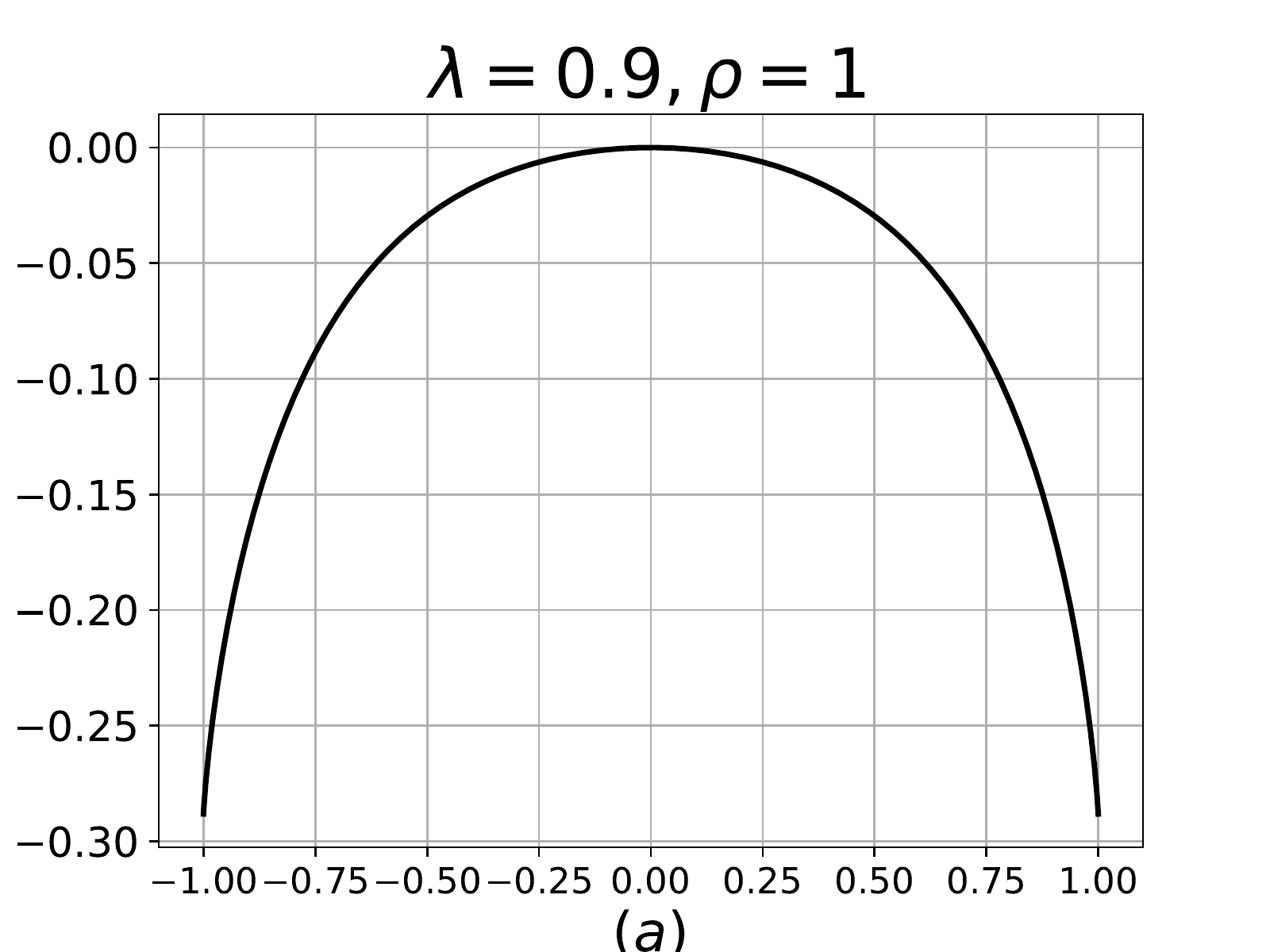}
\includegraphics[width=0.32\textwidth]{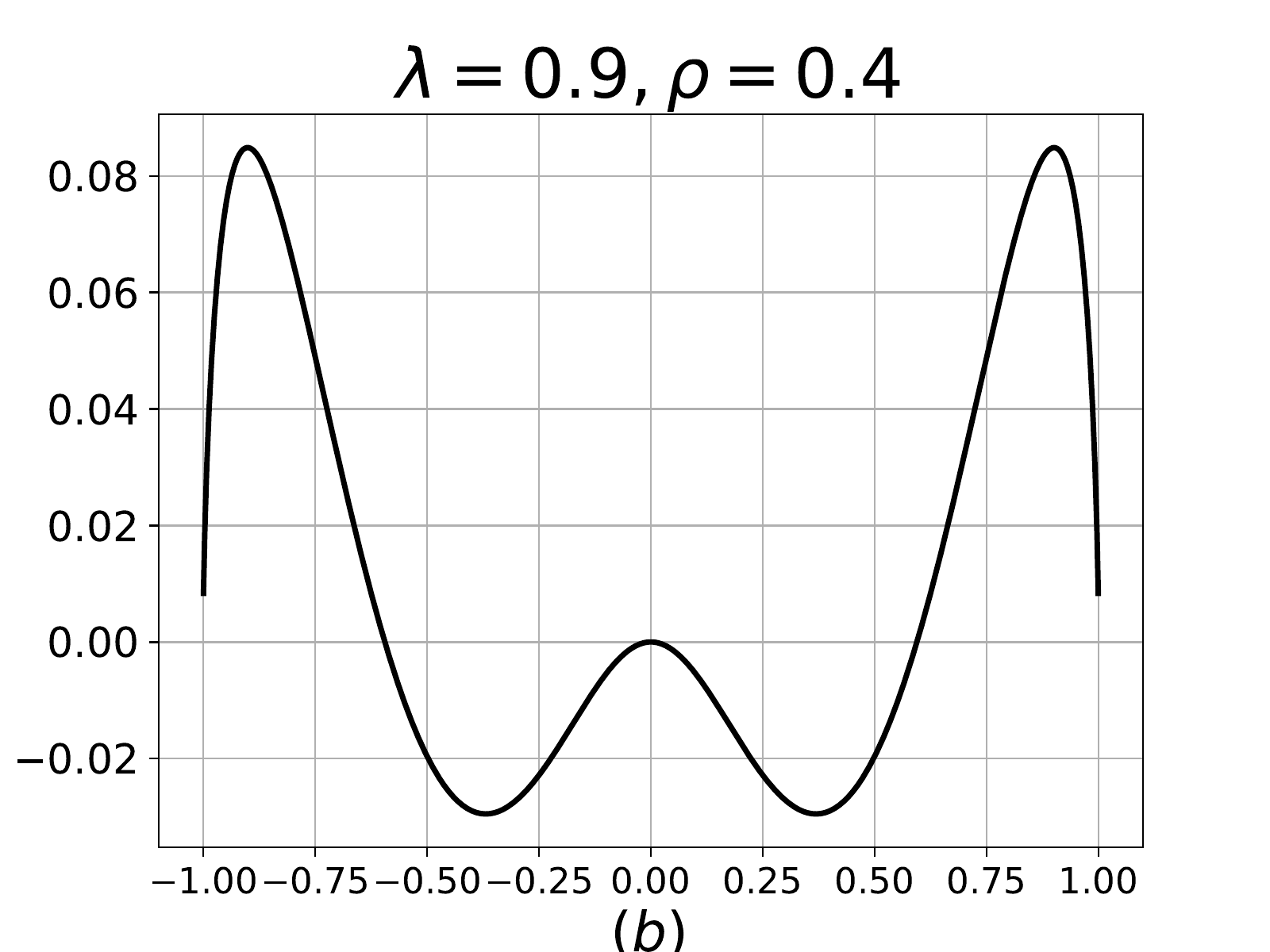}
\includegraphics[width=0.32\textwidth]{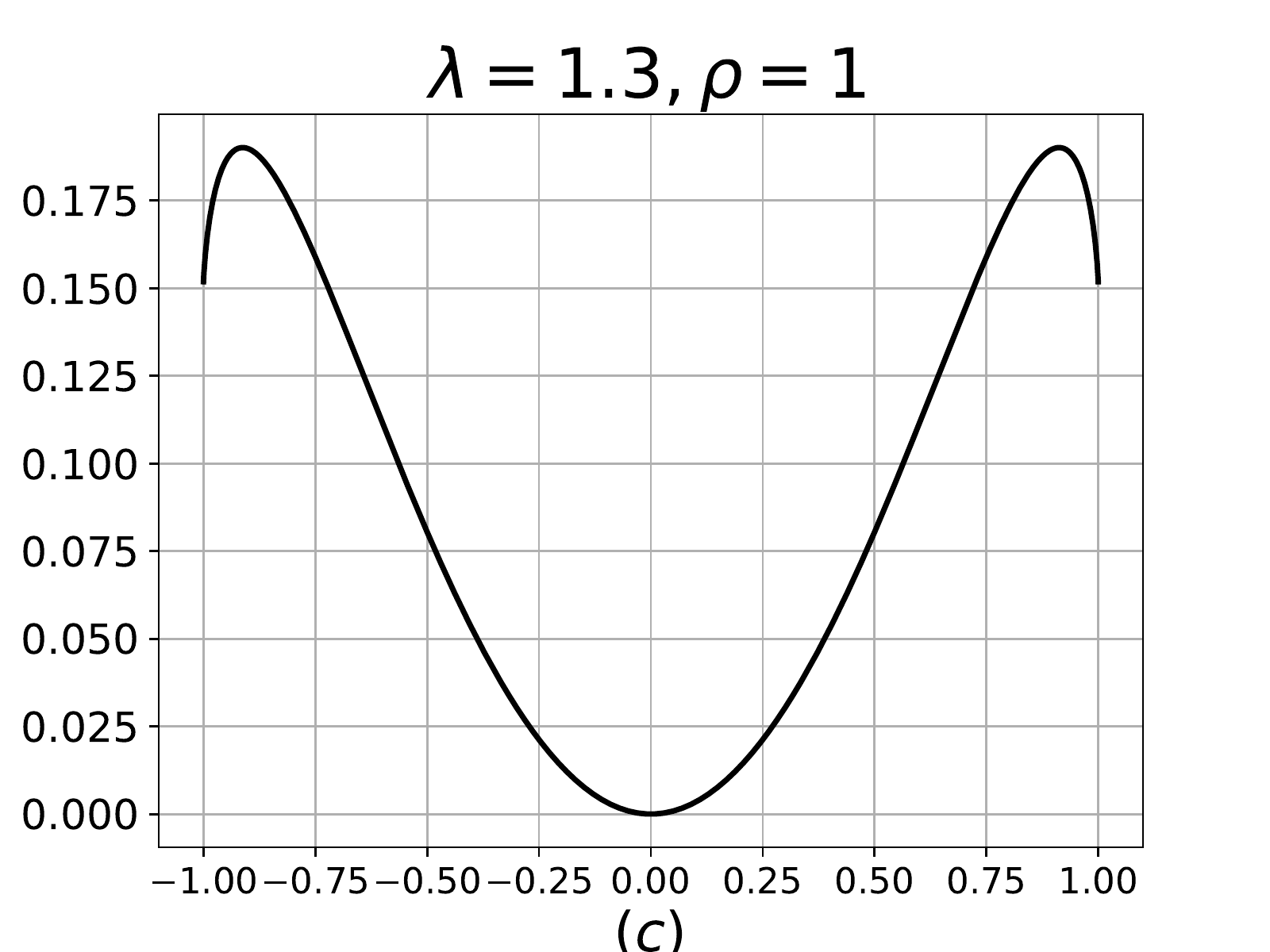}
\caption{The annealed FP potential, Eq.~\eqref{eq:var_fp_spiked}, for various value of $\lambda$ and $\rho$. 
Panel $(a)$: A global maximum at $x=0$. Panel $(b)$: Local maximum at $x=0$ separated from two global maxima by a `barrier'.  
Panel $(c)$: A local minimum at $x=0$.}
\label{fig:fp_spiked_wigner}
\end{figure}
It is not difficult to check that $\phi(0)=\phi'(0)=0$, and $\phi''(0) = (\lambda^2-1)/\rho^2$. Hence when $\lambda < \lambda_{\sALG} = 1$, the annealed FP potential $f^{\text{ann}}$ is negative for all $|x| \le \eps$ for some $\eps = \eps(\lambda,\rho)$ (Fig~\ref{fig:fp_spiked_wigner}, Panels $(a),(b)$). 
This indicates that $\fp(D)$ is bounded for $D \le c(\eps) n$. 
On the other hand, if $\lambda>1$, $f^{\text{ann}}$ is positive in an interval $[-\eps,\eps]$ for some $\eps>0$, which indicates that $\fp(D) \to \infty$ for $D = D_n \to \infty$ slowly (Panel $(c)$). 
One can also look at the global behavior of $f^{\text{ann}}$, which is plotted in Figure~\ref{fig:fp_spiked_wigner}. Panel $(b)$ represents a scenario where $x=0$ is a local maximum separated from the two symmetric global maxima by a barrier, while in panels $(a)$ and $(c)$, no such barrier exists.

Finally, let us remark that while our FP criterion is conceptually similar to ideas that have appeared before in statistical physics, we also emphasize a few key differences. While free energy barriers are typically thought of as an obstruction to algorithmic \emph{recovery}, our criterion---due to its connection with $\|L\|_\QQ^2$---is instead designed for the \emph{detection} problem. As such, we do not expect the annealed FP to make sharp predictions about estimation error (e.g., MMSE) like the AMP-based approaches mentioned above. (For instance, Figure~\ref{fig:fp_spiked_wigner}, Panel $(b)$ \emph{wrongly} predicts that estimation is `possible but hard' for $\lambda=0.9$, $\rho=0.4$, when it is in fact information-theoretically impossible~\cite{lelarge2019fundamental}.) 
On the other hand, one advantage of our criterion is that, by virtue of its connection to LD in Section~\ref{sec:gam}, it predicts the correct computational threshold for tensor PCA (matching the best known poly-time algorithms), whereas existing methods based on AMP or free energy barriers capture a different threshold~\cite{NIPS2014_b5488aef,lesieur2017statistical,arous2020algorithmic} (see also~\cite{kikuchi,replicated-gd} for other ways to ``redeem'' the physics approach).

\section{The Gaussian Additive Model}
\label{sec:gam}

We will for now focus on a particular class of estimation models, the so called \emph{Gaussian additive models}. The distribution $\PP_u$ describes an observation of the form
\begin{equation}\label{eq:GAMmodel}
Y = \lambda u + Z
\end{equation}
where $\lambda \geq 0$ is the signal-to-noise ratio, $u\sim \mu$ is the signal of interest drawn from some distribution $\mu$ on $\R^N$, and $Z \sim \NNN(0,I_N)$ is standard Gaussian noise (independent from $u$). In the recovery problem, the goal is to recover $u$, or more precisely to compute an estimator $\hat{u}(Y)$ that correlates with $u$.

We note that in principle, $\lambda$ could be absorbed into the norm of $u$, but it will be convenient for us to keep $\lambda$ explicit because some of our results will involve perturbing $\lambda$ slightly.

We focus on the hypothesis testing version of this question, where the goal is to distinguish a sample~\eqref{eq:GAMmodel} from a standard Gaussian vector.

\begin{definition}[Gaussian Additive Model: Hypothesis Testing]\label{def:GAM-HT}
Given $N$ a positive integer, $\lambda\geq 0$, and $\mu$ a distribution on $\R^N$ with all moments finite, hypothesis testing in the Gaussian additive model consists of performing a simple hypothesis test between
\begin{align*}
\mathbf{H_0}&\qquad\QQ:\ Y = Z & Z\sim\NNN(0,I), \\ 
\mathbf{H_1}&\qquad\PP:\ Y = \lambda u + Z & Z\sim\NNN(0,I), \   u\sim \mu.
\end{align*}
\end{definition}

We are interested in understanding, as $N = N_n \to\infty$, for which prior distributions $\mu = \mu_n$ and SNR levels $\lambda = \lambda_n$ it is possible to computationally efficiently distinguish a sample from $\PP$ from a sample from $\QQ$ (in the sense of strong or weak detection; see Definition~\ref{def:detection}).

A number of classical inference tasks are captured by the Gaussian additive model. Notable examples include spiked matrix and tensor models.

\begin{example}[Matrix and Tensor PCA]
In the matrix PCA case (spiked Wigner model), we take $N=n^2$ and $u = x^{\otimes 2}$, where $x$ is for instance drawn from the uniform measure over the sphere $\mathbb{S}^{n-1}$, or has i.i.d.\ coordinates from some prior $\mu_0$. See for instance a treatment in Section~\ref{sec:phys}. In the tensor case, we take $N = n^p$ and $u = x^{\otimes p}$, with $x$ again drawn from some prior.
\end{example}

\noindent Bounds on $\ld(D)$ have already been given for various special cases of the Gaussian additive model such as tensor PCA~\cite{sos-detect,ld-notes} and spiked Wigner models (including sparse PCA)~\cite{ld-notes,subexp-sparse,quiet-coloring}.

As in Section~\ref{sec:setting} we use $\PP_{u}$ to denote the distribution $\NNN(\lambda u,I_N)$, and write $L_u = \frac{\rmd \PP_u}{\rmd \QQ}$.
To write explicit expressions for $\ld$ and $\fp$, we will use the following facts, which are implicit in the proof of Theorem~2.6 in~\cite{ld-notes}.
\begin{proposition}\label{prop:taylor}
In the Gaussian additive model, we have the formulas
\[ \langle L_u,L_v \rangle_\QQ = \exp(\lambda^2 \langle u,v \rangle) \]
and
\[ \langle L_u^{\le D},L_v^{\le D} \rangle_\QQ = \exp^{\le D}(\lambda^2 \langle u,v \rangle) \]
where $\exp^{\le D}(\cdot)$ denotes the degree-$D$ Taylor expansion of $\exp(\cdot)$, namely
\begin{equation}\label{eq:taylor}
\exp^{\le D}(x) := \sum_{d=0}^D \frac{x^d}{d!}.
\end{equation}
\end{proposition}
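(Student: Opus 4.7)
The plan is to prove both formulas by writing down $L_u$ explicitly and exploiting standard Gaussian identities, the second one via Hermite polynomial expansion.

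First, I compute $L_u$ directly from the Radon--Nikodym derivative of $\NNN(\lambda u, I_N)$ against $\NNN(0, I_N)$. The Gaussian densities cancel nicely and give
\[
L_u(Y) = \exp\!\bigl(\lambda \langle u, Y\rangle - \tfrac{\lambda^2}{2}\|u\|^2\bigr).
\]
To obtain the first identity, I substitute into $\langle L_u, L_v\rangle_\QQ = \EE_{Y\sim\QQ}[L_u(Y) L_v(Y)]$, collect the linear-in-$Y$ factor as $\lambda \langle u+v, Y\rangle$, and apply the Gaussian moment generating function $\EE_Y[\exp(\langle a, Y\rangle)] = \exp(\|a\|^2/2)$ with $a = \lambda(u+v)$. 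The quadratic terms combine as $\tfrac{\lambda^2}{2}(\|u+v\|^2 - \|u\|^2 - \|v\|^2) = \lambda^2 \langle u,v\rangle$, yielding the first claim.

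For the second identity, the key is to expand $L_u$ in the orthogonal basis of (probabilists') Hermite polynomials $\{He_\alpha\}$ for $L^2(\QQ)$, normalized so that $\EE_\QQ[He_\alpha^2] = \alpha! := \prod_i \alpha_i!$. Since $L_u$ factorizes across coordinates and the univariate generating function identity $\exp(tx - t^2/2) = \sum_{k\ge 0} \frac{t^k}{k!} He_k(x)$ holds, I obtain
\[
L_u(Y) = \sum_\alpha \frac{\lambda^{|\alpha|} u^\alpha}{\alpha!}\, He_\alpha(Y),
\]
so the degree-$D$ projection is simply the truncation $L_u^{\le D} = \sum_{|\alpha| \le D} \frac{\lambda^{|\alpha|} u^\alpha}{\alpha!} He_\alpha$. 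Using orthogonality gives
\[
\langle L_u^{\le D}, L_v^{\le D}\rangle_\QQ = \sum_{|\alpha|\le D} \frac{\lambda^{2|\alpha|} u^\alpha v^\alpha}{\alpha!}.
\]
The final step is to group by degree $d = |\alpha|$ and apply the multinomial identity $\sum_{|\alpha|=d} \frac{d!}{\alpha!} u^\alpha v^\alpha = \langle u,v\rangle^d$ to recognize the inner sum as $\langle u,v\rangle^d/d!$, whence the total equals $\sum_{d=0}^D (\lambda^2 \langle u,v\rangle)^d/d! = \exp^{\le D}(\lambda^2\langle u,v\rangle)$.

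Neither step is really an obstacle; the only thing to be careful about is the normalization convention for the Hermite polynomials and verifying that the truncation of the Hermite series coincides with the $L^2(\QQ)$-projection onto polynomials of degree $\le D$ — this holds because each $He_\alpha$ has degree exactly $|\alpha|$ and the $\{He_\alpha\}_{|\alpha|\le D}$ form an orthogonal basis of that subspace. Alternatively, one can derive the second identity from the first by expanding $\exp(\lambda^2 \langle u,v\rangle) = \sum_d \lambda^{2d}\langle u,v\rangle^d/d!$ and observing that the degree-$d$ term on the $Y$-side corresponds exactly to the part of the Hermite expansion with $|\alpha|=d$, so that projecting to degree $\le D$ truncates the Taylor series at $d = D$.
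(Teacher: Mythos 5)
Your proof is correct and takes essentially the same route as the source the paper cites for this proposition (the reference given is the proof of Theorem~2.6 in the low-degree survey, which uses exactly the Hermite generating-function expansion you describe). The only thing worth noting for a fully self-contained write-up is that $L_u\in L^2(\QQ)$, so the Hermite series converges in $L^2(\QQ)$ and the truncation is indeed the orthogonal projection; this follows immediately from your first formula with $v=u$, which gives $\|L_u\|_\QQ^2=\exp(\lambda^2\|u\|^2)<\infty$.
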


It will also be helpful to define the \emph{overlap random variable}
\begin{equation}\label{eq:s}
s = \langle u,v \rangle \qquad \text{where $u,v \sim \mu$ independently.}
\end{equation}
With Proposition~\ref{prop:taylor} in hand, we can rewrite~\eqref{eq:ld-defn} and~\eqref{eq:fp-defn} as (making the dependence on $\lambda$ explicit)
\begin{equation}\label{eq:ld-gam}
\ld(D,\lambda) = \EE_{u,v} \left[\langle L_u^{\le D}, L_v^{\le D} \rangle_\QQ\right] = \EE_s \left[\exp^{\le D}(\lambda^2 s)\right]
\end{equation}
and
\begin{equation}\label{eq:fp-gam}
\fp(D,\lambda) = \EE_{u,v} \left[\Ind_{|\iprod{u,v}|\le \delta} \cdot \Iprod{L_{u},L_{v}}_{\QQ} \right] = \EE_s \left[\Ind_{|s| \le \delta} \cdot \exp(\lambda^2 s)\right]
\end{equation}
with $\delta = \delta(D)$ as defined in~\eqref{eq:fp-defn}.

We note that both LD and FP are guaranteed to be finite for any fixed choice of $D,\lambda,\mu$: our assumption that $\mu$ has finite moments of all orders implies that $s$ also has finite moments of all orders, and so~\eqref{eq:ld-gam} is finite. Also,~\eqref{eq:fp-gam} is at most $\exp(\lambda^2 \delta) < \infty$.

\subsection{FP-LD Equivalence}
\label{sec:gam-equiv}

The following two theorems show that in the Gaussian additive model, FP and LD are equivalent up to logarithmic factors in $D$ and $1+\eps$ factors in $\lambda$. Recall the notation $\ld(D,\lambda)$ and $\fp(D,\lambda)$ from~\eqref{eq:ld-gam} and~\eqref{eq:fp-gam}.

\begin{theorem}[FP-hard implies LD-hard]\label{thm:equiv-easy}
Assume the Gaussian additive model (Definition~\ref{def:GAM-HT}) and suppose $\|u\|^2 \le M$ for all $u \in \supp(\mu)$, for some $M > 0$.
Then for any $\lambda \ge 0$ and any odd integer $D \ge 1$,
\[ \ld(D,\lambda) \le \fp(\tilde{D}, \lambda) + e^{-D} \]
where
\[ \tilde{D} := D \cdot (2+\log(1 + \lambda^2 M)). \]
\end{theorem}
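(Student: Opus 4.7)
The two natural ingredients are Proposition~\ref{prop:taylor}, which rewrites both $\ld(D,\lambda)$ and $\fp(D,\lambda)$ as expectations of scalar functions of the overlap $s = \iprod{u,v}$, and the tail bound $\Pr(|s| > \tilde\delta) \le e^{-\tilde D}$ from Remark~\ref{rem:continuity}. The plan is to split $\ld(D,\lambda) = \EE_s[\exp^{\le D}(\lambda^2 s)]$ according to whether $|s| \le \tilde\delta$ or $|s| > \tilde\delta$, absorbing the ``bulk'' piece into $\fp(\tilde D,\lambda)$ and controlling the ``tail'' piece by $e^{-D}$.

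For the bulk piece we need the pointwise inequality $\exp^{\le D}(x) \le \exp(x)$ for every $x \in \R$, which is where the hypothesis that $D$ is \emph{odd} enters. I would prove this via Taylor's theorem with integral remainder, $\exp(x) - \exp^{\le D}(x) = \int_0^x \frac{(x-t)^D}{D!} e^t \, \rmd t$, and observe that when $D$ is odd the remainder is nonnegative both for $x \ge 0$ (trivial) and for $x < 0$ (where the signs of $(x-t)^D$ and of the integration orientation both flip). With this in hand, the bulk piece is immediately $\le \EE_s[\Ind_{|s|\le \tilde\delta}\exp(\lambda^2 s)] = \fp(\tilde D,\lambda)$.

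For the tail piece the crux is a deterministic upper bound on $|\exp^{\le D}(\lambda^2 s)|$ tight enough to overcome only an $e^{-\tilde D}$ tail probability. I would use the triangle inequality $|\exp^{\le D}(\lambda^2 s)| \le \exp^{\le D}(\lambda^2 |s|)$ together with the elementary inequality $\exp^{\le D}(y) \le (1+y)^D$ for $y \ge 0$; the latter follows by comparing coefficients of $y^d$ term by term, since $\binom{D}{d} \ge 1/d!$ for all $0 \le d \le D$. Using $|s| \le M$, this yields $|\exp^{\le D}(\lambda^2 s)| \le (1+\lambda^2 M)^D$. Because $\tilde D = D(2 + \log(1+\lambda^2 M))$, this equals $e^{\tilde D - 2D}$, so multiplying by $\Pr(|s|>\tilde\delta) \le e^{-\tilde D}$ yields $e^{-2D} \le e^{-D}$, as required.

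The main obstacle, or at least the load-bearing calibration, is the choice of $\tilde D$: one must produce an $L^\infty$ bound on $\exp^{\le D}(\lambda^2 s)$ that depends only \emph{polynomially} on $1 + \lambda^2 M$ (to the $D$-th power), rather than the naive exponential bound $e^{\lambda^2 M}$. The coefficient-wise inequality $\exp^{\le D}(y) \le (1+y)^D$ is exactly what makes $\tilde D$ only logarithmic in $1+\lambda^2 M$; using only $\exp^{\le D}(y) \le e^y$ would force $\tilde D \gtrsim D + \lambda^2 M$, which is much larger and does not match the stated form of the theorem.
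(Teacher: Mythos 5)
Your proof is correct and follows the same overall strategy as the paper: split $\ld(D,\lambda)=\EE_s[\exp^{\le D}(\lambda^2 s)]$ by whether $|s|\le\tilde\delta$, use the odd-degree inequality $\exp^{\le D}(x)\le\exp(x)$ to absorb the bulk into $\fp(\tilde D,\lambda)$, and bound the tail by a crude $L^\infty$ estimate times the tail probability $e^{-\tilde D}$. The one place you genuinely diverge is the tail estimate, and yours is cleaner. The paper proceeds via Cauchy--Schwarz and a self-correlation bound $\|L_u^{\le D}\|_\QQ^2 = \exp^{\le D}(\lambda^2\|u\|^2)\le (D+1)(1+\lambda^2M)^D$ (each of the $D+1$ Taylor terms is bounded by $(1+\lambda^2M)^D$), which leaves a $\log(D+1)$ term that must be absorbed by part of the slack in $\tilde D$, via $\log(D+1)\le D$. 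Your coefficient-wise comparison $\exp^{\le D}(y)\le (1+y)^D$ (using $\binom{D}{d}\ge 1/d!$, which holds because $D(D-1)\cdots(D-d+1)\ge 1$ for $0\le d\le D$), applied directly to $|\exp^{\le D}(\lambda^2 s)|\le \exp^{\le D}(\lambda^2|s|)\le \exp^{\le D}(\lambda^2 M)$, kills the $(D+1)$ factor and the Cauchy--Schwarz step entirely, yielding the tail bound $e^{-2D}$ at the stated $\tilde D$; in fact your argument would succeed with $\tilde D=D(1+\log(1+\lambda^2 M))$. Your proof of $\exp^{\le D}(x)\le\exp(x)$ for odd $D$ via the Lagrange/integral form of the remainder is also a valid alternative to the paper's induction (Lemma~\ref{lem:taylor-sign}). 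The one thing worth making explicit, which you leave implicit, is that $|s|\le M$ requires the Cauchy--Schwarz inequality $|\langle u,v\rangle|\le\|u\|\|v\|\le M$; a trivial point, but it is the only place Cauchy--Schwarz enters your argument, in contrast to the paper where it is used to decouple $L_u^{\le D}$ from $L_v^{\le D}$.
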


\noindent The proof can be found later in this section. While the result is non-asymptotic, we are primarily interested in the regime $D = \omega(1)$, in which case we have shown that LD can only exceed FP by an additive $o(1)$ term. We have lost logarithmic factors in passing from $D$ to $\tilde{D}$. For many applications, these log factors are not an issue because (in the ``hard'' regime) it is possible to prove $\fp$ is bounded for some $D = N^{\Omega(1)}$ while $\lambda,M$ are polynomial in $N$.

\begin{theorem}[LD-hard implies FP-hard]\label{thm:equiv-hard}
Assume the Gaussian additive model (Definition~\ref{def:GAM-HT}).
For every $\eps \in (0,1)$ there exists $D_0 = D_0(\eps) > 0$ such that for any $\lambda \ge 0$ and any even integer $D \ge D_0$, if
\begin{equation}\label{eq:ld-cond}
\ld(D,(1+\eps)\lambda) \le \frac{1}{eD}(1 + \eps)^D
\end{equation}
then
\begin{equation}\label{eq:fp-result}
\fp(D,\lambda) \le \ld(D,(1+\eps)\lambda) + \eps.
\end{equation}
\end{theorem}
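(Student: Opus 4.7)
The plan is to expand both quantities via Proposition~\ref{prop:taylor} as power series in moments of the overlap $s := \langle u,v\rangle$ (with $u,v\sim\mu$ i.i.d.):
\[
\fp(D,\lambda) = \sum_{d\ge 0}\frac{\lambda^{2d}}{d!}\,\E[s^d\,\Ind_{|s|\le\delta}], \qquad \ld(D,(1+\eps)\lambda) = \sum_{d=0}^{D}\frac{(1+\eps)^{2d}\lambda^{2d}}{d!}\,\E[s^d],
\]
and to split $\fp = T_1 + T_2$ at $d = D$. The sign-structure observation driving the proof is that because $u,v$ are i.i.d.,
\[
\E[s^d] \;=\; \sum_{i_1,\ldots,i_d}\E[u_{i_1}\cdots u_{i_d}]^2 \;\ge\; 0 \quad\text{for every } d\ge 0,
\]
so every Taylor coefficient appearing in the series for $\ld$ is nonnegative.

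For the head $T_1 = \E[\exp^{\le D}(\lambda^2 s)\,\Ind_{|s|\le\delta}]$: since $D$ is even, $\exp^{\le D}(\cdot)\ge 0$ everywhere (a standard fact, via monotonicity of $x\mapsto e^{-x}\exp^{\le D}(x)$), so dropping the indicator only increases the expectation, giving $T_1 \le \E[\exp^{\le D}(\lambda^2 s)] = \ld(D,\lambda)$. Term-by-term, $\ld(D,\lambda)\le\ld(D,(1+\eps)\lambda)$ follows from $\E[s^d]\ge 0$ and $1\le(1+\eps)^{2d}$. Hence $T_1 \le \ld(D,(1+\eps)\lambda)$ with no $\eps$ slack spent.

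For the tail $T_2$: on $\{|s|\le\delta\}$, $|s|^d \le \delta^{d-D}|s|^D$ for $d > D$, so $|\E[s^d\Ind_{|s|\le\delta}]|\le\delta^{d-D}\E[s^D]$. Summing geometrically and using $\lambda^2\delta/(D+1) \le 1/(1+\eps)$ yields $|T_2| \le C_\eps\cdot\lambda^{2D}\E[s^D]/D!$ for some $C_\eps = O(1/\eps)$. To justify the ratio condition and turn this into a bound of $o_D(1)$, I would invoke the hypothesis together with the single-term lower bound $\ld(D,(1+\eps)\lambda) \ge (1+\eps)^{2D}\lambda^{2D}\E[s^D]/D!$ (nonnegativity of every Taylor coefficient) and $\E[s^D]\ge\delta^D\Pr(|s|\ge\delta)\ge\delta^D e^{-D}$ (Remark~\ref{rem:continuity} with $D$ even). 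Stirling's formula then gives $\lambda^2\delta \le D/(1+\eps)\cdot(1+o(1))$, validating the ratio condition, and $\lambda^{2D}\E[s^D]/D! \le (1+\eps)^{-D}/(eD)$, yielding $|T_2|\le C_\eps(1+\eps)^{-D}/(eD)$. For $D\ge D_0(\eps)$ this is at most $\eps$, and combining with the head estimate gives $\fp(D,\lambda)\le\ld(D,(1+\eps)\lambda)+\eps$.

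The one delicate point is handling odd-index moments in the head. At first glance one might try term-by-term comparison between $T_1$ and $\ld(D,(1+\eps)\lambda)$, but for odd $d$ the quantity $\E[s^d\Ind_{|s|\le\delta}]$ can be negative (even though $\E[s^d]\ge 0$), preventing a clean pointwise bound. The remedy is to avoid term-by-term comparison entirely: the integral inequality $T_1 \le \ld(D,\lambda)$ uses only nonnegativity of $\exp^{\le D}$ for even $D$, and then the monotonicity $\ld(D,\lambda)\le\ld(D,(1+\eps)\lambda)$ leverages the nonnegativity of \emph{all} moments $\E[s^d]$ in the i.i.d.\ setting. Everything else is a routine Stirling-plus-geometric-series tail estimate.
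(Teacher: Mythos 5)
Your proof is correct, but the second half takes a genuinely different route from the paper's. The first step is shared: both arguments extract the bound $\lambda^2\delta \le D/(1+\eps)$ from the hypothesis by keeping only the degree-$D$ term of $\ld(D,(1+\eps)\lambda)$ (legitimate since all overlap moments $\E[s^d]$ are nonnegative), lower-bounding $\E[s^D] \ge \delta^D e^{-D}$ via Remark~\ref{rem:continuity} and evenness of $D$, and invoking the factorial bounds of Proposition~\ref{prop:factorial}. After that, the paper proves a \emph{pointwise} inequality (Lemma~\ref{lem:pointwise}): $\exp(\lambda^2 s) \le \exp^{\le D}(\tilde\lambda^2 s) + \eps$ for all $s \in [-\delta,\delta]$, with an intermediate SNR $\tilde\lambda = (1+\eps^2/4)\lambda$, established by a four-case analysis, and then integrates. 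You instead work entirely in expectation: split the exponential series for $\fp(D,\lambda)$ at degree $D$ (Fubini is justified since the series is dominated by $e^{\lambda^2\delta}$ on $\{|s|\le\delta\}$), bound the head by $\ld(D,\lambda) \le \ld(D,(1+\eps)\lambda)$ using only $\exp^{\le D} \ge 0$ for even $D$ (Corollary~\ref{cor:exp-pos}) and moment nonnegativity (Corollary~\ref{cor:s-moments}), and bound the tail by a geometric series with ratio $\lambda^2\delta/(D+1) \le 1/(1+\eps)$ whose leading term $\lambda^{2D}\E[s^D]/D!$ is again controlled by the hypothesis, giving a tail at most $\tfrac{1}{\eps e D}(1+\eps)^{-D} \le \eps$ for $D \ge D_0(\eps)$. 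This avoids the case analysis and the intermediate temperature entirely, and in fact yields a quantitatively stronger conclusion (additive error $O_\eps((1+\eps)^{-D}/D)$ rather than $\eps$); the paper's pointwise lemma, on the other hand, isolates the statement that $\langle L_u^{\le D},L_v^{\le D}\rangle_\QQ$ is nearly equal to $\langle L_u,L_v\rangle_\QQ$ at low overlap, which is the conceptual message emphasized there. One small imprecision: your ``Stirling gives $\lambda^2\delta \le D/(1+\eps)\cdot(1+o(1))$'' is weaker than needed for the ratio condition as stated with $D+1$; but plugging the hypothesis into $D! \le D^{D+1}/e^{D-1}$ gives $\lambda^2\delta \le D/(1+\eps)$ exactly (the same computation as the paper's), so no slack is actually required, and in any case a slightly smaller geometric ratio target would absorb it. Not a gap.
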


\noindent The proof can be found in Section~\ref{sec:pf-equiv-hard}. In the asymptotic regime of primary interest, we have the following consequence (also proved in Section~\ref{sec:pf-equiv-hard}).

\begin{corollary}\label{cor:equiv-hard}
Fix any constant $\eps' > 0$ and suppose $D = D_n$, $\lambda = \lambda_n$, $N = N_n$, and $\mu = \mu_n$ are such that $D$ is an even integer, $D = \omega(1)$, and $\ld(D,(1+\eps')\lambda) = O(1)$. Then
\[ \fp(D,\lambda) \le \ld(D,(1+\eps')\lambda) + o(1). \]
\end{corollary}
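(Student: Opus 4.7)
The plan is to invoke Theorem~\ref{thm:equiv-hard} at a vanishing sequence of perturbation levels $\eps_n \downarrow 0$ within $(0, \eps']$, handled by a diagonalization to accommodate the fact that $D_0(\eps)$ in that theorem may blow up as $\eps \to 0$. The hypothesis is stated only at the single perturbation $\eps'$, so the first task is to transfer it to smaller perturbations.

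The key auxiliary fact is that $\lambda \mapsto \ld(D, \lambda)$ is non-decreasing on $[0, \infty)$. Expanding via Proposition~\ref{prop:taylor},
\[
\ld(D, \lambda) \;=\; \EE_{u,v \sim \mu}\bigl[\exp^{\le D}(\lambda^2 \langle u, v \rangle)\bigr] \;=\; \sum_{d=0}^{D} \frac{\lambda^{2d}}{d!}\, \EE_{u,v \sim \mu}\bigl[\langle u, v\rangle^d\bigr],
\]
and applying the multinomial theorem together with independence of $u, v$ gives
\[
\EE_{u,v \sim \mu}\bigl[\langle u, v \rangle^d\bigr] \;=\; \sum_{|\alpha| = d} \binom{d}{\alpha}\bigl(\EE_{u \sim \mu}[u^\alpha]\bigr)^2 \;\ge\; 0,
\]
so $\ld(D, \lambda)$ is a polynomial in $\lambda^2$ with non-negative coefficients. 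Consequently, the hypothesis $\ld(D, (1+\eps')\lambda) = O(1)$ propagates: $\ld(D, (1+\eps)\lambda) = O(1)$ for every $\eps \in (0, \eps']$.

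Next, for each fixed $\eps \in (0, \eps']$, I verify the precondition~\eqref{eq:ld-cond} of Theorem~\ref{thm:equiv-hard} at perturbation $\eps$: since $\ld(D, (1+\eps)\lambda)$ is bounded in $n$ while $(1+\eps)^D/(eD) \to \infty$ (using $D = \omega(1)$ with $\eps$ fixed), the inequality $\ld(D, (1+\eps)\lambda) \le (1+\eps)^D/(eD)$ holds for all $n$ large enough; meanwhile $D \ge D_0(\eps)$ eventually. Theorem~\ref{thm:equiv-hard} and monotonicity then give, for all such $n$,
\[
\fp(D, \lambda) \;\le\; \ld(D, (1+\eps)\lambda) + \eps \;\le\; \ld(D, (1+\eps')\lambda) + \eps.
\]

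To finish with the claimed $o(1)$ bound I diagonalize. For each integer $k \ge 1/\eps'$, pick $n_k$ so that the displayed inequality with $\eps = 1/k$ is valid for all $n \ge n_k$, and arrange $n_1 < n_2 < \cdots$. Setting $\eps_n := 1/k$ for $n_k \le n < n_{k+1}$ yields $\eps_n \to 0$ and $\fp(D, \lambda) - \ld(D, (1+\eps')\lambda) \le \eps_n = o(1)$. There is no serious technical obstacle here: the only non-routine ingredient is the monotonicity observation, and once it lets us replace $(1+\eps)\lambda$ by $(1+\eps')\lambda$ on the right-hand side, the diagonal selection of $\eps_n$ is standard bookkeeping to reconcile the quantifier order of Theorem~\ref{thm:equiv-hard} with the asymptotic statement.
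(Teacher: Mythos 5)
Your proof is correct and follows essentially the same route as the paper's: fix a smaller perturbation $\eps \in (0,\eps']$, check the precondition of Theorem~\ref{thm:equiv-hard} holds for large $n$ since $D = \omega(1)$, apply the theorem, and use monotonicity of $\ld$ in $\lambda$ to replace $(1+\eps)\lambda$ by $(1+\eps')\lambda$ on the right-hand side, then let $\eps \downarrow 0$. The only cosmetic differences are that you re-derive the monotonicity fact directly via a multinomial expansion (the paper cites Lemma~\ref{lem:monotone}, which proves it via Corollary~\ref{cor:s-moments} using a Hermite-projection argument), and you spell out the diagonalization that the paper compresses into ``since $\eps$ was arbitrary, the $\limsup$ is $\le 0$.''
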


\begin{remark}
In the theorems above, we have taken the liberty to assume $D$ has a particular parity for convenience. Since $\fp$ and $\ld$ are both monotone in $D$ (see Lemma~\ref{lem:monotone}), we can deduce similar results for all integers $D$. For example, if $D$ is even, Theorem~\ref{thm:equiv-easy} implies
\[ \ld(D,\lambda) \le \ld(D+1,\lambda) \le \fp((D+1)(2+\log(1+\lambda^2 M)),\lambda) + e^{-(D+1)}. \]
\end{remark}

We now present the proof of Theorem~\ref{thm:equiv-easy} (``FP-hard implies LD-hard''), as it is conceptually simple and also instructive for highlighting the key reason why LD and FP are related. (Some of these ideas extend beyond the Gaussian additive model, as discussed in Remark~\ref{rem:generalization} below.) We first need to establish two key ingredients. The first is an inequality for low-degree projections.

\begin{lemma}\label{lem:ld-corr-ineq}
In the Gaussian additive model with $D$ odd, for any $u,v \in \supp(\mu)$,
\[ \langle L_u^{\le D}, L_v^{\le D} \rangle_\QQ \le \langle L_u, L_v \rangle_\QQ. \]
\end{lemma}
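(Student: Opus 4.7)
The plan is to reduce the claim, via Proposition~\ref{prop:taylor}, to a purely scalar inequality about the exponential function and its truncated Taylor series, and then verify that inequality using the Lagrange form of the Taylor remainder.

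First, I would apply Proposition~\ref{prop:taylor} on both sides. Setting $x := \lambda^2 \langle u, v\rangle \in \R$, the proposition gives
\[
\langle L_u, L_v \rangle_\QQ = \exp(x) \quad\text{and}\quad \langle L_u^{\le D}, L_v^{\le D}\rangle_\QQ = \exp^{\le D}(x) = \sum_{d=0}^{D} \frac{x^d}{d!}.
\]
So the lemma is equivalent to the inequality $\exp^{\le D}(x) \le \exp(x)$ for all real $x$ whenever $D$ is odd. Note that $x$ really does range over all of $\R$: even if $\mu$ is supported on vectors with nonnegative coordinates, $\langle u,v \rangle$ could in general be any real value, so we cannot restrict to $x \ge 0$.

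Second, I would verify the scalar inequality via Taylor's theorem. By the Lagrange remainder, there exists $\xi$ between $0$ and $x$ with
\[
\exp(x) - \exp^{\le D}(x) = \frac{e^\xi}{(D+1)!}\, x^{D+1}.
\]
Since $D$ is odd, $D+1$ is even, so $x^{D+1} \ge 0$ regardless of the sign of $x$; combined with $e^\xi > 0$ this gives $\exp(x) - \exp^{\le D}(x) \ge 0$, as needed. (Equivalently, one can argue by induction on odd $D$: let $f_D(x) = \exp(x) - \exp^{\le D}(x)$, note $f_D'(x) = f_{D-1}(x)$ and $f_D(0)=0$, and observe that $f_{-1}\equiv \exp > 0$, from which one deduces that $f_D$ attains its global minimum $0$ at $x=0$ whenever $D$ is odd.)

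There is no real obstacle here; the only subtlety is simply to notice why the parity assumption on $D$ is needed, namely to guarantee nonnegativity of $x^{D+1}$ on the negative axis (for even $D$, the analogous inequality $\exp^{\le D}(x) \le \exp(x)$ fails for suitably negative $x$, so the oddness hypothesis cannot be dropped). Once this scalar fact is in place, the lemma follows by plugging $x = \lambda^2 \langle u,v\rangle$ back in.
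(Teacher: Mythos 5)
Your proposal is correct and takes essentially the same route as the paper: reduce via Proposition~\ref{prop:taylor} to the scalar inequality $\exp^{\le D}(x) \le \exp(x)$ for all $x \in \R$ with $D$ odd, which the paper isolates as Lemma~\ref{lem:taylor-sign}. The paper proves that scalar fact by induction on $D$ using $\frac{d}{dx}[\exp(x)-\exp^{\le D}(x)] = \exp(x)-\exp^{\le(D-1)}(x)$ --- exactly the alternative you sketch in parentheses --- and your primary argument via the Lagrange remainder, with $x^{D+1}\ge 0$ because $D+1$ is even, is an equally valid way to obtain it.
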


\begin{proof}
Recalling from Proposition~\ref{prop:taylor} the formulas $\langle L_u^{\le D},L_v^{\le D} \rangle_\QQ = \exp^{\le D}(\lambda^2 \langle u,v \rangle)$ and $\langle L_u,L_v \rangle_\QQ = \exp(\lambda^2 \langle u,v \rangle)$, the result follows because $\exp^{\le D}(x) \le \exp(x)$ for all $x \in \R$ when $D$ is odd (see Lemma~\ref{lem:taylor-sign}).
\end{proof}

Second, we will need a crude upper bound on $\|L_u^{\le D}\|_\QQ$.
\begin{lemma}\label{lem:gam-norm-upper}
In the Gaussian additive model, for any $u \in \supp(\mu)$,
\[ \|L_u^{\le D}\|^2_\QQ \le (D+1)(1+\lambda^2 M)^D. \]
\end{lemma}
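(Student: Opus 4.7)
The plan is to unpack the squared norm via the inner-product formula and then bound the resulting truncated exponential series term by term.

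First I would apply Proposition~\ref{prop:taylor} in the special case $v = u$, which immediately yields
\[ \|L_u^{\le D}\|_\QQ^2 = \langle L_u^{\le D}, L_u^{\le D} \rangle_\QQ = \exp^{\le D}(\lambda^2 \|u\|^2) = \sum_{d=0}^D \frac{(\lambda^2 \|u\|^2)^d}{d!}. \]
This is the main structural step; from here on everything is elementary. Using the hypothesis $\|u\|^2 \le M$ carried over from the setup of Theorem~\ref{thm:equiv-easy}, and the fact that every term in the series is nonnegative, the right-hand side is at most $\sum_{d=0}^D (\lambda^2 M)^d / d!$.

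Next I would replace $1/d!$ by the trivial upper bound $1$, so that the sum is at most $\sum_{d=0}^D (\lambda^2 M)^d$. Finally, for each $0 \le d \le D$ one has $(\lambda^2 M)^d \le (1+\lambda^2 M)^D$: if $\lambda^2 M \ge 1$, then $(\lambda^2 M)^d \le (\lambda^2 M)^D \le (1+\lambda^2 M)^D$, and if $\lambda^2 M < 1$, then $(\lambda^2 M)^d \le 1 \le (1+\lambda^2 M)^D$. Summing over the $D+1$ values of $d$ delivers the stated bound $(D+1)(1+\lambda^2 M)^D$.

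There is no real obstacle here: the lemma is a crude but convenient estimate, and the proof is just a short sequence of monotone bounds applied to the explicit Taylor expansion provided by Proposition~\ref{prop:taylor}. The slack in the $(D+1)$ factor comes precisely from discarding the $1/d!$ and will be immaterial for the way the lemma is used in the proof of Theorem~\ref{thm:equiv-easy}, since the logarithm of this bound is absorbed into the logarithmic factor defining $\tilde D$.
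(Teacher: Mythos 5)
Your proof is correct and follows essentially the same route as the paper: apply Proposition~\ref{prop:taylor} with $v=u$ to express $\|L_u^{\le D}\|_\QQ^2 = \exp^{\le D}(\lambda^2\|u\|^2)$, then bound each of the $D+1$ terms of the truncated series by $(1+\lambda^2 M)^D$. Your case split on whether $\lambda^2 M \ge 1$ can be shortened---just note $(\lambda^2 M)^d \le (1+\lambda^2 M)^d \le (1+\lambda^2 M)^D$---but this is a cosmetic difference, not a gap.
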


\begin{proof}
Recall that $M$ is an upper bound on $\|u\|^2$, and recall from Proposition~\ref{prop:taylor} that $\|L_u^{\le D}\|^2_\QQ = \langle L_u^{\le D}, L_u^{\le D} \rangle_\QQ = \exp^{\le D}(\lambda^2 \|u\|^2)$. The result follows because $\exp^{\le D}(\lambda^2 \|u\|^2)$ is the sum of $D+1$ terms (see~\eqref{eq:taylor}), each of which can be upper-bounded by $(1+\lambda^2 M)^D$.
\end{proof}

With the two lemmas above in hand, we can now prove Theorem~\ref{thm:equiv-easy} without using any additional properties specific to the Gaussian additive model.

\begin{proof}[Proof of Theorem~\ref{thm:equiv-easy}]
Recall $\tilde{D} := D \cdot (2+\log(1 + \lambda^2 M))$. Let $u,v \sim \mu$ independently. Define $\delta = \delta(\tilde{D})$ as in~\eqref{eq:fp-defn}, which implies $\Pr\left(|\langle u,v \rangle| > \delta\right) \le e^{-\tilde{D}}$ (see Remark~\ref{rem:continuity}).
Decompose LD into low- and high-overlap terms:
\[ \ld(D,\lambda) = \EE_{u,v} \left[\langle L_u^{\le D}, L_v^{\le D} \rangle_\QQ\right] = \EE_{u,v}\left[\Ind_{|\langle u,v \rangle| \le \delta} \cdot \langle L_u^{\le D}, L_v^{\le D} \rangle_\QQ\right] + \EE_{u,v}\left[\Ind_{|\langle u,v \rangle| > \delta} \cdot \langle L_u^{\le D}, L_v^{\le D} \rangle_\QQ\right]. \]
The low-overlap term can be related to FP using Lemma~\ref{lem:ld-corr-ineq}:
\[ \EE_{u,v}\left[\Ind_{|\langle u,v \rangle| \le \delta} \cdot \langle L_u^{\le D}, L_v^{\le D} \rangle_\QQ\right] \le \EE_{u,v}\left[\Ind_{|\langle u,v \rangle| \le \delta} \cdot \langle L_u, L_v \rangle_\QQ\right] = \fp(\tilde{D},\lambda). \]
For the high-overlap term, Lemma~\ref{lem:gam-norm-upper} implies (via Cauchy--Schwarz) the crude upper bound $\langle L_u^{\le D}, L_v^{\le D} \rangle_\QQ \le (D+1)(1+\lambda^2 M)^D$, and together with the tail bound $\Pr\left(|\langle u,v \rangle| > \delta\right) \le e^{-\tilde{D}}$ this yields
\begin{align*}
\EE_{u,v}\left[\Ind_{|\langle u,v \rangle| > \delta} \cdot \langle L_u^{\le D}, L_v^{\le D} \rangle_\QQ\right]
&\le e^{-\tilde{D}} \cdot (D+1)(1+\lambda^2 M)^D \\
&= \exp\left(-\tilde{D} + \log(D+1) + D \log(1+\lambda^2 M)\right) \\
&\le \exp(-D)
\end{align*}
where the final step used the definition of $\tilde{D}$ and the fact $\log(D+1) \le D$.
\end{proof}

\begin{remark}\label{rem:generalization}
Many of the ideas in the above proof can potentially be extended beyond the Gaussian additive model. The only times we used the Gaussian additive model were in Lemmas~\ref{lem:ld-corr-ineq} and~\ref{lem:gam-norm-upper}. The main difficulty in generalizing this proof to other models seems to be establishing the inequality $\langle L_u^{\le D}, L_v^{\le D} \rangle_\QQ \le \langle L_u, L_v \rangle_\QQ$ from Lemma~\ref{lem:ld-corr-ineq}. This inequality is \emph{not} true in general: one counterexample is the Gaussian additive model with $D$ \emph{even} and $\langle u,v \rangle < 0$ (combine Proposition~\ref{prop:taylor} with Lemma~\ref{lem:taylor-sign}); see also the counterexamples in Section~\ref{sec:counterexamples}. One of our contributions (see Section~\ref{sec:sparse}) is to identify another class of problems where this inequality is guaranteed to hold, allowing us to prove ``FP-hard implies LD-hard'' for such problems.
\end{remark}

\begin{remark}
Some prior work has implicitly used FP in the proof of low-degree lower bounds, in a few specific settings where the FP-to-LD connection can be made quite easily~\cite{sk-cert,ld-notes,quiet-coloring}. Our contribution is to establish this in much higher generality, which requires some new ideas such as the symmetry argument in Section~\ref{sec:sparse-fp-ld}.
\end{remark}

The proof of the converse bound ``LD-hard implies FP-hard'' (Theorem~\ref{thm:equiv-hard}) is deferred to Section~\ref{sec:pf-equiv-hard}. The key step is to show that when $|\langle u,v \rangle| \le \delta$, the inequality $\langle L_u^{\le D}, L_v^{\le D} \rangle_\QQ \le \langle L_u, L_v \rangle_\QQ$ is almost an equality (see Lemma~\ref{lem:pointwise}).

\subsection{FP-Hard Implies MCMC-Hard}
\label{sec:mcmc}

In this section we show that bounds on FP imply that a natural class of local Markov chain Monte Carlo (MCMC) methods fail to recover the planted signal. Combining this with the results of the previous section, we also find that low-degree hardness implies MCMC-hardness. We note that in the setting of spin glass models (with no planted signal), the result of~\cite{gap-estimates} is similar in spirit to ours: they relate a version of annealed FP to the spectral gap for Langevin dynamics.

We again restrict our attention to the additive Gaussian model, now with the additional assumption that the prior $\mu$ is uniform on some finite (usually of exponential size) set $S \subseteq \R^N$ with \emph{transitive symmetry}, defined as follows.

\begin{definition}\label{def:trans-sym}
We say $S \subseteq \R^N$ has \emph{transitive symmetry} if for any $u,v \in S$ there exists an orthogonal matrix $R \in \mathrm{O}(N)$ such that $Ru = v$ and $RS = S$.
\end{definition}

The assumption that $S$ is finite is not too restrictive because one can imagine approximating any continuous prior to arbitrary accuracy by a discrete set. Many applications of interest have transitive symmetry. For example, in sparse PCA we might take $u = x^{\otimes 2}$ (thought of as the flattening of a rank-1 matrix) where $x \in \{0,1\}^n$ has exactly $k$ nonzero entries (chosen uniformly at random). In this case, the orthogonal matrix $R$ in Definition~\ref{def:trans-sym} is a permutation matrix. More generally, we could take $u = x^{\otimes p}$ where $x$ has any fixed empirical distribution of entries (ordered uniformly at random).

Note that transitive symmetry implies that every $u \in S$ has the same 2-norm. Without loss of generality we will assume this 2-norm is 1, i.e., $S$ is a subset of the unit sphere $\mathbb{S}^{N-1}$.

Given an observation $Y = \lambda u + Z$ drawn from the Gaussian additive model (with $\mu$ uniform on a finite, transitive-symmetric set $S \subseteq \mathbb{S}^{N-1}$), consider the associated \emph{Gibbs measure} $\nu_\beta$ on $S$ defined by
\begin{equation}\label{eq:nu}
\nu_\beta(v) = \frac{1}{\cZ_\beta} \exp(-\beta H(v))
\end{equation}
where $\beta \ge 0$ is an inverse-temperature parameter (i.e., $1/\beta$ is the \emph{temperature}),
\[ H(v) = -\langle v,Y \rangle \]
is the \emph{Hamiltonian}, and
\[ \cZ_\beta = \sum_{v \in S} \exp(-\beta H(v)) \]
is the \emph{partition function}. Note that $\nu_\beta, H, \cZ_\beta$ all depend on $Y$, but we have supressed this dependence for ease of notation. When $\beta = \lambda$ (the ``Bayesian temperature''), the Gibbs measure $\nu_\beta$ is precisely the posterior distribution for the signal $u$ given the observation $Y = \lambda u + Z$.

We will consider a (not necessarily reversible) Markov chain $X_0, X_1, X_2, \ldots$ on state space $S$ with stationary distribution $\nu_\beta$ (for some $\beta$), that is, if $X_t \sim \nu_\beta$ then $X_{t+1} \sim \nu_\beta$. We will assume a worst-case initial state, which may depend adversarially on $Y$. We will be interested in \emph{hitting time lower bounds}, showing that such a Markov chain will take many steps before arriving at a ``good'' state that is close to the true signal $u$.

The core idea of our argument is to establish a \emph{free energy barrier}, that is, a subset $B \subseteq S$ of small Gibbs mass that separates the initial state from the ``good'' states.\footnote{We note that depending on the temperature, a free energy barrier may arise due to entropy, not necessarily due to an increase of the Hamiltonian. Even when an Hamiltonian is monotonically decreasing along a direction to the desired solution, a free barrier may still exist, consisting of a small-volume set that must be crossed to reach good solutions --- its small volume can still lead to a small Gibbs mass, for a sufficiently high temperature.} Such a barrier is well-known to imply a lower bound for the hitting time of the ``good'' states using conductance; see e.g.~\cite[Theorem 7.4]{LP-book17}. In fact, establishing such barriers have been the main tool behind most statistical MCMC lower bounds \cite{jerrum1992large, arous2020algorithmic, gamarnik2022sparse, gamarnik2019clique, gamarnik2021overlap, arous2020free}, with the recent exception of \cite{chen2022almost}. 
More formally, we leverage the following result; see (the proof of) Proposition~2.2 in~\cite{arous2020free}.

\begin{proposition}[Free Energy Barrier Implies Hitting Time Lower Bound]
\label{prop:hitting}
Suppose $X_0, X_1, X_2, \ldots$ is a Markov chain on a finite state space $S$, with some stationary distribution $\nu$. Let $A$ and $B$ be two disjoint subsets of $S$ and define the hitting time $\tau_B := \inf\{t \in \NN \,:\, X_t \in B\}$. If the initial state $X_0$ is drawn from the conditional distribution $\nu|A$, then for any $t \in \NN$, $\Pr(\tau_B \le t) \le t \cdot \frac{\nu(B)}{\nu(A)}$. In particular, for any $t \in \NN$ there exists a state $v \in A$ such that if $X_0 = v$ deterministically, then $\Pr(\tau_B \le t) \le t \cdot \frac{\nu(B)}{\nu(A)}$.
\end{proposition}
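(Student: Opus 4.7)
The plan is to reduce everything to a single union bound applied to the chain started from the stationary distribution $\nu$. Let $(\tilde X_s)_{s \ge 0}$ be an auxiliary copy of the same Markov chain with $\tilde X_0 \sim \nu$. Stationarity of $\nu$ means that $\tilde X_s \sim \nu$ for every $s \in \NN$, so $\Pr(\tilde X_s \in B) = \nu(B)$ for each such $s$. A union bound over $s \in \{0,1,\dots,t-1\}$ (the exact range is a harmless off-by-one depending on the convention for $\NN$) then yields
\[ \Pr_{\tilde X_0 \sim \nu}\bigl(\tau_B \le t\bigr) \;\le\; \sum_{s} \Pr(\tilde X_s \in B) \;\le\; t \cdot \nu(B). \]

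Second, I would convert this into the conditional statement of the proposition via the elementary inequality $\Pr(E \mid F) \le \Pr(E)/\Pr(F)$ applied with $E = \{\tau_B \le t\}$ and $F = \{\tilde X_0 \in A\}$:
\[ \Pr_{\tilde X_0 \sim \nu}\bigl(\tau_B \le t \,\big|\, \tilde X_0 \in A\bigr) \;\le\; \frac{\Pr(\tau_B \le t)}{\nu(A)} \;\le\; \frac{t \cdot \nu(B)}{\nu(A)}. \]
By the Markov property, the distribution of $(\tilde X_s)_{s \ge 0}$ conditional on $\tilde X_0 \in A$ coincides with the law of the original chain $(X_s)_{s \ge 0}$ started from $X_0 \sim \nu|A$, which is exactly the setting of the first claim.

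Finally, to extract a deterministic worst-case initial state, I would apply a one-line averaging (pigeonhole) argument: writing
\[ \Pr_{X_0 \sim \nu|A}\bigl(\tau_B \le t\bigr) \;=\; \sum_{v \in A} (\nu|A)(v) \cdot \Pr_{X_0 = v}\bigl(\tau_B \le t\bigr) \;\le\; \frac{t \cdot \nu(B)}{\nu(A)}, \]
I conclude that there must exist some $v \in A$ with $(\nu|A)(v) > 0$ (hence $v \in A$) for which $\Pr_{X_0 = v}(\tau_B \le t) \le t \cdot \nu(B)/\nu(A)$, giving the desired deterministic initial state.

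I do not anticipate any substantive obstacle: the argument is a direct consequence of stationarity of $\nu$ together with the Markov property, and crucially does \emph{not} invoke reversibility, conductance, or any spectral-gap machinery (which would be the usual ingredients for hitting-time bounds on reversible chains; this is fortunate, since the proposition explicitly allows the chain to be non-reversible). The only care needed is in tracking the off-by-one constant in the union bound and in matching the normalization of the conditional distribution $\nu|A$ against the unconditional $\nu$ restricted to $A$.
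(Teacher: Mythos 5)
Your argument is correct and is essentially the same as the one the paper relies on (the paper does not prove Proposition~\ref{prop:hitting} itself but defers to the proof of Proposition~2.2 of~\cite{arous2020free}, which proceeds exactly this way: stationarity, a union bound over time steps, the inequality $\Pr(E \mid F) \le \Pr(E)/\Pr(F)$, and an averaging argument to extract a deterministic initial state). The only detail to make explicit is the off-by-one you flag: the event $\{\tau_B \le t\}$ involves times $0,1,\ldots,t$, and you recover the stated constant $t$ (rather than $t+1$) because, conditionally on $X_0 \in A$, the time-$0$ term contributes nothing since $A$ and $B$ are disjoint, so the union bound need only run over $s \in \{1,\ldots,t\}$.
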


We will need to impose some ``locality'' on our Markov chain so that it cannot jump from $A$ to a good state without first visiting $B$. \begin{definition}
We say a Markov chain on a finite state space $S \subseteq \mathbb{S}^{N-1}$ is \emph{$\Delta$-local} if for every possible transition $v \to v'$ we have $\|v-v'\|_2 \le \Delta$.
\end{definition}

\noindent We note that the use of local Markov chains is generally motivated and preferred in theory and practice for the, in principle, low computation time for implementing a single step. Indeed, a $\Delta$-local Markov chain sampling from a sufficiently low-temperature Gibbs measure may need to optimize over the whole $\Delta$-neighborhood to update a given point. For such reasons, in most (discrete-state) cases the locality parameter $\Delta>0$ is tuned so that the $\Delta$-neighborhood of each point is at most of polynomial size; see e.g.~\cite{jerrum1992large, arous2020free, gamarnik2019clique, chen2022almost}.

Our results will be slightly stronger in the special case that $S$ satisfies the following property, which holds for instance if $u = x^{\otimes p}$ with $p$ even, or if $u \ge 0$ entrywise.

\begin{definition}\label{def:nonneg-ov}
We say $S \subseteq \mathbb{S}^{N-1}$ has \emph{nonnegative overlaps} if $\langle u,v \rangle \ge 0$ for all $u,v \in S$.
\end{definition}

We now state the core result of this section, followed by various corollaries.

\begin{theorem}[FP-Hard Implies Free Energy Barrier]\label{thm:free-energy-barrier}
Let $\mu$ be the uniform measure on $S$, where $S \subseteq \mathbb{S}^{N-1}$ is a finite, transitive-symmetric set. The following holds for any $\eps \in (0,1/2)$, $D \ge 2$, $\lambda \ge 0$, and $\beta \ge 0$. Fix a ground-truth signal $u \in S$ and let $Y = \lambda u + Z$ with $Z \sim \cN(0,I_N)$. Define $\delta = \delta(D)$ as in~\eqref{eq:fp-defn}. Let
\[ A = \{v \in S \,:\, |\langle u,v \rangle| \le \delta\} \qquad\text{ and }\qquad B = \{v \in S \,:\, \langle u,v \rangle \in (\delta, (1+\eps)\delta]\}. \]
With probability at least $1-e^{-\eps D}$ over $Z$, the Gibbs measure~\eqref{eq:nu} associated to $Y$ satisfies
\[ \frac{\nu_\beta(B)}{\nu_\beta(A)} \le 2 \left(2 \cdot \fp(D+\log 2,\tilde\lambda)\right)^{1-2\eps} e^{-\eps D} \]
where
\begin{equation}\label{eq:tilde-lambda}
\tilde\lambda := \sqrt{\beta\lambda \cdot \frac{2+\eps}{1-2\eps}}.
\end{equation}
Furthermore, if $S$ has nonnegative overlaps (in the sense of Definition~\ref{def:nonneg-ov}) then the factor $\frac{2+\eps}{1-2\eps}$ in~\eqref{eq:tilde-lambda} can be replaced by $\frac{1+\eps}{1-2\eps}$.
\end{theorem}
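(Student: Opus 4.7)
The plan is to write $\nu_\beta(B)/\nu_\beta(A) = W_B/W_A$ where $W_S := \sum_{v \in S} e^{\beta\langle v,Y\rangle}$ for $S \in \{A,B\}$, and then separately upper bound $W_B$ and lower bound $W_A$ with probability $\ge 1-e^{-\eps D}/2$ each over the noise $Z = Y - \lambda u$, combining them with a union bound. Recall that by transitive symmetry the distribution (in $Z$) of everything is the same for every fixed choice of the ground truth $u \in S$, so we may work at a fixed $u$ throughout.

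For the numerator, I will use the subadditivity inequality $(a+b)^\alpha \le a^\alpha + b^\alpha$ for $\alpha := 1-2\eps \in (0,1]$ to get $W_B^{1-2\eps} \le \sum_{v \in B} e^{(1-2\eps)\beta\langle v,Y\rangle}$, and then take Gaussian expectation termwise to obtain $\EE_Z W_B^{1-2\eps} \le e^{(1-2\eps)^2\beta^2/2} \sum_{v \in B} e^{(1-2\eps)\beta\lambda\langle v,u\rangle}$. To replace the indicator of $B$ by something controlled by $\fp$, I apply the Chernoff-type bound $\Ind_{v \in B} \le \Ind_{\langle v,u\rangle > \delta} \le e^{c(\langle v,u\rangle - \delta)}$, valid for all $v \in S$ whenever $c > 0$, choosing $c$ so that $c + (1-2\eps)\beta\lambda = \tilde\lambda^2$; this pushes the exponent on $\langle v,u\rangle$ up to exactly $\tilde\lambda^2$. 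By transitive symmetry the resulting sum equals $e^{-c\delta}\,|S|\,\EE_{v\sim\mu}[e^{\tilde\lambda^2 \langle v,u\rangle}]$, and splitting the expectation at $|\langle v,u\rangle| \le \delta' := \delta(D+\log 2)$ (with tail probability at most $e^{-D}/2$, so the tail contributes at most $\fp$ itself) bounds it by $2\fp(D+\log 2,\tilde\lambda)$. Finally, Markov's inequality on $W_B^{1-2\eps}$ at level $e^{\eps D}\EE W_B^{1-2\eps}$ completes the step.

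For the denominator, the key observation is that $\log W_A$ is both convex (as a log-sum-exp of affine functions of $Z$) and $\beta$-Lipschitz in $Z$ (since $\|\nabla_Z \log W_A\| = \beta\|\EE_{v\sim\nu_\beta|_A} v\| \le \beta$ by Jensen applied to the conditional Gibbs measure on $A \subset \mathbb{S}^{N-1}$). Convexity together with $\EE_Z Z = 0$ yields $\EE_Z \log W_A \ge \log W_A(0) = \log \sum_{v \in A} e^{\beta\lambda\langle v,u\rangle} \ge \log|A| - \beta\lambda\delta$, using $\langle v,u\rangle \ge -\delta$ on $A$. Borell--TIS Gaussian concentration then gives $\log W_A \ge \log|A| - \beta\lambda\delta - \beta\sqrt{2\eps D}$ with probability at least $1-e^{-\eps D}$. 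Transitive symmetry and the tail bound $\Pr(|\langle u,v\rangle| > \delta) \le e^{-D}$ imply $|A|/|S| \ge 1-e^{-D} \ge 1/2$ (using $D \ge 2$), yielding $W_A \ge (|S|/2)\, e^{-\beta\lambda\delta - \beta\sqrt{2\eps D}}$.

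Assembling the two high-probability bounds and plugging in $\tilde\lambda^2 = \beta\lambda\,(2+\eps)/(1-2\eps)$ into the Chernoff constant $c = \tilde\lambda^2 - (1-2\eps)\beta\lambda$ is designed so that the $e^{(1-2\eps)^2\beta^2/2}$ factor (from the Gaussian moment generating function on $W_B$), the $e^{\beta\lambda\delta}$ factor (from the $W_A$ lower bound), the $e^{-c\delta}$ factor (from Chernoff), the Markov penalty $e^{\eps D}$, and the Gaussian concentration penalty $e^{\beta\sqrt{2\eps D}}$ all telescope into $2(2\fp(D+\log 2,\tilde\lambda))^{1-2\eps} e^{-\eps D}$, with the $\log 2$ shift in $D+\log 2$ absorbing the factor of $2$ from the tail-splitting $\EE_v[e^{\tilde\lambda^2\langle v,u\rangle}] \le 2\fp$. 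In the nonnegative-overlap case one has $\langle v,u\rangle \ge 0$ on $A$, improving the $W_A$ lower bound from $|A|\,e^{-\beta\lambda\delta}$ to $|A|$; this removes a $\beta\lambda\delta$ term from the Chernoff budget, shrinking the numerator of $\tilde\lambda^2/(\beta\lambda)$ from $(2+\eps)$ down to $(1+\eps)$. The main obstacle I anticipate is matching the $|S|$-scaling: Markov on $W_B^{1-2\eps}$ produces a factor $|S|^{1/(1-2\eps)}$ after raising to the $1/(1-2\eps)$ power, while the $W_A$ bound produces only $|S|$, so the $\fp^{1-2\eps}$ exponent must genuinely come out of the Markov step (not from a coarser bound), which likely requires bounding the joint ratio $(W_B/W_A)^{1-2\eps}$ rather than $W_B^{1-2\eps}$ alone, e.g.\ by using $\EE W_A^{-(1-2\eps)} \le W_A(0)^{-(1-2\eps)} e^{O(\beta^2)}$ from convexity of $\log W_A$ and combining with the $W_B^{1-2\eps}$ bound via Cauchy--Schwarz.
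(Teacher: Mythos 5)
Your decomposition of the ratio into a separate numerator bound and denominator bound runs into two genuine gaps, one of which you flag yourself and one of which you don't.

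The gap you don't flag is the claim that $\EE_{v\sim\mu}\bigl[e^{\tilde\lambda^2\langle v,u\rangle}\bigr]\le 2\,\fp(D+\log 2,\tilde\lambda)$. After splitting at $|\langle v,u\rangle|\le\delta'$, the low-overlap piece is exactly $\fp(D+\log 2,\tilde\lambda)$, but the high-overlap tail $\EE\bigl[\Ind_{|\langle v,u\rangle|>\delta'}\,e^{\tilde\lambda^2\langle v,u\rangle}\bigr]$ is \emph{not} bounded by $\fp$. The tail event has probability at most $e^{-D}/2$, but the integrand on it can be as large as $e^{\tilde\lambda^2}$; for instance the term $v=u$ alone contributes $e^{\tilde\lambda^2}/|S|$, which can dwarf $\fp$ whenever $\tilde\lambda^2\gg\log|S|$. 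This is precisely the regime where $\|L\|_\QQ^2$ blows up while $\fp$ stays bounded --- the whole point of the FP criterion --- so the tail cannot be absorbed this way.

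The second gap is the $|S|$-scaling mismatch you identify at the end. Your Markov bound on $W_B^{1-2\eps}$, once raised to the power $1/(1-2\eps)$ to get $W_B$, carries $|S|^{1/(1-2\eps)}$, while your $W_A$ lower bound produces only $|S|$. The Cauchy--Schwarz fix you sketch doubles the exponents, and for $\eps<1/4$ the exponent $2(1-2\eps)$ exceeds $1$, so the subadditivity step $W_B^{\alpha}\le\sum_{v\in B}e^{\alpha\beta\langle v,Y\rangle}$ breaks down. There is also a further discrepancy you don't mention: the Borell--TIS step costs an extra factor $e^{\beta\sqrt{2\eps D}}$ in the $W_A$ lower bound, which has no counterpart in the target bound and is not negligible when $\beta$ is large.

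The paper's proof sidesteps all of this by never trying to manufacture a $\fp^{1-2\eps}$ factor probabilistically. It first pulls out the deterministic factor $\exp(\beta\lambda\delta(1+\sigma+\eps))$, reducing to the ``pure-noise'' normalized Gibbs measure $\tilde\nu_\beta(v)\propto e^{\beta\langle v,Z\rangle}$ and the ratio $\tilde\nu_\beta(B)/\tilde\nu_\beta(A)\le\tilde\nu_\beta(A^c)/(1-\tilde\nu_\beta(A^c))$. Since $\tilde\nu_\beta(A^c)\in[0,1]$ and transitive symmetry plus rotational invariance give $\EE_Z[\tilde\nu_\beta(v)]=1/|S|$ exactly, a single Markov application controls both numerator and denominator simultaneously, with no $|S|$-mismatch and no Lipschitz-concentration loss. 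Only after this purely algebraic/probabilistic bound $\nu_\beta(B)/\nu_\beta(A)\le 2\exp(\beta\lambda\delta(1+\sigma+\eps)-(1-\eps)D)$ is established does $\fp$ enter, and only as a \emph{lower} bound: $\fp(D+\log 2,\tilde\lambda)\ge\tfrac12 e^{-D}e^{\tilde\lambda^2\delta}$, which, with the designed choice of $\tilde\lambda$, rewrites the right-hand side as $2(2\fp)^{1-2\eps}e^{-\eps D}$. The direction of the FP inequality in the paper is the opposite of what your plan needs, which is why your tail bound fails. If you want to salvage the separate-$W_B$/$W_A$ route, the essential step is to renormalize so that everything lives in $[0,1]$ before taking expectations; otherwise the $|S|$ factors and uncontrolled tails will persist.
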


\noindent The proof is deferred to Section~\ref{sec:pf-feb} and uses an argument based on~\cite{arous2020algorithmic} (and also used by~\cite{arous2020free} in the ``high temperature'' regime). This argument makes use of the rotational invariance of Gaussian measure, and we unfortunately do not know how to generalize it beyond the Gaussian additive model. We leave this as an open problem for future work.

As mentioned above, one particularly natural choice of $\beta$ is the Bayesian temperature $\lambda$ (which corresponds to sampling from the posterior distribution). In this case $\beta = \lambda$, if $S$ has nonnegative overlaps and $\eps$ is small, there is essentially no ``loss'' between $\tilde\lambda$ and $\lambda$. Without nonnegative overlaps, we lose a factor of $\sqrt{2}$ in $\lambda$.

\begin{corollary}[FP-Hard Implies Hitting Time Lower Bound]\label{cor:MCMClowerbound}
In the setting of Theorem~\ref{thm:free-energy-barrier}, suppose $X_0, X_1, X_2, \ldots$ is a $\Delta$-local Markov chain with state space $S$ and stationary distribution $\nu_\beta$, for some $\Delta \le \eps\delta$. Define the hitting time $\tau := \inf\{t \in \NN \,:\, \langle u,X_t \rangle > \delta\}$. With probability at least $1-e^{-\eps D}$ over $Z$, there exists a state $v \in A$ such that for the initialization $X_0 = v$, with probability at least $1-e^{-\eps D/2}$ over the Markov chain,
\[ \tau \ge \frac{e^{\eps D/2}}{2 \left(2 \cdot \fp(D+\log 2,\tilde\lambda)\right)^{1-2\eps}}. \]
\end{corollary}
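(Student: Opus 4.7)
The plan is to deduce the hitting time lower bound by combining the free energy barrier established in Theorem~\ref{thm:free-energy-barrier} with the conductance-based bound of Proposition~\ref{prop:hitting}. The single new ingredient needed is an observation that $\Delta$-locality forces any trajectory starting in $A$ to pass through $B$ before reaching the ``good'' region $\{v : \langle u, v \rangle > \delta\}$; everything else is a direct substitution.

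First I would establish this locality step. Because $u \in S \subseteq \mathbb{S}^{N-1}$, Cauchy--Schwarz together with $\Delta$-locality and the hypothesis $\Delta \le \eps\delta$ gives $|\langle u, X_{t+1}\rangle - \langle u, X_t\rangle| \le \|X_{t+1} - X_t\| \le \eps\delta$ for every step of the chain. So if $\langle u, X_t\rangle \le \delta$ and $\langle u, X_{t+1}\rangle > \delta$, then automatically $\langle u, X_{t+1}\rangle \le \delta + \eps\delta = (1+\eps)\delta$, i.e., $X_{t+1} \in B$. Starting from any $X_0 \in A$ (where $\langle u, X_0\rangle \le \delta$), applying this at time $\tau - 1$ shows $X_\tau \in B$, and therefore $\tau_B := \inf\{t \in \NN : X_t \in B\}$ satisfies $\tau_B \le \tau$.

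Next I would invoke Theorem~\ref{thm:free-energy-barrier} to control $\nu_\beta(B)/\nu_\beta(A)$ and then apply Proposition~\ref{prop:hitting} to $\tau_B$. With probability at least $1 - e^{-\eps D}$ over $Z$, the theorem gives
\[ \frac{\nu_\beta(B)}{\nu_\beta(A)} \le 2\bigl(2 \cdot \fp(D+\log 2, \tilde\lambda)\bigr)^{1-2\eps} e^{-\eps D}. \]
Condition on this event. Proposition~\ref{prop:hitting} then supplies a state $v \in A$ such that, starting from $X_0 = v$, $\Pr(\tau_B \le t) \le t \cdot \nu_\beta(B)/\nu_\beta(A)$ for every $t \in \NN$. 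Taking $t = \lfloor e^{\eps D/2}/(2(2\,\fp(D+\log 2,\tilde\lambda))^{1-2\eps}) \rfloor$ makes the right-hand side at most $e^{-\eps D/2}$, and combining with $\tau \ge \tau_B$ (from the locality step) yields $\Pr(\tau \le t) \le e^{-\eps D/2}$, which is the claim.

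I do not foresee a real obstacle: the deep content sits inside Theorem~\ref{thm:free-energy-barrier}, and the corollary is essentially a packaging statement that adds only the elementary locality observation and the standard conductance bound. The only subtle point is the order of quantifiers: one must first condition on the high-probability event from Theorem~\ref{thm:free-energy-barrier} and only then invoke Proposition~\ref{prop:hitting} to pick the worst-case $v$, so that $v$ is allowed to depend on the realization of $Z$; this matches exactly the statement of Proposition~\ref{prop:hitting} applied pointwise on the good event.
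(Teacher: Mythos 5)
Your proposal is correct and takes essentially the same route as the paper: establish $\tau \ge \tau_B$ via the $\Delta$-locality observation $|\langle u,X_{t+1}\rangle - \langle u,X_t\rangle| \le \|X_{t+1}-X_t\|_2 \le \eps\delta$ (so the chain cannot jump over $B$), then combine the bound on $\nu_\beta(B)/\nu_\beta(A)$ from Theorem~\ref{thm:free-energy-barrier} with the conductance bound of Proposition~\ref{prop:hitting} at the chosen time $t$. You spell out the choice of $t$ and the resulting probability bound more explicitly than the paper does, but the argument is the same.
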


\begin{proof}
Due to $\Delta$-locality,
\[ |\langle u,X_{t+1} \rangle - \langle u,X_t \rangle| = |\langle u, X_{t+1} - X_t \rangle | \le \|u\|_2 \cdot \|X_{t+1} - X_t\|_2 \le \Delta \le \eps\delta \]
since $\|u\|_2 = 1$. This means the Markov chain cannot ``jump'' over the region $B$. Formally, since $X_0 \in A$, we have $\tau \ge \tau_B$ (with $\tau_B$ defined as in Proposition~\ref{prop:hitting}). The result now follows by combining Proposition~\ref{prop:hitting} and Theorem~\ref{thm:free-energy-barrier}.
\end{proof}

We note that Corollary~\ref{cor:MCMClowerbound} applies for all $\Delta \leq \eps \delta(D)$. For various models of interest, we note that the range $\Delta \leq \eps \delta(D)$ for the locality parameter $\Delta$ contains the ``reasonable'' range of values where the $\Delta$-neighborhoods are of polynomial size. For example, let us focus on the well-studied tensor PCA settng with a Rademacher signal and even tensor power, that is $S=\{u=x^{\otimes 2p}: x \in \{-n^{-p},n^{-p}\}^n\}$  and the sparse PCA setting where $S=\{x \in \{0,1/\sqrt{k}\}^n: \|x\|_0=k\}$ where we focus for simplicity in the regime $k/\sqrt{n}=\omega(1)$. Then for any $D>0$ and any $\eps>0,$ for both the models, the $\eps\delta(D)$-neighborhood of any point $x \in S,$ contains $n^{\omega(1)}$ points. Indeed, it is a simple exercise that for tensor PCA (respectively, sparse PCA) $\delta(D)=\Omega(n^{-p})$ (respectively, $\delta(D)=\Omega(k/n)$), and  as an implication, each neighborhood contains every vector $x$ at any Hamming distance $o(\sqrt{n})$ (respectively, $o(k^2/n)$) from the given reference point.

Combining Corollary~\ref{cor:equiv-hard} with Corollary~\ref{cor:MCMClowerbound} directly implies the following result, showing that for certain Gaussian additive models, a bounded low-degree likelihood norm implies a hitting time lower bound for local Markov chains. To the best of our knowledge this is the first result of its kind.

\begin{corollary}[LD-Hard Implies Hitting Time Lower Bound]
\label{cor:MCMClowerbound_LD}
Suppose $D=D_n$ is a sequence with $D=\omega(1)$ such that $D+\log 2$ is an even integer. In the setting of Theorem~\ref{thm:free-energy-barrier}, assume for some constant $B>0$ that
\[ \ld(D+\log 2,(1+\eps)\tilde{\lambda}) \leq B. \]
Suppose $X_0, X_1, X_2, \ldots$ is a $\Delta$-local Markov chain with state space $S$ and stationary distribution $\nu_\beta$, for some $\Delta \le \eps\delta$. Define the hitting time $\tau := \inf\{t \in \NN \,:\, \langle u,X_t \rangle > \delta\}$. There is a constant $C=C(B,\eps)>0$ only depending on $B,\eps$ such that the following holds for all sufficiently large $n$. With probability at least $1-e^{-\eps D}$ over $Z$, there exists a state $v \in A$ such that for the initialization $X_0 = v$, with probability at least $1-e^{-\eps D/2}$ over the Markov chain,
\[ \tau \ge C(B,\eps)e^{\eps D/2}. \]
\end{corollary}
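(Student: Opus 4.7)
The plan is to chain Corollary~\ref{cor:equiv-hard} (LD implies FP) with Corollary~\ref{cor:MCMClowerbound} (FP implies MCMC hitting time lower bound). Both hypotheses are already lined up for us: the corollary assumes precisely the LD bound that Corollary~\ref{cor:equiv-hard} needs at the $\tilde{\lambda}$ scale, and inherits the $\Delta$-locality hypothesis from Corollary~\ref{cor:MCMClowerbound}.

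The first step is to apply Corollary~\ref{cor:equiv-hard} with the substitutions $D \leftarrow D + \log 2$, $\lambda \leftarrow \tilde{\lambda}$, and $\eps' \leftarrow \eps$. The hypotheses are met: $D + \log 2$ is assumed to be an even integer, $D + \log 2 = \omega(1)$ since $D = \omega(1)$, and the assumed bound $\ld(D+\log 2,(1+\eps)\tilde{\lambda}) \le B = O(1)$ is exactly the hypothesis of Corollary~\ref{cor:equiv-hard}. The conclusion yields
\[
\fp(D+\log 2,\tilde{\lambda}) \;\le\; \ld(D+\log 2,(1+\eps)\tilde{\lambda}) + o(1) \;\le\; B + o(1).
\]
In particular, for all $n$ sufficiently large (depending only on $B$), we have $\fp(D+\log 2,\tilde\lambda) \le B+1$.

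The second step is to invoke Corollary~\ref{cor:MCMClowerbound} in the same regime. Its conclusion says that with probability at least $1-e^{-\eps D}$ over $Z$, there exists $v \in A$ such that when $X_0 = v$, the chain satisfies with probability at least $1-e^{-\eps D/2}$
\[
\tau \;\ge\; \frac{e^{\eps D/2}}{2\bigl(2 \cdot \fp(D+\log 2,\tilde{\lambda})\bigr)^{1-2\eps}}.
\]
Substituting the bound from step one,
\[
\tau \;\ge\; \frac{e^{\eps D/2}}{2\bigl(2(B+1)\bigr)^{1-2\eps}} \;=:\; C(B,\eps)\, e^{\eps D/2},
\]
where $C(B,\eps) := \frac{1}{2(2(B+1))^{1-2\eps}}$ depends only on $B$ and $\eps$, as required.

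There is no real obstacle here; the statement is essentially a bookkeeping composition of the two previous corollaries. The only thing to be careful about is that Corollary~\ref{cor:equiv-hard} produces a nonuniform $o(1)$ slack, which is why the conclusion is stated only ``for all sufficiently large $n$'' and the constant $C(B,\eps)$ is allowed to depend on $B$ rather than equal $1/(2(2B)^{1-2\eps})$. The choice $D+\log 2$ (rather than $D$) and the scale $\tilde\lambda$ (rather than $\lambda$) enter only because they are the quantities already appearing in Corollary~\ref{cor:MCMClowerbound}; no extra work is needed to handle them.
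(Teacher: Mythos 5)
Your proof is correct and matches the paper's intended argument exactly: the paper states without proof that ``Combining Corollary~\ref{cor:equiv-hard} with Corollary~\ref{cor:MCMClowerbound} directly implies the following result,'' and your chaining (apply Corollary~\ref{cor:equiv-hard} at degree $D+\log 2$ and scale $\tilde\lambda$ to bound $\fp(D+\log 2,\tilde\lambda)\le B+1$ for large $n$, then plug into the hitting-time bound of Corollary~\ref{cor:MCMClowerbound}) is precisely that composition. You also correctly identify that the nonuniform $o(1)$ slack from Corollary~\ref{cor:equiv-hard} is why the statement is restricted to sufficiently large $n$ and why $C(B,\eps)$ must be allowed to depend on $B$.
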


\begin{remark}
Observe that under a bounded degree-$D$ likelihood norm, Corollary~\ref{cor:MCMClowerbound_LD} not only implies a super-polynomial lower bound on the hitting time of large overlap $\tau$, but also an $e^{\Omega(D)}$-time lower bound, matching the exact ``low-degree'' time complexity predictions. This significantly generalizes to a wide class of Gaussian additive models a similar observation from~\cite{arous2020free} which was in the context of sparse PCA.
\end{remark}

\begin{remark}
We note that the original work of~\cite{arous2020algorithmic}, on which the proof of Theorem~\ref{thm:free-energy-barrier} is based, showed failure of MCMC methods in a strictly \emph{larger} (by a power of $n$) range of $\lambda$ than the low-degree-hard regime for the tensor PCA problem. In contrast, our MCMC lower bound uses the same argument but matches the low-degree threshold (at the Bayesian temperature). This is because~\cite{arous2020algorithmic} only considers temperatures that are well above the Bayesian one: in their notation, their result is for constant $\beta$ whereas the Bayesian $\beta$ grows with $n$.
\end{remark}

One might naturally wonder whether for every Gaussian additive model considered in Corollary~\ref{cor:MCMClowerbound_LD}, an appropriately designed MCMC method achieves a matching upper bound, that is it works all the way down to the low-degree threshold. While we don't know the answer to this question, we highlight the severe lack of tools in the literature towards proving the success of MCMC methods for inference, with only a few exceptions designed in the zero-temperature regime; see e.g.~\cite{gamarnik2022sparse}. A perhaps interesting indication of the lack of such tools and generic understanding, has been the recent proof that the classical MCMC method of Jerrum~\cite{jerrum1992large} actually fails to recover even almost-linear sized planted cliques~\cite{chen2022almost}, that is it fails much above the $\sqrt{n}$-size low-degree threshold. We note though that \cite{gamarnik2019clique} suggested (but not proved) a way to ``lift'' certain free energy barriers causing the failure of MCMC methods, by an appropriate overparametrization of the state space. Finally, we note that non-rigorous statistical physics results have suggested the underperformance of MCMC for inference in various settings, see e.g.~\cite{antenucci2019glassy}.

Our Theorem~\ref{thm:free-energy-barrier} provides a free energy barrier which becomes larger as the temperature $1/\beta$ becomes larger (equivalently, as $\tilde{\lambda}$ becomes smaller). We note that there are bottleneck arguments which can establish the failure of low-temperature MCMC methods using the so-called Overlap Gap Property for inference, see e.g. \cite{gamarnik2022sparse, arous2020free}. Yet these techniques are usually based on a careful second moment argument and appear more difficult to be applied in high generality and to connect with the technology built in the present work.

\section{Planted Sparse Models}
\label{sec:sparse}

As discussed previously (see Remark~\ref{rem:generalization}), the main difficulty in generalizing our proof of ``FP-hard implies LD-hard'' from the Gaussian additive model to other models is establishing the inequality $\langle L_u^{\le D}, L_v^{\le D} \rangle_\QQ \le \langle L_u, L_v \rangle_\QQ$. We showed previously that this inequality holds in the Gaussian additive model with $D$ odd (Lemma~\ref{lem:ld-corr-ineq}). Here we identify another class of problems (with a ``sparse planted signal'') where we can establish this inequality via a symmetry argument, allowing us to prove ``FP-hard implies LD-hard'' and thus use FP as a tool to prove low-degree lower bounds. As an application, we give new low-degree lower bounds for the detection problem in sparse linear regression (see Section~\ref{sec:sparse-reg}).

\begin{assumption}[Distributional Assumptions]
\label{assum:sparse-planted}
In this section we focus on hypothesis testing between two distributions $\PP$, $\QQ$ on $\R^N$ of a specific form, where the signal corresponds to a planted subset of entries.
\begin{itemize}
    \item Under $Y \sim \QQ$, each entry $Y_i$ is drawn independently from some distribution $Q_i$ on $\R$.
    \item Under $Y \sim \PP$, first a signal vector $u \in \R^n$ is drawn from some distribution $\mu$ on $\R^n$. The only role of the vector $u$ is to be a surrogate for a subset of ``planted entries.'' To be more precise, to each $u \in \supp(\mu)$ we associate a set of planted entries $\Phi_u \subseteq [N]$.\footnote{This seemingly involved way to identify the planted structure has some advantages as we will see, including providing a suitable measure of ``overlap.'' It may be helpful to think about the special case $u \in \{0,1\}^n$, $N = n$, and $\Phi_u = \supp(u)$ for intuition, although in many cases of interest $u$ will correspond to a subset of entries of vectors in a different dimension.} Conditioned on $u$, we draw $Y$ from the following distribution $\PP_u$.
    \begin{itemize}
        \item For entries $i \notin \Phi_u$, draw $Y_i$ independently from $Q_i$ (the same as in $\QQ$).
        \item The entries in $\Phi_u$ can have an arbitrary joint distribution (independent from the entries outside $\Phi_u$) subject to the following symmetry condition: for any subset $S \subseteq [N]$ and any $u \in \supp(\mu)$ such that $S \subseteq \Phi_u$, the marginal distribution $\PP_u|_S$ is equal to some distribution $P_S$ that may depend on $S$ but not on $u$.
    \end{itemize}
\end{itemize}
We will further assume that $Q_i$ and $P_S$ have finite moments of all orders, and that $\PP_u$ is absolutely continuous with respect to $\QQ$. Finally, we assume $\QQ$ has a complete basis of orthogonal polynomials in $L^2(\QQ)$. (This is for instance known to be the case if the marginals $Q_i$ are Gaussian or have bounded support. More generally, this can be guaranteed under mild conditions on the marginals $Q_i$. See Appendix~\ref{app:orthog-poly} for further discussion.)
\end{assumption}

As in Section~\ref{sec:setting} we define $L_u = \frac{\rmd \PP_u}{\rmd \QQ}$, and define
\[ \ld(D) = \EE_{u,v \sim \mu} \left[\langle L_u^{\le D},L_v^{\le D} \rangle_\QQ\right] \]
and
\[ \fp(D) = \EE_{u,v \sim \mu} \left[\Ind_{|\iprod{u,v}| \le \delta} \cdot \Iprod{L_{u},L_{v}}_{\QQ} \right] \]
where
\begin{equation}\label{eq:delta-defn-2}
\delta = \delta(D) = \sup \, \{ \eps \geq 0 \text{ s.t.\ }\Pr_{u,v \sim \mu} \left(|\iprod{u,v}| \ge \eps\right)\ge e^{-D} \}.
\end{equation}

\begin{remark}
Any distribution $\PP$ can satisfy Assumption~\ref{assum:sparse-planted} by taking only one possible value for $u$ and taking the associated $\Phi_u$ to be all-ones. However, in this case the resulting FP degenerates to simply $\|L\|_\QQ^2$ (which can't be $O(1)$ unless detection is information-theoretically impossible). In order to prove useful low-degree lower bounds (in the possible-but-hard regime) using the tools from this section, it will be important to have many possible $u$'s with different associated distributions $\PP_u$, and so the $\Phi_u$'s must be sparse (not all ones).
\end{remark}

We now give some motivating examples that satisfy the assumptions above. The first is a generic class of problems that includes the classical planted clique and planted dense subgraph problems (see e.g.~\cite{hajek2015computational}).

\begin{example}[Planted Subgraph Problems]
Let $N = \binom{n}{2}$ and observe $Y \in \R^N$, which we think of as the complete graph on $n$ vertices with a real-valued observation on each edge. Under $\QQ$, each entry $Y_{ij}$ is drawn independently from some distribution $Q$ on $\R$. Under $\PP$, a vertex subset $C \subseteq [n]$ of size $|C| = k$ is chosen uniformly at random. For edges $(i,j)$ whose endpoints both lie in $C$, we draw $Y_{ij}$ independently from some distribution $P$. For all other edges, $Y_{ij}$ is drawn independently from $Q$.

To see that this satisfies Assumption~\ref{assum:sparse-planted}, let $u \in \{0,1\}^n$ be the indicator vector for $C$ and let $\Phi_u \subseteq [N]$ be the set of edges whose endpoints both lie in $C$. For the symmetry condition, note that for any edge subset $S \subseteq [N]$ and any $u$ such that $S \subseteq \Phi_u$, all edges in $S$ must have both endpoints in $C$, so the marginal distribution $\PP_u |_S$ is simply the product distribution $P^{\otimes |S|}$.
\end{example}

The next example, \emph{sparse generalized linear models} (sparse GLMs) includes various classical problems such as the sparse linear regression problem we will study in Section~\ref{sec:sparse-reg}.

\begin{example}[Sparse GLMs]
\label{ex:sparse-glm}
Fix an arbitrary activation function $\phi: \mathbb{R} \to \mathbb{R}$ and real-valued distributions $\nu, \Xi, Q, R$. The observation will be a pair $(X,Y) \in \R^{m \times n} \times \R^m$ generated as follows.
\begin{itemize}
\item Under $\QQ$, $X$ has i.i.d.\ entries drawn from $Q$ and $Y$ has i.i.d.\ entries drawn from $R$.
\item Under $\PP$, $X$ again has i.i.d.\ entries drawn from $Q$. We draw a $k$-sparse signal vector $\beta \in \R^n$ by first choosing exactly $k$ distinct entries uniformly at random to be the support, then drawing these $k$ entries i.i.d.\ from $\nu$, and then setting all remaining entries to $0$. We then let
\[ Y=\phi(X \beta)+\xi, \]
where $\phi$ is applied entrywise and the noise $\xi \in \R^m$ is drawn i.i.d.\ from $\Xi$.
\end{itemize}

To see that this satisfies Assumption~\ref{assum:sparse-planted}, let $u \in \{0,1\}^n$ be the support of $\beta$ (so $\PP_u$ includes sampling the nonzero entries of $\beta$ from $\nu$). Define $\Phi_u \subseteq ([m] \times [n]) \sqcup [m]$ (where $\sqcup$ denotes disjoint union) to contain: (i) all entries of $Y$, and (ii) all ``planted'' columns of $X$, that is, the columns of $X$ indexed by the support of $\beta$. To see that the entries in $\Phi_u$ are independent (conditioned on $u$) from those outside $\Phi_u$, note that $Y$ depends only on the planted columns of $X$. For the symmetry condition, let $S \subseteq ([m] \times [n]) \sqcup [m]$ and let $J_S \subseteq [n]$ index the columns of $X$ that contain at least one entry of $S$. Suppose $u,v$ are such that $S \subseteq \Phi_u$ and $S \subseteq \Phi_v$; we will show $\PP_u|_S = \PP_v|_S$. From the definition of $\Phi_u$, we see that $u$ and $v$ must both contain the columns $J_S$ in their support, and each also contains $k-|J_S|$ additional columns in their support. However, the joint distribution of $(X_{[m],J_S},Y)$ does not depend on which additional $k-|J_S|$ columns are the planted ones, due to symmetry in the model. Therefore $\PP_u|_S = \PP_v|_S$.
\end{example}

We give one more example, which is the one we will need for our application in Section~\ref{sec:sparse-reg}. This is a variation on the previous example with an added ingredient: for technical reasons we will need to condition on a particular high-probability event.

\begin{example}[Sparse GLMs with Conditioning]
\label{ex:sparse-glm-cond}
Consider the setting of Example~\ref{ex:sparse-glm}. Let $i_1,\ldots,i_k \in [n]$ denote the indices in the support of $\beta$, and let $X_{i_1},\ldots,X_{i_k}$ denote the corresponding (``planted'') columns of $X$. Let $A = A(X_{i_1},\ldots,X_{i_k})$ be an event that depends only on the planted columns and is invariant under permutations of its $k$ inputs. (In other words, $A$ has access to a multi-set of $k$ vectors containing the values in the planted columns, but not the planted indices $i_1,\ldots,i_k$.) Under $\QQ$, draw $(X,Y)$ as in Example~\ref{ex:sparse-glm}. Under $\PP$, draw $u$ as in Example~\ref{ex:sparse-glm} (equivalently, draw $i_1,\ldots,i_k$) and sample $(X,Y)$ from the conditional distribution $\PP_u|A$.

This satisfies Assumption~\ref{assum:sparse-planted} by the same argument as in Example~\ref{ex:sparse-glm}. Here it is crucial that the conditioning on $A$ does not affect the non-planted columns. (It would also be okay for $A$ to depend on $Y$, but we won't need this for our sparse regression example.)
\end{example}

\subsection{FP-Hard Implies LD-Hard}
\label{sec:sparse-fp-ld}

In the setting of Assumption~\ref{assum:sparse-planted}, we now establish the key inequality that will allow us to prove ``FP-hard implies LD-hard.''

\begin{proposition}\label{prop:ld-corr-ineq-2}
Under Assumption~\ref{assum:sparse-planted}, for any integer $D \ge 0$ and any $u,v \in \supp(\mu)$,
\[ \langle L_u^{\le D}, L_v^{\le D} \rangle_\QQ \le \langle L_u, L_v \rangle_\QQ. \]
\end{proposition}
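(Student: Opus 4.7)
The plan is to expand both $L_u$ and $L_v$ in a suitable orthonormal basis of $L^2(\QQ)$ and show that the Fourier coefficients of $L_u$ have a very rigid form — they factor as a constant depending only on the multi-index $\alpha$ times the indicator that $\supp(\alpha) \subseteq \Phi_u$. Once that is done, both $\langle L_u^{\le D}, L_v^{\le D}\rangle_\QQ$ and $\langle L_u, L_v\rangle_\QQ$ become non-negative sums over multi-indices, differing only in whether the sum is truncated at total degree $D$, and the inequality is immediate.

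Concretely, since $\QQ = \bigotimes_i Q_i$ is a product measure and has a complete orthonormal polynomial basis in $L^2(\QQ)$, I would pick, for each coordinate $i$, a family $\{h_{i,k}\}_{k\ge 0}$ of orthonormal polynomials with respect to $Q_i$ normalized so that $h_{i,0} \equiv 1$. Then the tensor products $h_\alpha(Y) := \prod_i h_{i,\alpha_i}(Y_i)$, indexed by multi-indices $\alpha \in \NN^N$, form a complete orthonormal basis of $L^2(\QQ)$, with $h_\alpha$ a polynomial of total degree $|\alpha| = \sum_i \alpha_i$. Writing $\hat L_u(\alpha) := \langle L_u, h_\alpha\rangle_\QQ = \EE_{Y\sim \PP_u}[h_\alpha(Y)]$, Parseval gives
\[
\langle L_u^{\le D}, L_v^{\le D}\rangle_\QQ = \sum_{|\alpha|\le D} \hat L_u(\alpha)\,\hat L_v(\alpha), \qquad \langle L_u, L_v\rangle_\QQ = \sum_{\alpha} \hat L_u(\alpha)\,\hat L_v(\alpha).
\]

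The key computation is a case analysis for $\hat L_u(\alpha)$ based on $S := \supp(\alpha)$. If $S \not\subseteq \Phi_u$, there exists $i \in S$ with $i \notin \Phi_u$; under $\PP_u$ the coordinate $Y_i$ is independent of the rest and has marginal $Q_i$, so $\EE[h_{i,\alpha_i}(Y_i)] = \langle h_{i,\alpha_i}, h_{i,0}\rangle_{Q_i} = 0$ (since $\alpha_i \ge 1$), and therefore $\hat L_u(\alpha) = 0$. If on the other hand $S \subseteq \Phi_u$, then the symmetry clause of Assumption~\ref{assum:sparse-planted} says that $\PP_u|_S = P_S$ does not depend on $u$, so $\hat L_u(\alpha)$ equals a constant
\[
c_\alpha := \EE_{Y_S \sim P_S}\Bigl[\,\prod_{i\in S} h_{i,\alpha_i}(Y_i)\Bigr]
\]
depending only on $\alpha$. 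Combining the two cases, $\hat L_u(\alpha) = c_\alpha \cdot \Ind_{\supp(\alpha)\subseteq \Phi_u}$.

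Substituting back into the Parseval expansions, each summand becomes $c_\alpha^2 \,\Ind_{\supp(\alpha)\subseteq \Phi_u \cap \Phi_v} \ge 0$, and therefore
\[
\langle L_u^{\le D}, L_v^{\le D}\rangle_\QQ = \sum_{|\alpha|\le D} c_\alpha^2 \Ind_{\supp(\alpha)\subseteq \Phi_u\cap\Phi_v} \;\le\; \sum_{\alpha} c_\alpha^2 \Ind_{\supp(\alpha)\subseteq \Phi_u\cap\Phi_v} = \langle L_u, L_v\rangle_\QQ,
\]
which is the desired inequality. The main thing to be careful about is that the symmetry hypothesis really does force $\hat L_u(\alpha)$ to be $u$-independent once $\supp(\alpha)\subseteq \Phi_u$ — this is where Assumption~\ref{assum:sparse-planted} is used essentially, and it is also what rules out the ``degenerate'' case flagged in the remark (where $\Phi_u$ is all of $[N]$ for a single $u$, giving no nontrivial vanishing). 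The completeness of the orthogonal polynomial basis (granted by the assumption on $\QQ$) is needed to ensure the Parseval expansion of $\langle L_u, L_v\rangle_\QQ$ is valid in $L^2(\QQ)$; I would invoke this together with $L_u \in L^2(\QQ)$ (which follows from $\PP_u \ll \QQ$ and the moment assumptions).
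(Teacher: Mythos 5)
Your proof is correct and takes essentially the same approach as the paper: both expand in the tensor-product orthonormal polynomial basis, show via the independence and symmetry hypotheses that $\EE_{\PP_u}[H_\alpha]$ vanishes unless $\supp(\alpha)\subseteq\Phi_u$ and otherwise equals a constant $c_\alpha$ independent of $u$, and conclude that every term in the Parseval expansion is a nonnegative square, so the degree-truncated sum is dominated by the full sum. Your reformulation $\hat L_u(\alpha) = c_\alpha\,\Ind_{\supp(\alpha)\subseteq\Phi_u}$ is just a cleaner packaging of the paper's case analysis.
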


\begin{proof}
We first construct an orthonormal basis of polynomials for $L^2(\QQ)$. For each $i \in [N]$, first construct an orthonormal basis of polynomials for $L^2(Q_i)$, that is, a collection $\{h^{(i)}_k\}_{k \in I_i}$ for some index set $I_i \subseteq \NN$ (we use the convention $0 \in \NN$), where $h^{(i)}_k: \R \to \R$ is a degree-$k$ polynomial, the set $\{h^{(i)}_k\}_{k \in I_i, \, k \le D}$ spans all polynomials of degree at most $D$, and $\langle h^{(i)}_k, h^{(i)}_\ell \rangle_{Q_i} = \Ind_{k = \ell}$. Such a basis can be constructed by applying the Gram--Schmidt process to the monomial basis $\{1,x,x^2,\ldots\}$, discarding any monomials that are linearly dependent on the previous ones. In particular, $0 \in I_i$ and $h_0^{(i)}(x) = 1$. From orthonormality, we have the property
\begin{equation}\label{eq:E-zero-prop}
\EE_{x \sim Q_i}[h^{(i)}_k] = \langle h^{(i)}_k, 1 \rangle_{Q_i} = \langle h^{(i)}_k, h^{(i)}_0 \rangle_{Q_i} = 0 \qquad\text{for all } k \ge 1,
\end{equation}
which will be needed later.

Now an orthonormal basis of polynomials for $\QQ$ is given by $\{H_\alpha\}_{\alpha \in I}$ where the index set $I \subseteq \NN^N$ is the direct product $I := \prod_{i \in [N]} I_i$ and $H_\alpha(x) := \prod_{i \in [N]} h^{(i)}_{\alpha_i}(x_i)$. Letting $|\alpha| := \sum_{i \in [N]} \alpha_i$, these have the property that $\{H_\alpha\}_{\alpha \in I, \, |\alpha| \le D}$ spans all polynomials $\R^N \to \R$ of degree at most $D$, and $\langle H_\alpha, H_\beta \rangle_\QQ = \Ind_{\alpha = \beta}$.

Expanding in this basis, we have
\begin{equation}\label{eq:ld-inner-expansion}
\langle L_u^{\le D}, L_v^{\le D} \rangle_\QQ = \sum_{\alpha \in I, \, |\alpha| \le D} \langle L_u, H_\alpha \rangle_\QQ \langle L_v, H_\alpha \rangle_\QQ = \sum_{\alpha \in I, \, |\alpha| \le D} \EE_{Y \sim \PP_u}[H_\alpha(Y)] \EE_{Y \sim \PP_v}[H_\alpha(Y)],
\end{equation}
where we have used the change-of-measure property $\EE_\QQ[L \cdot f] = \EE_\PP[f]$. We claim that for any $\alpha$,
\begin{equation}\label{eq:key-claim}
\EE_{Y \sim \PP_u}[H_\alpha(Y)] \EE_{Y \sim \PP_v}[H_\alpha(Y)] \ge 0.
\end{equation}
This claim completes the proof because every term in~\eqref{eq:ld-inner-expansion} is nonnegative and so
\[ \langle L_u^{\le 0}, L_{v}^{\le 0} \rangle_\QQ \le \langle L_u^{\le 1}, L_{v}^{\le 1} \rangle_\QQ \le \langle L_u^{\le 2}, L_{v}^{\le 2} \rangle_\QQ \le \cdots \le \lim_{D \rightarrow \infty} \langle L_u^{\le D}, L_{v}^{\le D} \rangle_\QQ = \langle L_u, L_{v} \rangle_\QQ, \]
where the final equality follows due to our assumption that $\QQ$ has a \emph{complete} orthonormal basis of polynomials, that is, the space of polynomials is dense in $L^2(\QQ)$.
It remains to prove the claim~\eqref{eq:key-claim}. Let $S_\alpha \subseteq [N]$ be the support of $\alpha$, that is, $S_\alpha = \{i \in [N] \,:\, \alpha_i \ge 1\}$. Note that since $h^{(i)}_0 = 1$, $H_\alpha(Y)$ depends only on the entries $Y|_{S_\alpha}$. If $S_\alpha \not\subseteq \Phi_u$, we will show that $\EE_{Y \sim \PP_u}[H_\alpha(Y)] = 0$, implying that~\eqref{eq:key-claim} holds with equality. To see this, fix some $i \in S_\alpha \setminus \Phi_u$ (which exists because $S_\alpha \not\subseteq \Phi_u$) and note that under $Y \sim \PP_u$, $Y_i$ is drawn from $Q_i$ independent from all other entries of $Y$. Letting $\bar{\alpha}$ be obtained from $\alpha$ by setting $\alpha_i$ to 0, $H_{\bar\alpha}(Y)$ depends only on $Y|_{S_\alpha \setminus \{i\}}$ and is therefore independent from $Y_i$. This means
\[ \EE_{Y \sim \PP_u}[H_\alpha(Y)] = \EE_{Y \sim \PP_u}[H_{\bar\alpha}(Y) \cdot h^{(i)}_{\alpha_i}(Y_i)] = \EE_{Y \sim \PP_u}[H_{\bar\alpha}(Y)] \EE_{x \sim Q_i}[h^{(i)}_{\alpha_i}(x)] = \EE_{Y \sim \PP_u}[H_{\bar\alpha}(Y)] \cdot 0 = 0, \]
where we have used~\eqref{eq:E-zero-prop} along with the fact $\alpha_i \ge 1$ (since $i \in S_\alpha$). The same argument also shows that if $S \not\subseteq \Phi_v$ then $\EE_{Y \sim \PP_v}[H_\alpha(Y)] = 0$.

It therefore remains to prove~\eqref{eq:key-claim} in the case $S_\alpha \subseteq \Phi_u \cap \Phi_v$. In this case, the symmetry condition in Assumption~\ref{assum:sparse-planted} implies $\PP_u|_{S_\alpha} = \PP_v|_{S_\alpha}$. Since $H_\alpha(Y)$ depends only on $Y|_{S_\alpha}$, this means $\EE_{Y \sim \PP_u}[H_\alpha(Y)] = \EE_{Y \sim \PP_v}[H_\alpha(Y)]$, implying~\eqref{eq:key-claim}.
\end{proof}

As a consequence of the above, we can now show ``FP-hard implies LD-hard.''

\begin{theorem}[FP-hard implies LD-hard]
\label{thm:fp_ld_sparse}
Under Assumption~\ref{assum:sparse-planted}, suppose
\[ \sup_{u \in \supp(\mu)} \|L_u^{\le D}\|_\QQ^2 \le M \]
for some $M \ge 1$ and some integer $D \ge 0$. Then
\[ \ld(D) \le \fp(D + \log M) + e^{-D}. \]
\end{theorem}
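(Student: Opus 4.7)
The plan is to follow the same split-by-overlap strategy used in the proof of Theorem~\ref{thm:equiv-easy}, now leveraging Proposition~\ref{prop:ld-corr-ineq-2} in place of Lemma~\ref{lem:ld-corr-ineq} and the hypothesis $\sup_u \|L_u^{\le D}\|_\QQ^2 \le M$ in place of Lemma~\ref{lem:gam-norm-upper}. Set $\tilde{D} := D + \log M$ and $\delta := \delta(\tilde{D})$ as in~\eqref{eq:delta-defn-2}, so that by Remark~\ref{rem:continuity} we have the tail bound $\Pr_{u,v \sim \mu}(|\langle u,v\rangle| > \delta) \le e^{-\tilde{D}} = e^{-D}/M$. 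Write
\[ \ld(D) = \EE_{u,v}\!\left[\Ind_{|\langle u,v\rangle| \le \delta}\,\langle L_u^{\le D}, L_v^{\le D}\rangle_\QQ\right] + \EE_{u,v}\!\left[\Ind_{|\langle u,v\rangle| > \delta}\,\langle L_u^{\le D}, L_v^{\le D}\rangle_\QQ\right]. \]

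For the low-overlap term, apply Proposition~\ref{prop:ld-corr-ineq-2} pointwise to obtain $\langle L_u^{\le D}, L_v^{\le D}\rangle_\QQ \le \langle L_u, L_v\rangle_\QQ$ for every $u,v \in \supp(\mu)$; hence this term is bounded by $\fp(\tilde{D}) = \fp(D+\log M)$ by the definition of $\fp$. For the high-overlap term, use Cauchy--Schwarz in $L^2(\QQ)$ together with the hypothesis to get the uniform bound
\[ \langle L_u^{\le D}, L_v^{\le D}\rangle_\QQ \le \|L_u^{\le D}\|_\QQ\,\|L_v^{\le D}\|_\QQ \le M, \]
so that combining with the tail bound yields
\[ \EE_{u,v}\!\left[\Ind_{|\langle u,v\rangle| > \delta}\,\langle L_u^{\le D}, L_v^{\le D}\rangle_\QQ\right] \le M \cdot e^{-D}/M = e^{-D}. \]
Summing the two estimates gives the claimed inequality.

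There is essentially no obstacle left, since both of the key ingredients are already in place: Proposition~\ref{prop:ld-corr-ineq-2} is the nontrivial fact (the analogue of the ``odd $D$'' sign inequality in the Gaussian case) and its proof was the heart of this section; and the uniform norm bound $M$ is assumed as a hypothesis rather than derived, which is why we pay $\log M$ in the FP argument instead of the $\log(1+\lambda^2 M)$ factor appearing in Theorem~\ref{thm:equiv-easy}. The only mild subtlety is making sure that the $\delta = \delta(D+\log M)$ used in the FP term on the right-hand side is the one matching the tail bound $e^{-(D+\log M)}$ on the left; this is handled directly by the definition~\eqref{eq:delta-defn-2} together with Remark~\ref{rem:continuity}.
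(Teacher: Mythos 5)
Your proof matches the paper's proof of Theorem~\ref{thm:fp_ld_sparse} essentially line for line: same definition of $\tilde D = D + \log M$, same low/high-overlap decomposition, Proposition~\ref{prop:ld-corr-ineq-2} applied pointwise to control the low-overlap part by $\fp(\tilde D)$, and the tail bound times the uniform bound $M$ (which you obtain via Cauchy--Schwarz, a detail the paper leaves implicit) to control the high-overlap part by $e^{-D}$. No gaps; the argument is correct and follows the paper's route.
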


For intuition, it is typical to have $M = n^{O(D)}$ and so $\log M = O(D \log n)$. This will be the case in our sparse regression example.

\begin{proof}
The proof is nearly identical to that of Theorem~\ref{thm:equiv-easy}. We recap the main steps here.

Let $u,v \sim \mu$ independently. Define $\tilde{D} := D + \log M$ and $\delta = \delta(\tilde{D})$ as in~\eqref{eq:delta-defn-2}, which implies $\Pr\left(|\langle u,v \rangle| > \delta\right) \le e^{-\tilde{D}}$ (see Remark~\ref{rem:continuity}). Decompose
\[ \ld(D,\lambda) = \EE_{u,v} \left[\langle L_u^{\le D}, L_v^{\le D} \rangle_\QQ\right] = \EE_{u,v}\left[\Ind_{|\langle u,v \rangle| \le \delta} \cdot \langle L_u^{\le D}, L_v^{\le D} \rangle_\QQ\right] + \EE_{u,v}\left[\Ind_{|\langle u,v \rangle| > \delta} \cdot \langle L_u^{\le D}, L_v^{\le D} \rangle_\QQ\right]. \]
The low-overlap term can be related to FP using Proposition~\ref{prop:ld-corr-ineq-2}:
\[ \EE_{u,v}\left[\Ind_{|\langle u,v \rangle| \le \delta} \cdot \langle L_u^{\le D}, L_v^{\le D} \rangle_\QQ\right] \le \EE_{u,v}\left[\Ind_{|\langle u,v \rangle| \le \delta} \cdot \langle L_u, L_v \rangle_\QQ\right] = \fp(\tilde{D},\lambda). \]
For the high-overlap term,
\[ \EE_{u,v}\left[\Ind_{|\langle u,v \rangle| > \delta} \cdot \langle L_u^{\le D}, L_v^{\le D} \rangle_\QQ\right]
\le e^{-\tilde{D}} \cdot M \le e^{-D} \]
using the choice of $\tilde{D}$.
\end{proof}

\subsection{Application: Sparse Linear Regression}
\label{sec:sparse-reg}

In this section we give sharp low-degree lower bounds for the hypothesis testing version of a classical inference model: sparse linear regression with Gaussian covariates and Gaussian noise. This classical model, which shares many similarities with the well-studied setting of compressed sensing, admits an interesting information-computation gap for an appropriate choice of parameters (see~\cite{gamarnik2022sparse} and references therein). There is a sample regime where multiple polynomial-time algorithms can correctly infer the hidden coefficient vector, including convex programs such as LASSO~\cite{wainwright2009information} or greedy compressed sensing algorithms such as Basis Pursuit~\cite{donoho2010counting}. Interestingly though, exponential-time algorithms are known to work for sample sizes which are an order of magnitude smaller as compared to the known polynomial-time ones (see~\cite{reeves2021all} and references therein). All known polynomial-time algorithms for sparse regression are either believed or proven to fail in the intermediate ``hard'' regime. Motivated by such results,~\cite{gamarnik2022sparse} study this gap and prove that a low-temperature free-energy barrier, also called the Overlap Gap Property for inference, appears for a part of the hard regime. Their result implies that certain low-temperature MCMC methods fail, leaving open the question of whether more evidence, such as a low-degree lower bound, can also be established in the hard regime to support the presence of a gap.

In this section, we establish such a low-degree lower bound for the associated detection problem in sparse regression. As we discuss in Remark~\ref{rem:standard-proof}, the lower bound seems difficult to prove using existing approaches. We will instead prove this result in an indirect manner by leveraging the connection with FP developed in the previous section, illustrating that FP can be a powerful tool for proving low-degree lower bounds that otherwise seem out of reach.

Formally, we consider the following detection task, also studied by~\cite{ingster2010detection,arpino-thesis}.

\begin{definition}[Sparse Linear Regression: Hypothesis Testing]
\label{def:sparse-reg}
Given a sample size $m \in \NN$, feature size $n \in \NN$, sparsity level $k \in \NN$ with $k \le n$ and noise level $\sigma > 0$, we consider hypothesis testing between the following two distributions over $(X,Y) \in \R^{m \times n} \times \R^m$.
\begin{itemize}
    \item $\QQ$ generates a pair $(X,Y)$ where both $X$ and $Y$ have i.i.d.\ $\cN(0,1)$ entries.
    \item  $\PP$ generates a pair $(X,Y)$ as follows. First $X$ is drawn with i.i.d.\ $\cN(0,1)$ entries. Then, for a planted signal $u \in \{0,1\}^n$ drawn uniformly from all binary vectors of sparsity exactly $k$, and independent noise $W \sim \cN(0,\sigma^2 I_m)$, we set
    \[ Y = (k + \sigma^2)^{-1/2}(X u + W). \]
\end{itemize}
\end{definition}

This is a specific instance of a sparse GLM (see Example~\ref{ex:sparse-glm}). As a remark, note that the marginal distribution of $Y$ is $\cN(0,I_m)$ under both $\QQ$ and $\PP$ (but under $\PP$, $Y$ is correlated with $X$).

We follow the parameter assumptions from \cite{gamarnik2022sparse}.
\begin{assumption}[Scaling of the parameters]
\label{assum:sparse-reg}
For constants $\theta \in (0,1)$ and $R > 0$, consider a scaling regime where $n \to \infty$ and
\begin{itemize}
    \item (Sublinear sparsity)
    \[ k = n^{\theta + o(1)}, \]

    \item (High signal-to-noise ratio per sample)
    \[ \sigma^2 = o(k), \]
    
    \item (Scale of the sample size)
    \[ m = (1+o(1)) R k \log(n/k) = (1+o(1)) R (1-\theta) k \log n. \]
\end{itemize}
\end{assumption}

The scaling of the sample size is chosen because the low-degree threshold will occur at this scaling. The high-SNR assumption is for the sake of simplicity, and guarantees that the low-degree threshold will not depend on the value of $\sigma$. Our techniques can likely be generalized to the case where $\sigma$ is larger, but as~\cite{gamarnik2022sparse} consider this scaling, we focus on this regime for our result.

Under the above assumptions, multiple works in the literature have studied the closely related task of \emph{approximate recovery} of $u$ given access to $m$ samples from the planted model $\PP$, where the goal is to estimate the support of $u$ with $o(k)$ errors (false positives plus false negatives) with probability $1-o(1)$. It is known that when given $m > 2k \log n$ samples, or equivalently when $R>2/(1-\theta)$, the LASSO convex program \cite{wainwright2009information} succeeds in achieving the even harder goal of exact recovery. 
We prove (see Section~\ref{sec:approx-rec}) that under the weaker condition $R > 2$ (for any $\theta \in (0,1)$), a simple thresholding algorithm achieves approximate recovery.
(It was previously suggested by non-rigorous calculations, but not proven to the best of our knowledge, that AMP also achieves approximate recovery when $R>2$~\cite{reeves2019all}, and a similar result is expected to hold for LASSO as well \cite{gamarnik2022sparse}.)
On the other hand, it is known \cite{reeves2021all} that the information-theoretic sample size is \begin{align}\label{it_sample} m_{\inf} = 2k \,\frac{\log(n/k)}{\log(k/\sigma^2+1)} = o(k \log n/k),
\end{align}
above which the exponential-time maximum-likelihood estimator approximately recovers the hidden signal, while no estimator succeeds below it. This line of work suggests the presence of a possible-but-hard regime for approximate recovery when $0<R<2$ (for any fixed $\theta \in (0,1)$). In \cite{gamarnik2022sparse}, rigorous evidence was provided for this gap, namely when $0<R<c_0$ for a small constant $c_0>0$, the Overlap Gap Property appears and certain MCMC methods fail.

Turning back to the detection task, in sparse regression it holds that approximate recovery is formally at least as hard as strong detection in the sense that there is a polynomial-time reduction from strong detection to approximate recovery~\cite{arpino-thesis}. In particular, the results mentioned above imply that there is a polynomial-time algorithm for strong detection whenever $R>2$. Furthermore, any evidence of hardness for detection when $0<R<2$ would suggest that recovery should also be hard in the same regime. We note that the information-theoretic threshold for strong detection is the same as that for approximate recovery, namely $m_{\inf}$ as defined in~\eqref{it_sample}~\cite{reeves2021all}. This makes strong detection information-theoretically possible in the entire regime $0 < R < 2$, but the algorithm achieving this has exponential-in-$k$ runtime due to a brute-force search over all possible $\binom{n}{k}$ signals.

In the following result, we provide rigorous ``low-degree evidence'' (and a matching upper bound) for the precise optimal trade-off between sparsity, sample complexity, and time complexity for the sparse linear regression detection task. As compared to prior work, this is a sharper threshold than the low-degree lower bounds in~\cite{arpino-thesis} (as we discuss below), and a different scaling regime than the one considered in~\cite{ingster2010detection} (which is an information-theoretic result). The proof is deferred to Section~\ref{sec:pf-sparse-reg}.

\begin{theorem}\label{thm:sparse-reg}
Define
\begin{equation}\label{eq:R-LD}
R_\ld(\theta) = \begin{cases}
\frac{2(1-\sqrt{\theta})}{1+\sqrt{\theta}} & \text{if } 0 < \theta < \frac{1}{4}, \\
\frac{1-2\theta}{1-\theta} & \text{if } \frac{1}{4} \le \theta < \frac{1}{2}, \\
0 & \text{if } \frac{1}{2} \le \theta < 1. \end{cases}
\end{equation}
Consider sparse linear regression (Definition~\ref{def:sparse-reg}) in the scaling regime of Assumption~\ref{assum:sparse-reg}.
\begin{itemize}
    \item[(a)] (Hard regime) If $R < R_\ld(\theta)$ then no degree-$o(k)$ polynomial weakly separates $\PP$ and $\QQ$ (see Definition~\ref{def:separation}).
    \item[(b)] (Easy regime) If $R > R_\ld(\theta)$ then there is a polynomial-time algorithm for strong detection between $\PP$ and $\QQ$ (see Definition~\ref{def:detection}).
\end{itemize}
\end{theorem}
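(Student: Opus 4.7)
The plan is to establish the hardness part $(a)$ via the FP-LD connection for sparse planted models (Theorem~\ref{thm:fp_ld_sparse}), and the algorithmic part $(b)$ by constructing explicit polynomial-time tests whose means and variances exhibit strong separation. Sparse linear regression is a special case of a sparse GLM (Example~\ref{ex:sparse-glm}), so Assumption~\ref{assum:sparse-planted} is satisfied. However, as the paper warns in Section~\ref{sec:setting}, the unconditional $\ld(D)$ here blows up due to rare high-overlap configurations of the planted columns of $X$, so I would work in the conditional setting of Example~\ref{ex:sparse-glm-cond}: condition $\PP$ on a high-probability, permutation-symmetric event $A = A(X_{\supp(u)})$ controlling the planted columns (e.g.\ a spectral norm or chi-squared-type bound on the planted Gram matrix). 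A careful quantitative choice of $A$ should ensure $\sup_u \|L_u^{\le D}\|_{\QQ}^2 \le M$ with $\log M = O(D \log n)$, so Theorem~\ref{thm:fp_ld_sparse} reduces the task to bounding $\fp(D')$ for $D' = D + O(D \log n)$, which is still $o(k \log n)$ when $D = o(k)$.

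For the FP calculation, I would first derive a closed form for $\langle L_u, L_v \rangle_{\QQ}$ as a function of the overlap $s = |\supp(u) \cap \supp(v)|$ by integrating out $X$ using the joint Gaussianity of $(X,Y)$ under each $\PP_u$. Second, I would use the fact that $s$ is hypergeometric with mean $k^2/n$ under uniform $k$-subset sampling, with $\log \Pr(s \ge t) \asymp -t \log(tn/k^2)$ for $t \gg k^2/n$; inverting gives $\delta(D')$ explicitly. The FP quantity then becomes $\EE_s[\mathbf{1}_{s \le \delta} \cdot \phi_R(s)]$ for an explicit $\phi_R$, and is controlled by a saddle-point/large-deviations analysis in $s$. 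The three regimes in the definition of $R_{\ld}(\theta)$ should emerge from the competition between the hypergeometric decay and the likelihood growth: in the very sparse regime $\theta < 1/4$, the binding saddle sits at the tail overlap $\delta$, producing the $\sqrt\theta$-dependent formula; for $1/4 \le \theta < 1/2$ a more typical overlap dominates, producing $(1-2\theta)/(1-\theta)$; and for $\theta \ge 1/2$ the typical overlap $k^2/n$ already exceeds $k$, so no positive $R$ is hard.

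For part $(b)$, I would build matching polynomial-time algorithms whose choice mirrors the dominant saddle in the FP analysis. For $\theta \ge 1/2$, a simple quadratic statistic such as $\|X^\top Y\|^2 - \EE_\QQ \|X^\top Y\|^2$ already strongly separates $\PP$ from $\QQ$ for any $R>0$, verified by direct mean/variance computation under both hypotheses. For $1/4 \le \theta < 1/2$, a test that counts indices $i$ with $(X^\top Y)_i^2$ above a carefully chosen threshold (matching the optimal saddle location) should strongly separate above $R_{\ld}(\theta)$. For $\theta < 1/4$, the extreme tail is the binding constraint, and the test $\max_i (X^\top Y)_i^2$ (or a top-$k$ variant) captures the $\sqrt\theta$ formula. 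In each case strong separation (Definition~\ref{def:separation}) follows from sub-Gaussian and $\chi^2$ concentration.

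The main obstacle is twofold. First, the choice of the conditioning event $A$: it must be high-probability, symmetric in the planted column indices, and control $\|L_u^{\le D}\|_{\QQ}^2$ with only polylogarithmic overhead, while not materially distorting $\langle L_u, L_v \rangle_{\QQ}$ on the low-overlap event that contributes to $\fp$. Second, matching the sharp constants in $R_{\ld}(\theta)$ requires carefully tracking the $\log M = \Theta(D \log n)$ overhead inside $\delta(D')$ through the saddle-point calculation, so that the LD threshold delivered from the FP side lines up exactly at $R = R_{\ld}(\theta)$ with the algorithmic threshold established in part $(b)$.
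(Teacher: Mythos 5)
Your overall architecture for part (a) — pass to a conditional distribution $\PP|A$ with $A$ a permutation-symmetric event on the planted columns, invoke Theorem~\ref{thm:fp_ld_sparse} with $\log M = O(D\log n)$, and finish with an FP bound over the hypergeometric overlap — is exactly the paper's strategy, and your part (b) tests (counting/max statistics built from $\langle X_j, Y\rangle/\|Y\|_2$) are essentially the paper's single counting statistic in disguise and would work. But there is a genuine gap at the heart of part (a): you ask that the conditioning event ``not materially distort $\langle L_u, L_v \rangle_\QQ$ on the low-overlap event,'' and this is backwards. If the conditioning leaves the low-overlap inner products essentially unchanged, the FP calculation only yields $\lo(\eps k) = 1+o(1)$ when $2\theta - 1 + R(1-\theta) < 0$, i.e.\ $R < \frac{1-2\theta}{1-\theta}$, which is strictly below $R_\ld(\theta) = \frac{2(1-\sqrt\theta)}{1+\sqrt\theta}$ throughout $\theta < 1/4$. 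The entire point of the conditioning in the hard regime $\theta<1/4$ is to \emph{reduce} $\langle L_u, L_v\rangle_\QQ$ at small overlap $\ell$: in the paper, the event bounds inner products between partial sums of planted columns, and this restricts the Wishart integration so that $\langle L_u, L_v\rangle_\QQ \le \PP(A)^{-2}\exp\bigl((\tfrac{\ell}{k-\ell} - q^2)m\bigr)$ with $q \asymp c\sqrt{\ell\log n/m}$ and $c$ up to $\sqrt{R(1-\theta)/2}-\sqrt\theta$; it is this extra $e^{-q^2 m} = n^{-c^2\ell}$ damping that moves the threshold from $\frac{1-2\theta}{1-\theta}$ to $\frac{2(1-\sqrt\theta)}{1+\sqrt\theta}$.

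Relatedly, your candidate event (a spectral-norm or chi-squared bound on the planted Gram matrix) is too coarse to deliver this gain. The binding overlaps are the \emph{smallest} ones, $\ell = O(1)$ (the threshold is a geometric-ratio condition over $\ell$, with $\delta$ taken of order $\eps k$, not a saddle at the tail overlap $\delta(D')$ as you describe), and there one needs, uniformly over all size-$\ell$ subsets $S$ of the support, control of $\langle \sum_{j\in S} X_j, \sum_{j \in \supp(u)\setminus S} X_j\rangle$ at scale $\sqrt{\ell m \log k}$. A global spectral bound on the $k\times k$ planted Gram matrix only gives deviations of order $\sqrt{mk}$, which is far too weak at $\ell = O(1)$ and would not produce the $\sqrt\theta$-dependent constant. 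So the proposal is missing the key idea (a subset-wise inner-product event at the correct $\ell$-dependent scale, fed into the likelihood-ratio computation so that it provably shrinks the pairwise correlations), and the heuristic you give for where the three regimes of $R_\ld(\theta)$ come from does not reflect the actual mechanism.
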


The threshold $R_\ld(\theta)$ is a continuous and monotone decreasing function of $\theta \in (0,1)$, that is, the problem is expected to become easier the larger $\theta$ is. When $\theta \ge 1/2$ we have $R_\ld(\theta) = 0$ which means the testing problem is ``easy'' for any fixed $R > 0$.

The algorithm that gives the matching upper bound in part (b) is fairly simple but somewhat subtle: letting $X_j$ denote the $j$th column of $X$, the idea is to count the number of indices $j \in [n]$ for which $\langle X_j,Y \rangle/\|Y\|_2$ exceeds a particular (carefully chosen) threshold, and then threshold this count.

\begin{remark}
We expect that by approximating the algorithm from part (b) by a polynomial, it is possible to prove that for $R > R_\ld(\theta)$, there is a degree-$O(\log n)$ polynomial that strongly separates $\PP$ and $\QQ$.
\end{remark}

\begin{remark}[Optimality of ``brute-force search'' in the hard regime]
Part (a) suggests that for $R < R_\ld(\theta)$, weak detection requires runtime $\exp(\tilde\Omega(k))$, which matches the runtime of the brute-force search algorithm of~\cite{reeves2021all}. 
This is in contrast to the related sparse PCA problem, where the low-degree analysis suggests a smoother tradeoff between runtime and SNR, with non-trivial subexponential-time algorithms existing in the hard regime~\cite{subexp-sparse,holtzman2020greedy}.
\end{remark}

\begin{remark}[Implications for recovery]
Recall that approximate recovery is formally at least as hard as strong detection \cite{arpino-thesis}. 
Thus, Theorem \ref{thm:sparse-reg} suggests computational hardness of recovery when $R < R_\ld(\theta)$. In the limit $\theta \downarrow 0$, this is essentially tight: $\lim_{\theta \downarrow 0} R_\ld(\theta) = 2$, matching the threshold achieved by both LASSO~\cite{wainwright2009information} and our thresholding algorithm (Section~\ref{sec:approx-rec}).
For larger values of $\theta$, there appears to be a detection-recovery gap, so our lower bound (while sharp for detection) does not suggest a sharp recovery lower bound. An interesting open problem is to establish a direct low-degree lower bound for \emph{recovery} (in the style of~\cite{SW-recovery}) for all $R < 2$ and all $\theta \in (0,1)$.
\end{remark}

\paragraph{\bf Proof techniques}

Proving the sharp low-degree lower bound in the regime $\theta < 1/4$ requires a \emph{conditional} low-degree calculation: instead of bounding LD for testing $\PP$ versus $\QQ$, we bound LD for testing the conditional distribution $\PP | A$ versus $\QQ$, for some high-probability event $A$ (see Section~\ref{sec:conditional} for more details). This is necessary because the standard LD blows up at a sub-optimal threshold due to a rare ``bad'' event under $\PP$. Conditioning arguments of this type are common for information-theoretic lower bounds (see e.g.~\cite{bmnn,banks2017information,PWB-tensor,perry2018optimality}), but this is (to our knowledge) the first instance where conditioning has been needed for a low-degree lower bound (along with the concurrent work~\cite{group-testing} by some of the same authors). We note that~\cite{arpino-thesis} gave low-degree lower bounds for sparse regression by analyzing the standard (non-conditioned) LD, and our result improves the threshold when $\theta < 1/4$ via conditioning.

\begin{remark}\label{rem:standard-proof}
The standard approach to bounding LD involves direct moment calculations; see e.g.\ Section~2.4 of~\cite{hopkins-thesis} for a simple example, or~\cite{arpino-thesis} for the case of sparse regression. It seems difficult to carry out our conditional low-degree calculation by this approach because it does not seem straightforward to directly analyze the moments of $\PP|A$ for our event $A$. Instead, we bound FP for the $\PP|A$ versus $\QQ$ problem and then use the machinery from the previous section to conclude a bound on the conditional LD. Luckily, FP is a somewhat simpler object that ``plays well'' with the conditioning, leading to a more tractable calculation. This illustrates that FP can be a powerful tool for proving low-degree lower bounds that may otherwise be out of reach.
\end{remark}

\begin{remark}
Finally, we point out that the prior results and our low-degree hardness results supporting the information-computation gap for sparse linear regression are conjectured to be meaningful only under the assumption that the noise level $\sigma$ is \emph{not exponentially small in $n$}. If $\sigma$ is exponentially small, it is known that polynomial-time lattice-based methods can exactly recover the hidden signal $u$ even with access to only $m=1$ sample from the planted model \cite{zadik2018high} (and in particular solve the detection task as well). We direct the interested reader to the discussion in \cite{LLL1} for the importance of non-trivial noise in making computational predictions in inference.
\end{remark}

\section{Counterexamples}
\label{sec:counterexamples}

Our results suggest that one may hope for a formal FP-LD equivalence in much higher generality than what we have proven.
However, in this section we discuss a few obstacles which a more widely-applicable free energy-based criterion for computational hardness will have to overcome.
By ``more widely-applicable,'' we mean that we would like a criterion which accurately predicts information-computation gaps for a wide range of problems outside the Gaussian additive model -- after all, the low-degree approach appears to make accurate predictions for e.g.\ graph partitioning problems, constraint satisfaction, planted clique, tensor PCA, and more, and it remains to develop a rigorous mathematical theory based on free energy barriers that achieves the same.

We first demonstrate a simple hypothesis testing problem for distributions on the $n$-dimensional hypercube for which the FP criterion, as we have defined it, makes an obviously-incorrect prediction about computational hardness -- predicting that an easy problem is hard.
That is, $\fp(D) = o(1)$ for the problems we demonstrate, but they are polynomial-time solvable (in particular, $\ld(D) \gg 1$ for small $D$).
This shows that either (a) the FP criterion does generalize accurately to a broad range of testing problems beyond the Gaussian additive model, but the particular problem we construct has to be ``defined out'' of that range, or (b) the FP criterion itself must be modified to successfully generalize beyond the Gaussian additive model.

We give some evidence against option (a) by showing that our construction of such ``bad'' problems is robust in two ways, making it seemingly hard to define a natural class of problems which avoids the issue.
First, adding some noise to the alternative hypothesis $H_1$ does not fix the issue.
And, second, the issue can appear even in a natural planted subgraph detection problem of the kind that we would expect a good theory of computational hardness to address.

This leaves option (b), that to generalize past the Gaussian additive model, we should look for a different free energy-based criterion for hardness.
The intuitions from statistical physics which in the first place guided the definition of the Franz--Parisi criterion actually suggest that computational hardness should coincide with \emph{non-monotonicity} of some one-parameter curve associated to an inference problem.
For instance, the \emph{replica heuristic} for predicting computational hardness associates to an inference problem (with a fixed SNR) a certain one-parameter curve called the \emph{replica symmetric potential} which, according to the heuristic, is monotonically increasing if and only if the problem is computationally tractable at that SNR (see Figure~1 of~\cite{replica-proof} or Figure~1 of~\cite{bandeira2018notes}).
By contrast, $\fp(D)$ measures the \emph{value} of a related curve near the typical overlap.

While the replica heuristic and others have been remarkably successful at predicting hardness,\footnote{We note, however, that the replica heuristic and the associated AMP algorithm do not predict the correct computational threshold for tensor PCA~\cite{NIPS2014_b5488aef,lesieur2017statistical,arous2020algorithmic} (see also~\cite{kikuchi,replicated-gd} for discussion), which was part of our initial motivation to search for a different criterion.} we show that formalizing such a criterion will require overcoming some technical challenges.
Free energy-based approaches to computational hardness we are aware of can all be understood to study some function $f(t)$ which tracks or approximates the free energy of a posterior distribution (or the corresponding Gibbs measure at a different ``non-Bayesian'' temperature) \emph{restricted to overlap $\approx t$ with the some ground-truth signal}.
(Exactly which function $f$ is used depends on which of many possible free energy-based hardness criteria is in question.)
The $LO(\delta)$ curve we study in this paper and the one-parameter curve studied in the replica method are both examples of such $f$.

We also show that many natural hypothesis testing problems -- problems to which one would naturally hope a generic theory of computational hardness would apply -- can straightforwardly be transformed into hypothesis testing problems where any reasonable curve $f(t)$ which measures the free energy of solutions ``at overlap $t$'' \emph{must} be non-monotonic, regardless of computational complexity of the problem.
We do this by introducing artificial ``overlap gaps'' -- ranges of $t$ where no pair of solutions can have overlap $t$, but some pairs exist with both smaller and larger overlaps.

Since the low-degree criterion for hardness remains applicable even to these problems with manufactured overlap gaps, we take this to show that any criterion based on monotonicity of some free-energy curve must apply to a narrower set of inference problems than the corresponding low-degree criterion.

\begin{remark}
One use case for our results is to use FP as a tool for proving low-degree lower bounds, as in Section~\ref{sec:sparse-reg}.
While the counterexamples we give here show that we cannot hope for a formal FP-to-LD connection for general Boolean-valued problems, one strategy for proving low-degree lower bounds for Boolean-valued problems is to first compare to an associated Gaussian problem (see Proposition~B.1 of~\cite{quiet-coloring}) and then use the FP-to-LD connection for Gaussian problems (Theorem~\ref{thm:equiv-easy}). This type of strategy is used implicitly in~\cite{quiet-coloring} to give low-degree lower bounds for community detection. Also, some Boolean-valued problems (such as planted clique and planted dense subgraph) fall into the framework of Section~\ref{sec:sparse} and can be handled using the FP-to-LD connection in Theorem~\ref{thm:fp_ld_sparse}.
\end{remark}

\subsection{The Form of the Low-Overlap and Low-Degree Likelihood Ratios for Boolean Problems}

Throughout this section, let $H_0 = \Rad(\frac{1}{2})^{\otimes n}$. 
We'll have $H_1$ as a mixture over {\em biased} distributions $H_1 = \E_{u \sim \mu} H_u$, where $\mu$ is a distribution over {\em bias vectors} $u \in [-1,1]^n$, and we sample $x \sim H_u$ by independently sampling
\[
x_i = \begin{cases}
1 & \text{with probability } \frac{1}{2} + \frac{u_i}{2}\\
-1 & \text{with probability } \frac{1}{2} - \frac{u_i}{2}.
\end{cases}
\]

\begin{claim}\label{claim:boolean-ip-form}
For $u,v \in [-1,1]^n$, $\iprod{L_u,L_v} = \prod_{i=1}^n (1+u_iv_i)$.
\end{claim}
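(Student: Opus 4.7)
The plan is to exploit the product structure of both $H_0$ and $H_u$ to reduce the inner product to a product of one-dimensional inner products, and then to use the identity $x_i^2 = 1$ for $x \in \{-1,+1\}^n$ to simplify each factor.

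First I would write down the likelihood ratio $L_u$ explicitly. Since $H_0$ and $H_u$ are both product measures on $\{-1,+1\}^n$, the likelihood ratio factorizes as $L_u(x) = \prod_{i=1}^n L_u^{(i)}(x_i)$, where $L_u^{(i)}(x_i) = H_u(x_i)/H_0(x_i)$. A direct calculation of the two cases $x_i = +1$ and $x_i = -1$ gives the unified formula $L_u^{(i)}(x_i) = 1 + u_i x_i$, and hence
\[
L_u(x) = \prod_{i=1}^n (1 + u_i x_i).
\]

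Next I would compute $\langle L_u, L_v \rangle_{H_0} = \mathbb{E}_{x \sim H_0}[L_u(x) L_v(x)]$. Because under $H_0$ the coordinates $x_i$ are independent Rademacher random variables, the expectation factorizes:
\[
\langle L_u, L_v \rangle_{H_0} = \prod_{i=1}^n \mathbb{E}_{x_i \sim \mathrm{Rad}(1/2)} \bigl[(1+u_i x_i)(1+v_i x_i)\bigr].
\]
Expanding the product inside and using $x_i^2 = 1$ yields $(1+u_i x_i)(1+v_i x_i) = (1 + u_i v_i) + (u_i + v_i) x_i$, and since $\mathbb{E}[x_i] = 0$ under $H_0$, each factor evaluates to $1 + u_i v_i$. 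This gives the claimed identity.

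There is no real obstacle here — the whole argument is a one-line calculation once one writes $L_u$ in product form. The only point worth checking carefully is that the formula $L_u^{(i)}(x_i) = 1 + u_i x_i$ is valid for both $x_i = \pm 1$ and for all $u_i \in [-1,1]$ (including the boundary where $H_u$ may be degenerate, in which case $L_u$ is only defined $H_0$-a.e., but the formula still holds on the support of $H_u$).
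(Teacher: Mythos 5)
Your proof is correct and follows essentially the same route as the paper: write $L_u(x) = \prod_i (1+u_i x_i)$, use independence of the coordinates under $H_0$ to factorize the inner product, and evaluate each one-dimensional factor using $x_i^2=1$ and $\E[x_i]=0$. The remark about the boundary case $u_i = \pm 1$ is a nice extra precaution, though the paper does not address it.
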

\begin{proof}
By definition,
\[
L_u(x) = \prod_{i \leq n} \left ( \Ind(x_i = 1)\cdot \frac{1/2 + u_i/2}{1/2}  + \Ind(x_i = -1) \cdot \frac{1/2 - u_i/2}{1/2}\right ) = \prod_{i \leq n} (1 + x_i u_i) \, .
\]
So,
\[
\iprod{L_u,L_v} = \EE_{x \sim H_0} \prod_{i \leq n} (1+x_i u_i) = \prod_{i \leq n} \EE_{x \sim H_0} (1+x_iu_i)(1+x_i v_i) = \prod_{i \leq n} (1+u_i v_i). \qedhere
\]
\end{proof}

It is not completely clear which notion of overlap to take in defining the Franz--Parisi criterion in this Boolean setting. 
However, our examples below will rule out any reasonable notion of overlap.

\begin{claim}\label{claim:ld-form}
In the setting where $H_0 = \Rad(\frac{1}{2})^{\otimes n}$ and $H_1 = \E_{u \sim \mu} H_u$,
\[
\ld(D) = \sum_{\substack{S\subset [n]\\|S| \le D}} \E_{u,v \sim \mu}\left[\prod_{i \in S} u_i v_i\right].
\]
\end{claim}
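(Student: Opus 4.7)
The plan is to use the parity (Fourier) basis for $L^2(H_0)$. Define $\chi_S(x) := \prod_{i \in S} x_i$ for $S \subseteq [n]$. Since $x_i^2 = 1$ almost surely under $H_0 = \Rad(\tfrac12)^{\otimes n}$, every polynomial on $\supp(H_0)$ reduces to a multilinear polynomial, and an easy computation shows that $\{\chi_S\}_{S \subseteq [n]}$ is an orthonormal basis of $L^2(H_0)$. Crucially, $\chi_S$ has degree exactly $|S|$, so $\{\chi_S : |S| \le D\}$ is an orthonormal basis for the subspace of polynomials of degree at most $D$.

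First I would compute the Fourier expansion of each $L_u$. From the formula $L_u(x) = \prod_i (1 + x_i u_i)$ derived in the proof of Claim~\ref{claim:boolean-ip-form}, expanding the product gives
\[
L_u(x) \;=\; \sum_{S \subseteq [n]} \Bigl(\prod_{i \in S} u_i\Bigr)\, \chi_S(x),
\]
so the Fourier coefficient is $\widehat{L_u}(S) = \prod_{i \in S} u_i$. By linearity of expectation, $L = \E_{u \sim \mu} L_u$ has coefficients $\widehat{L}(S) = \E_{u \sim \mu} \prod_{i \in S} u_i$.

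Next, since projection onto degree at most $D$ in an orthonormal basis amounts to truncating the Fourier expansion to $|S| \le D$, Parseval's identity together with independence of $u, v \sim \mu$ yields
\[
\ld(D) \;=\; \|L^{\le D}\|_{H_0}^2 \;=\; \sum_{|S| \le D} \widehat{L}(S)^2 \;=\; \sum_{|S| \le D} \Bigl(\E_{u \sim \mu}\!\prod_{i \in S} u_i\Bigr)^{\!2} \;=\; \sum_{|S| \le D} \E_{u,v \sim \mu}\!\prod_{i \in S} u_i v_i,
\]
which is the claimed identity.

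There is no real obstacle: the argument is just Fourier analysis on the hypercube. The only facts to invoke are the orthonormality of $\{\chi_S\}$ in $L^2(H_0)$ and the identification of the degree-$\le D$ subspace with $\mathrm{span}\{\chi_S : |S| \le D\}$, both of which are standard; the substantive computation, namely the Fourier expansion of $L_u$, is immediate from expanding the product $\prod_i(1 + x_i u_i)$.
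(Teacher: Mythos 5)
Your proof is correct and follows essentially the same approach as the paper: both use the Walsh--Hadamard (parity) basis $\{\chi_S\}$, compute $\widehat{L_u}(S) = \prod_{i \in S} u_i$ (you by expanding $\prod_i(1+x_iu_i)$ directly, the paper by evaluating $\E_{x\sim H_0}[L_u(x)\chi_S(x)]$, which is the same computation), and then truncate to $|S|\le D$. The only cosmetic difference is that you take $\ld(D)=\|L^{\le D}\|^2$ and apply Parseval before invoking independence of $u,v$, whereas the paper starts from the $\E_{u,v}\langle L_u^{\le D},L_v^{\le D}\rangle$ form of~\eqref{eq:ld-defn}; these are equivalent by linearity of projection.
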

\begin{proof}
The \emph{Walsh--Hadamard} characters are an orthonormal basis for $L^2(H_0)$.
For each $S \subseteq [n]$, the character $\chi_S$ is given by $\chi_S(x) = \prod_{i \in S} x_i$.
We can express $L_u(x) = \sum_{S \subseteq [n]} \widehat{L_u}(S) \chi_S(x)$, where $\widehat{L_u}(S) = \iprod{L_u,\chi_S}$, and $L_u^{\leq D}(x) = \sum_{S \subseteq [n], |S| \leq D} \widehat{L_u}(S) \chi_S(x)$.
(For proofs of these standard facts from Boolean analysis, see e.g.~\cite{o2014analysis}.)

Taking the inner product in the Walsh--Hadamard basis, $\iprod{L_u^{\leq D}, L_v^{\leq D}} = \sum_{S \subseteq [n], |S| \leq D} \widehat{L_u}(S) \widehat{L_v}(S)$.
Computing $\widehat{L_u}(S)$, we get $\widehat{L_u}(S) = \E_{x \sim H_0} \prod_{i \in [n]} (1 + x_i u_i) \cdot \prod_{i \in S} x_i = \prod_{i \in S} u_i$. 
Since $LD(D) = \E_{u,v \sim \mu} \iprod{L_u^{\leq D}, L_v^{\leq D}}$, the claim follows. 
\end{proof}

\subsection{Examples of Problems where FP Fails to Predict the Computational Threshold}
Both of the examples presented in this section show that in the Boolean case, the form of the inner product of likelihood ratios (Claim~\ref{claim:boolean-ip-form}) enables us to make $\FP(D)$ small even for easy problems.

Our first simple example shows that it is possible to have $FP(D)  = 0$ for all reasonable values of $D$, and for any reasonable definition of overlap between bias vectors $u,v$ even for an easy hypothesis testing problem.
Consider any distribution $\mu$ over bias vectors in $\{\pm 1\}^n$ (rather than in $[-1,1]^n$).
Then whenever $u,v\sim \mu$ are such that $u \neq v$, they disagree on at least one coordinate, so there exists some $i\in [n]$ (depending on $u,v$) where $u_i = - v_i$.
This means whenever $u \neq v$,
\[
\iprod{L_u,L_v} = \prod_{j\in[n]} (1+u_jv_j) = (1-u_i^2) \cdot \prod_{j \in [n]\setminus\{i\}} (1+u_iv_i) = 0.
\]
Hence, for any reasonable definition of overlap between $u,v$, for any $\delta$ small enough to exclude the $u = v$ case, \[
\lo(\delta) \le \E_{u,v\sim\mu}[\Ind_{u \neq v} \cdot \iprod{L_u,L_v}] = 0.
\]
Thus, even if $H_1$ and $H_0$ are easy to distinguish (for example, $H_1$ is uniform over $\{ u \in \{ \pm 1 \} \, : \, \sum_{i \in [n]} u_i = 0.9n\}$), the Franz--Parisi criterion will predict that the problem is hard for $D = n^{\Omega(1)}$.

The assumption of $u \in \{ \pm 1 \}$ is quite artificial, but next we will see that it is not necessary; a more ``noisy'' version of the problem, in which $H_1$ is not a mixture around point masses but rather a mixture of biased product measures with reasonable variance, will still exhibit the same qualitative behavior. 
After that, we'll show how to embed this problem into a natural planted problem: a variant of densest subgraph in the easy regime.

\subsubsection{Positively biased product measures}

For any $\eps,\delta \in (0,1)$, consider the following prior $\mu$ over $u$: sample $u_i = \eps$ with probability $\frac{1}{2}+\frac{1}{2}\delta$ and $u_i = -\eps$ otherwise.
For sake of illustration, in the following lemmas we take $\iprod{u,v}$ to be the definition of overlap of bias vectors $u,v$, so that $\fp$ agrees with the definition used in the rest of this paper.
However, we believe that a qualitatively similar statement holds for any reasonable definition of overlap.

\begin{lemma}
In the model above,
for any $\alpha > 0$, if $\eps = n^{-1/4 + 2\alpha}$ and $\delta = n^{-1/4 + \alpha}$ and $D \ll n^{\alpha}$, $FP(D) = \exp(-O(n^{8\alpha}))$ but $\ld(D) \ge n^{6\alpha}$.
Furthermore, given $x$ sampled from either $H_0$ or $H_1$, the statistic $\iprod{x,\vec{1}}$ distinguishes between the models with error probability $o(1)$.
\end{lemma}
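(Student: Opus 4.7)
The plan is to leverage the explicit product formula $\langle L_u,L_v\rangle = \prod_i(1+u_iv_i)$ from Claim~\ref{claim:boolean-ip-form}. Since each $u_iv_i \in \{\pm\eps^2\}$, I would first rewrite the inner product in closed form by grouping positive vs.\ negative matches:
\[
\langle L_u,L_v\rangle = (1-\eps^4)^{n/2}\cdot\left(\frac{1+\eps^2}{1-\eps^2}\right)^{\langle u,v\rangle/(2\eps^2)}.
\]
Taylor-expanding in $\eps^2$ (which is $o(1)$ since $\alpha<1/8$) then gives the master estimate
\[
\log\langle L_u,L_v\rangle = -\tfrac{1}{2}n^{8\alpha}(1+o(1)) + \langle u,v\rangle(1+o(1)),
\]
which crystallizes the essential competition: a deterministic negative drift $-n^{8\alpha}/2$ versus the random gain $\langle u,v\rangle$.

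The $\ld$ lower bound then follows immediately from Claim~\ref{claim:ld-form}: by independence of coordinates, $\ld(D) = \sum_{|S|\le D}(\eps\delta)^{2|S|}$, and the $|S|=1$ term alone contributes $n(\eps\delta)^2 = n^{6\alpha}$. For the $\fp$ upper bound, the main subtask will be to locate $\delta(D)$. The overlap $\langle u,v\rangle = \sum_i u_iv_i$ is a sum of i.i.d.\ $\pm\eps^2$ terms with mean $n^{6\alpha}$ and variance $\Theta(n\eps^4) = \Theta(n^{8\alpha})$, so Hoeffding's inequality applied to the centered variables (each bounded by $2\eps^2$) yields $\Pr(|\langle u,v\rangle - n^{6\alpha}| \ge s) \le 2\exp(-cs^2/(n\eps^4))$. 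Setting this equal to $e^{-D}$ with $D \ll n^\alpha$ gives $\delta(D) \le n^{6\alpha} + O(n^{4\alpha}\sqrt{D}) = n^{6\alpha}(1+o(1))$. Substituting into the master estimate, on the event $\{|\langle u,v\rangle| \le \delta(D)\}$ one has
\[
\log\langle L_u,L_v\rangle \le O(n^{6\alpha}) - \tfrac{1}{2}n^{8\alpha}(1+o(1)) = -\Omega(n^{8\alpha}),
\]
since $n^{6\alpha} \ll n^{8\alpha}$, and hence $\fp(D) \le \exp(-\Omega(n^{8\alpha}))$ as claimed.

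For the efficient distinguisher, I will apply Chebyshev's inequality to $T(x) = \langle x,\vec{1}\rangle$. Under $H_0$ this has mean $0$ and variance $n$; under $H_1$, conditioning on $u$ gives $\EE[T] = n\eps\delta = n^{1/2+3\alpha}$ and (by the law of total variance) $\mathrm{Var}(T) = n(1-\eps^2\delta^2) = \Theta(n)$. Since the mean separation $n^{1/2+3\alpha}$ exceeds $\sqrt{n}$ by a factor of $n^{3\alpha}\to\infty$, thresholding $T$ at $\tfrac{1}{2}n^{1/2+3\alpha}$ yields $o(1)$ total error. The main obstacle I anticipate is controlling the Taylor-expansion errors in the master estimate uniformly on the low-overlap event; this is where the hypotheses $\alpha<1/8$ (so $\eps^2=o(1)$) and $D\ll n^\alpha$ (so $\delta(D)$ does not overshoot $n^{6\alpha}$ appreciably) are crucial, and it is precisely the gap $n^{8\alpha}\gg n^{6\alpha}$ that drives the dramatic separation between $\fp$ and $\ld$.
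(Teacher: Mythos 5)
Your proof is correct and follows essentially the same route as the paper's: the closed-form $\langle L_u,L_v\rangle=(1-\eps^4)^{n/2}\bigl(\frac{1+\eps^2}{1-\eps^2}\bigr)^{\langle u,v\rangle/(2\eps^2)}$, concentration of $\langle u,v\rangle$ to locate an upper bound $\delta_0\geq\delta(D)$ with $\lo(\delta_0)\geq\fp(D)$, the $|S|=1$ Fourier term $n(\eps\delta)^2=n^{6\alpha}$ for the $\ld$ lower bound, and mean-vs.-variance of $\langle x,\vec 1\rangle$ for the distinguisher. The only cosmetic differences are that you use Hoeffding rather than the Chernoff bound for the overlap tail and Chebyshev rather than a direct Gaussian/Binomial tail for the test, and your ``master estimate'' neatly packages the $-\tfrac12 n\eps^4$ vs.\ $\langle u,v\rangle$ competition that the paper leaves implicit; also note that the range $\alpha<1/8$ you invoke is genuinely required (so that $\eps<1$ and the bias model is well defined), a constraint the lemma statement leaves implicit.
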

\begin{proof}
First, we show that in this parameter setting, a successful test statistic exists.
The test statistic $\iprod{x,\vec{1}}$ is distributed as $2\left(\Bin(n,\frac{1}{2}) - \frac{n}{2}\right)$ for $x\sim H_0$, and it is distributed as $2\eps\left(\Bin(n,\frac{1}{2} + \frac{1}{2}\delta) - \frac{n}{2}\right)$ for $x \sim H_1$.
Since in our setting $\E_{H_1}[\iprod{x,1}] = \eps\delta n = n^{1/2 + 3 \alpha } \gg \sqrt{n}$ and $\E_{H_0}[\iprod{x,1}] = 0$, $\iprod{x,\vec{1}}$ takes value at most $\sqrt{n\log n}$ under $H_0$ with probability $1-O(1/n)$, and at least $n^{1/2 + 2\alpha}$ with probability $1 - o(1/n)$, so thresholding on $\iprod{x,1}$ gives a hypothesis test which succeeds with high probability.

The value of $\fp(D)$ and $\ld(D)$ follow as corollaries of Claims~\ref{claim:fp-bad}~and~\ref{claim:ld-good} below.
\begin{claim}\label{claim:fp-bad}
If $\eps,\delta \in (0,1)$ satisfy $\eps^2 \gg \max\left(\delta^2, \sqrt{\tfrac{D}{n}}\right)$, then $\FP(D) \le \exp\left(-O(n\eps^4)\right)$.
\end{claim}
\begin{proof}
If $u,v$ agree on $\frac{n}{2} + \Delta$ coordinates (which is equivalent to $\iprod{u,v} = \eps^2\cdot 2\Delta$), then by Claim~\ref{claim:boolean-ip-form},
\begin{align*}
\iprod{L_u,L_v} &= (1+\eps^2)^{\frac{n}{2} + \Delta} (1-\eps^2)^{\frac{n}{2} - \Delta}\\
&=\left(1 - \eps^4\right)^{n/2} \cdot \left(\frac{1+\eps^2}{1-\eps^2}\right)^{\Delta}.
\end{align*}

For $u,v\sim \mu$, we have that $\iprod{u,v} \sim 2\eps^2\left(\mathrm{Bin}(n,\frac{1}{2}+\frac{1}{2}\delta^2)-\frac{n}{2}\right)$.
Now, applying the Chernoff bound for a sum of independent Bernoulli random variables,
\[
\Pr_{u,v\sim \mu}\left[\left|\frac{\iprod{u,v}}{\eps^2}-\delta^2 n\right|\ge C \sqrt{n} \right] 
= \Pr_{X \sim \mathrm{Bin}(n,\frac{1}{2}+\frac{1}{2}\delta^2)} \left[|X - \E[X]| \ge \tfrac{1}{2}C\sqrt{n}\right]
 \le \exp\left(-\frac{C^2}{2(1-\delta^4)}\right).
\]

Taking our notion of overlap to be $\iprod{u,v}$, note that if $\delta_0$ satisfies $\Pr(|\iprod{u,v}| > \delta_0) < e^{-D}$ then $\delta_0 > \delta(D)$ and hence $\fp(D) \leq \lo(\delta_0)$.
So we have
\begin{align*}
\FP(D) 
& \leq \E_{u,v\sim \mu}\left[\Ind_{|\iprod{u,v}| > \eps^2 \delta^2 n + \eps^2 \sqrt{2 D (1-\delta^4) n}} \cdot \iprod{L_u,L_v}\right]
\le (1-\eps^4)^{n/2} \left(\frac{1+\eps^2}{1-\eps^2}\right)^{\delta^2 n + \sqrt{2Dn}},
\end{align*}
and taking logarithms,
\begin{align*}
\log(\FP(D))
&\le n \cdot \left( \tfrac{1}{2}\log(1-\eps^4) + \left(\delta^2 + \sqrt{\tfrac{2D}{n}}\right)\log\left(\frac{1+\eps^2}{1-\eps^2}\right)\right)\\
&= n \cdot \left(-\Omega(\eps^4) + O\left(\left(\delta^2 + \sqrt{\tfrac{D}{n}}\right)\eps^2\right)\right),
\end{align*}
where we have used a first-order Taylor expansion to $\log(1+x)$.
Thus so long as $\eps^2 \gg \max\left(\delta^2, \sqrt{\tfrac{D}{n}}\right)$, $\FP(D) \le \exp(- n O(\eps^4))$.
This completes the proof.
\end{proof}

While the definition of the overlap of $u,v$ as $\iprod{u,v}$ is just one of many possible choices in the Boolean setting, we note that any definition of overlap which would count only only ``typical'' pairs $u,v$ which agree on a $n(\frac{1}{2} + \frac{\delta^2}{2}) \pm O(\sqrt{n})$ fraction of coordinates as having small-enough overlap to be counted when computing $\fp(D)$ for small $D$ would have led to the same outcome.  

\begin{claim}\label{claim:ld-good}
For the model specified above, $\ld(D) \ge \ld(1) = n\delta^2\eps^2$.
\end{claim}
\begin{proof}
In our model, $\E_{u \sim \mu} u_i = \eps \delta$ for every $i \in [n]$.
So we compute directly from Claim~\ref{claim:ld-form},
\[
\ld(1) = \sum_{i\in[n]} \E_{u,v\sim \mu} u_i v_i = n\cdot\left(\E_{u\sim\mu} u_1\right)^2 = n \delta^2 \eps^2.\qedhere
\]
\end{proof}
\noindent This concludes the proof of the lemma.
\end{proof}

\subsubsection{Planted dense-and-sparse subgraph}
Next we show that $\fp$ can mis-predict the computational threshold even for a familiar-looking planted subgraph problem.
Consider the following problem: $H_0$ is uniform over $\{\pm 1\}^{\binom{n}{2}}$, i.e., $H_0$ is the Erdos-Renyi distribution $G(n,1/2)$.
The alternate distribution $H_1$ is uniform over signed adjacency matrices of $n$-vertex graphs containing a planted {\em dense} subgraph of size $\delta n$ and a planted {\em sparse} subgraph of size $c \delta n$ for $c < 1$.
That is, we take $\mu$ uniform over the upper-triangular restriction of matrices of the form 
$u = \mathrm{upper}(0.9 \cdot 1_S 1_S^\top - 0.9 \cdot 1_{T} 1_{T}^\top)$, where $S$ is a subset of $[n]$ chosen by including every $i \in S$ independently with probability $\delta$, and $T$ is a subset of $[n]$ chosen by including every $i \in T$ independently with probability $c\delta$.

\begin{lemma}
When $c = 0.9$, $\delta = n^{-1/10}$, and $D \ll n^{0.2}$, the testing problem $H_0$ vs $H_1$ is easy, $\ld(D) = \Omega(n^{9/5})$, but $\FP(D) = \exp(-\Omega(n^{8/5}))$.
\end{lemma}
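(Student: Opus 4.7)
The plan is to establish easiness, a lower bound on $\ld(D)$, and an upper bound on $\fp(D)$ using Claims~\ref{claim:boolean-ip-form} and~\ref{claim:ld-form}. For easiness, I would threshold the signed edge count $T(x) := \sum_{i<j} x_{ij}$: under $H_0$, $|T| = O(n)$ with high probability, while under $H_1$ the asymmetry $c = 0.9 < 1$ gives $\E_{H_1}[T] = \binom{n}{2}\cdot 0.9(1-c^2)\delta^2 = \Theta(n^{9/5}) \gg n$, so thresholding strongly separates. For $\ld(D)$, Claim~\ref{claim:ld-form} writes $\ld(D) = \sum_{|S|\le D}(\E_\mu \prod_{e\in S} u_e)^2$ as a non-negative sum; the $|S|=2$ terms with $S=\{(a,b),(a,c)\}$ a pair of edges sharing a vertex give $\E_\mu[u_{ab}u_{ac}] = 0.81(1+c^3)\delta^3 + O(\delta^4) = \Theta(\delta^3)$, and summing over the $\Theta(n^3)$ such pairs yields $\ld(2) \ge \Theta(n^3 \delta^6) = \Theta(n^{12/5}) = \Omega(n^{9/5})$.

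The main work is the upper bound on $\fp(D)$. Using Claim~\ref{claim:boolean-ip-form} together with the observation $u_e v_e \in \{-0.81, 0, +0.81\}$, I would write $\iprod{L_u,L_v} = 1.81^{N_+(u,v)} \cdot 0.19^{N_-(u,v)}$, where $N_\pm(u,v) := \#\{e : u_e v_e = \pm 0.81\}$. Setting $N = N_+ + N_-$ and using the identity $\iprod{u,v} = 0.81(N_+-N_-)$, this rewrites as
\[ \iprod{L_u,L_v} \;=\; \exp\bigl(-c_1 N + c_2 \iprod{u,v}\bigr), \]
with explicit positive constants $c_1 = \tfrac12 \log(1/(0.19\cdot 1.81))$ and $c_2 = \tfrac{1}{1.62}\log(1.81/0.19)$. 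On $\{|\iprod{u,v}|\le \delta(D)\}$ this gives $\iprod{L_u,L_v} \le \exp(-c_1 N + c_2 \delta(D))$. Next, I would observe that up to lower-order corrections $N \approx \binom{|S_u\cap S_v|}{2}+\binom{|T_u\cap T_v|}{2}+\binom{|S_u\cap T_v|}{2}+\binom{|T_u\cap S_v|}{2}$, where each vertex-intersection size is $\mathrm{Bin}(n,\Theta(\delta^2))$ concentrated around its mean $\Theta(n^{4/5})$; Chernoff gives $\E[N] = \Theta(n^{8/5})$ and $\Pr[N < \tfrac12 \E N] \le \exp(-\Omega(n^{4/5}))$. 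A parallel concentration argument for $\iprod{u,v}$ (mean $\Theta(n^{8/5})$, standard deviation $O(n^{6/5})$) shows $\delta(D) = O(n^{8/5})$ with a small enough constant that $c_2 \delta(D) \ll c_1 \E[N]$ when $D \ll n^{0.2}$. Consequently, on the typical event $\{N \ge \tfrac12 \E[N]\}$ the bound gives $\iprod{L_u,L_v} \le \exp(-\Omega(n^{8/5}))$, contributing $\exp(-\Omega(n^{8/5}))$ to $\fp(D)$.

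The main obstacle will be controlling the residual contribution from rare pairs $(u,v)$ where $|\iprod{u,v}| \le \delta(D)$ but $N$ is far below its mean, since the Chernoff tail $\exp(-\Omega(n^{4/5}))$ combined with the worst-case pointwise bound $\iprod{L_u,L_v} \le 1.81^{N_+}$ of size $\exp(O(n^{8/5}))$ cannot by itself close the gap. To overcome this, I would stratify the bad event by the four vertex-intersection sizes $(\alpha,\beta,\gamma,\eta) := (|S_u\cap S_v|,|T_u\cap T_v|,|S_u\cap T_v|,|T_u\cap S_v|)$: conditional on these, $N_\pm$ and hence $\iprod{L_u,L_v}$ are (up to lower-order corrections) determined, each intersection admits very sharp binomial tails via independent-vertex sampling, and the low-overlap constraint $|\iprod{u,v}|\le \delta(D)$ forces $\binom{\alpha}{2}+\binom{\beta}{2} \approx \binom{\gamma}{2}+\binom{\eta}{2}$, coupling the four sizes. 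Carefully balancing the tail probability of each stratum against the resulting value of $\iprod{L_u,L_v}$ and summing should yield the claimed $\fp(D) \le \exp(-\Omega(n^{8/5}))$.
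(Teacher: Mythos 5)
Your easiness test (global signed edge count, rather than the paper's top eigenvalue) and your degree-$2$ path lower bound for $\ld(D)$ are both correct and genuinely different from the paper's. In fact the degree-$2$ route is the more robust one: the correct per-edge bias is $\E_{\mu}u_{ij}=0.9(1-c^2)\delta^2=\Theta(\delta^2)$ (the paper's displayed $0.9(1-c)\delta$ appears to be a slip), under which the singleton terms alone give only $\Theta(n^{8/5})$; your path coefficients give $\Omega(n^{12/5})$, comfortably above the claimed $n^{9/5}$, at the small price of requiring $D\ge 2$.

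The gap is in the $\FP(D)$ bound, and the stratification plan cannot close it, because you have committed to $\iprod{u,v}$ as the overlap while the paper does not. With overlap $\iprod{u,v}$, the ``bad'' strata you flag are not a residual nuisance to control: they make $\lo(\delta(D))$ exponentially \emph{large}, so the claim is actually false under that overlap. Concretely, take the stratum in which $|S_u'\cap T_v'|=|T_u'\cap S_v'|=0$ (so $N_-=0$) and in which $|S_u'\cap S_v'|,|T_u'\cap T_v'|$ are shrunk by a constant factor so that $N_+=\binom{|S_u'\cap S_v'|}{2}+\binom{|T_u'\cap T_v'|}{2}\approx 0.9\,\delta(D)/0.81$. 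Then $\iprod{u,v}=0.81N_+<\delta(D)$, so the stratum lies in the low-overlap region, yet $\iprod{L_u,L_v}=1.81^{N_+}\cdot 0.19^{0}=\exp(\Theta(\delta(D)))=\exp(\Theta(n^{8/5}))$. Each intersection size is (to leading order) $\mathrm{Bin}(n,\Theta(\delta^2))$ with mean $\Theta(n^{4/5})$, so the Sanov cost of the joint deviation is only $\exp(-\Theta(n^{4/5}))$, and this single stratum contributes $\exp\bigl(\Theta(n^{8/5})-\Theta(n^{4/5})\bigr)=\exp(\Theta(n^{8/5}))$ to $\FP(D)$. The coupling you hoped for, ``$\binom{\alpha}{2}+\binom{\beta}{2}\approx\binom{\gamma}{2}+\binom{\eta}{2}$,'' is too weak to exclude it: the allowed slack $\delta(D)/0.81$ is $\Theta(n^{8/5})$, of the same order as $\E N_+$, so the constraint is satisfied vacuously once $N_+$ and $N_-$ are both far below $\E N_+$. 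The paper avoids all of this by asserting the bound only ``for definition of overlap $\ldots$ such that the overlaps accounted for in $\FP(D)$ for small $D$ includes only pairs falling within this tolerance,'' i.e., for overlap measures whose low-overlap event is contained in the event that all four intersection sizes lie within $O(\sqrt{Dn})$ of their means. Under such a coarser overlap the bad stratum is excluded by fiat, and plugging the typical intersection sizes into $1.81^{N_+}0.19^{N_-}$ immediately gives $\exp(-\Omega(n^{8/5}))$. Your argument would have to adopt that same coarser overlap to recover the lemma; it cannot be rescued while overlap means $\iprod{u,v}$.
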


\begin{proof}
To see that the testing problem is easy in this regime, consider the test statistic given a sample $x$ from either $H_0$ or $H_1$ which is the maximum eigenvalue of the matrix $A(x)$ whose $(i,j)$ entry is given by $x_{ij}$ (or $x_{ji}$).
Under $H_0$, $\lambda_{\max}(A(x)) = O(\sqrt{n})$ with high probability.
However under $H_1$, $\lambda_{\max}(A(x)) \ge 1_S^\top A(x)1_S/|S|$, and the final quantity is at least $\Omega((\delta n)^2) = \Omega(n^{9/5})$ with high probability using standard estimates.
Hence the maximum eigenvalue of $A(x)$ furnishes a test that succeeds with high probability.

To bound the values of $\ld(D)$ and $\FP(D)$, we turn to the following claims:
\begin{claim}
Suppose $\delta = o(1)$, $c \in (0.43,2.32)$ and $D \ll \delta^8 n$.
Then there exists a constant $a > 0$ such that $\FP(D) \le \exp\left(- a \delta^4 n^2\right)$.
\end{claim}
\begin{proof}
Let $S'_u = S_u \setminus T_u$ and similarly for $T'_u$.
For $u,v$ where $|S_u' \cap S_v'| = \alpha n$,  $|T_u' \cap T_v'| = \beta n$, $|S_u' \cap T_v'|$ = $\gamma n$, and  $|T_u' \cap S_v'| = \eta n$,
\[
\iprod{L_u,L_v} = \prod_{(i,j) \in \binom{[n]}{2}} (1+u_{ij} v_{ij}) = \left(1.81\right)^{\binom{\alpha n}{2} + \binom{\beta n}{2}} \left(0.19\right)^{\binom{\gamma n}{2}+\binom{\eta n}{2}}.
\]
We have that $\E|S_u' \cap S_v'| = \delta^2(1-c\delta)^2n$, $\E|T_u' \cap T_v'| = c^2\delta^2 (1-\delta)^2 n $ and $\E|T_u' \cap S_v'| = \E|S_v' \cap T_u'| = c\delta^2 (1-c\delta)(1-\delta)n$.
From standard concentration arguments, the sizes of each of these sets is within an additive $\sqrt{Dn}$ with probability at least $1-\exp(-O(D^2))$.
Hence, for definition of overlap between pairs $(S_0,T_0), (S_1,T_1)$ such that the overlaps accounted for in $\FP(D)$ for small $D$ includes only pairs falling within this tolerance,
\begin{align*}
&\log \left(\FP(D)\right) 
\le \log\left( (1.81)^{\frac{1}{2}(1-c\delta)^4\delta^4 n^2 + \frac 12 c^4 \delta^4 (1-\delta)^4 + O(n^{3/2}\sqrt{D})} (0.19)^{c^2\delta^4(1-c\delta)^2(1-\delta)^2 n^2 - O(n^{3/2}\sqrt{D})}\right)\\
&\quad\le \delta^4 n^2\cdot\left(\tfrac{1}{2}\left((1-c \delta)^4 + c^4(1-\delta)^4 + O(\tfrac{\sqrt{D}}{\delta^4\sqrt{n}})\right)\log(1.81) + (c^2 (1-c \delta)^2 (1-\delta)^2 - O(\frac{\sqrt{D}}{\delta^4 \sqrt{n}}))\log (0.19)\right),
\end{align*}
and the quantity within the parenthesis is a negative constant so long as $D \ll \delta^8 n$ and $c \in (0.43,2.32)$.
This concludes the proof.
\end{proof}

\begin{claim}
$\ld(D) = \Omega( (\delta(1-c)n)^2)$.
\end{claim}
\begin{proof}
We have that $\E_{u \sim \mu} u_{ij} = 0.9(1-c)\delta$.
Hence,
\[
\ld(D) \ge \ld(1) = \sum_{(i,j) \in \binom{[n]}{2}} \E_{u,v \sim \mu} u_{ij}v_{ij} = \binom{n}{2} 0.81 (1-c)^2\delta^2.\qedhere
\]
\end{proof}
\noindent This completes the proof of the lemma.
\end{proof}

\begin{remark}
We note that the planted dense-and-sparse subgraph problem can be put into the framework of Section~\ref{sec:sparse} (specifically Assumption~\ref{assum:sparse-planted}) and so we have the FP-to-LD implication from Theorem~\ref{thm:fp_ld_sparse}. However, to do this, we need to let $u$ encode the set of vertices in the union of the two subgraphs, but not the choice of which vertices belong to the dense subgraph and which belong to the sparse one --- this choice is instead absorbed into $\PP_u$.
This alters the notion of overlap sufficiently that the $\fp$ curve accurately reflects the computational complexity of the problem. 
\end{remark}

\subsection{Pruning to Achieve Sparse Support of the Overlap Distribution}

We turn to our last family of examples, constructing problems where the distribution of overlaps has ``gaps'' in its support, regardless of the computational complexity of the problem.
In fact, this follows from a simple lemma:
\begin{lemma}[Subsampling prior distributions]
\label{lem:subsample}
  Let $D$ be a probability distribution and $E$ be an event in the probability space corresponding to $D \otimes D$, with $\Pr_{x,y \sim D}( (x,y) \in E) \leq \delta$, and such that $(x,x) \notin E$ for all $x$ in the support of $D$.
  Then the uniform distribution $D'$ over $\Omega(1/\sqrt{\delta})$ samples from $D$ satisfies $\Pr_{x,y \sim D'}( (x,y) \in E) = 0$ with probability at least $0.99$.
\end{lemma}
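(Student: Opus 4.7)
This is essentially a birthday-paradox-style calculation, and I would attack it with a simple union bound. Pick a small absolute constant $c>0$ to be fixed at the end of the argument, set $k := \lfloor c/\sqrt{\delta}\rfloor$, draw $x_1,\dots,x_k$ i.i.d.\ from $D$, and let $D'$ be the uniform distribution on the resulting multiset.

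The crucial observation is that
\[
\Pr_{x,y\sim D'}\!\big[(x,y)\in E\big] \;=\; \frac{1}{k^2}\sum_{i,j=1}^{k}\Ind_{(x_i,x_j)\in E},
\]
so this probability equals $0$ exactly when no ordered pair $(x_i,x_j)$ lies in $E$. The diagonal terms $i=j$ automatically contribute nothing by the hypothesis that $(x,x)\notin E$ for every $x$ in the support of $D$; crucially, the same hypothesis also handles any accidental coincidences $x_i=x_j$ with $i\neq j$ that may arise when $D$ has atoms, since for such a pair $(x_i,x_j)=(x_i,x_i)\notin E$.

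For the genuinely off-diagonal pairs $i\neq j$, $x_i$ and $x_j$ are independent draws from $D$, so $\Pr[(x_i,x_j)\in E]\le \delta$ by the assumption on $E$. A union bound over the at most $k^2$ such pairs gives
\[
\Pr\!\big[\exists\, i\neq j:(x_i,x_j)\in E\big] \;\le\; k^2\delta \;\le\; c^2,
\]
and taking e.g.\ $c=1/10$ makes this at most $0.01$, yielding the desired $0.99$ lower bound.

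I do not anticipate any real obstacle here; the lemma is a one-line union bound. The only mildly subtle point is making sure repeated samples from atomic priors are dealt with, but this is exactly what the assumption $(x,x)\notin E$ is there for, so it dispatches itself without any extra work.
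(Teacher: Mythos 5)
Your proof is correct and takes essentially the same approach as the paper's: a union bound over the roughly $k^2$ ordered pairs of samples, each landing in $E$ with probability at most $\delta$, with the choice $k=\Theta(1/\sqrt{\delta})$ making the failure probability a small constant. The paper's version is terser and does not explicitly address the diagonal and atom-coincidence cases, which your write-up handles carefully; otherwise the arguments are the same.
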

\begin{proof}
  There are at most $T^2$ distinct pairs of draws $x,y \sim D$ in a list of $T$ independent draws $x_1,\ldots,x_T$; by a union bound the probability that any $(x_i,x_j)$ is in $E$ is at most $\delta T^2$.
\end{proof}

Now let us sketch an example hypothesis testing problems with a ``manufactured'' overlap gap, using Lemma~\ref{lem:subsample}.

\subsubsection*{Planted clique with artificial overlap gap}
Here $H_0$ is $G(n,1/2)$ and $H_1$ is $G(n,1/2)$ with a randomly-added $\approx k$-clique.
(For simplicity, consider the model where each vertex of the clique is added independently with probability $k/n$.)
A natural measure of the overlap of two potential $k$-cliques $S,T \subseteq [n]$, $|S| = |T| = k$, is $|S \cap T|$.

Consider the event $E$ that $|S \cap T| \in [k^{\delta},k)$.
By standard Chernoff bounds,
\[
\Pr ( |S \cap T| > k^{\delta} ) \leq e^{-\Omega(k^{\delta})} 
\]
so long as $k \leq n^{1/2 + O(\delta)}$.
Applying Lemma~\ref{lem:subsample}, we see that there is a distribution $D'$ on size $\approx k$ subsets of $[n]$ for which no pair has overlap between $k^{\delta}$ and $k$ and which has support size $e^{\Omega(k^{\delta})}$.
Then we can create a new planted problem, $H_0$ versus $H_1'$, where $H_0$ is as before and $H_1'$ plants a clique on a randomly-chosen set of vertices from $D'$.

We note a few features of this construction.
(1) Since this construction allows for $k$ to be either smaller or larger than $\sqrt{n}$, overlap gaps like this can be introduced in both the computationally easy and hard regimes of planted clique. And, (2), for $k = \poly(n)$, the size of the support of the prior distribution in $H_1'$ is still $2^{\poly(n)}$, meaning that we have not trivialized the planted problem.
Finally, (3), it is hopefully clear that there was nothing special here about planted clique; this approach applies easily to other planted subgraph problems, spiked matrix and tensor models, and so on.

\section{Proofs for the Gaussian Additive Model}

\subsection{Basic Facts}
\label{sec:basic-facts}

First we prove Remark~\ref{rem:continuity}, which contains some basic facts about the quantity $\delta$ in the definition of FP.

\begin{proof}[Proof of Remark~\ref{rem:continuity}]
For convenience, we recall the definition
\[ \delta := \sup \, \{ \eps \geq 0 \text{ s.t.\ }\Pr_{u,v \sim \mu} \left(|\iprod{u,v}| \ge \eps\right)\ge e^{-D} \}. \]
By definition of supremum, for any $\delta' < \delta$ we have $\Pr(|\langle u,v \rangle| \ge \delta') \ge e^{-D}$. Using continuity of measure,
\[ \Pr(|\langle u,v \rangle| \ge \delta) = \Pr\left(\cap_{\delta' < \delta} \{|\langle u,v \rangle| \ge \delta'\}\right) = \lim_{\delta' \,\uparrow\, \delta} \Pr(|\langle u,v \rangle| \ge \delta') \ge e^{-D}, \]
as desired.

Now we prove the second statement. By definition of supremum, for any $\delta' > \delta$ we have $\Pr(|\langle u,v \rangle| \ge \delta') < e^{-D}$. Using continuity of measure,
\[ \Pr(|\langle u,v \rangle| > \delta) = \Pr\left(\cup_{\delta' > \delta} \{|\langle u,v \rangle| \ge \delta'\}\right) = \lim_{\delta' \,\downarrow\, \delta} \Pr(|\langle u,v \rangle| \ge \delta') \le e^{-D}, \]
as desired.
\end{proof}

Recall the quantities $\ld(D,\lambda)$ and $\fp(D,\lambda)$ from~\eqref{eq:ld-gam} and~\eqref{eq:fp-gam}. We now state some associated monotonicity properties.

\begin{lemma}\label{lem:monotone}
For any fixed $\lambda$, we have that $\ld(D,\lambda)$ and $\fp(D,\lambda)$ are both monotone increasing in $D$. For any fixed $D$, we have that $\ld(D,\lambda)$ is monotone increasing in $\lambda$.
\end{lemma}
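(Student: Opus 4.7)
The plan is to handle each of the three claims separately, in each case exhibiting $\ld$ or $\fp$ as a manifestly non-negative expression that can only grow as the relevant parameter increases.

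For monotonicity of $\ld(D,\lambda)$ in $D$, I would appeal directly to the definition $\ld(D,\lambda)=\|L^{\le D}\|_\QQ^2$: since $L^{\le D}$ is the orthogonal projection of $L$ onto the nested subspace of polynomials of degree at most $D$, the Pythagorean theorem immediately yields $\|L^{\le D}\|_\QQ\le\|L^{\le D'}\|_\QQ$ whenever $D\le D'$. For monotonicity of $\fp(D,\lambda)$ in $D$, I would first observe that the threshold $\delta(D)$ is itself non-decreasing in $D$: as $D$ grows, $e^{-D}$ shrinks, so the constraint $\Pr(|\langle u,v\rangle|\ge\eps)\ge e^{-D}$ appearing in the supremum defining $\delta(D)$ is relaxed and the feasible set enlarges. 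Combined with the fact that the integrand $\langle L_u,L_v\rangle_\QQ=\exp(\lambda^2\langle u,v\rangle)$ (Proposition~\ref{prop:taylor}) is strictly positive, enlarging $\delta$ only switches the indicator from $0$ to $1$ on additional pairs, each contributing non-negative mass.

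For monotonicity of $\ld(D,\lambda)$ in $\lambda$ at fixed $D$, I would use the Taylor expression~\eqref{eq:ld-gam}, $\ld(D,\lambda)=\sum_{d=0}^D \frac{\lambda^{2d}}{d!}\,\E[s^d]$ with $s=\langle u,v\rangle$. It suffices to show $\E[s^d]\ge 0$ for every $d\ge 0$, for then $\ld(D,\cdot)$ is a polynomial in $\lambda^2$ with non-negative coefficients and hence monotone on $\lambda\ge 0$. The key step expands $s^d=(\sum_i u_iv_i)^d$ and uses the independence of $u,v\sim\mu$ to factor the expectation:
\begin{equation*}
\E[s^d]=\sum_{i_1,\ldots,i_d}\E_u[u_{i_1}\cdots u_{i_d}]\,\E_v[v_{i_1}\cdots v_{i_d}]=\sum_{i_1,\ldots,i_d}\bigl(\E_u[u_{i_1}\cdots u_{i_d}]\bigr)^2\ge 0.
\end{equation*}

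I expect no serious obstacles. The only mildly non-obvious ingredient is this last one: the odd-$d$ coefficients in the $\lambda^2$-expansion of $\ld$ must be shown to be non-negative, which relies on the ``sum of squares'' identity above (and hence on the independence of $u$ and $v$) rather than on any sign properties of $s$ itself. Everything else follows directly from definitions, continuity of measure, or positivity of the pairwise likelihood inner product in the Gaussian additive model.
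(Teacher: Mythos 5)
Your proof is correct. For the first two claims (monotonicity of $\ld$ and $\fp$ in $D$) you use the same reasoning as the paper: nested projection subspaces for $\ld$, and $\delta(D)$ non-decreasing together with positivity of $\langle L_u,L_v\rangle_\QQ$ for $\fp$. (You cite the Gaussian formula $\exp(\lambda^2\langle u,v\rangle)>0$ from Proposition~\ref{prop:taylor}; the paper instead notes the slightly more general fact that likelihood ratios are nonnegative, so their pairwise $\QQ$-inner products are too. Both are valid here.)

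For the third claim, you and the paper both reduce to showing $\E_s[s^d]\ge 0$ for all $d\ge 0$, but the justification is genuinely different. You expand $s^d = \bigl(\sum_i u_i v_i\bigr)^d$ monomial by monomial and use the independence and equidistribution of $u,v$ to factor $\E[s^d]$ as a sum of squares $\sum_{i_1,\ldots,i_d}\bigl(\E_u[u_{i_1}\cdots u_{i_d}]\bigr)^2$. The paper instead proves this as Corollary~\ref{cor:s-moments} via the Hermite machinery: it recognizes $\frac{\lambda^{2d}}{d!}\E[s^d]$ as $\bigl\|\E_{u\sim\mu} L_u^{=d}\bigr\|_\QQ^2$ in the Gaussian additive model (Proposition~\ref{prop:taylor-term}), which is manifestly nonnegative. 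Your argument is more elementary and self-contained, and makes explicit where independence of $u$ and $v$ enters; the paper's is a one-liner once Proposition~\ref{prop:taylor-term} is available, and the paper itself remarks on the ``curiousness'' of proving a model-independent fact via projections in a specific model. Either route is legitimate.
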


\begin{proof}
To see that LD is increasing in $D$, recall the definition~\eqref{eq:ld-defn} and note that projecting onto a larger subspace can only increase the 2-norm of the projection.

To see that FP is increasing in $D$, recall the definition~\eqref{eq:fp-defn}, note that $\delta(D)$ is increasing in $D$, and note that $\langle L_u,L_v \rangle_\QQ \ge 0$ because (being likelihood ratios) $L_u$ and $L_v$ are nonnegative-valued functions.

To see that LD is increasing in $\lambda$, start with~\eqref{eq:ld-gam} and expand
\[ \ld(D,\lambda) = \sum_{d=0}^D \frac{\lambda^{2d}}{d!} \EE_s[s^d], \]
where $s = \langle u,v \rangle$ is the overlap random variable from~\eqref{eq:s}. Since $\E[s^d] \ge 0$ for all integers $d \ge 0$ (see Corollary~\ref{cor:s-moments} below), this is increasing in $\lambda$.
\end{proof}

For the next result, we need to introduce some new notation. Let $V^{\le D}$ denote the space of polynomials $\R^N \to \R$ of degree at most $D$. Also define $V^{=D} = V^{\le D} \cap (V^{\le (D-1)})^\perp$ where $\perp$ denotes orthogonal complement (with respect to $\langle \cdot,\cdot \rangle_\QQ$). We have already defined $f^{\le D}$ to mean the orthogonal projection of $f$ onto $V^{\le D}$, and we similarly define $f^{=D}$ to be the orthogonal projection of $f$ onto $V^{=D}$. In the Gaussian additive model, $V^{=D}$ is spanned by the multivariate Hermite polynomials of degree exactly $D$. The following extension of Proposition~\ref{prop:taylor} is implicit in the proof of Theorem~2.6 in~\cite{ld-notes}.

\begin{proposition}\label{prop:taylor-term}
In the Gaussian additive model, we have the formula
\[ \langle L_u^{=D},L_v^{=D} \rangle_\QQ = \exp^{=D}(\lambda^2 \langle u,v \rangle), \]
where $\exp^{=D}(x) := \frac{x^D}{D!}$
\end{proposition}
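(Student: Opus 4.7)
The plan is to deduce Proposition~\ref{prop:taylor-term} directly from Proposition~\ref{prop:taylor} via a one-line difference-of-projections argument, using nothing more than the orthogonal decomposition defining $V^{=D}$. Since $V^{=D}$ is by construction the orthogonal complement of $V^{\le D-1}$ inside $V^{\le D}$, we have $V^{\le D} = V^{\le D-1} \oplus V^{=D}$ as an orthogonal sum in $L^2(\QQ)$, and hence for any $u \in \supp(\mu)$ the projection decomposes as $L_u^{\le D} = L_u^{\le D-1} + L_u^{=D}$ with $L_u^{\le D-1} \perp L_w^{=D}$ for every $w$ (and symmetrically). Expanding $\langle L_u^{\le D}, L_v^{\le D}\rangle_\QQ$ along these two decompositions kills both cross terms, yielding the Pythagorean identity
\[ \langle L_u^{\le D}, L_v^{\le D} \rangle_\QQ \;=\; \langle L_u^{\le D-1}, L_v^{\le D-1} \rangle_\QQ \;+\; \langle L_u^{=D}, L_v^{=D} \rangle_\QQ. \]

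Rearranging and applying Proposition~\ref{prop:taylor} at degrees $D$ and $D-1$ then immediately gives
\[ \langle L_u^{=D}, L_v^{=D} \rangle_\QQ \;=\; \exp^{\le D}(\lambda^2 \langle u,v \rangle) - \exp^{\le D-1}(\lambda^2 \langle u,v \rangle) \;=\; \frac{(\lambda^2 \langle u,v \rangle)^D}{D!}, \]
which is exactly $\exp^{=D}(\lambda^2 \langle u,v \rangle)$. The $D=0$ case is handled separately (and trivially): $L_u^{=0}$ is the projection onto constants, which equals the constant function $\E_\QQ L_u = 1$, so $\langle L_u^{=0}, L_v^{=0}\rangle_\QQ = 1 = \exp^{=0}(\lambda^2 \langle u,v\rangle)$. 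I do not anticipate any real obstacle: the only nontrivial input is Proposition~\ref{prop:taylor}, which is already assumed, and the rest is bookkeeping about orthogonal projections in $L^2(\QQ)$.
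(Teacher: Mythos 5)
Your argument is correct, but it is not the route the paper takes: the paper offers no self-contained proof of Proposition~\ref{prop:taylor-term}, instead citing it (like Proposition~\ref{prop:taylor}) as implicit in the proof of Theorem~2.6 of~\cite{ld-notes}, where the computation is done directly in the Hermite basis --- one computes the Hermite coefficients $\langle L_u, H_\alpha\rangle_\QQ$ explicitly and sums over $|\alpha|=D$ (a multinomial identity) to obtain $\frac{(\lambda^2\langle u,v\rangle)^D}{D!}$, which simultaneously yields both the degree-exactly-$D$ and degree-at-most-$D$ formulas. Your proof instead deduces the ``$=D$'' statement from the ``$\le D$'' statement by pure Hilbert-space bookkeeping: since $V^{\le D} = V^{\le D-1} \oplus V^{=D}$ orthogonally, the projections satisfy $L_u^{\le D} = L_u^{\le D-1} + L_u^{=D}$, the cross terms $\langle L_u^{\le D-1}, L_v^{=D}\rangle_\QQ$ vanish, and telescoping the Taylor partial sums gives $\exp^{\le D}(x) - \exp^{\le D-1}(x) = x^D/D!$; the $D=0$ case is handled by $L_u^{=0} = \EE_\QQ[L_u] = 1$. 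This is valid and arguably cleaner within the paper's structure, since Proposition~\ref{prop:taylor} is already taken as given (so there is no circularity), and it avoids re-deriving any Hermite coefficient formulas; what it buys is economy and model-independence of the reduction step, while the cited Hermite computation buys the stronger coefficient-level information (the full graded expansion of $L_u$) from which both propositions drop out at once.
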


The following corollary is not specific to the Gaussian additive model, yet curiously can be proved via degree-$D$ projections in the Gaussian additive model.

\begin{corollary}\label{cor:s-moments}
Let $s = \langle u,v \rangle$ denote the overlap random variable, with $u,v$ drawn independently from some distribution $\mu$ on $\R^N$ with all moments finite. For any integer $d \ge 0$, we have $\E[s^d] \ge 0$.
\end{corollary}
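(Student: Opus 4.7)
The plan is to leverage Proposition~\ref{prop:taylor-term} by introducing an auxiliary Gaussian additive model, even though the statement itself is distribution-agnostic. Specifically, I would fix $\lambda = 1$ and consider the Gaussian additive model with signal prior $\mu$. Proposition~\ref{prop:taylor-term} then yields the key identity
\[ \langle L_u^{=d}, L_v^{=d} \rangle_\QQ = \frac{\langle u, v \rangle^d}{d!} = \frac{s^d}{d!}, \]
which expresses $s^d$ as an inner product of polynomials in $L^2(\QQ)$ parametrized by $u,v$.

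Taking expectation over independent $u, v \sim \mu$, the plan is to recognize the right-hand side as the squared $L^2(\QQ)$-norm of the single averaged function $f_d(Y) := \E_{u \sim \mu} L_u^{=d}(Y)$. Concretely, I would compute
\[ \|f_d\|_\QQ^2 \;=\; \E_{Y \sim \QQ} \Bigl( \E_{u \sim \mu} L_u^{=d}(Y)\Bigr)^2 \;=\; \E_{u,v \sim \mu}\langle L_u^{=d}, L_v^{=d}\rangle_\QQ \;=\; \frac{1}{d!}\,\E[s^d], \]
which directly gives $\E[s^d] = d! \cdot \|f_d\|_\QQ^2 \ge 0$. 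The case $d=0$ is trivial since $s^0 \equiv 1$.

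The only technical point is justifying the Fubini-type interchange in the middle equality (and, relatedly, checking that $f_d$ is well-defined and $\E[s^d]$ is finite). This is routine: Proposition~\ref{prop:taylor-term} with $u = v$ gives $\|L_u^{=d}\|_\QQ = \|u\|^d/\sqrt{d!}$, so Cauchy--Schwarz yields $\E_{u,v}\bigl|\langle L_u^{=d}, L_v^{=d}\rangle_\QQ\bigr| \le (\E_u \|u\|^d)^2/d!$, which is finite because $\mu$ has all moments finite. There is no substantive obstacle; the conceptually interesting aspect is that a purely moment-theoretic statement about an arbitrary $\mu$ is proved by embedding $\mu$ as the signal prior of a Gaussian additive model, where nonnegativity of $\E[s^d]$ is revealed as a squared $L^2$-norm.
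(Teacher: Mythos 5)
Your proof is correct and is essentially identical to the paper's: both embed $\mu$ as the prior of an auxiliary Gaussian additive model, invoke Proposition~\ref{prop:taylor-term} to identify $\E_{u,v}[s^d]/d!$ (up to a factor of $\lambda^{2d}$) with $\|\E_{u\sim\mu} L_u^{=d}\|_\QQ^2$, and conclude nonnegativity. Fixing $\lambda=1$ rather than carrying a general $\lambda>0$, and spelling out the Fubini/integrability check, are cosmetic differences only.
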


\begin{proof}
Consider the Gaussian additive model with prior $\mu$ and some SNR $\lambda > 0$. Using Proposition~\ref{prop:taylor-term},
\[ 0 \le \left\|\EE_{u \sim \mu} L_u^{=d}\right\|^2_\QQ = \EE_{u,v \sim \mu} \left[\langle L_u^{=d}, L_v^{=d} \rangle_\QQ\right] = \EE_s\left[\exp^{=d}(\lambda^2 s)\right] = \frac{\lambda^{2d}}{d!} \EE_s[s^d], \]
which yields the result.
\end{proof}

Next we state some basic properties of the function $\exp^{\le D}(\cdot)$.

\begin{lemma}\label{lem:taylor-sign}
Let $\exp^{\le D}(\cdot)$ be defined as in~\eqref{eq:taylor} for some integer $D \ge 0$.
\begin{itemize}
\item If $D$ is odd then $\exp^{\le D}(x) \le \exp(x)$ for all $x \in \R$.
\item If $D$ is even then
\[ \exp^{\le D}(x) \le \exp(x) \; \text{ for all } x \ge 0, \qquad\text{and}\qquad \exp^{\le D}(x) > \exp(x) \; \text{ for all } x < 0. \]
\end{itemize}
\end{lemma}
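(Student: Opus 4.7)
The plan is to derive both parts from a single application of Taylor's theorem with Lagrange remainder. Define $f(x) := \exp(x) - \exp^{\le D}(x)$. By construction, $f$ has vanishing derivatives of order $0, 1, \dots, D$ at the origin, and its $(D+1)$-th derivative is $\exp(\cdot)$ everywhere. Taylor's theorem then yields, for every $x \in \R$, some $\xi$ between $0$ and $x$ with
\[
f(x) \;=\; \frac{e^{\xi}\, x^{D+1}}{(D+1)!}.
\]
Since $e^{\xi} > 0$ always, the sign of $f(x)$ is determined entirely by the sign of $x^{D+1}$, and the strictness is determined by whether $x = 0$.

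From here the two cases fall out by parity. If $D$ is odd then $D+1$ is even, so $x^{D+1} \ge 0$ for all $x \in \R$; hence $f(x) \ge 0$, i.e.\ $\exp^{\le D}(x) \le \exp(x)$, which is the first bullet. If $D$ is even then $D+1$ is odd, so $x^{D+1} \ge 0$ for $x \ge 0$ and $x^{D+1} < 0$ for $x < 0$; plugging in gives $\exp^{\le D}(x) \le \exp(x)$ on the nonnegative reals and $\exp^{\le D}(x) > \exp(x)$ (strictly, since the remainder is nonzero) on the negative reals, which is the second bullet.

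There is no real obstacle here; the only thing to be careful about is the strict inequality in the even/negative case, which is why I invoke the Lagrange form (with its explicit positive factor $e^{\xi}/(D+1)!$) rather than only an $O(x^{D+1})$ bound. As a sanity check on the endpoint $x=0$: both sides of each claimed inequality equal $1$, consistent with $f(0) = 0$, and the strict inequality in the even case is only asserted for $x < 0$, so nothing needs to be said at $x = 0$.
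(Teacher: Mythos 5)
Your proof is correct. You verify that $f := \exp - \exp^{\le D}$ has vanishing derivatives through order $D$ at the origin and $(D+1)$-th derivative equal to $\exp$, so the Lagrange form of Taylor's theorem gives $f(x) = e^\xi x^{D+1}/(D+1)!$ for some $\xi$ between $0$ and $x$, and the sign of $f$ is then read off from the parity of $D+1$; all cases, including the strict inequality for $D$ even and $x<0$, follow immediately. This is a genuinely different (and in my view cleaner) argument than the paper's. The paper treats the two sign regimes separately: for $x \ge 0$ it observes that the Taylor series for $\exp$ has all nonnegative terms so the partial sums increase to the limit, and for $x < 0$ it runs an induction on $D$, using the identity $\frac{d}{dx}\bigl[\exp(x)-\exp^{\le D}(x)\bigr] = \exp(x)-\exp^{\le D-1}(x)$ together with $f(0)=0$ to flip the sign at each step. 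What the paper's route buys is that it is elementary in the sense of not invoking Taylor's theorem with remainder, and the $x\ge 0$ half generalizes to any power series with nonnegative coefficients; what your route buys is a single unified formula for the remainder that makes the sign and the strictness transparent in one line, with no case split between the two bullets. One small note: you could equally well have applied the Lagrange remainder directly to $\exp$ rather than first defining $f$ and checking its derivatives — the formula $\exp(x) = \exp^{\le D}(x) + e^\xi x^{D+1}/(D+1)!$ is exactly the standard statement — but the argument as you wrote it is sound.
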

\begin{proof}
For $x \ge 0$, both results are immediate because every term in the Taylor expansion~\eqref{eq:taylor} is nonnegative and the series converges to $\exp(x)$. 

For $x < 0$, we will prove the following statement by induction on $D$: $\exp^{\le D}(x) < \exp(x)$ for all $x < 0$ when $D$ is odd, and $\exp^{\le D}(x) > \exp(x)$ for all $x < 0$ when $D$ is even. The base case $D = 0$ is easily verified. The induction step can be deduced from the fact
\[ \frac{d}{dx} \left[\exp(x) - \exp^{\le D}(x)\right] = \exp(x) - \exp^{\le (D-1)}(x) \]
along with the fact $\exp(0) = \exp^{\le D}(0)$.
\end{proof}

\begin{corollary}\label{cor:exp-pos}
If $D$ is even then $\exp^{\le D}(x) \ge 0$ for all $x \in \R$.
\end{corollary}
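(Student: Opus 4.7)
The plan is to deduce this directly as a two-case consequence of Lemma~\ref{lem:taylor-sign}. Fix an even integer $D \ge 0$ and an arbitrary $x \in \mathbb{R}$, and split on the sign of $x$.

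First, when $x \ge 0$, every term in the finite Taylor sum $\exp^{\le D}(x) = \sum_{d=0}^{D} x^d/d!$ is nonnegative (since $x^d \ge 0$ for $x \ge 0$ and all integers $d \ge 0$), so the sum is nonnegative; this case requires nothing beyond the definition~\eqref{eq:taylor}.

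Second, when $x < 0$, Lemma~\ref{lem:taylor-sign} (even case, negative argument) gives the strict inequality $\exp^{\le D}(x) > \exp(x)$. Since $\exp(x) > 0$ for all real $x$, this yields $\exp^{\le D}(x) > 0$, which is even stronger than the claimed $\ge 0$.

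Combining the two cases completes the proof. There is no substantive obstacle here: the corollary is a one-line consequence of the preceding lemma, and its role in the paper is essentially to package the result in a form that is convenient to cite later (for example, when arguing that quantities of the form $\exp^{\le D}(\lambda^2 \langle u,v \rangle)$ with $D$ even are nonnegative, which is relevant to the truncated second moment computations).
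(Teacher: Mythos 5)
Your proof is correct and follows the same two-case split as the paper: nonnegativity of the Taylor terms for $x \ge 0$, and an appeal to Lemma~\ref{lem:taylor-sign} together with $\exp(x) > 0$ for $x < 0$. The only cosmetic difference is that you invoke the strict inequality from the lemma, while the paper phrases the second case with $x \le 0$ and a non-strict $\ge$; both are valid.
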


\begin{proof}
For $x \ge 0$ this is clear because every term in the Taylor expansion~\eqref{eq:taylor} is nonnegative. For $x \le 0$, Lemma~\ref{lem:taylor-sign} implies $\exp^{\le D}(x) \ge \exp(x) \ge 0$.
\end{proof}

Finally, we will need the following standard bounds on the factorial. These appeared in~\cite{taocp} (Section~1.2.5, Exercise~24), and the proof can be found in~\cite{proofwiki}.

\begin{proposition}\label{prop:factorial}
For any integer $n \ge 1$,
\[ \frac{n^n}{e^{n-1}} \le n! \le \frac{n^{n+1}}{e^{n-1}}. \]
\end{proposition}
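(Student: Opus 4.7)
The plan is to prove both inequalities by induction on $n$, reducing each inductive step to a well-known inequality about the sequences approximating $e$.

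First I would verify the base case $n=1$, which is immediate: $1^1/e^0 = 1 = 1! = 1^2/e^0$. For the inductive step, assume the bounds hold for some $n \ge 1$ and write $(n+1)! = (n+1) \cdot n!$.

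For the upper bound, combining the induction hypothesis with the identity above gives $(n+1)! \le (n+1) \cdot n^{n+1}/e^{n-1}$, and the goal $(n+1)! \le (n+1)^{n+2}/e^n$ reduces after rearrangement to
\[ e \le \left(1 + \tfrac{1}{n}\right)^{n+1}. \]
Symmetrically, for the lower bound, the induction hypothesis yields $(n+1)! \ge (n+1) \cdot n^n/e^{n-1}$, and the target $(n+1)! \ge (n+1)^{n+1}/e^n$ reduces to
\[ e \ge \left(1 + \tfrac{1}{n}\right)^{n}. \]

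Both of these are standard facts: the sequence $(1+1/n)^n$ is monotone increasing with limit $e$, and $(1+1/n)^{n+1}$ is monotone decreasing with limit $e$, so each sequence is bounded on the appropriate side by its limit for every $n \ge 1$. These can be proved in turn by a short application of the AM--GM inequality, or simply cited as classical.

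Since the argument is entirely routine, there is no real obstacle; the only thing to be careful about is getting the exponents of $n$ and of $e$ on both sides to line up correctly in the induction step, which is why framing the reduction as the two inequalities above (rather than trying to manipulate the full bounds directly) makes the bookkeeping transparent.
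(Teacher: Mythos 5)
Your induction is correct: the base case is trivial, and each inductive step correctly reduces to the classical facts $(1+1/n)^n \le e \le (1+1/n)^{n+1}$, which follow from the monotonicity of those two sequences. The paper does not give its own proof of this proposition — it simply cites Knuth (TAOCP, Section 1.2.5, Exercise 24) and ProofWiki — and the cited argument is precisely this same induction, so your proposal recovers the intended proof.
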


\subsection{Proof of Theorem~\ref{thm:equiv-hard}: LD-Hard Implies FP-Hard}
\label{sec:pf-equiv-hard}

We first prove Corollary~\ref{cor:equiv-hard}, a straightforward consequence of Theorem~\ref{thm:equiv-hard} under certain asymptotic assumptions.

\begin{proof}[Proof of Corollary~\ref{cor:equiv-hard}]
Fix any constant $\eps \in (0,\eps']$. Recall LD is monotone increasing in $\lambda$ (see Lemma~\ref{lem:monotone}). For all sufficiently large $n$, our assumptions on the scaling regime imply that $D \ge D_0(\eps)$ and~\eqref{eq:ld-cond} holds, so~\eqref{eq:fp-result} holds. In other words, $\limsup_n [\fp(D,\lambda) - \ld(D,(1+\eps')\lambda)] \le \eps$. Since $\eps$ was arbitrary, $\limsup_n [\fp(D,\lambda) - \ld(D,(1+\eps')\lambda)] \le 0$ as desired.
\end{proof}

\noindent The remainder of this section is devoted to the proof of Theorem~\ref{thm:equiv-hard}.

\begin{proof}[Proof of Theorem~\ref{thm:equiv-hard}]
Define $\hat\lambda := (1+\eps)\lambda$, $\tilde\lambda := (1+\eps^2/4)\lambda$, and $C := \ld(D,\hat\lambda)$. Define $\delta = \delta(D)$ as in~\eqref{eq:fp-defn}, which implies $\Pr(|s| \ge \delta) \ge e^{-D}$ (see Remark~\ref{rem:continuity}). Recall the overlap random variable $s$ from~\eqref{eq:s}. We will first prove an upper bound on $\delta$ in terms of $\ld(D,\hat\lambda)$. Since $\EE_s[\exp^{=d}(\hat\lambda^2 s)] \ge 0$ for all $d$ (see the proof of Corollary~\ref{cor:s-moments}),
\[ C = \ld(D,\hat\lambda) = \EE_s[\exp^{\le D}(\hat\lambda^2 s)] \ge \EE_s[\exp^{=D}(\hat\lambda^2 s)] = \EE_s \frac{1}{D!}(\hat\lambda^2 s)^D. \]
Using $\Pr(|s| \ge \delta) \ge e^{-D}$ and the fact that $D$ is even,
\[ \EE_s \frac{1}{D!}(\hat\lambda^2 s)^D \ge e^{-D} \frac{1}{D!}(\hat\lambda^2 \delta)^D. \]
Combining this with the above yields $(\hat\lambda^2 \delta)^D \le C e^D D!$ and so, using the factorial bound in Proposition~\ref{prop:factorial},
\[ \delta \le \hat\lambda^{-2} C^{1/D} e (D!)^{1/D} \le \hat\lambda^{-2} C^{1/D} e \left(\frac{D^{D+1}}{e^{D-1}}\right)^{1/D} = \frac{D}{\hat\lambda^2} (CeD)^{1/D}. \]
Using~\eqref{eq:ld-cond},
\[ (CeD)^{1/D} \le 1 + \eps \le \frac{(1+\eps)^2}{(1+\eps^2/4)^2} = \frac{\hat\lambda^2}{\tilde\lambda^2}, \]
and so we conclude
\begin{equation}\label{eq:del-bound}
\delta \le \frac{D}{\tilde\lambda^2}.
\end{equation}
In Lemma~\ref{lem:pointwise} below, we establish for all $s \in [-\delta,\delta]$,
\[ \exp(\lambda^2 s) \le \exp^{\le D}(\tilde\lambda^2 s) + \eps. \]
As a result,
\[ \fp(D,\lambda) = \EE_s\left[\Ind_{|s| \le \delta} \exp(\lambda^2 s)\right] \le \EE_s\left[\exp^{\le D}(\tilde\lambda^2 s)\right] + \eps = \ld(D,\tilde\lambda) + \eps \le \ld(D,\hat\lambda) + \eps, \]
where we have used the fact $\exp^{\le D}(x) \ge 0$ for $D$ even (Corollary~\ref{cor:exp-pos}) and monotonicity of $\ld$ in $\lambda$ (Lemma~\ref{lem:monotone}).
\end{proof}

\begin{lemma}\label{lem:pointwise}
For an appropriate choice of $D_0 = D_0(\eps)$, we have for all $s \in [-\delta,\delta]$,
\[ \exp(\lambda^2 s) \le \exp^{\le D}(\tilde\lambda^2 s) + \eps. \]
\end{lemma}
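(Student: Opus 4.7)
The natural substitution is $x := \tilde\lambda^2 s$, so that $\lambda^2 s = \alpha x$ with $\alpha := (\lambda/\tilde\lambda)^2 = (1+\eps^2/4)^{-2} \in (0,1)$; writing $\gamma := 1 - \alpha$, the elementary bound $(1+u)^{-2} \ge 1 - 2u$ gives $\gamma \le \eps^2/2 \le \eps$ for $\eps \in (0,1)$. Since $|s| \le \delta$ and $\tilde\lambda^2 \delta \le D$ by~\eqref{eq:del-bound}, we have $|x| \le D$, so it suffices to prove
\[ \exp(\alpha x) \le \exp^{\le D}(x) + \eps \qquad \text{for all } |x| \le D, \]
provided $D \ge D_0(\eps)$ is large enough. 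I would handle $x \le 0$ and $x \in [0,D]$ separately.

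For $x \in [-D, 0]$, Lemma~\ref{lem:taylor-sign} (using that $D$ is even) yields $\exp^{\le D}(x) \ge \exp(x)$, so the task reduces to bounding $\sup_{x \le 0}[\exp(\alpha x) - \exp(x)]$. Setting the derivative to zero at $x^* = (\log \alpha)/\gamma$ and plugging back in gives the maximum value $\gamma\,\alpha^{(1-\gamma)/\gamma} < \gamma \le \eps$, closing this case.

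For $x \in [0, D]$, I would split at the threshold $T := 1/\gamma = \Theta(1/\eps^2)$. In the bounded regime $x \in [0, T]$, use $\exp(\alpha x) \le \exp(x) = \exp^{\le D}(x) + \sum_{d > D} x^d/d!$ and the elementary tail estimate $\sum_{d > D} T^d/d! \le (eT/(D+1))^{D+1}/(1 - T/(D+2))$, which is at most $\eps$ for $D \ge D_0(\eps)$ since $T$ depends only on $\eps$. In the tail regime $x \in [T, D]$, use the Poisson identity $\exp^{\le D}(x) = \exp(x)\,\Pr[\mathrm{Poisson}(x) \le D]$ (valid for $x \ge 0$). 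Stochastic monotonicity of Poisson in its parameter gives $\Pr[\mathrm{Poisson}(x) \le D] \ge \Pr[\mathrm{Poisson}(D) \le D]$, and the latter tends to $1/2$ by the CLT, hence exceeds $1/e$ for $D$ large enough. Thus $\exp^{\le D}(x) \ge \exp(x - 1)$, so $\exp^{\le D}(x)/\exp(\alpha x) \ge \exp(\gamma x - 1) \ge 1$ whenever $x \ge T = 1/\gamma$, yielding $\exp(\alpha x) \le \exp^{\le D}(x)$.

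The main obstacle I anticipate is handling $x$ close to $D$: neither the approximation ``$\exp^{\le D}(x) \approx \exp(x)$'' (valid only for $x \ll D$) nor the inequality ``$\exp(\alpha x) \le \exp(x)$'' (from $\alpha < 1$) is individually strong enough in that regime. The split at $T = 1/\gamma$ is precisely where the suppression factor $e^{-\gamma x}$ coming from $\alpha < 1$ becomes large enough to absorb the constant-order loss $\Pr[\mathrm{Poisson}(D) \le D] \approx 1/2$; this is also the quantitative reason for choosing the perturbation $\tilde\lambda = (1 + \eps^2/4)\lambda$ rather than a different perturbation in the statement of Theorem~\ref{thm:equiv-hard}.
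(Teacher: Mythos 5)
Your proof is correct, and it reaches the conclusion by a route that differs from the paper's in its key technical steps. The paper also splits into negative, moderately positive, and large positive values of $\tilde\lambda^2 s$, and like you it invokes Lemma~\ref{lem:taylor-sign} (with $D$ even) on the negative side and a Taylor-tail estimate on the moderate side; but the details diverge. For $x\le 0$ the paper uses two sub-cases (very negative, where $\exp(\lambda^2 s)\le\eps$ outright, and mildly negative, where a first-order estimate controls the multiplicative error), whereas you compute the exact supremum of $\exp(\alpha x)-\exp(x)$, getting the clean bound $\gamma\,\alpha^{(1-\gamma)/\gamma}<\gamma\le\eps$ in one stroke. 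More substantively, in the hard regime of large positive $x$ the paper keeps only the single Taylor term $d=\lceil \tilde\lambda^2 s\rceil$ and estimates it with the factorial bounds of Proposition~\ref{prop:factorial}, splitting at $x\approx D/(2e)$; you instead use the identity $\exp^{\le D}(x)=\exp(x)\Pr[\mathrm{Poisson}(x)\le D]$ together with stochastic monotonicity and the CLT to get the uniform bound $\exp^{\le D}(x)\ge e^{x-1}$ on all of $[0,D]$, which lets you split at the constant $x=T=1/\gamma=\Theta(1/\eps^2)$ and absorb the constant-factor loss with the suppression $e^{-\gamma x}$. Your version buys a cleaner, reusable lower bound on $\exp^{\le D}$ over the whole range $[0,D]$ and a more transparent explanation of why the $(1+\eps^2/4)$ slack in $\tilde\lambda$ suffices; the paper's version is more elementary (no probabilistic identity, only factorial estimates). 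One small point to make explicit: your geometric-series tail bound on $[0,T]$ requires $T<D+2$, which indeed holds once $D_0(\eps)$ is large since $T$ depends only on $\eps$ (note $\gamma\ge\Omega(\eps^2)$, so $T=O(\eps^{-2})$); and, as in the paper's Case~IV, your appeal to $|x|\le D$ rests on~\eqref{eq:del-bound}, i.e.\ on the hypothesis~\eqref{eq:ld-cond} of Theorem~\ref{thm:equiv-hard}, which is the context in which the lemma is used.
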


\begin{proof}
We will split into various cases depending on the value of $s$.

\paragraph{{\bf Case I}: $s \le -\lambda^{-2} \log(1/\eps)$.}

We have
\[ \exp(\lambda^2 s) \le \exp[\lambda^2 \cdot (-\lambda^{-2} \log(1/\eps))] = \eps, \]
which suffices because $\exp^{\le D}(x) \ge 0$ for even $D$ (Corollary~\ref{cor:exp-pos}).

\paragraph{{\bf Case II}: $-\lambda^{-2} \log(1/\eps) < s \le 0$.} We have
\[ \exp(\lambda^2 s) = \exp(\tilde\lambda^2 s) + \exp(\tilde\lambda^2 s)[\exp(\lambda^2 s - \tilde\lambda^2 s) - 1]. \]
Since $s \le 0$ and $D$ is even, Lemma~\ref{lem:taylor-sign} gives $\exp(\tilde\lambda^2 s) \le \exp^{\le D}(\tilde\lambda^2 s)$. For the second term, recalling $\tilde\lambda = (1+\eps^2/4)\lambda$ and $-\lambda^2 s \le \log(1/\eps)$,
\begin{align*}
\exp(\tilde\lambda^2 s)[\exp(\lambda^2 s - \tilde\lambda^2 s) - 1]
&\le 1 \cdot [\exp(-\lambda^2 s (\eps^2/2 + \eps^4/16)) - 1] \\
&\le \exp[\log(1/\eps)(9\eps^2/16)] - 1 \\
&\le \exp(9\eps/16) - 1.
\end{align*}
Using the bound $\exp(x) \le 1 + (e-1)x$ for $x \in [0,1]$, the above is at most $\frac{9}{16}(e-1)\eps \le \eps$.

\paragraph{{\bf Case III}: $0 < s \le D/(2e\tilde\lambda^2)$.} Using the Taylor series for $\exp$,
\[ \exp(\lambda^2 s) \le \exp(\tilde\lambda^2 s) = \exp^{\le D}(\tilde\lambda^2 s) + \sum_{d=D+1}^\infty \frac{(\tilde\lambda^2 s)^d}{d!}. \]
Using the factorial bound (Proposition~\ref{prop:factorial}) and $s \le D/(2e\tilde\lambda^2)$,
\[ \sum_{d=D+1}^\infty \frac{(\tilde\lambda^2 s)^d}{d!} \le \sum_{d=D+1}^\infty \frac{1}{e}\left(\frac{e\tilde\lambda^2 s}{d}\right)^d \le \frac{1}{e} \sum_{d=D+1}^\infty \left(\frac{1}{2}\right)^d = \frac{1}{e \cdot 2^D}, \]
which can be made smaller than $\eps$ by choosing $D_0$ sufficiently large.

\paragraph{{\bf Case IV}: $D/(2e\tilde\lambda^2) < s \le \delta$.} Let $d = \lceil \tilde\lambda^2 s \rceil$ and note that $\frac{D}{2e} \le d \le D$ due to~\eqref{eq:del-bound} and the assumption on $s$. Again using the factorial bound (Proposition~\ref{prop:factorial}),
\begin{align*}
\exp^{\le D}(\tilde\lambda^2 s)
&\ge \frac{1}{d!}(\tilde\lambda^2 s)^d
\ge \frac{1}{ed}\left(\frac{e\tilde\lambda^2 s}{d}\right)^d
= \frac{1}{ed}\left(\frac{e\tilde\lambda^2 s}{\lceil \tilde\lambda^2 s \rceil}\right)^{\lceil \tilde\lambda^2 s \rceil}
\ge \frac{1}{eD}\left(\frac{e\tilde\lambda^2 s}{\tilde\lambda^2 s + 1}\right)^{\tilde\lambda^2 s} \\
&= \frac{1}{eD}\left(\frac{e}{1 + \frac{1}{\tilde\lambda^2 s}}\right)^{\tilde\lambda^2 s}
= \exp\left[\tilde\lambda^2 s \left(1 - \frac{1+\log D}{\tilde\lambda^2 s} - \log\left(1 + \frac{1}{\tilde\lambda^2 s}\right)\right)\right].
\end{align*}
Since $\tilde\lambda^2 s \ge D/(2e)$ by assumption, we conclude
\[ \exp^{\le D}(\tilde\lambda^2 s)
\ge \exp\left[\tilde\lambda^2 s \left(1 - 2e \cdot \frac{1+\log D}{D} - \log\left(1 + \frac{2e}{D}\right)\right)\right]. \]
Since $\tilde\lambda > \lambda$, this can be made larger than $\exp(\lambda^2 s)$ by choosing $D_0$ sufficiently large.
\end{proof}

\subsection{Proof of Theorem~\ref{thm:free-energy-barrier}: FP-Hard Implies Free Energy Barrier}
\label{sec:pf-feb}

\begin{proof}[Proof of Theorem~\ref{thm:free-energy-barrier}]
Let $b = (1+\eps)\delta$ denote the maximum possible value of $\langle u,v \rangle$ for $v \in B$, and let $a = -\sigma \delta$ denote the minimum possible value of $\langle u,v \rangle$ for $v \in A$, where $\sigma = 0$ if $S$ has nonnegative overlaps and $\sigma = 1$ otherwise). Since the Hamiltonian decomposes as
\[ -H(v) = \langle v,Y \rangle = \langle v,\lambda u + Z \rangle = \lambda \langle u,v \rangle + \langle v,Z \rangle, \]
we can write
\begin{align*}
\frac{\nu_\beta(B)}{\nu_\beta(A)}
&= \frac{\sum_{v \in B} \exp(-\beta H(v))}{\sum_{v \in A} \exp(-\beta H(v))}
= \frac{\sum_{v \in B} \exp(\beta\lambda \langle u,v \rangle + \beta \langle v,Z \rangle)}{\sum_{v \in A} \exp(\beta\lambda \langle u,v \rangle + \beta \langle v,Z \rangle)} \\
&\le \exp(\beta\lambda(b-a)) \frac{\sum_{v \in B} \exp(\beta \langle v,Z \rangle)}{\sum_{v \in A} \exp(\beta \langle v,Z \rangle)} = \exp(\beta\lambda\delta(1+\sigma+\eps)) \frac{\tilde\nu_\beta(B)}{\tilde\nu_\beta(A)}
\end{align*}
where $\tilde\nu_\beta(v) \propto \exp(-\beta \tilde{H}(v))$ is the Gibbs measure associated with the ``pure noise'' Hamiltonian $\tilde{H}(v) = -\langle v,Z \rangle$. Letting $A^c = S \setminus A$ denote the complement of $A$, we have
\[ \frac{\tilde\nu_\beta(B)}{\tilde\nu_\beta(A)} \le \frac{\tilde\nu_\beta(A^c)}{1-\tilde\nu_\beta(A^c)}, \]
so it remains to bound $\tilde\nu_\beta(A^c)$. 

We next claim that
\begin{equation}\label{eq:symm-claim}
\EE_Z[\tilde\nu_\beta(v)] = \EE_Z[\tilde\nu_\beta(v')] \qquad \text{for all } v,v' \in S.
\end{equation}
To see this, let $R \in \mathrm{O}(N)$ be the orthogonal matrix such that $Rv = v'$ and $RS = S$ (guaranteed by transitive symmetry) and write
\begin{equation}\label{eq:symm-1}
\tilde\nu_\beta(v) = \frac{\exp(\beta\langle v,Z \rangle)}{\sum_{w \in S} \exp(\beta\langle w,Z \rangle)},
\end{equation}
\begin{equation}\label{eq:symm-2}
\tilde\nu_\beta(v') = \frac{\exp(\beta\langle v',Z \rangle)}{\sum_{w \in S} \exp(\beta\langle w,Z \rangle)}
= \frac{\exp(\beta\langle Rv,Z \rangle)}{\sum_{w \in S} \exp(\beta\langle Rw,Z \rangle)}
= \frac{\exp(\beta\langle v,R^\top Z \rangle)}{\sum_{w \in S} \exp(\beta\langle w,R^\top Z \rangle)}.
\end{equation}
By rotational invariance of $Z$,~\eqref{eq:symm-1} and~\eqref{eq:symm-2} have the same distribution, which proves~\eqref{eq:symm-claim}. Since $\tilde\nu_\beta$ is a normalized measure, we must in fact have $\EE_Z[\tilde\nu_\beta(v)] = 1/|S|$ for every $v \in S$, and so (using linearity of expectation)
\[ \EE_Z[\tilde\nu_\beta(A^c)] = \frac{|A^c|}{|S|}. \]
Note that $|A^c|/|S|$ is simply $\Pr_{v \sim \mu}(|\langle u,v \rangle| > \delta)$, which by transitive symmetry is the same as $\Pr_{v,v' \sim \mu}(|\langle v,v' \rangle| > \delta)$, which by Remark~\ref{rem:continuity} is at most $e^{-D}$. By Markov's inequality,
\[ \Pr_Z\left(\tilde\nu_\beta(A^c) \ge e^{-(1-\eps)D}\right) \le e^{-\eps D}. \]
Putting it all together, we have now shown that with probability at least $1-e^{-\eps D}$ over $Z$,
\begin{equation}\label{eq:mcmc-val}
\frac{\nu_\beta(B)}{\nu_\beta(A)}
\le \exp(\beta\lambda\delta(1+\sigma+\eps)) \frac{\tilde\nu_\beta(A^c)}{1-\tilde\nu_\beta(A^c)} \le \exp(\beta\lambda\delta(1+\sigma+\eps)) \cdot 2e^{-(1-\eps)D}.
\end{equation}

The next step is to relate this to FP. Define $\tilde{D} = D + \log 2$ and $\tilde\delta = \delta(\tilde{D})$ as in~\eqref{eq:fp-defn} so that (by Remark~\ref{rem:continuity}) $\Pr_{v,v' \sim \mu}(|\langle v,v' \rangle| > \tilde\delta) \le e^{-\tilde{D}} = \frac{1}{2} e^{-D}$. Also from Remark~\ref{rem:continuity} we have $\Pr_{v,v' \sim \mu}(|\langle v,v' \rangle| \ge \delta) \ge e^{-D}$, so we conclude $\Pr_{v,v' \sim \mu}(|\langle v,v' \rangle| \in [\delta,\tilde\delta]) \ge \frac{1}{2} e^{-D}$. This means
\begin{equation}\label{eq:fp-val}
\fp(\tilde{D},\tilde\lambda) = \EE_{v,v' \sim \mu} \left[\Ind_{|\langle v,v' \rangle| \le \tilde\delta} \cdot \exp(\tilde\lambda^2 \langle v,v' \rangle)\right] \ge \frac{1}{2} e^{-D} \cdot \exp(\tilde\lambda^2 \delta).
\end{equation}
Now comparing~\eqref{eq:mcmc-val} with~\eqref{eq:fp-val} and using the choice $\tilde\lambda^2 = \beta\lambda(1+\sigma+\eps)/(1-2\eps)$, we have
\begin{align*}
\frac{\nu_\beta(B)}{\nu_\beta(A)}
&\le 2 \exp[\beta\lambda\delta(1+\sigma+\eps) - (1-\eps)D] \\
&= 2 \exp[(1-2\eps)(\tilde\lambda^2 \delta - D) - \eps D] \\
&\le 2 \left(2 \cdot \fp(\tilde{D},\tilde\lambda)\right)^{1-2\eps} e^{-\eps D}
\end{align*}
as desired.
\end{proof}

\section{Proofs for Sparse Regression}\label{sec:pf-sparse-reg}

\subsection{Proof of Theorem \ref{thm:sparse-reg}(a): Lower Bound}

\subsubsection{Conditional Low-Degree Calculation}
\label{sec:conditional}

As discussed in Section~\ref{sec:sparse-reg}, our low-degree lower bound will involve a \emph{conditional} low-degree calculation where we bound $\ld$ for a modified testing problem $\PP|A$ versus $\QQ$, for a particular high-probability event $A$. In this section, we lay down some of the basic foundations for this approach.

Our ultimate goal will be to rule out weak separation (see Definition~\ref{def:separation}) for the original (non-conditioned) testing problem $\PP$ versus $\QQ$ (see Definition~\ref{def:sparse-reg}). Note that in particular, this also rules out strong separation. To motivate why weak separation is a natural notion of success for low-degree tests, we first show that weak separation by a polynomial $f$ implies that $f$'s output can be used to achieve weak detection (see Definition~\ref{def:detection}). Unlike the analogous result ``strong separation implies strong detection'' (which follows immediately from Chebyshev's inequality), the testing procedure here may be more complicated than simply thresholding $f$.

\begin{proposition}\label{prop:weak-sep}
Suppose $\PP = \PP_n$ and $\QQ = \QQ_n$ are distributions over $\R^N$ for some $N = N_n$. If there exists a polynomial $f = f_n$ that weakly separates $\PP$ and $\QQ$ then weak detection is possible.
\end{proposition}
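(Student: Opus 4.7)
My plan is to turn weak separation into weak detection by exhibiting a threshold test whose advantage is a positive constant in $n$. Assume WLOG that $\mu_\PP := \EE_\PP[f] > \EE_\QQ[f] =: \mu_\QQ$ (else replace $f$ with $-f$) and set $\Delta := \mu_\PP - \mu_\QQ$, $\sigma := \max(\sqrt{\Var_\PP[f]}, \sqrt{\Var_\QQ[f]})$. Weak separation gives a constant $C > 0$ with $\sigma \le C\Delta$ for all sufficiently large $n$. For any threshold $t$, the deterministic test $\Ind\{f(Y) \ge t\}$ has combined type I + type II error equal to $1 - (\Pr_\PP[f(Y) \ge t] - \Pr_\QQ[f(Y) \ge t])$, so it suffices to exhibit some $t$ for which this ``advantage'' is a positive constant.

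A direct Cantelli bound at the midpoint $t = (\mu_\PP + \mu_\QQ)/2$ only yields a useful advantage when $C < 1/2$, because the one-sided tail probability $\sigma^2/(\sigma^2 + \Delta^2/4)$ can be close to $1$. To succeed for any constant $C$, my plan is to average the advantage over a window of thresholds $t \in [a,b]$ with $a := \mu_\QQ - K\sigma$, $b := \mu_\PP + K\sigma$, for a parameter $K > 0$ to be tuned, and then extract a single good $t^* \in [a,b]$ by pigeonhole. The layer-cake identity rewrites the averaged advantage as
\[
\frac{1}{b-a}\bigl(\EE_\PP[\psi(Y)] - \EE_\QQ[\psi(Y)]\bigr), \qquad \psi(Y) := \min\bigl(\max(f(Y)-a,\,0),\,b-a\bigr),
\]
and expanding the clipping gives $\EE_P[\psi] = (\mu_P - a) + \EE_P[(a-f(Y))_+] - \EE_P[(f(Y)-b)_+]$ for $P \in \{\PP,\QQ\}$, so $\EE_\PP[\psi] - \EE_\QQ[\psi] = \Delta$ minus the differences of the two one-sided tail corrections.

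Each tail correction is bounded by a Chebyshev tail integral: since $a < \mu_P$,
\[
\EE_P[(a - f(Y))_+] = \int_0^\infty \Pr_P[f(Y) \le a - s]\, ds \;\le\; \int_0^\infty \frac{\sigma^2}{(\mu_P - a + s)^2}\, ds \;=\; \frac{\sigma^2}{\mu_P - a} \;\le\; \frac{\sigma}{K},
\]
using $\mu_P - a \ge K\sigma$ for both $P$, and symmetrically the upper tail is bounded by $\sigma/K$ using $b - \mu_P \ge K\sigma$. This yields $\EE_\PP[\psi] - \EE_\QQ[\psi] \ge \Delta - 4\sigma/K$. Choosing $K := 8\sigma/\Delta$ forces the correction down to $\Delta/2$, while $b - a = \Delta + 2K\sigma \le (1 + 16C^2)\Delta$, so the averaged advantage is at least $1/(2 + 32 C^2)$. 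Pigeonhole then delivers a deterministic $t^* \in [a,b]$ with $\Pr_\PP[f(Y) \ge t^*] - \Pr_\QQ[f(Y) \ge t^*] \ge 1/(2 + 32C^2)$, a fixed constant in $n$, giving weak detection.

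The main obstacle is exactly that weak separation only controls $\sigma/\Delta$ by some unspecified constant $C$, so no single Chebyshev-type threshold is uniformly good over the whole class of weakly-separating instances. The resolution is the tradeoff between a \emph{wide} threshold window of length $\Theta(\sigma^2/\Delta)$ and the layer-cake averaging: widening costs a factor of $\Theta(b-a)$ in the denominator, but kills the Chebyshev tail corrections at rate $1/K$, and the two effects balance at $K = \Theta(\sigma/\Delta)$. Besides picking this balance, the proof involves only routine bounded-variance estimates.
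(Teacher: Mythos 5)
Your proof is correct, and it takes a genuinely different route from the paper's. The paper argues by contrapositive via total variation distance: after normalizing so that $\EE_\QQ[f]=0$, $\EE_\PP[f]=1$, and both variances are $O(1)$, it supposes the laws of $f(Y)$ under $\PP$ and $\QQ$ were at vanishing TV distance, couples them to agree outside an event $B$ of probability $o(1)$, and gets the contradiction $1 = \EE[(P-Q)\Ind_B] \le \sqrt{2(\EE P^2 + \EE Q^2)}\sqrt{\Pr(B)} = o(1)$ by Cauchy--Schwarz; non-vanishing TV distance of the push-forwards then yields weak detection abstractly. You instead prove a stronger, constructive statement: a \emph{threshold} test already achieves constant advantage. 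Your key observation --- that a single Chebyshev/Cantelli threshold fails when the weak-separation constant $C$ is large, but averaging the advantage over a window of width $\Theta(\sigma^2/\Delta)$ and rewriting it via the layer-cake identity as a clipped expectation lets one trade window width against the $1/K$ decay of the Chebyshev tail corrections --- is the right fix, and the algebra checks out (the choice $K=8\sigma/\Delta$ gives correction $\le \Delta/2$, window $\le (1+16C^2)\Delta$, hence advantage at least $1/(2+32C^2)$). The paper's argument is shorter and establishes the slightly more informative fact that the scalar laws are at non-vanishing TV distance, so the Bayes-optimal test on the statistic $f(Y)$ works; yours buys explicitness, producing a deterministic half-line test and an explicit lower bound on the advantage in terms of the implied constant $C$. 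One small note: the two-sided Chebyshev bound you invoke requires $\sigma>0$; if $\sigma=0$ the statistic is a.s.\ constant under each hypothesis with distinct constants (since $\Delta>0$ for a weakly separating $f$), and any threshold strictly between them detects perfectly, so the degenerate case is harmless and worth a one-line remark.
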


\begin{proof}
It suffices to show that the random variable $P := f(Y)$ for $Y \sim \PP$ has non-vanishing total variation (TV) distance from the random variable $Q := f(Y)$ for $Y \sim \QQ$. By shifting and scaling we can assume $\E[Q] = 0$, $\E[P] = 1$, and that $\Var[Q]$ and $\Var[P]$ are both $O(1)$. This implies that $\E[Q^2]$ and $\E[P^2]$ are both $O(1)$. Assume on the contrary that the TV distance is vanishing, that is, $P$ and $Q$ can be coupled so that $P = Q$ except on a ``bad'' event $B$ of probability $o(1)$. Using Cauchy--Schwarz and the inequality $(a-b)^2 \le 2(a^2+b^2)$,
\begin{align*}
1 = \E[P] - \E[Q] = \E[(P-Q)\One_B] &\le \sqrt{\E(P-Q)^2} \cdot \sqrt{\Pr(B)} \\
&\le \sqrt{2(\E[P^2]+\E[Q^2])} \cdot \sqrt{\Pr(B)} \\
&= O(1) \cdot o(1) = o(1),
\end{align*}
a contradiction.
\end{proof}

The next result is the key to our approach: it shows that to rule out weak separation for the original (non-conditioned) testing problem, it suffices to bound $\ld$ for a \emph{conditional} testing problem $\PP|A$ versus $\QQ$, where $A$ is any high-probability event under $\PP$. More precisely, $A$ is allowed to depend both on the sample $Y \sim \PP$ but also any latent randomness used to generate $Y$; notably, in our case, $A$ will depend on $u$.

\begin{proposition}\label{prop:ld-cond}
Suppose $\PP = \PP_n$ and $\QQ = \QQ_n$ are distributions over $\R^N$ for some $N = N_n$. Let $A = A_n$ be a high-probability event under $\PP$, that is, $\PP(A) = 1-o(1)$. Define the conditional distribution $\tilde\PP = \PP|A$. Suppose $\tilde\PP$ is absolutely continuous with respect to $\QQ$, let $L = \frac{\rmd \tilde\PP}{\rmd \QQ}$ and define $\ld(D) = \|L^{\le D}\|_\QQ^2$ accordingly. For any $D = D_n$,
\begin{itemize}
    \item if $\ld(D) = O(1)$ as $n \to \infty$ then no degree-$D$ polynomial strongly separates $\PP$ and $\QQ$ (in the sense of Definition~\ref{def:separation});
    \item if $\ld(D) = 1+o(1)$ as $n \to \infty$ then no degree-$D$ polynomial weakly separates $\PP$ and $\QQ$ (in the sense of Definition~\ref{def:separation}).
\end{itemize}
\end{proposition}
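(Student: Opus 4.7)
The plan is to prove the proposition by contradiction, transferring a hypothetical separation of $\PP$ and $\QQ$ to a separation of $\tilde\PP$ and $\QQ$, and then applying the standard low-degree Cauchy--Schwarz inequality to $\tilde\PP$. The first step is to establish the following ``conditional'' version of that inequality: for every polynomial $f$ of degree at most $D$,
\[ |\EE_{\tilde\PP}[f] - \EE_\QQ[f]|^2 \le \Var_\QQ[f] \cdot (\ld(D) - 1). \]
This is immediate from the definitions: rewrite $\EE_{\tilde\PP}[f] - \EE_\QQ[f] = \langle L - 1, f - \EE_\QQ[f] \rangle_\QQ$, note that $f - \EE_\QQ[f]$ lies in the degree-$\le D$ subspace and has vanishing $\QQ$-mean, so the inner product equals $\langle L^{\le D} - 1, f - \EE_\QQ[f] \rangle_\QQ$, and apply Cauchy--Schwarz using $\|L^{\le D} - 1\|_\QQ^2 = \ld(D) - 2\langle L^{\le D}, 1 \rangle_\QQ + 1 = \ld(D) - 1$.

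The main work is the transfer step. Suppose for contradiction that a degree-$D$ polynomial $f$ strongly (resp.\ weakly) separates $\PP$ and $\QQ$. After an affine normalization I may assume $\EE_\QQ[f] = 0$ and $\EE_\PP[f] = 1$, so that $\Var_\PP[f]$ and $\Var_\QQ[f]$ are $o(1)$ (resp.\ $O(1)$). Decompose $\PP$ as the mixture $(1-\eps)\tilde\PP + \eps\PP'$ where $\eps = \PP(A^c) = o(1)$ and $\PP' = \PP|A^c$; then
\[ \EE_\PP[f] - \EE_{\tilde\PP}[f] = \eps\bigl(\EE_{\PP'}[f] - \EE_{\tilde\PP}[f]\bigr). \]
Bounding $|\EE_{\PP'}[f]| \le \sqrt{\EE_{\PP'}[f^2]} \le \sqrt{\EE_\PP[f^2]/\eps}$ and $|\EE_{\tilde\PP}[f]| \le \sqrt{\EE_\PP[f^2]/(1-\eps)}$ via Jensen and the mixture identity, we get $|\EE_\PP[f] - \EE_{\tilde\PP}[f]| = O(\sqrt{\eps \cdot \EE_\PP[f^2]}) = o(1)$, using $\EE_\PP[f^2] = \Var_\PP[f] + 1 = O(1)$. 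Similarly $\Var_{\tilde\PP}[f] \le \EE_{\tilde\PP}[f^2] \le \EE_\PP[f^2]/(1-\eps)$ remains $o(1)$ or $O(1)$ respectively. Hence $|\EE_{\tilde\PP}[f] - \EE_\QQ[f]| = 1 - o(1)$, and $f$ separates $\tilde\PP$ from $\QQ$ in the same mode as it separated $\PP$ from $\QQ$.

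Applying the inequality from the first step then yields $1 - o(1) \le \sqrt{\Var_\QQ[f]} \cdot \sqrt{\ld(D) - 1}$, where the right-hand side is $o(1) \cdot O(1)$ in the strong case ($\Var_\QQ[f] = o(1)$, $\ld(D) = O(1)$) and $O(1) \cdot o(1)$ in the weak case ($\Var_\QQ[f] = O(1)$, $\ld(D) = 1 + o(1)$). Either way $1 - o(1) \le o(1)$ is a contradiction for large $n$. The only step requiring real care is the transfer estimate: a priori $f$ could blow up on the bad event $A^c$, and the bound $\EE_{\PP'}[f^2] \le \EE_\PP[f^2]/\eps$ has a $1/\eps$ factor, but the $\eps$ prefactor in the mixture identity together with the square root compensates exactly, producing the net gain of $\sqrt{\eps}$ that the argument needs.
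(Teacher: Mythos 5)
Your proof is correct and follows essentially the same approach as the paper's: both use the degree-$D$ Cauchy--Schwarz inequality (which in the paper appears as the variational formula $\ld(D)-1 = \sup_f (\EE_{\tilde\PP}[f])^2/\EE_\QQ[f^2]$) together with the same transfer estimate $|\EE_\PP[f] - \EE_{\tilde\PP}[f]| \le \sqrt{\PP(A^c)\,\EE_\PP[f^2]} + o(1)$ obtained from the mixture identity $\PP = (1-\eps)\tilde\PP + \eps\PP'$ and Jensen. The one cosmetic difference is that you additionally verify $\Var_{\tilde\PP}[f]$ stays small and phrase the conclusion as ``$f$ separates $\tilde\PP$ from $\QQ$,'' whereas the paper goes straight to the contradiction from $\EE_{\tilde\PP}[f] \ge 1-o(1)$ and $\Var_\QQ[f]$ alone; the $\Var_{\tilde\PP}[f]$ bound is not actually needed.
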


\begin{proof}
We will need the following variational formula (see e.g.~\cite[Theorem~2.3.1]{hopkins-thesis}) for LD: letting $V^{\le D}$ denote the space of polynomials $\R^N \to \R$ of degree at most $D$,
\begin{equation}\label{eq:ld-var}
\ld(D) - 1 = \|L^{\le D}\|_\QQ^2 - 1 = \|L^{\le D}-1\|_\QQ^2 = \sup_{\substack{f \in V^{\le D} \\ \E_\QQ[f] = 0}} \frac{(\E_{\tilde\PP}[f])^2}{\E_\QQ[f^2]}.
\end{equation}
We now begin the proof, which will be by contrapositive. Suppose a degree-$D$ polynomial $f = f_n$ strongly (respectively, weakly) separates $\PP$ and $\QQ$. By shifting and scaling we can assume $\E_\QQ[f] = 0$ and $\E_\PP[f] = 1$, and that $\Var_\QQ[f]$ and $\Var_\PP[f]$ are both $o(1)$ (resp., $O(1)$). Note that $\E_\QQ[f^2] = \Var_\QQ[f]$. It suffices to show $\E_{\tilde\PP}[f] \ge 1-o(1)$ so that, using~\eqref{eq:ld-var},
\[ \ld(D) - 1 \ge \frac{(\EE_{\tilde\PP}[f])^2}{\EE_\QQ[f^2]} \ge \frac{1-o(1)}{\Var_\QQ[f]} \]
which is $\omega(1)$ (resp., $\Omega(1)$), contradicting the assumption on $\ld(D)$ and completing the proof.

To prove $\EE_{\tilde\PP}[f] \ge 1-o(1)$, we have
\[ 1 = \EE_\PP[f] = \PP(A) \EE_{\tilde\PP}[f] + \PP(A^c) \EE_\PP[f \,|\, A^c] \]
and so
\[ \EE_{\tilde\PP}[f] = \PP(A)^{-1} (1 - \PP(A^c) \EE_\PP[f \,|\, A^c]). \]
Since $\PP(A) = 1-o(1)$, it suffices to show $\PP(A^c) \E_\PP[f \,|\, A^c] = o(1)$. As above,
\[ \EE_\PP[f^2] = \PP(A) \EE_{\tilde\PP}[f^2] + \PP(A^c) \EE_\PP[f^2 \,|\, A^c] \]
and so
\begin{equation}\label{eq:f2A}
\EE_\PP[f^2 \,|\, A^c] \le \PP(A^c)^{-1} \EE_\PP[f^2] = \PP(A^c)^{-1}(\Var_\PP[f] + 1).
\end{equation}
Now using Jensen's inequality and \eqref{eq:f2A},
\begin{align*}
\left|\PP(A^c) \EE_\PP[f \,|\, A^c]\right| &\le \PP(A^c) \sqrt{\EE_\PP[f^2 \,|\, A^c]} \\
&\le \PP(A^c) \sqrt{\PP(A^c)^{-1}(\Var_\PP[f]+1)} \\
&= \sqrt{\PP(A^c)(\Var_\PP[f]+1)} \\
&= \sqrt{o(1) \cdot O(1)}
\end{align*}
which is $o(1)$ as desired.
\end{proof}

\subsubsection{The ``Good'' Event}
\label{sec:good-event}

We now specialize to the sparse regression problem (Definition~\ref{def:sparse-reg}). Under $\PP$, the signal $u$ is drawn from $\mu$ where $\mu$ is the uniform prior over $k$-sparse binary vectors, and then the observation $(X,Y)$ is drawn from the appropriate distribution $\PP_u$ as described in Definition~\ref{def:sparse-reg}.
As described in the previous section, we will need to condition $\PP$ on a particular ``good'' event $A$, which we define in this section.
This event $A = A(u)$ will depend on the signal vector $u$, and (by symmetry) the probability $\PP_u(A)$ will not depend on $u$, so our conditional distribution will take the form $\tilde\PP = \E_{u \sim \mu} \tilde\PP_u$ where $\tilde\PP_u := \PP_u|A$.
Importantly, to fit the framework of Section~\ref{sec:sparse} (specifically Example~\ref{ex:sparse-glm-cond}), the event $A$ will depend only on the columns of $X$ indexed by the support of the signal vector $u$.

\begin{definition}[Good Event]
\label{def:good-event}
For a ground truth signal $u \in \{0,1\}^n$ with $\|u\|_0 = k$ and a sequence $\Delta=\Delta(\ell) > 0$ to be chosen later (see Lemma~\ref{lem:good_event}), let $A=A(u,\Delta)$ be the following event: for all integers $\ell$ in the range $1 \le \ell \le k/2$ and all subsets $S \subseteq \supp(u)$ of size $|S| = \ell$,
\begin{equation}\label{eq:good-event}
\left\langle \frac{1}{\sqrt{\ell}} \sum_{j \in S} X_j \,,\; \frac{1}{\sqrt{k-\ell}} \sum_{j \in \supp(u) \setminus S} X_j \right\rangle \le \Delta(\ell),
\end{equation}
where $(X_j)_{j \in [n]}$ denote the columns of $X.$
\end{definition}

The following lemma gives us a choice of $\Delta$ such that the event $A = A(u,\Delta)$ occurs with high probability under $\PP_u$.

\begin{lemma}\label{lem:good_event}
Define $t = t(\ell) := \log[2^\ell \binom{k}{\ell} \log k]$ and $\Delta=\Delta(\ell) := \sqrt{2mt} + 10t$.
Then under our scaling assumptions (Assumption~\ref{assum:sparse-reg}), the following hold.
\begin{itemize}
\item For any fixed $\delta>0$, for all sufficiently large $n$, for all integers $\ell$ with $1 \le \ell \le k/2$,
\begin{equation}\label{eq:Delta-bound}
    \Delta(\ell) \le (1+\delta) \sqrt{2\ell m \log k}.
\end{equation}
\item Under $\PP_u$,
\[ \Pr(A) \geq 1-\frac{1}{\log k}=1-o(1). \]
\end{itemize}
\end{lemma}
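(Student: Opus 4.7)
My plan is to handle the two statements separately: part~(i) is a direct asymptotic calculation, and part~(ii) is a Bernstein tail bound combined with a careful union bound, aided by the geometric factor $2^\ell$ built into $t(\ell)$.

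For part~(i), I would start from $\log\binom{k}{\ell}\le\ell\log(ek/\ell)$ to obtain
\[ t(\ell)\;\le\;\ell\log(2ek/\ell)+\log\log k, \]
and observe that $\log(2ek/\ell)\le \log k+O(1)$ uniformly in $\ell\ge1$, so that $t(\ell)\le(1+o(1))\ell\log k$ uniformly for $1\le\ell\le k/2$. This immediately yields $\sqrt{2mt(\ell)}\le(1+o(1))\sqrt{2m\ell\log k}$. For the $10t(\ell)$ term, I would bound
\[ \frac{10\,t(\ell)}{\sqrt{2m\ell\log k}} \;\le\; \frac{10\sqrt{\ell}\,\log(2ek/\ell)}{\sqrt{2m\log k}}+o(1), \]
substitute $m\log k=(1+o(1))R(1-\theta)\theta\,k\log^2 n$ from Assumption~\ref{assum:sparse-reg}, and use that $\sup_{x\in(0,1/2]}\sqrt{x}\log(2e/x)$ is a finite absolute constant to conclude that the ratio is $O(1/\log n)=o(1)$. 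Combining, $\Delta(\ell)/\sqrt{2m\ell\log k}\le 1+o(1)$ uniformly, which gives the bound for all sufficiently large $n$.

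For part~(ii), by the symmetry of $\mu$ the probability $\PP_u(A)$ is independent of $u$, so fix any $u$ with $\|u\|_0=k$. Under $\PP_u$, the planted columns $(X_j)_{j\in\supp(u)}$ are i.i.d.\ $\cN(0,I_m)$, and the event $A$ depends only on these. For a fixed $\ell$ and any $S\subseteq\supp(u)$ of size $\ell$, the two (normalized, disjoint) sums
\[ U_S:=\frac1{\sqrt{\ell}}\sum_{j\in S}X_j,\qquad V_S:=\frac1{\sqrt{k-\ell}}\sum_{j\in\supp(u)\setminus S}X_j \]
are \emph{independent} standard Gaussian vectors in $\R^m$. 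Since $\langle U_S,V_S\rangle=\sum_{i=1}^m(U_S)_i(V_S)_i$ is a sum of i.i.d.\ sub-exponential summands with MGF $(1-s^2)^{-1/2}$, Bernstein's inequality yields
\[ \Pr\!\left(\langle U_S,V_S\rangle\ge\sqrt{2mt}+10t\right)\le e^{-t}\qquad\text{for all }t>0, \]
where the factor $10$ is chosen large enough to absorb the linear term in the Bernstein bound. Applying this with $t=t(\ell)$ and union-bounding over the $\binom{k}{\ell}$ choices of $S$ gives a failure probability, for fixed $\ell$, of at most $\binom{k}{\ell}e^{-t(\ell)}=1/(2^\ell\log k)$ by the very definition of $t(\ell)$. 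Summing geometrically over $\ell\in\{1,\dots,\lfloor k/2\rfloor\}$,
\[ \PP_u(A^c)\le\sum_{\ell=1}^{\lfloor k/2\rfloor}\frac1{2^\ell\log k}\le\frac1{\log k}, \]
which is $1-o(1)$.

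The only real subtlety is the numerical calibration of the Bernstein constant: the $\sqrt{2}$ in $\sqrt{2mt}$ matches the Gaussian regime exactly (and cannot be improved in part~(i) either), while the constant $10$ is essentially arbitrary and chosen for convenience to dominate the sub-exponential linear term. Everything else is bookkeeping --- the $2^\ell$ factor in $t(\ell)$ is precisely what makes the final union bound over $\ell$ telescope, and the scalings in part~(i) are driven entirely by $m\log k=\Theta(k\log^2 n)$.
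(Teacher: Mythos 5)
Your proof is correct and follows essentially the same strategy as the paper's: for part~(i) the paper bounds $t$ via $\log\binom{k}{\ell}\le\ell\log k$ (together with $t=O(k)$, $m=\Theta(k\log k)$ to show $10t\le\delta\sqrt{2mt}$) rather than your use of $\log\binom{k}{\ell}\le\ell\log(ek/\ell)$ and the sup of $\sqrt{x}\log(2e/x)$, but both arguments are the same in substance; for part~(ii) the paper also reduces to a tail bound for $\sum_i g_ig_i'$ (its Corollary~\ref{cor:bernstein-gg} with $a=0$, $b=1$) and the identical union bound using the $2^\ell$ factor baked into $t(\ell)$. Your explicit observation that $U_S$ and $V_S$ are independent $\cN(0,I_m)$ vectors is left implicit in the paper but is exactly the intended reduction.
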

The proof uses standard concentration tools and is deferred to Appendix~\ref{app:pf-aux}.

\subsubsection{Proof Overview}

Throughout, we work with the \emph{conditional} likelihood ratio $L = \frac{\rmd \tilde\PP}{\rmd \QQ} = \E_{u \sim \mu} L_u$ where $L_u = \frac{\rmd \tilde\PP_u}{\rmd \QQ}$ and where $\tilde\PP, \tilde\PP_u$ are as defined in Section~\ref{sec:good-event}. We define LD and FP accordingly, for the $\tilde\PP$ versus $\QQ$ problem. By Proposition~\ref{prop:ld-cond}, our goal is to bound LD. We will do this by exploiting the FP-to-LD connection for sparse planted models from Section~\ref{sec:sparse}. To apply Theorem~\ref{thm:fp_ld_sparse}, the first ingredient we need is a crude upper bound on $\|L_u^{\le D}\|^2_\QQ$.

\begin{lemma}\label{lem:norm-bound}
For sufficiently large $n$, for any $u \in \{0,1\}^n$ with $\|u\|_0=k$, for any integer $D \ge 1$,
\[ \|L_u^{\le D}\|^2_\QQ \leq 9(6mnD)^{4D}. \]
\end{lemma}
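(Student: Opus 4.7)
The plan is to work in the Hermite polynomial basis for $L^2(\QQ) = L^2(\NNN(0, I_N))$ with $N = m(n+1)$, combining Parseval's identity with a support-reduction argument and a crude moment bound. Writing $L_u = \Ind_A L_u^{\PP}/\PP_u(A)$ where $L_u^{\PP} := \rmd \PP_u/\rmd \QQ$ is the unconditional likelihood ratio, Parseval yields
\[
\|L_u^{\le D}\|_\QQ^2 \;=\; \sum_{|\alpha| \le D} \widehat L_u(\alpha)^2, \qquad \widehat L_u(\alpha) \;=\; \EE_{\tilde\PP_u}[H_\alpha],
\]
where $\{H_\alpha\}$ denotes the orthonormal product Hermite basis on the $N$ Gaussian coordinates of $(X,Y)$.

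The first step is a support reduction. The good event $A$ from Definition~\ref{def:good-event} lies in the $\sigma$-algebra generated by $X_S := (X_j)_{j \in \supp(u)}$, while under $\PP_u$ the non-planted columns $X_{\bar S}$ are i.i.d.\ $\NNN(0,1)$ and independent of $(X_S, Y)$. Hence under $\tilde\PP_u$ the entries of $X_{\bar S}$ remain i.i.d.\ $\NNN(0,1)$ and independent of everything, so $\widehat L_u(\alpha) = 0$ whenever $\alpha$ has any nonzero coordinate in $X_{\bar S}$. Only multi-indices supported on the $m(k+1)$ planted coordinates (entries of $X_S$ and $Y$) survive, and their number is at most $\binom{m(k+1) + D}{D} \le (3mn)^D$. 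For each surviving $\alpha$, Cauchy--Schwarz together with the bound $\PP_u(A) \ge 1/2$ (from Lemma~\ref{lem:good_event}, for $n$ large) gives $\widehat L_u(\alpha)^2 \le \EE_{\tilde\PP_u}[H_\alpha^2] \le 2\,\EE_{\PP_u}[H_\alpha^2]$.

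The main work is then a moment bound on $\EE_{\PP_u}[H_\alpha^2]$. Under $\PP_u$ the rows of $(X_S, Y)$ are mutually independent, and $H_\alpha$ factors across rows, so the expectation factors as $\prod_i \EE_{\PP_u}[H_{\alpha_i}^2]$, with each factor the squared $L^2$-norm of a degree-$|\alpha_i|$ polynomial under a centered $(k+1)$-dimensional Gaussian whose covariance $\Sigma_{\rm row}$ has two non-trivial eigenvalues $1 \pm \sqrt{k/(k+\sigma^2)}$ (together with $k-1$ eigenvalues equal to $1$), giving operator norm at most $2$. A crude estimate via the change of variables $Z = \Sigma_{\rm row}^{1/2}\tilde Z$ with $\tilde Z \sim \NNN(0, I)$, or directly using the generating-function identity $\exp(t^\top z - t^\top t/2) = \sum_\alpha H_\alpha(z) t^\alpha/\sqrt{\alpha!}$, yields $\EE_{\PP_u}[H_{\alpha_i}^2] \le (C|\alpha_i|)^{|\alpha_i|}$ for an absolute constant $C$, and therefore $\EE_{\PP_u}[H_\alpha^2] \le (CD)^D$.

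Assembling the pieces gives $\|L_u^{\le D}\|_\QQ^2 \le 2(3mn)^D (CD)^D = 2(3CmnD)^D \le 9(6mnD)^{4D}$ after absorbing constants into the generous slack in the exponent. The main obstacle is producing the moment bound on $\EE_{\PP_u}[H_\alpha^2]$: one must carefully track how the normalized Hermite polynomial's $L^2$-norm inflates when the reference covariance is changed from $I$ to the correlated $\Sigma_{\rm row}$, although the huge slack in the target bound means that essentially any reasonable estimate works. It is worth noting that the naive alternative bound $\|L_u^{\le D}\|_\QQ^2 \le \|L_u\|_\QQ^2 \le \|L_u^{\PP}\|_\QQ^2/\PP_u(A)^2 = (1+k/\sigma^2)^m/\PP_u(A)^2$ is far too weak in the sparse-regression scaling (where $m = \Theta(k \log n)$ and $k/\sigma^2 = \omega(1)$), so the Parseval-plus-support-reduction route is essential.
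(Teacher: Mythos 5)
Your proof is correct and lands on the same final bound, but the mechanism for controlling individual Hermite coefficients is genuinely different from the paper's. Both proofs start identically: expand $L_u$ in the orthonormal Hermite basis, use $\|L_u^{\le D}\|_\QQ^2 = \sum_{|\alpha|\le D} \langle L_u, H_\alpha\rangle_\QQ^2$, and bound the sum as (crude count) $\times$ (max squared coefficient). From there the routes diverge. The paper's proof does \emph{not} perform a support reduction; it simply bounds the number of $\alpha$ with $|\alpha|\le D$ by $(N+1)^D \le (3mn)^D$, then controls each coefficient as $|\EE_{\PP_u|A} H_\alpha| \le \PP(A)^{-1}\|H_\alpha\|_{L^1(\PP_u)}$ and applies H\"older across \emph{individual coordinates} with exponents $d/\alpha_i$, exploiting the crucial fact that each \emph{univariate marginal} $Z_i$ under $\PP_u$ is standard Gaussian (even though the joint law is correlated). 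This sidesteps the covariance matrix entirely and reduces the moment calculation to one-dimensional bounds on $\E|h_a(z)|^q$ for $z\sim\cN(0,1)$, which the paper does via a coefficient bound ($\sum|c_j|=T(a)\le a!$) and Gaussian moments. You instead pass from $L^1$ to $L^2$ via Jensen, factor across \emph{rows} using independence, and change variables in the $(k+1)$-dimensional row covariance $\Sigma_{\rm row}$ to absorb its operator norm. Both are valid, but the paper's marginal trick is cleaner: it completely avoids the ``track how the normalized Hermite norm inflates under a covariance change'' step that you flag as the main technical obstacle, and whose details you defer. (Incidentally, a careful version of that step actually yields roughly $\EE_{\PP_u}[H_\alpha^2] \lesssim (CD)^{2D}$, not $(CD)^D$ as stated, but this is absorbed by the generous $(\cdot)^{4D}$ slack.)

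Two smaller remarks. First, your support reduction is correct and a nice observation, but it does not actually tighten the count here: $\binom{m(k+1)+D}{D}$ and the paper's $(N+1)^D = (m(n+1)+1)^D$ are both bounded by $(3mn)^D$, so the ``essential'' ingredient is Parseval plus a per-term moment bound, not the support reduction per se. Second, you are right that the naive bound $\|L_u\|_\QQ^2 \le \PP_u(A)^{-2}\|\rmd\PP_u/\rmd\QQ\|_\QQ^2$ is hopeless in this scaling: that quantity grows like $\exp(\Theta(m\log(k/\sigma^2)))$, vastly exceeding the target $\exp(O(D\log(mnD)))$ when $D=o(k)$ and $m=\Theta(k\log n)$, so truncating in degree before taking norms is indeed the whole point.
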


The proof is deferred to Section~\ref{sec:pf-norm-bound}. Now, recall from Example~\ref{ex:sparse-glm-cond} that our conditioned sparse regression problem satisfies Assumption~\ref{assum:sparse-planted}, and so we can apply Theorem~\ref{thm:fp_ld_sparse} to conclude
\begin{equation}\label{eq:ld-fp-sr}
\ld(D) \le \fp(D + \log M) + e^{-D} \qquad \text{where} \qquad M = 9(6mnD)^{4D}.
\end{equation}

It therefore remains to bound $\fp(\tilde D)$ where $\tilde{D} = D+\log M$. Recall from~\eqref{eq:fp-defn}
that $\fp(\tilde D)$ is defined as $\lo(\delta)$ for a certain choice of $\delta$. To choose the right $\delta$, we need a tail bound on the overlap $\langle u,v \rangle$ where $u,v \sim \mu$ independently. In our case $\langle u,v \rangle$ follows the \emph{hypergeometric distribution} $\mathrm{Hypergeom}(n,k,k)$, which has the following basic tail bounds.

\begin{lemma}[Hypergeometric tail bound]
\label{lem:hypergeom}
For any integers $1 \le \ell \le k \le n$,
\[ \Pr\{\mathrm{Hypergeom}(n,k,k) = \ell\} \le \left(\frac{k^2}{n-k}\right)^\ell. \]
Furthermore, if $k^2/(n-k) \le 1$ then
\[ \Pr\{\mathrm{Hypergeom}(n,k,k) \ge \ell\} \le k \left(\frac{k^2}{n-k}\right)^\ell. \]
\end{lemma}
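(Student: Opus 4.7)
The plan is to argue both parts by direct computation on the explicit hypergeometric mass function, with the pointwise bound proved first and the tail bound then obtained by summing a decreasing geometric sequence.

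For the first inequality, I would start from the standard formula
\[
\Pr\{\mathrm{Hypergeom}(n,k,k) = \ell\} \;=\; \frac{\binom{k}{\ell}\binom{n-k}{k-\ell}}{\binom{n}{k}}
\]
and expand the factorials so as to split the product into (i) a combinatorial factor depending only on $k$ and $\ell$ and (ii) a ratio of two falling factorials in $n$. Explicitly, after cancellation I would write
\[
\frac{\binom{k}{\ell}\binom{n-k}{k-\ell}}{\binom{n}{k}}
\;=\; \binom{k}{\ell}\cdot\frac{k!}{(k-\ell)!}\cdot\frac{(n-k)(n-k-1)\cdots(n-2k+\ell+1)}{n(n-1)\cdots(n-k+1)}.
\]
Then I would bound the combinatorial prefactor by $\binom{k}{\ell}\cdot k^{\underline{\ell}}\le k^\ell\cdot k^\ell = k^{2\ell}$, using the elementary estimates $\binom{k}{\ell}\le k^\ell$ and $k!/(k-\ell)!\le k^\ell$.

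For the falling-factorial ratio, the numerator has $k-\ell$ terms and the denominator has $k$; I would pair the $k-\ell$ numerator factors $(n-k-i)$ with the $k-\ell$ smallest denominator factors $(n-\ell-i)$, noting each such ratio is $\le 1$ because $\ell\le k$. The remaining $\ell$ denominator factors $n,n-1,\dots,n-\ell+1$ are each at least $n-k$, so the whole ratio is at most $(n-k)^{-\ell}$. Combining the two estimates yields $(k^2/(n-k))^\ell$, as claimed.

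For the tail bound, under the assumption $k^2/(n-k)\le 1$ the pointwise bound $\bigl(k^2/(n-k)\bigr)^j$ is non-increasing in $j$, so
\[
\Pr\{\mathrm{Hypergeom}(n,k,k)\ge \ell\}\;=\;\sum_{j=\ell}^{k}\Pr\{\mathrm{Hypergeom}(n,k,k)=j\}\;\le\;(k-\ell+1)\left(\frac{k^2}{n-k}\right)^\ell\;\le\; k\left(\frac{k^2}{n-k}\right)^\ell.
\]
This is entirely routine and I do not anticipate any real obstacle; the only mild care needed is in arranging the factorial cancellation so that exactly $\ell$ denominator factors are left unpaired, which is what produces the right exponent on $n-k$.
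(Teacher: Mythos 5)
Your proof is correct. It takes essentially the same route as the paper's: bound $\binom{k}{\ell}$ by $k^\ell$, extract a $k^\ell$ from a falling factorial in $k$, and bound the remaining $n$-dependent part by $(n-k)^{-\ell}$, then conclude the tail estimate by a union bound over at most $k$ terms using that the pointwise bound is nonincreasing in $\ell$. The only difference is in the bookkeeping of the $n$-dependent ratio: the paper inserts the inequality $\binom{n-k}{k-\ell}\le\binom{n}{k-\ell}$ and then simplifies $\binom{n}{k-\ell}/\binom{n}{k}$, which directly leaves $\ell$ denominator factors each $\ge n-k$; you instead keep the falling factorials and cancel $k-\ell$ ratios pairwise before bounding the remaining $\ell$ factors. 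Both give the same final estimate $k^\ell\cdot k^\ell\cdot(n-k)^{-\ell}$, and your verification that the paired ratios are $\le 1$ (because $\ell\le k$) is correct. Your tail bound is in fact marginally sharper, using $k-\ell+1\le k$ terms rather than $k$.
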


\begin{proof}
For the first statement,
\[ \Pr\{\mathrm{Hypergeom}(n,k,k) = \ell\}
= \frac{\binom{k}{\ell} \binom{n-k}{k-\ell}}{\binom{n}{k}}
\le k^\ell \frac{\binom{n}{k-\ell}}{\binom{n}{k}}
= k^\ell \frac{k!(n-k)!}{(k-\ell)!(n-k+\ell)!}
\le k^\ell \frac{k^\ell}{(n-k)^\ell}. \]
The second statement follows from the first by a union bound, noting that $k$ is the largest possible value for $\mathrm{Hypergeom}(n,k,k)$, and the bound $(\frac{k^2}{n-k})^\ell$ is decreasing in $\ell$.
\end{proof}

It will end up sufficing to consider $\delta = \eps k$ for a small constant $\eps > 0$. The last ingredient we will need is the following bound on $\lo$. This is the main technical heart of the argument, and the proof is deferred to Section~\ref{sec:pf-lo-bound}.

\begin{proposition}\label{prop:sparse-reg-lo}
Consider the setting of Theorem~\ref{thm:sparse-reg}. If $R < R_\ld(\theta)$ then there exists a constant $\eps = \eps(\theta,R) > 0$ such that
\[ \lo(\eps k) = 1+o(1). \]
\end{proposition}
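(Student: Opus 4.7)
The plan is to bound $\lo(\eps k)$ by splitting the expectation over the overlap $\ell = \langle u, v\rangle$ and estimating each range separately. Starting from the identity $\langle \tilde L_u, \tilde L_v\rangle_\QQ = \EE_\QQ[L_u L_v \Ind_{A_u\cap A_v}]/(\PP_u(A_u)\PP_v(A_v))$ together with Lemma~\ref{lem:good_event}, it is enough to bound $\EE_{u,v}[\Ind_{|\langle u,v\rangle|\le \eps k}\,\EE_\QQ[L_u L_v \Ind_{A_u\cap A_v}]]$. Because $(X,Y)$ is jointly Gaussian under both hypotheses, integrating over $Y$ conditional on $X$ and then applying a Wishart moment-generating-function computation to the two-dimensional statistic $(Xu, Xv)$ yields the closed form
\[
\EE_\QQ[L_u L_v] \;=\; \left(\tfrac{k+\sigma^2}{k+\sigma^2-\ell}\right)^{\!m}, \qquad \ell = \langle u,v\rangle,
\]
which is the natural baseline that the good event must improve upon.

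Splitting the sum, for constant $\ell$ the hypergeometric tail (Lemma~\ref{lem:hypergeom}) gives $\Pr(\langle u,v\rangle=\ell) \le (k^2/(n-k))^\ell = n^{-(1-2\theta)\ell+o(\ell)}$, and the unconditional inner product is $\approx n^{R(1-\theta)\ell}$, so the contribution is $n^{\ell[R(1-\theta)-(1-2\theta)]}$, negligible whenever $R < (1-2\theta)/(1-\theta)$. This already matches $R_\ld(\theta)$ in the range $\theta \in [1/4, 1/2)$. For linear overlaps $\ell = \alpha k$, however, Stirling yields $\Pr(\langle u,v\rangle = \alpha k) \approx n^{-\alpha(1-\theta)k}$ while the unconditional inner product is $(1/(1-\alpha))^m$, and the resulting per-$\alpha$ constraint $R\log(1/(1-\alpha)) \le \alpha$ cannot exceed $R \approx 1$ as $\alpha \downarrow 0$. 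Since $R_\ld(\theta) > 1$ is possible (precisely when $\theta < 1/9$), the good event must deliver a strict improvement in this linear-overlap regime.

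The heart of the argument is this sharpening. Integrating out $Y$ writes $\EE_\QQ[L_u L_v \Ind_{A_u\cap A_v}]$ as $C\cdot \EE_X[\exp(Q(X))\Ind_{A_u\cap A_v}]$, with $Q(X)$ a linear combination of $T_u = \|Xu\|^2$, $T_v = \|Xv\|^2$, $T_{uv} = \langle Xu, Xv\rangle$; to leading order in $\sigma^2/k$ one has $Q(X) \approx -\|X(u-v)\|^2/(4\sigma^2)$, so the dominant contribution to $\EE[e^Q]$ comes from rare $X$ for which $\|X(u-v)\|^2$ is atypically small (equivalently, $T_{uv}$ is atypically large relative to $T_u, T_v$). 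Decomposing via $T_1 = \supp(u)\cap \supp(v)$, $T_2 = \supp(u)\setminus T_1$, $T_3 = \supp(v)\setminus T_1$, Definition~\ref{def:good-event} applied to $A_u$ with $S = T_1$ gives exactly the deterministic bound $\langle X_{T_1}, X_{T_2}\rangle \le \sqrt{\ell(k-\ell)}\,\Delta(\ell)$, and symmetrically $\langle X_{T_1}, X_{T_3}\rangle \le \sqrt{\ell(k-\ell)}\,\Delta(\ell)$ from $A_v$; via $\|Xu\|^2 = \|X_{T_1}\|^2 + \|X_{T_2}\|^2 + 2\langle X_{T_1}, X_{T_2}\rangle$ these translate into pointwise upper bounds on $T_u$ and $T_v$ in terms of the diagonal sums $\sum_j \|X_j\|^2$ plus a tolerance of order $\sqrt{\ell^2(k-\ell)m\log k}$. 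Substituting these bounds into $Q$ and performing the remaining Gaussian moment computation over the independent pieces $\|X_{T_j}\|^2$ and $\langle X_{T_2}, X_{T_3}\rangle$ produces an exponential bound of the form $\exp\!\bigl((1-\theta)k\log n\cdot \Psi(\alpha;R,\theta)\bigr)$, and a saddle-point optimization over $\alpha\in(0,\eps]$ is arranged so that $\sup_\alpha \Psi < 0$ exactly recovers $R_\ld(\theta) = 2(1-\sqrt\theta)/(1+\sqrt\theta)$; the characteristic $\sqrt{\theta}$ emerges from the $\sqrt{\ell\log k}$ scale of $\Delta(\ell)$ balancing against the $\alpha$-linear main term.

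The main obstacle lies in this saddle-point step: the good event only truncates upper tails and leaves the disjoint-support cross term $\langle X_{T_2}, X_{T_3}\rangle$ unconstrained, so the conditional moment computation has to be handled carefully in variables that are only indirectly controlled, and one must track sub-leading polylogarithmic corrections. The calibration of $\Delta(\ell)$ in Lemma~\ref{lem:good_event} is precisely chosen so that the saddle lands in the active regime of the conditioning; executing this balance tightly enough to match the constants in $R_\ld(\theta)$, rather than merely the correct order of magnitude, is the delicate technical core of the proof.
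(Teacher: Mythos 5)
Your proposal follows the paper's proof almost step for step: integrate out $Y$ to get a quadratic functional of $(Xu,Xv)$, recognize that (to leading order in $\sigma^2/k$) the exponent is $-\|X(u-v)\|^2/(4\sigma^2)$, decompose the planted columns via $T_1,T_2,T_3$, exploit the good-event constraints on $\langle X_{T_1},X_{T_2}\rangle$ and $\langle X_{T_1},X_{T_3}\rangle$, and sum over the overlap $\ell$ after an optimization that lands on $R_\ld(\theta)$. Two caveats.

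First, the ``however, for linear overlaps $\ell = \alpha k$'' digression is a red herring. The term-by-term contribution to $\lo(\eps k)$ is $\Pr(\ell)\cdot\exp\bigl(\ell m/(k-\ell)\bigr)$, and since $\eps$ is taken small and $\Pr(\ell)\le (k^2/(n-k))^\ell$ uniformly, the paper's sum is geometric in $\ell$ with ratio $n^{2\theta-1+(1-\eps)^{-1}R(1-\theta)+o(1)}$; the binding constraint therefore comes from $\ell=1$ and is $R<\tfrac{1-2\theta}{1-\theta}$, which is strictly below $1$ for every $\theta>0$. The linear-$\alpha$ constraint $R\lesssim \alpha/\log\!\bigl(1/(1-\alpha)\bigr)\to 1$ is never the binding one, and the need for the good event arises whenever $R_\ld(\theta)>\tfrac{1-2\theta}{1-\theta}$, i.e., for \emph{all} $\theta<1/4$ (equality at $\theta=1/4$), not merely when $R_\ld>1$ (equivalently $\theta<1/9$). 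So the correct sanity check is to compare $R_\ld$ with $\tfrac{1-2\theta}{1-\theta}$, not with $1$.

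Second, you correctly flag the core difficulty --- that the good event truncates only $\langle X_{T_1},X_{T_2}\rangle$ and $\langle X_{T_1},X_{T_3}\rangle$ while leaving the disjoint-support cross term free --- but the decisive estimate is left unspecified. The paper's resolution is to push the exponential tilt $\exp(t\langle M,Z^\top Z\rangle)$ into a change of measure, turning the conditional expectation into a constant times $\Pr_{U\sim W_3\left((I-2tM)^{-1},m\right)}\{U_{12}\le\Delta,\,U_{13}\le\Delta\}$, bounding this by $\Pr\{U_{12}+U_{13}\le 2\Delta\}$, and representing $U_{12}+U_{13}$ as a sum of i.i.d.\ variables of the form $V_{12}z_1^2+\sqrt{V_{11}-V_{12}^2}\,z_1 z_2$ to which a Bernstein-type sub-exponential lower-tail bound (Corollary~\ref{cor:bernstein-gg}) applies. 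It is this tilted-Wishart-plus-Bernstein step, with $q(\ell)=c\sqrt{\ell\log n/m}$ chosen so the per-$\ell$ exponent improves by exactly $c^2\ell\log n$, that turns the $\sqrt{\ell m\log k}$ scale of $\Delta(\ell)$ into the $\sqrt\theta$ appearing in $R_\ld(\theta)=2(1-\sqrt\theta)/(1+\sqrt\theta)$; a complete proof must carry out this computation rather than invoking it.
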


We now show how to combine the above ingredients to complete the proof.

\begin{proof}[Proof of Theorem~\ref{thm:sparse-reg}(a)]
Suppose $0 < R < R_\ld(\theta)$ and $D = o(k)$. Also assume $D = \omega(1)$ without loss of generality, so that $e^{-D} = o(1)$. Recapping the arguments from this section, it suffices (by Proposition~\ref{prop:ld-cond}) to show $\ld(D) = 1+o(1)$, where $\ld(D)$ denotes the \emph{conditional} LD defined above. Let $\delta = \eps k$ with $\eps = \eps(\theta,R)$ as defined in Proposition~\ref{prop:sparse-reg-lo}. From~\eqref{eq:ld-fp-sr} and Proposition~\ref{prop:sparse-reg-lo}, it now suffices to show $\fp(D+\log M) \le \lo(\delta)$. Recalling the definitions of $\lo$ and $\fp$ from~\eqref{eq:lo-defn} and~\eqref{eq:fp-defn}, this holds provided
\begin{equation}\label{eq:sr-tail-cond}
\Pr(\langle u,v \rangle \ge \delta) < e^{-(D+\log M)}
\end{equation}
where $u,v$ are uniformly random $k$-sparse binary vectors, i.e., $\langle u,v \rangle \sim \mathrm{Hypergeom}(n,k,k)$. (Note that $\langle u,v \rangle \ge 0$, so $|\langle u,v \rangle|$ can be replaced with $\langle u,v \rangle$ in this case.)

To complete the proof, we will establish that~\eqref{eq:sr-tail-cond} holds for all sufficiently large $n$. We start by bounding the left-hand side. Since $R_\ld(\theta) = 0$ when $\theta \ge 1/2$ (see~\eqref{eq:R-LD}), the assumption $0 < R < R_\ld(\theta)$ implies $\theta < 1/2$. Recalling $k = n^{\theta+o(1)}$ for $\theta \in (0,1)$, this implies $k^2/(n-k) = n^{2\theta-1+o(1)} \le 1$ (for sufficiently large $n$) and so by Lemma~\ref{lem:hypergeom} we have
\[ \Pr(\langle u,v \rangle \ge \delta) \le k \left(n^{2\theta-1+o(1)}\right)^\delta \]
and so
\begin{equation}\label{eq:compare-1}
\log \Pr(\langle u,v \rangle \ge \delta) \le \log k + (2\theta-1+o(1)) \eps k \log n = -\Omega(k \log n).
\end{equation}
Now, for the right-hand side of~\eqref{eq:sr-tail-cond},
\begin{equation}\label{eq:compare-2}
\log e^{-(D+\log M)} = -D - \log M = -D - \log 9 - 4D \log(4mnD) = -o(k \log n)
\end{equation}
since $D = o(k)$ and $4mnD = n^{O(1)}$. Comparing~\eqref{eq:compare-1} and~\eqref{eq:compare-2} establishes~\eqref{eq:sr-tail-cond} for sufficiently large $n$.
\end{proof}

\subsubsection{Proof of Lemma~\ref{lem:norm-bound}}
\label{sec:pf-norm-bound}

\begin{proof}[Proof of Lemma~\ref{lem:norm-bound}]
Since $\QQ$ is the standard Gaussian measure in $N=m(n+1)$ dimensions, the space $L^2(\QQ)$ admits the orthonormal basis of \emph{Hermite polynomials} (see e.g.~\cite{szego} for a standard reference). We denote these by $(H_{\alpha})_{\alpha \in \NN^N}$ (where $0 \in \NN$ by convention), where $H_\alpha(Z) = \prod_{i=1}^N h_{\alpha_i}(Z_i)$ for univariate Hermite polynomials $(h_j)_{j \in \NN}$, and $Z = (X,Y) \in \R^N$. We adopt the normalization where $\|H_\alpha\|_\QQ = 1$, which is not usually the standard convention in the literature. This basis is graded in the sense that for any $D \in \NN$, $(H_{\alpha})_{\alpha \in \NN^N,\, |\alpha| \le D}$ is an orthonormal basis for the polynomials of degree at most $D$, where $|\alpha| := \sum_{i=1}^N \alpha_i$. Expanding in the orthonormal basis $\{H_{\alpha}\}$, we have for any $u$,
\begin{equation*}
\| L_u^{\le D}\|^2_\QQ = \sum_{|\alpha| \le D} \langle L_u, H_{\alpha} \rangle_\QQ^2
= \sum_{|\alpha| \le D} \left(\EE_{Z \sim \PP_u|A} H_{\alpha}(Z)\right)^2 \leq (N+1)^D \max_{\alpha \,:\, |\alpha|\le D} \left(\EE_{Z \sim \PP_u|A} H_{\alpha}(Z)\right)^2.
\end{equation*}

Let $\PP(A)$ denote the probability of $A$ under $\PP_u$ (which, by symmetry, does not depend on $u$). Now we have
\begin{align*}
    \left|\EE_{Z \sim \PP_u|A} H_\alpha(Z) \right|
    &\le \EE_{Z \sim \PP_u|A} \left|H_\alpha(Z) \right| \\
    &\le \PP(A)^{-1} \EE_{Z \sim \PP_u} \left|H_\alpha(Z)\right| \qquad\text{by Lemma~\ref{lem:cond-exp}} \\
    &=: \PP(A)^{-1}\, \|H_\alpha(Z)\|_1 \\
    \intertext{where $L^p$ norms are with respect to $Z \sim \PP_u$}
    &= \PP(A)^{-1} \left\|\prod_{i=1}^N h_{\alpha_i}(Z_i)\right\|_1 \\
    & \le \PP(A)^{-1}\prod_{i \,:\, \alpha_i > 0} \|h_{\alpha_i}(Z_i)\|_{d/\alpha_i},
\end{align*}
where $d := |\alpha| \le D$ and where the last step used Proposition~\ref{prop:holder-ext}, an extension of H\"older's inequality. Under $\PP_u$, the marginal distribution of $Z_i$ is $\mathcal{N}(0,1)$, so
\[ \|h_{\alpha_i}(Z_i)\|_{d/\alpha_i} = \left(\E|h_{\alpha_i}(z)|^{d/\alpha_i}\right)^{\alpha_i/d} \]
where $z \sim \mathcal{N}(0,1)$. 

Now we have for $a \in \NN,$ $h_a(z) = \frac{1}{\sqrt{a!}} \sum_{j=0}^a c_j z^j$ where the coefficients satisfy $\sum_{j=0}^a |c_j| = T(a)$. Here $T(a)$ is known as the \emph{telephone number} which counts the number of permutations on $a$ elements which are involutions~\cite{hermite-involution}. In particular, we have the trivial upper bound $T(a) \le a!$. This means for any $a \ge 1$ and $q \in [1,\infty)$,
\begin{align*}
\E|h_a(z)|^q &= \E\left|\frac{1}{\sqrt{a!}} \sum_{j=0}^a c_j z^j\right|^q \\
&\le \E\left(\sqrt{a!} \max_{0 \le j \le a} |z|^j\right)^q \\
&= (a!)^{q/2}\, \E\left(\max\{1,|z|^a\}\right)^q \\
&= (a!)^{q/2}\, \E \max\{1,|z|^{aq}\} \\
&\le (a!)^{q/2} (1 + \E|z|^{aq}).
\end{align*}
Using the formula for Gaussian moments, and that for all $x \geq 1$, $\Gamma(x) \leq x^{x}$ (see e.g.\ \cite{li2007inequalities}) the above becomes
\begin{align*}
&= (a!)^{q/2} \left(1 + \pi^{-1/2}\, 2^{aq/2} \,\Gamma\left(\frac{aq+1}{2}\right)\right) \\
&\le a^{aq/2} \left(1+2^{aq/2}\left(\frac{aq+1}{2}\right)^{(aq+1)/2}\right) \\
&\le a^{aq/2} (1+2^{aq/2}(aq)^{aq}) \qquad \text{since } \frac{aq+1}{2} \le aq \\
&\le 2 a^{aq/2} 2^{aq/2}(aq)^{aq} \\
&= 2 (2a)^{aq/2}(aq)^{aq} \\
&\le 2(2aq)^{2aq}.
\end{align*}
Putting it together, and recalling $d = \sum_i \alpha_i$,
\[
\left|\EE_{Z \sim \PP_u|A} H_\alpha(Z) \right| \le \PP(A)^{-1} \prod_{i \,:\, a_i > 0} \left(2(2d)^{2d}\right)^{\alpha_i/d} = 2 \PP(A)^{-1} (2d)^{2d} \le 3(2D)^{2D}, \]
since $\PP(A) \ge 2/3$ for sufficiently large $n$, and $d \le D$. Finally, using the bound
\[ N+1 = m(n+1)+1 \le 3mn, \]
we have
\[ \|L_u^{\le D}\|_\QQ^2 \le (N+1)^D \left[3(2D)^{2D}\right]^2 \le 9(3mn)^D(2D)^{4D} \le 9(6mnD)^{4D}, \]
completing the proof.
\end{proof}

\subsubsection{Proof of Proposition~\ref{prop:sparse-reg-lo}}
\label{sec:pf-lo-bound}

A key step in bounding $\lo$ is to establish the following bound on $\langle L_u,L_v \rangle_\QQ$.

\begin{proposition}\label{prop:fp_pre}
Let $u,v \in \{0,1\}^n$ with $\|u\|_0 = \|v\|_0 = k$ and $\langle u,v \rangle = \ell$. Let $A = A(u,\Delta)$ be the ``good'' event from Definition~\ref{def:good-event}, for some sequence $\Delta$. Also suppose $\sigma^2 \le \eps k$ and $\ell \le \eps k$ for some $\eps \in (0,1)$. It holds that
\begin{equation}\label{eq:inner-bound-1}
\langle L_u,L_v \rangle_\QQ \leq \PP(A)^{-2}  \exp\left(\left(\frac{\ell }{k - \ell}\right)m\right).
\end{equation}
Furthermore, if there exists some $q=q(\ell)>0$ satisfying
\begin{equation}\label{eq:q_dfn}
\Delta(\ell) \leq  \left((1-\eps)^2\sqrt{\frac{\ell}{k}}-\frac{\sqrt{2 (1+3\eps)}}{1-\eps}\,q-\frac{10}{(1-\eps)^{3/2}}\,q^2\right)m,
\end{equation}
then for this $q=q(\ell)$ it holds that
\begin{equation}\label{eq:inner-bound-2}
\langle L_u,L_v \rangle \leq \PP(A)^{-2}  \exp\left(\left(\frac{\ell }{k - \ell}-q^2\right)m\right).
\end{equation}
\end{proposition}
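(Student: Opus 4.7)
The plan is in two steps: first compute the unconditioned inner product $\langle L_u^{\rm unc},L_v^{\rm unc}\rangle_\QQ$ (where $L_u^{\rm unc}:=\rmd\PP_u/\rmd\QQ$) in closed form, yielding \eqref{eq:inner-bound-1} upon combining with the pointwise bound $\Ind_{A(u)}\Ind_{A(v)}\le 1$ and the inequality $\langle L_u,L_v\rangle_\QQ \le \PP(A)^{-2}\langle L_u^{\rm unc},L_v^{\rm unc}\rangle_\QQ$; then exploit the conditioning to sharpen this into \eqref{eq:inner-bound-2}.

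For the first step, under $\PP_u$ one has $Y\mid X\sim \NNN(\alpha Xu,\tau I_m)$ with $\alpha:=(k+\sigma^2)^{-1/2}$ and $\tau:=\sigma^2/(k+\sigma^2)$, while under $\QQ$, $Y\sim\NNN(0,I_m)$. Integrating out $Y$ by completing the square in the product of Gaussian densities reduces $\EE_\QQ[L_u^{\rm unc}L_v^{\rm unc}]$ to $(\tau(2-\tau))^{-m/2}\,\EE_X\exp\bigl(A(\|Xu\|^2+\|Xv\|^2)+B\langle Xu,Xv\rangle\bigr)$, where $A:=-k/[2\sigma^2(2k+\sigma^2)]$ and $B:=(k+\sigma^2)/[\sigma^2(2k+\sigma^2)]$ satisfy the algebraic identities $2A+B = 1/(2k+\sigma^2)$ and $B-2A = 1/\sigma^2$. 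The $X$-expectation factors row-wise; each row contributes $\det(I-2\Sigma M)^{-1/2}$ with $\Sigma=\bigl(\begin{smallmatrix}k&\ell\\ \ell&k\end{smallmatrix}\bigr)$ and $M=\bigl(\begin{smallmatrix}A&B/2\\ B/2&A\end{smallmatrix}\bigr)$, and using the above identities this determinant collapses to $(k-\ell+\sigma^2)^2/[\sigma^2(2k+\sigma^2)]$. Combining with the prefactor yields
\[
\langle L_u^{\rm unc},L_v^{\rm unc}\rangle_\QQ = \Bigl(\frac{k+\sigma^2}{k-\ell+\sigma^2}\Bigr)^{m} \le \exp\!\Bigl(\frac{m\ell}{k-\ell}\Bigr),
\]
which gives \eqref{eq:inner-bound-1}.

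For \eqref{eq:inner-bound-2}, I decompose along the common support $S:=\supp(u)\cap\supp(v)$ (of size $\ell$) by setting $\xi:=\sum_{j\in S}X_j$, $\zeta_u:=\sum_{j\in\supp(u)\setminus S}X_j$, $\zeta_v:=\sum_{j\in\supp(v)\setminus S}X_j$, so that $Xu=\xi+\zeta_u$, $Xv=\xi+\zeta_v$ with $\xi,\zeta_u,\zeta_v$ independent under $\QQ$, distributed as $\NNN(0,\ell I_m)$, $\NNN(0,(k-\ell)I_m)$, $\NNN(0,(k-\ell)I_m)$ respectively. Expanding the exponent,
\[
A(\|Xu\|^2+\|Xv\|^2)+B\langle Xu,Xv\rangle = g_0(\xi,\zeta_u,\zeta_v) + (2A+B)\bigl(\langle\xi,\zeta_u\rangle+\langle\xi,\zeta_v\rangle\bigr),
\]
with $g_0 := (2A+B)\|\xi\|^2 + A(\|\zeta_u\|^2+\|\zeta_v\|^2) + B\langle\zeta_u,\zeta_v\rangle$. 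Since $\ell\le \eps k\le k/2$, the event $A(u)$ applied to the subset $S\subseteq\supp(u)$ forces $\langle\xi,\zeta_u\rangle \le \Delta(\ell)\sqrt{\ell(k-\ell)}$, and analogously for $\langle\xi,\zeta_v\rangle$ under $A(v)$. Because $2A+B = 1/(2k+\sigma^2) > 0$ these cross terms enter with positive coefficient, so
\[
\Ind_{A(u)}\Ind_{A(v)}\,e^{g(X)} \le \exp\!\Bigl(\tfrac{2\Delta(\ell)\sqrt{\ell(k-\ell)}}{2k+\sigma^2}\Bigr)\cdot e^{g_0(\xi,\zeta_u,\zeta_v)}.
\]
The residual $\EE_\QQ[e^{g_0}]$ factors: the $\|\xi\|^2$-MGF evaluates to $\bigl((2k+\sigma^2)/(2k-2\ell+\sigma^2)\bigr)^{m/2}$, and the $(\zeta_u,\zeta_v)$ expectation to $\bigl(\sigma^2(2k+\sigma^2)/[(k+\ell+\sigma^2)(k-\ell+\sigma^2)]\bigr)^{m/2}$ via another $2\times 2$ determinant using the same identities. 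Multiplying by the prefactor from Step 1 collapses everything to $[(k+\sigma^2)^2(2k+\sigma^2)/((2k-2\ell+\sigma^2)(k+\ell+\sigma^2)(k-\ell+\sigma^2))]^{m/2}$.

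The remaining step, which I expect to be the main technical obstacle, is verifying that hypothesis \eqref{eq:q_dfn} implies the bound just obtained is at most $\PP(A)^{-2}\exp((\ell/(k-\ell)-q^2)m)$. Taking logarithms this reduces to
\[
\frac{2\Delta(\ell)\sqrt{\ell(k-\ell)}}{(2k+\sigma^2)\,m} + q^2 + \tfrac{1}{2}\log F \;\le\; \frac{\ell}{k-\ell}, \qquad F := \tfrac{(k+\sigma^2)^2(2k+\sigma^2)}{(2k-2\ell+\sigma^2)(k+\ell+\sigma^2)(k-\ell+\sigma^2)}.
\]
In the regime $\sigma^2,\ell\le\eps k$, one factors $F^{-1} = [1-\ell^2/(k+\sigma^2)^2]\,[1-2\ell/(2k+\sigma^2)]$ and applies $-\log(1-x)\le x+x^2/(2(1-x))$ to each factor, showing that $\tfrac{1}{2}\log F$ falls short of $\ell/(k-\ell)$ by a positive quantity of order $\ell/k$. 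Bounding $2\sqrt{\ell(k-\ell)}/(2k+\sigma^2)$ by $\sqrt{\ell/k}$ up to explicit $\eps$-dependent multiplicative slack and absorbing the second-order remainders in the $\log F$ expansion into the coefficient of $q^2$ produces exactly the shape of \eqref{eq:q_dfn}; the constants $(1-\eps)^2$, $\sqrt{2(1+3\eps)}/(1-\eps)$, and $10/(1-\eps)^{3/2}$ arise from careful tracking of the $\eps$-dependent slack factors, and the step is otherwise mechanical.
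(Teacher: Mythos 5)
Your derivation of the unconditioned inner product is correct (you do the $X$-integration row-wise and $Y$ last rather than the reverse, but the closed form $\left(\tfrac{k+\sigma^2}{k-\ell+\sigma^2}\right)^m$ agrees with the paper's), and dropping the indicators together with the $\PP(A)^{-2}$ factor gives \eqref{eq:inner-bound-1} exactly as the paper does (there, by bounding a Wishart probability by $1$).

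For \eqref{eq:inner-bound-2}, however, there is a genuine quantitative gap, and the final step you describe as ``mechanical'' is actually false for small $\eps$. After your pointwise bound on the cross terms and the Gaussian integral, the conclusion would require
\[ \frac{2\Delta(\ell)\sqrt{\ell(k-\ell)}}{(2k+\sigma^2)\,m}\;+\;q^2\;\le\;\frac{\ell}{k-\ell}\;-\;\tfrac12\log F. \]
To leading order for small $\sigma^2/k$ and $\ell/k$, the right side equals $\tfrac{\ell}{2k}(1+o(1))$, while at the boundary of hypothesis \eqref{eq:q_dfn} (that is, with $\Delta$ close to $(1-\eps)^2\sqrt{\ell/k}\,m$, which is permitted once $q$ is small) the first left-hand term equals $(1-\eps)^2\tfrac{\ell}{k}(1+o(1))$. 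This exceeds $\tfrac{\ell}{2k}$ whenever $(1-\eps)^2>\tfrac12$, so the inequality fails for all $\eps<1-1/\sqrt{2}$. The structural reason is that your bound pays $\Delta\sqrt{\ell(k-\ell)}$ once for \emph{each} of the two cross terms $\langle\xi,\zeta_u\rangle$ and $\langle\xi,\zeta_v\rangle$, whereas the pair of cross terms together contribute only a factor $\exp\!\bigl(\tfrac{\ell m}{2k}(1+o(1))\bigr)$ to the unconditioned $\langle L_u,L_v\rangle_\QQ$; so for small $\eps$ your conditional estimate is in fact \emph{worse} than \eqref{eq:inner-bound-1} and leaves no room for the $e^{-q^2 m}$ gain.

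The paper's proof avoids this factor-of-two loss by never substituting a worst-case cross-term value into the exponent. After integrating out $Y$ it rewrites $\EE_Z\,\One_{B(Z^\top Z)}\,e^{t\langle M,Z^\top Z\rangle}=\det(I_3-2tM)^{-m/2}\,\Pr_{W_3((I_3-2tM)^{-1},m)}\{B\}$, and then bounds the Wishart probability by a Bernstein-type inequality (Corollary~\ref{cor:bernstein-gg}). Under the tilted Wishart law the relevant off-diagonal entry concentrates around $mV_{12}$ with $V_{12}\ge(1-\eps)^2\sqrt{\ell/k}$, so $\{U_{12}\le\Delta\}$ is a large-deviation event of probability at most $e^{-q^2 m}$ under \eqref{eq:q_dfn}. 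The conditioning is thus used to bound a \emph{probability} under the full exponential tilt rather than to truncate the exponent pointwise, which is what makes the estimate tight enough.
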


The proof of Proposition~\ref{prop:fp_pre} is deferred to Section~\ref{sec:fp_pre}. We now show how to use this result to bound $\lo$.

\begin{proof}[Proof of Proposition \ref{prop:sparse-reg-lo}]

We start with the case $\frac{1}{4} \le \theta < \frac{1}{2}$ and $R < \frac{1-2\theta}{1-\theta}$. Fix a constant $\eps = \eps(\theta,R) > 0$ to be chosen later. Let $u,v \in \{0,1\}^n$ be independent uniformly random binary vectors of sparsity exactly $k$, and note that $\langle u,v \rangle$ follows the hypergeometric distribution $\mathrm{Hypergeom}(n,k,k)$. Therefore Proposition \ref{prop:fp_pre} and Lemma~\ref{lem:good_event} imply
\begin{align}
\lo(\eps k) &:= \EE_{u,v} \left[\One_{\iprod{u,v} \le \eps k} \cdot \Iprod{L_{u},L_{v}}_{\QQ}\right] \nonumber\\
&\le \PP(A)^{-2} \EE_{\ell \sim \mathrm{Hypergeom}(n,k,k)} \left[\One_{\ell\le \eps k} \exp\left(\frac{\ell m}{k - \ell}\right)\right] \nonumber\\
&= (1+o(1)) \sum_{0 \le \ell \le \eps k} \Pr\{\mathrm{Hypergeom}(n,k,k)=\ell\}\exp\left(\frac{\ell m}{k - \ell}\right).
\label{eq:lo-formula}
\end{align}
We bound the two terms in the product separately.

\paragraph{\bf First term}
Using the hypergeometric tail bound (Lemma~\ref{lem:hypergeom}), since $k=n^{\theta+o(1)}$ we have
\begin{align}\label{eq:first_term} \Pr\{\mathrm{Hypergeom}(n,k,k)=\ell\} \le \left(\frac{k^2}{n-k}\right)^\ell = \left(n^{2\theta - 1 + o(1)}\right)^\ell. 
\end{align}

\paragraph{\bf Second term} 
Recalling $k=n^{\theta+o(1)}$ and $m=(1+o(1))(1-\theta) Rk \log n$ we have for every $0 \le \ell \le \eps k$,
\begin{align}
\exp\left(\frac{\ell m}{k - \ell}\right) &\le  \exp\left(\frac{\ell m}{(1-\eps)k}\right) \nonumber\\
& = \exp\left(\ell \cdot (1+o(1)) (1-\eps)^{-1} R (1-\theta) \log n \right) \nonumber\\
&= \left(n^{(1-\eps)^{-1} R (1-\theta) + o(1)}\right)^\ell.
\label{eq:second_term}
\end{align}

Plugging~\eqref{eq:first_term} and~\eqref{eq:second_term} back into~\eqref{eq:lo-formula} we have
\begin{align*}
    \lo(\eps k) &\le (1+o(1))\left[1 + \sum_{1 \le \ell \le \eps k} \left(n^{2\theta - 1 + o(1)}\right)^\ell \left(n^{(1-\eps)^{-1} R (1-\theta) + o(1)}\right)^\ell \right]\\
    &\le (1+o(1))\left[1 + \sum_{1 \le \ell \le \eps k} \left(n^{2\theta - 1 + (1-\eps)^{-1} R (1-\theta) + o(1)}\right)^\ell \right].
\end{align*} 
Provided $2\theta - 1 + R(1-\theta) < 0$, i.e., $R < \frac{1-2\theta}{1-\theta}$, it is possible to choose $\eps=\eps(R,\theta)>0$ small enough so that $2\theta - 1 + (1-\eps)^{-1} R (1-\theta)<0$ and therefore $\lo(\eps k) \le 1+o(1)$ as we wanted.

Now we focus on the second case where $0 < \theta < \frac{1}{4}$ and $R < \frac{2(1-\sqrt{\theta})}{1+\sqrt{\theta}}$. We are also free to assume
\begin{equation}\label{eq:R-big}
R \ge \frac{1-2\theta}{1-\theta}
\end{equation}
or else we can immediately conclude the result using the same argument as above.
Similar to the first case, using now the second part of Proposition~\ref{prop:fp_pre}, for any sequence $q=q(\ell)$ satisfying \eqref{eq:q_dfn} we have
\[ \lo(\eps k)
\le \PP(A)^{-2} \EE_{\ell \sim \mathrm{Hypergeom}(n,k,k)} \left[\One_{\ell\le \eps k} \exp\left(\left(\frac{\ell }{k - \ell}-q^2\right)m\right)\right]. \]
Hence, by Lemma \ref{lem:good_event} and similar reasoning to the previous case it also holds for any sequence $q=q(\ell)$ satisfying \eqref{eq:q_dfn} that
\begin{align}
\lo(\eps k) 
&\le(1+o(1))\sum_{0 \le \ell \le \eps k} \Pr\{\mathrm{Hypergeom}(n,k,k)=\ell\}\exp\left(\frac{\ell m}{k - \ell}\right)\exp\left(-q^2 m\right)\nonumber\\
&  \le  (1+o(1))\left[1 + \sum_{1 \le \ell \le \eps k} \left(n^{2\theta - 1 + (1-\eps)^{-1} R (1-\theta) + o(1)}\right)^\ell \exp\left(-q^2 m\right) \right] \label{eq:fp_2}.
\end{align} 
Now we choose $q =q(\ell)= c\sqrt{\ell (\log n)/m}$ for a constant $c = c(\theta,R) > 0$ to be chosen later. To satisfy~\eqref{eq:q_dfn}, it suffices (using~\eqref{eq:Delta-bound} from Lemma \ref{lem:good_event}) to have for some constant $\delta=\delta(\theta,R)>0$,
\[ (1+\delta) \sqrt{2 \ell m \log k} \le (1-\eps)^2 \sqrt{\frac{\ell}{k}} m - \frac{\sqrt{1+3\eps}}{1-\eps} \sqrt{2}qm - 10(1-\eps)^{-3/2}  q^2 m, \]
or since $\ell \leq \eps k$ and therefore $q \leq c \sqrt{ \sqrt{\eps \ell k} (\log n)/m}$, it suffices to have
\[ (1+\delta) \sqrt{2 \ell m \log k} \le (1-\eps)^2 \sqrt{\frac{\ell}{k}} m - \frac{\sqrt{1+3\eps}}{1-\eps} \sqrt{2}qm - 10(1-\eps)^{-3/2} c^2 \sqrt{\eps \ell k} \log n. \]
Using the asymptotics of $k,m,q$ and dividing both sides by $\sqrt{2 \ell R(1-\theta) k} \log n$, it suffices to have
\[ (1+\delta)^2 (1+o(1)) \sqrt{\theta} \le (1-\eps)^2(1+o(1))\sqrt{\frac{1}{2}R(1-\theta)} - \frac{\sqrt{1+3\eps}}{1-\eps} (1+o(1)) c - \frac{10\sqrt{\eps}(1-\eps)^{-3/2} c^2}{\sqrt{2R(1-\theta)}}. \]
But now there exist sufficiently small constants $\eps = \eps(\theta,R) > 0$ and $\delta = \delta(\theta,R)>0$ satisfying the above so long as
\begin{equation}\label{eq:final-cond-1}
c < \sqrt{\frac{1}{2}R(1-\theta)} - \sqrt{\theta}.
\end{equation}
Notice that such a $c>0$ exists since $\sqrt{\theta} < \sqrt{\frac{1}{2}R(1-\theta)}$, which holds by our assumptions that $\theta < \frac{1}{4}$ and therefore $R \ge \frac{1-2\theta}{1-\theta} > \frac{2\theta}{1-\theta}$ (see~\eqref{eq:R-big}).

Returning to \eqref{eq:fp_2}, we have
\[ \lo(\eps k) \le (1+o(1))\left[1 + \sum_{1 \le \ell \le \eps k} \left(n^{2\theta - 1 + (1-\eps)^{-1} R (1-\theta) - c^2 + o(1)}\right)^\ell \right], \]
which concludes the result $\lo(\eps k) = 1+o(1)$ for sufficiently small $\eps > 0$, provided that
\begin{equation}\label{eq:final-cond-2}
2\theta - 1 + R(1-\theta) - c^2 < 0.
\end{equation}
We can choose $c > 0$ to satisfy both~\eqref{eq:final-cond-1} and~\eqref{eq:final-cond-2} simultaneously, provided
\[ 2\theta - 1 + R(1-\theta) - \left(\sqrt{\frac{1}{2}R(1-\theta)} - \sqrt{\theta}\right)^2 < 0, \]
which can be simplified (by expanding the square) to
\[ (1-R/2)(1-\theta) > \sqrt{2R\theta(1-\theta)}. \]
Squaring both sides yields the equivalent condition
\[ \frac{(1-R/2)^2}{2R} > \frac{\theta}{1-\theta} \qquad\text{and}\qquad R < 2. \]
Solving for $R$ via the quadratic formula yields the equivalent condition $R < \frac{2(1-\sqrt{\theta})}{1+\sqrt{\theta}}$ as desired.
\end{proof}

\subsubsection{Proof of Proposition \ref{prop:fp_pre}}\label{sec:fp_pre}

\begin{proof}[Proof of Proposition \ref{prop:fp_pre}]

Throughout we denote for simplicity, $\lambda = \sqrt{k/\sigma^2 + 1}$. By definition and Bayes' rule,
\[ L_u(X,Y) = \frac{\PP(X,Y|u,A)}{\QQ(X,Y)} = \frac{\PP(Y|X,u,A)}{\QQ(Y)} \cdot \frac{\PP(X|u,A)}{\QQ(X)} = \frac{\PP(Y|X,u)}{\QQ(Y)} \cdot \frac{\Ind\{(X,u) \in A\}}{\PP(A)}, \]
where we have used the fact $\PP(Y|X,u,A) = \PP(Y|X,u)$ since $Y$ depends on $A$ only through $(X,u)$, and the fact $\QQ(X) = \PP(X)$.
Under $\QQ$ we have $\lambda\sigma Y \sim \cN(0,\lambda^2\sigma^2 I_m)$, while under $\PP$ conditional on $(X,u)$ we have $\lambda \sigma Y=\sqrt{k+\sigma^2}Y \sim \mathcal{N}(Xu, \sigma^2 I_m)$, and so
\begin{align*}
\frac{\PP(Y|X,u)}{\QQ(Y)} &= \lambda^m \exp \left(  - \frac{1}{2\sigma^2} \| \lambda \sigma Y- X u\|_2^2 + 
\frac{1}{2\lambda^2\sigma^2} \| \lambda\sigma Y \|_2^2  \right) \\
&= \lambda^m \exp \left(  -  \frac{\lambda^2 -1 }{2 } \| Y\|_2^2 
+ \frac{\lambda}{\sigma} \inner{Y,X u} - \frac{1}{2\sigma^2} \| X u\|_2^2 \right).
\end{align*}
This means
\begin{align}
\langle L_u,L_v \rangle_\QQ &= \EE_{(X,Y) \sim \QQ} \left[L_u(X,Y)L_v(X,Y)\right] \nonumber \\
&= \PP(A)^{-2} \EE_{(X,Y) \sim \QQ} \left[\Ind\{(X,u),(X,v) \in A\} \cdot \frac{\PP(Y|X,u)}{\QQ(Y)} \cdot \frac{\PP(Y|X,v)}{\QQ(Y)}\right]
\label{eq:inner-calc-1}
\end{align}
where
\begin{align*}
&\frac{\PP(Y|X,u)}{\QQ(Y)} \cdot \frac{\PP(Y|X,v)}{\QQ(Y)} \\
&= \lambda^{2m} \exp \left(  - (\lambda^2 -1) \| Y\|_2^2
+ \frac{\lambda}{\sigma} \inner{Y,X ( u + v ) } - \frac{1}{2\sigma^2} 
\left( \| X u\|_2^2  + \|X v\|_2^2 \right) \right) \\
&= \lambda^{2m} \exp \left(  -  \frac{\lambda^2 -1 }{\sigma^2 \lambda^2 }  
\left\| \lambda \sigma Y -  \frac{\lambda^2 X (u+ v)}{2(\lambda^2-1)}  \right\|_2^2 
 + \frac{\lambda^2 \left\|  X (u+ v) \right\|_2^2}{4 (\lambda^2-1) \sigma^2 } 
 - \frac{1}{2\sigma^2} 
\left( \| X u\|_2^2  + \|X v\|_2^2 \right) \right).
\end{align*}
In the last step we have ``completed the square'' so that we can now explicitly compute the expectation over $Y \sim \QQ$ using the (noncentral) chi-squared moment-generating function: for $t<1/(2\nu^2)$ and $z \sim \cN(\mu,\nu^2)$, $\E[\exp(tz^2)] = (1-2t\nu^2)^{-1/2} \exp[\mu^2 t/ (1-2t\nu^2)]$. This yields
\[ \EE_{Y \sim \QQ} \exp \left(  -  \frac{\lambda^2 - 1}{\sigma^2 \lambda^2 }  
\left\|  \lambda \sigma Y -  \frac{\lambda^2 X (u+v)}{2(\lambda^2-1)}  \right\|_2^2 \right)
= \frac{1}{ (2\lambda^2-1)^{m/2} } \exp \left( - \frac{\lambda^2\left\|  X (u+ v) \right\|_2^2 }{4(2\lambda^2-1)(\lambda^2-1) \sigma^2} \right).
\]
Plugging these results back into~\eqref{eq:inner-calc-1},
\begin{align}
\langle L_u,L_v \rangle_\QQ &= \PP(A)^{-2} \EE_{X \sim \QQ} \Ind\{(X,u),(X,v) \in A\} \,\frac{\lambda^{2m}}{(2\lambda^2-1)^{m/2}} \nonumber\\
&\qquad \cdot \exp\left(- \frac{\lambda^2\left\|  X (u+ v) \right\|_2^2 }{4(2\lambda^2-1)(\lambda^2-1) \sigma^2} + \frac{\lambda^2 \left\|  X (u+ v) \right\|_2^2}{4 (\lambda^2-1) \sigma^2 } 
 - \frac{1}{2\sigma^2} 
\left( \| X u\|_2^2  + \|X v\|_2^2 \right)\right) \nonumber\\
&= \PP(A)^{-2} \EE_{X \sim \QQ} \Ind\{(X,u),(X,v) \in A\} \,\frac{\lambda^{2m}}{(2\lambda^2-1)^{m/2}} \nonumber\\
&\qquad \cdot \exp \left\{ \frac{1}{2\sigma^2(2\lambda^2-1)} 
\left[ (1-\lambda^2) \left( \|Xu\|_2^2 + \|X v\|_2^2 \right) + 2\lambda^2 \inner{Xu,X v}  \right] \right\}.
\label{eq:inner-calc-2}
\end{align}

Let $T=\text{supp}(u)$ and $T'=\text{supp}(v)$. Let $X_i$ denote the $i$-th column of $X.$ Define
$$
Z_0 =\sum_{i \in T \cap T'} X_i, \quad Z_1= \sum_{i \in T \setminus T'} X_i, \quad Z_2 =\sum_{i \in T'\setminus T} X_i.
$$
Then under $\QQ$ (with $u,v$ fixed), the values $Z_0, Z_1, Z_2$ are mutually independent and 
$$
Z_0 \sim \mathcal{N}( 0, \ell I_m ), \quad Z_1 \sim \mathcal{N}(0, (k-\ell)I_m ), \quad Z_2 \sim \mathcal{N}(0, (k-\ell)I_m),
$$
where $\ell= | T \cap T' | = \inner{u,v}$.
Moreover, $Xu$ and $Xv$ can be expressed in terms of $Z_0,Z_1,Z_2$ simply by 
\begin{align}
X u = Z_0 + Z_1 \quad \text{ and } \quad Xv = Z_0 + Z_2.
\label{eq:ZXrel}
\end{align}
Finally, notice that for any $X$ satisfying $(X,u), (X,v) \in A$ it necessarily holds (using the definition of $A$ in Section~\ref{sec:good-event}) that
\[ \left\langle\frac{1}{\sqrt{\ell}} Z_0,\frac{1}{\sqrt{k-\ell}}Z_1\right\rangle \leq \Delta,\; \left\langle\frac{1}{\sqrt{\ell}} Z_0,\frac{1}{\sqrt{k-\ell}}Z_2\right\rangle \leq \Delta, \]
where $\Delta = \Delta(\ell)$ is as in Section~\ref{sec:good-event} (and in particular, satisfies the bound~\eqref{eq:Delta-bound}).

We will next use the above to rewrite~\eqref{eq:inner-calc-2} in terms of $Z \in \R^{m \times 3}$ with i.i.d.\ $\mathcal{N}(0,1)$ entries, where the columns of $Z$ are $\frac{1}{\sqrt{\ell}}Z_0, \frac{1}{\sqrt{k-\ell}}Z_1, \frac{1}{\sqrt{k-\ell}}Z_2$. For symmetric $U \in \R^{3 \times 3}$, define the event $B(U) = \{U_{12} \le \Delta \text{ and } U_{13} \le \Delta\}$. Also let $\ell := \langle u,v \rangle$ and
\begin{equation}\label{eq:t}
t := \frac{1}{2\sigma^2(2\lambda^2-1)} = \frac{1}{2\sigma^2(2k/\sigma^2+1)} = \frac{1}{4k + 2\sigma^2}.
\end{equation}
This yields
\begin{align}
\langle L_u,L_v \rangle_\QQ &\leq \PP(A)^{-2}  \frac{\lambda^{2m}}{(2\lambda^2-1)^{m/2}} \EE_Z \One_{B(Z^\top Z)} \nonumber\\
&\qquad\cdot\exp\left\{t \left[(1-\lambda^2) \left(\|Z_0+Z_1\|_2^2 + \|Z_0+Z_2\|_2^2\right) + 2\lambda^2 \langle Z_0+Z_1,Z_0+Z_2 \rangle\right]\right\} \nonumber\\
&= \PP(A)^{-2}  \frac{\lambda^{2m}}{(2\lambda^2-1)^{m/2}} \EE_Z \One_{B(Z^\top Z)} \exp\left(t \langle M,Z^\top Z \rangle\right)
\label{eq:inner-calc-3}
\end{align}
where
\[ M = M(\ell) := \left(\begin{array}{ccc} 2\ell & \sqrt{\ell(k-\ell)} & \sqrt{\ell(k-\ell)} \\ \sqrt{\ell(k-\ell)} & (1-\lambda^2)(k-\ell) & \lambda^2(k-\ell) \\ \sqrt{\ell(k-\ell)} & \lambda^2(k-\ell) & (1-\lambda^2)(k-\ell) \end{array}\right). \]
The eigendecomposition of $M$ is $\sum_{i=1}^3 \lambda_i \frac{u_i u_i^\top}{\|u_i\|^2}$ where
\begin{equation}\label{eq:M-eig}
\begin{array}{lll}
u_1^\top = (0 \quad 1 \quad -1) & \qquad & \lambda_1 = (1-2\lambda^2)(k-\ell) \\
u_2^\top = (\sqrt{k-\ell} \quad -\sqrt{\ell} \quad -\sqrt{\ell}) & & \lambda_2 = 0 \\
u_3^\top = (2\sqrt{\ell} \quad \sqrt{k-\ell} \quad \sqrt{k-\ell}) & & \lambda_3 = k+\ell.
\end{array}
\end{equation}

We will evaluate the expression in~\eqref{eq:inner-calc-3} via some direct manipulations with the Wishart density function that are deferred to Appendix~\ref{app:wishart}.
To apply Lemma~\ref{lem:wishart}, we need to first verify $tM \prec \frac{1}{2}I_3$. This follows from the fact that the maximum eigenvalue of $M$ is $k+\ell \le 2k$ (see~\eqref{eq:M-eig}) and the fact $t < \frac{1}{4k}$ (see~\eqref{eq:t}). Applying Lemma~\ref{lem:wishart} to~\eqref{eq:inner-calc-3} we conclude
\begin{equation}\label{eq:inner-calc-4}
\langle L_u,L_v \rangle_\QQ \leq \PP(A)^{-2}  \frac{\lambda^{2m}}{(2\lambda^2-1)^{m/2}} \det(I_3-2tM)^{-m/2} \Pr_{U \sim W_3((I_3-2tM)^{-1}, m)} \{B(U)\}
\end{equation}
where the Wishart distribution $U \sim W_3((I_3-2tM)^{-1}, m)$ means $U = Z^\top Z$ where $Z \in \R^{m \times 3}$ has independent rows drawn from $\cN(0,(I_3-2tM)^{-1})$.

We now focus on bounding $\det(I_3-2tM)^{-m/2}$. Using~\eqref{eq:t} and~\eqref{eq:M-eig}, the eigenvalues of the matrix $I_3-2tM$ are
\[ \{1,\, 1-2t(k+\ell),\, 1-2t(1-2\lambda^2)(k-\ell)\}=\left\{1,\,1-\frac{k+\ell}{\sigma^2(2\lambda^2-1)},\,1+\frac{k-\ell}{\sigma^2}\right\}. \]
Since $\lambda^2=k/\sigma^2+1$ we conclude
\begin{align*}
\frac{\lambda^{2}}{\sqrt{2\lambda^2-1}} \det(I_3-2tM)^{-1/2} &=
\lambda^2 \left[ \left(2\lambda^2-1-\frac{k+\ell}{\sigma^2} \right)\left(1+\frac{k-\ell}{\sigma^2}\right)\right]^{-1/2} \\
&= \frac{\frac{k}{\sigma^2}+1}{1+\frac{k-\ell}{\sigma^2}} \\
&= \left(1 - \frac{\ell}{k+\sigma^2}\right)^{-1}.
\end{align*}
Hence, it holds that
\begin{align}
\frac{\lambda^{2m}}{(2\lambda^2-1)^{m/2}} \det(I_3-2tM)^{-m/2}
&=\left(1 - \frac{\ell}{k+\sigma^2}\right)^{-m} \nonumber\\
&= \exp\left[-m \log\left(1 - \frac{\ell}{k+\sigma^2}\right)\right] \nonumber\\
&\le \exp\left(\frac{\ell m}{k-\ell}\right)
\label{eq:det-bound}
\end{align}
where we have used the bound $\log(x) \ge 1 - 1/x$ in the last step. Combining~\eqref{eq:inner-calc-4} and~\eqref{eq:det-bound},
\begin{equation}\label{eq:inner-calc-5}
\langle L_u,L_v \rangle_\QQ \le \PP(A)^{-2} \exp\left(\frac{\ell m}{k-\ell}\right) \Pr_{U \sim W_3((I_3-2tM)^{-1}, m)} \{B(U)\}.
\end{equation}
At this point, we can conclude the first claim~\eqref{eq:inner-bound-1} by simply taking the trivial bound $\Pr\{B(U)\} \le 1$ on the last term above (this argument does not exploit the conditioning on the ``good'' event $A$). To prove the second claim~\eqref{eq:inner-bound-2}, we will need a better bound on $\Pr\{B(U)\}$. 

Using the eigendecomposition~\eqref{eq:M-eig}, we can directly compute the entries of
\[ V := (I_3 - 2tM)^{-1} = \sum_{i=1}^3 (1 - 2t\lambda_i)^{-1} \frac{u_i u_i^\top}{\|u_i\|^2} \]
and in particular deduce
\[ V_{11} = \frac{k + \sigma^2 + \ell}{k + \sigma^2 - \ell} \qquad\quad V_{12} = V_{13} = \frac{\sqrt{\ell(k-\ell)}}{k + \sigma^2 - \ell} \qquad\quad V_{22} + V_{23} = 2 \qquad\quad V_{22} = V_{33}. \]
Since $U \sim W_3(V,m)$, we have $U_{12} + U_{13} = 2 \sum_{i=1}^m s_i$ where $s_i$ are i.i.d.\ and distributed as $s = \frac{1}{2} g_1(g_2 + g_3)$ where $g \sim \mathcal{N}(0,V)$. Equivalently, we can write \[ g_1 = \sqrt{V_{11}} \, z_1 \qquad \text{ and } \qquad g_2 + g_3 = \frac{2V_{12}}{\sqrt{V_{11}}} \, z_1 + \sqrt{2V_{22} + 2V_{23} - 4V_{12}^2/V_{11}} \, z_2\]
where $z_1,z_2$ are independent $\mathcal{N}(0,1)$, and so
\begin{equation}\label{eq:s-rep}
s = \frac{1}{2} g_1(g_2 + g_3) = V_{12} \, z_1^2 + \sqrt{\frac{1}{2} V_{11}(V_{22}+V_{23}) - V_{12}^2}\, z_1 z_2 = V_{12} \, z_1^2 + \sqrt{V_{11} - V_{12}^2}\, z_1 z_2.
\end{equation}
Hence, recalling $B(U) = \{U_{12} \le \Delta \text{ and } U_{13} \le \Delta\}$,
\[ \Pr_{U \sim W_3(V, m)} \{B(U)\} \le \Pr_{U \sim W_3(V, m)} \{U_{12} + U_{13} \le 2\Delta\} = \Pr\left\{\sum_{i=1}^m s_i \le \Delta\right\}. \]
Now using Corollary~\ref{cor:bernstein-gg} for $y=q^2 m$, along with the representation~\eqref{eq:s-rep} for $s$, we conclude that for all $q > 0$,
\[ \Pr\left\{\sum_{i=1}^m s_i \le \left(a - \sqrt{2(3a^2 + b^2)}\,q - 10\sqrt{a^2 + b^2}\,q^2 \right) m \right\} \le \exp(-q^2 m) \]
where $a = V_{12}$ and $b^2 = V_{11} - V_{12}^2$.

We now claim that for $q>0$ satisfying \eqref{eq:q_dfn} it must also hold that
\begin{equation}\label{eq:q-need-0}
\Delta \le \left(a - \sqrt{2(3a^2 + b^2)}\,q - 10\sqrt{a^2 + b^2}\,q^2 \right) m.
\end{equation}
Notice that upon establishing \eqref{eq:q-need-0} we can conclude
\[\Pr_{U \sim W_3(V, m)} \{B(U)\} \le \Pr\left\{\sum_{i=1}^m s_i \le \Delta\right\} \leq \exp(-q^2 m), \]
and therefore combining with~\eqref{eq:inner-calc-5},
\[ \langle L_u,L_v \rangle_\QQ \leq \PP(A)^{-2}  \exp\left(\frac{\ell m}{k - \ell}-q^2 m \right)\]
which concludes the proof of~\eqref{eq:inner-bound-2}.
 
Now we focus on establishing~\eqref{eq:q-need-0} as the final step of the proof.  
Now since $0 \le \ell \le \eps k$ and $\sigma^2 \leq \eps k$, we have
\begin{align}\label{eq:a_lb}
a = V_{12} = \frac{\sqrt{\ell(k-\ell)}}{k+\sigma^2-\ell}
\ge \frac{\sqrt{\ell (1-\eps) k}}{(1+\eps) k}
\ge \frac{1-\eps}{1+\eps} \sqrt{\frac{\ell}{k}}
\ge (1-\eps)^2 \sqrt{\frac{\ell}{k}}.
\end{align}
Also, elementary algebra gives
\[ a = \frac{\sqrt{\ell(k-\ell)}}{k + \sigma^2 - \ell} \le \frac{\sqrt{\ell k}}{(1-\eps)k} = \frac{1}{1-\eps} \sqrt{\frac{\ell}{k}} \le \frac{\sqrt{\eps}}{1-\eps}, \]
\[ b^2 = V_{11} - V_{12}^2 \le V_{11} = \frac{k+\sigma^2+\ell}{k+\sigma^2-\ell} \le \frac{k+\ell}{k-\ell} \le \frac{1+\eps}{1-\eps} \le (1-\eps)^{-2}, \]
and therefore
\begin{align}\label{eq:ab_ub}
a^2 + b^2 \le \frac{1+\eps}{(1-\eps)^2} \le (1-\eps)^{-3},
\end{align}
\begin{align}\label{eq:ab_ub2}
3a^2 + b^2 \le \frac{1+3\eps}{(1-\eps)^2}.
\end{align}
Therefore since $q>0$, combining \eqref{eq:a_lb}, \eqref{eq:ab_ub}, \eqref{eq:ab_ub2} we have
\begin{equation}\label{eq:q-need-1}
a - \sqrt{2(3a^2 + b^2)}\,q - 10\sqrt{a^2 + b^2}\,q^2 \ge (1-\eps)^2\sqrt{\frac{\ell}{k}}-\frac{\sqrt{2 (1+3\eps)}}{1-\eps}\,q-\frac{10}{(1-\eps)^{3/2}}\,q^2.
\end{equation}
But now combining \eqref{eq:q_dfn} and \eqref{eq:q-need-1} we conclude \eqref{eq:q-need-0} and the proof is complete.
\end{proof}

\subsection{Proof of Theorem \ref{thm:sparse-reg}(b): Upper Bound}

Given $(X,Y)$ drawn from either $\QQ$ or $\PP$, we will distinguish via the statistic $T$ that counts the number of indices $j \in [n]$ such that $\langle X_j,Y \rangle / \|Y\|_2 \ge \tau$ where $X_j$ denotes column $j$ of $X$ and $\tau > 0$ is a threshold to be chosen later. Specifically, we set 
\begin{align*}
    T_{\tau}=\left|\left\{j \in [n]: \frac{\langle X_j,Y \rangle }{ \|Y\|_2} \ge \tau\right\}\right|.
\end{align*}
We will choose $\tau = c \sqrt{\log n}$ for an appropriate constant $c = c(\theta,R) > 0$ to be chosen later.

\subsubsection{Null Model}
\label{sec:upper-null}

Define
\begin{equation}\label{eq:q}
q = q(\tau) := \Pr\{\mathcal{N}(0,1) \ge \tau\}.
\end{equation}

\begin{proposition}\label{prop:null_det}
Let $\tau = c \sqrt{\log n}$ for a constant $c > 0$. Then under the null model $\QQ$ we have $\E[T_\tau] = qn = n^{1-c^2/2+o(1)}$ and $\Var(T_\tau) \leq qn = n^{1-c^2/2+o(1)}$.
\end{proposition}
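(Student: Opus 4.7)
The plan is to exploit the independence of $X$ and $Y$ under $\QQ$ together with rotational invariance of the Gaussian. First I would observe that under $\QQ$, the columns $X_1, \dots, X_n$ are i.i.d.\ $\mathcal{N}(0, I_m)$ vectors, independent of $Y \sim \mathcal{N}(0, I_m)$. Condition on $Y$ (which, almost surely, has $\|Y\|_2 > 0$). For each fixed $j$, the scalar $\langle X_j, Y\rangle/\|Y\|_2$ equals $\langle X_j, \hat Y\rangle$ where $\hat Y = Y/\|Y\|_2$ is a fixed unit vector, and since $X_j$ is spherically symmetric this scalar is $\mathcal{N}(0,1)$. Moreover, since the $X_j$'s are mutually independent, these scalars are mutually independent across $j$ \emph{even after} conditioning on $Y$. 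Thus, conditional on $Y$, the indicator variables $\mathbf{1}\{\langle X_j, Y\rangle/\|Y\|_2 \ge \tau\}$ are i.i.d.\ Bernoulli$(q)$, so
\[
T_\tau \mid Y \;\sim\; \mathrm{Binomial}(n, q),
\]
where $q$ is as defined in~\eqref{eq:q}.

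Next I would compute the two moments. From the above, $\E[T_\tau \mid Y] = nq$ and $\Var(T_\tau \mid Y) = nq(1-q)$, both deterministic in $Y$. Hence $\E[T_\tau] = nq$, and by the law of total variance
\[
\Var(T_\tau) \;=\; \E[\Var(T_\tau \mid Y)] + \Var(\E[T_\tau \mid Y]) \;=\; nq(1-q) + 0 \;\le\; nq.
\]

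Finally, I would invoke the standard Mills-ratio bounds
\[
\frac{1}{\tau\sqrt{2\pi}}\Bigl(1 - \tfrac{1}{\tau^2}\Bigr) e^{-\tau^2/2} \;\le\; q \;\le\; \frac{1}{\tau\sqrt{2\pi}} e^{-\tau^2/2}
\]
valid for $\tau > 1$, which give $q = \tau^{-1}e^{-\tau^2/2 + o(1)}$. Plugging in $\tau = c\sqrt{\log n}$ yields $q = n^{-c^2/2 + o(1)}$, and hence $nq = n^{1 - c^2/2 + o(1)}$ and $\Var(T_\tau) \le nq = n^{1-c^2/2+o(1)}$, as claimed.

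There is no serious obstacle here; the only conceptual point worth flagging is the conditioning argument in the first paragraph, which is what decouples the otherwise-correlated events $\{\langle X_j, Y\rangle/\|Y\|_2 \ge \tau\}$ (they are correlated through the common $Y$) and turns $T_\tau$ into a conditional binomial. The rest is a one-line tail bound.
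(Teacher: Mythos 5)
Your proposal is correct and follows essentially the same route as the paper: both reduce to the observation that the scalars $z_j := \langle X_j, Y\rangle/\|Y\|_2$ are i.i.d.\ $\mathcal{N}(0,1)$ (you justify this explicitly via conditioning on $Y$ and rotational invariance, whereas the paper simply asserts it), which makes $T_\tau$ Binomial$(n,q)$, and then both conclude via a Gaussian tail estimate $q = n^{-c^2/2+o(1)}$ (stated as Lemma~\ref{lem:q} in the paper, via Mills-ratio bounds in your writeup). The conditioning/law-of-total-variance step you flag is a slightly more careful rendering of the same fact, not a different argument.
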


\noindent We need the following Lemma.

\begin{lemma}\label{lem:q}
Let $\tau = c \sqrt{\log n}$ for a constant $c > 0$ and define $q = q(\tau)$ as in~\eqref{eq:q}. Then $q = n^{-c^2/2 + o(1)}$.
\end{lemma}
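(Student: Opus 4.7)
The plan is to invoke the standard Mills ratio bounds on the Gaussian tail and then take logarithms in base $n$. Specifically, for any $\tau > 0$, we have
\[
\frac{1}{\sqrt{2\pi}}\left(\frac{1}{\tau} - \frac{1}{\tau^3}\right) e^{-\tau^2/2} \;\le\; \Pr\{\mathcal{N}(0,1) \ge \tau\} \;\le\; \frac{1}{\sqrt{2\pi}\,\tau}\, e^{-\tau^2/2}.
\]
Both inequalities are classical and follow from integration by parts on $\int_{\tau}^{\infty} e^{-t^2/2}\,dt$.

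Substituting $\tau = c\sqrt{\log n}$ and taking logarithms, the dominant term on both sides is $-\tau^2/2 = -(c^2/2)\log n$, while the remaining contributions are $-\log \tau - \tfrac{1}{2}\log(2\pi) = -\tfrac{1}{2}\log\log n - \log c - \tfrac{1}{2}\log(2\pi)$ together with $\log(1 - \tau^{-2}) = o(1)$ on the lower side. All of these correction terms are $o(\log n)$, so dividing by $\log n$ gives
\[
\frac{\log q}{\log n} \;=\; -\frac{c^2}{2} + o(1),
\]
which rearranges to $q = n^{-c^2/2 + o(1)}$ as required.

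There is no real obstacle here; the only thing to check is that the logarithmic factors and constants are genuinely absorbed into the $o(1)$, which is immediate since $\log\log n = o(\log n)$ and $c, 2\pi$ are constants. The proof should be essentially a two-line computation after citing the Mills ratio bounds.
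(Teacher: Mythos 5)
Your proof is correct and follows essentially the same strategy as the paper's: bound $q$ from above and below by standard Gaussian tail estimates, substitute $\tau = c\sqrt{\log n}$, and observe that the polynomial-in-$\tau$ prefactors are absorbed into the $o(1)$ in the exponent since $\log \tau = o(\log n)$. The paper uses the slightly simpler upper bound $e^{-\tau^2/2}$ and the lower bound $\frac{1}{\sqrt{2\pi}}\frac{\tau}{\tau^2+1}e^{-\tau^2/2}$ rather than the symmetric Mills-ratio pair you cite, but the differences are cosmetic.
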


\begin{proof}
Using a standard Gaussian tail bound,
\[ q = \Pr\{\mathcal{N}(0,1) \ge \tau\} \le \exp\left(-\frac{\tau^2}{2}\right) = \exp\left(-\frac{c^2}{2} \log n\right) = n^{-c^2/2}. \]
For the reverse bound, use a standard \emph{lower} bound on the Gaussian tail:
\[ q = \Pr\{\mathcal{N}(0,1) \ge \tau\} \ge \frac{1}{\sqrt{2\pi}} \cdot \frac{\tau}{\tau^2+1}\exp\left(-\frac{\tau^2}{2}\right)
= \frac{1}{\sqrt{2\pi}} \cdot \frac{\tau}{\tau^2+1}\,n^{-c^2/2} = n^{-c^2/2 + o(1)}, \]
completing the proof.
\end{proof}

\begin{proof}[Proof of Proposition \ref{prop:null_det}]

Suppose $(X,Y) \sim \QQ$. In this case, the values $z_j := \langle X_j, Y \rangle / \|Y\|_2$ are independent and each distributed as $\mathcal{N}(0,1)$. The test statistic can be rewritten as
\[ T_\tau = \sum_{j=1}^n \One\{z_j \ge \tau\}. \]
Therefore $\E[T] = qn$ and $\Var(T) = q(1-q)n \le qn$ where $q := \Pr\{\mathcal{N}(0,1) \ge \tau\}$. The result then follows from Lemma \ref{lem:q}.
\end{proof}

\subsubsection{Planted Model}
\label{sec:upper-planted}

We decompose the test statistic into two parts $T_{\tau}=T_{\tau}^+ + T_{\tau}^-$, depending on whether the index $j \in [n]$ lies in $S=\supp(u)$, the support of the signal vector $u$, or not. That is,
\[ T_{\tau}^+ := \sum_{j \in S} \One\{z_j \ge \tau\}, \]
and
\[ T_\tau^- := \sum_{j \not \in S} \One\{z_j \ge \tau\}. \]

The following analysis of $T_\tau^-$ follows by the same argument as Proposition~\ref{prop:null_det}, so we omit the proof.
\begin{proposition}\label{prop:planted_det_neg}
Let $\tau = c \sqrt{\log n}$ for a constant $c > 0$, and define $q = q(\tau)$ as in~\eqref{eq:q}. Then under the planted model $\PP$ we have $\E[T_\tau^-] = (n-k)q$ and $\Var(T_\tau^-) \leq n^{1-c^2/2+o(1)}$.
\end{proposition}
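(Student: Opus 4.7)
The plan is to reduce the computation for $T_\tau^-$ under $\PP$ to the null-model computation already carried out in Proposition~\ref{prop:null_det}, by exploiting the fact that the non-support columns of $X$ are independent of everything else in the model.

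First, I would condition on the pair $(u,Y)$ and observe that for any $j \notin S = \supp(u)$, the column $X_j$ does not appear in $Xu = \sum_{i \in S} u_i X_i$, and therefore $X_j$ is independent of $Y = (k+\sigma^2)^{-1/2}(Xu + W)$ as well as of $u$ and of the other columns $(X_i)_{i \neq j}$. Conditional on $Y$, we thus have $\langle X_j, Y\rangle \sim \mathcal{N}(0, \|Y\|_2^2)$ for each $j \notin S$, and these are mutually independent across $j \notin S$. Normalizing, $z_j = \langle X_j, Y\rangle / \|Y\|_2 \sim \mathcal{N}(0,1)$ conditional on $Y$, independently across $j \notin S$. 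Since this conditional distribution does not depend on $Y$, the $(z_j)_{j \notin S}$ are in fact unconditionally i.i.d.\ $\mathcal{N}(0,1)$ (and independent of $Y$ and $u$).

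With this independence in hand, the remainder is a direct copy of the null-model calculation: writing $T_\tau^- = \sum_{j \notin S} \mathbf{1}\{z_j \geq \tau\}$ as a sum of $(n-k)$ i.i.d.\ Bernoulli$(q)$ variables with $q = \Pr\{\mathcal{N}(0,1) \geq \tau\}$ gives $\EE[T_\tau^-] = (n-k)q$ and $\Var(T_\tau^-) = (n-k)q(1-q) \leq nq$. Finally, applying Lemma~\ref{lem:q} with $\tau = c\sqrt{\log n}$ yields $nq = n^{1-c^2/2 + o(1)}$, producing the claimed variance bound.

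There is no real obstacle here: the only subtlety is making explicit that conditioning on $u$ (which controls the identity of $S$) does not change the distribution of the non-support columns $X_j$ for $j \notin S$, since under $\PP$ the matrix $X$ has i.i.d.\ $\mathcal{N}(0,1)$ entries independently of $u$. This is why the argument from Proposition~\ref{prop:null_det} transfers verbatim once restricted to indices outside $S$.
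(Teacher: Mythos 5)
Your proposal is correct and matches the paper's intended argument: the paper explicitly omits the proof, stating that it "follows by the same argument as Proposition~\ref{prop:null_det}," and your argument spells out exactly that reduction—namely, that the non-support columns $(X_j)_{j\notin S}$ remain i.i.d.\ $\cN(0,I_m)$ and independent of $(u,Y)$ under $\PP$, so $T_\tau^-$ is $\mathrm{Bin}(n-k,q)$ just as $T_\tau$ is $\mathrm{Bin}(n,q)$ under $\QQ$.
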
 

Focusing now on $T_\tau^+$, we will establish the following result in the next section.
\begin{proposition}\label{prop:planted_det_pos}
Let $\tau = c \sqrt{\log n}$ for a constant $c > 0$. Fix any constant $\tilde{c}>0$ such that
\[ \max\{c-\sqrt{R(1-\theta)},0\} < \tilde{c} < \min\{\sqrt{2\theta},c\}. \]
Then under the planted model $\PP$ we have that with probability $1-o(1)$,
\[ T_\tau^+ \ge n^{\theta-\tilde{c}^2/2-o(1)}. \]
\end{proposition}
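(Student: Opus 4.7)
The plan is to expose a Gaussian representation of $z_j := \langle X_j, Y\rangle/\|Y\|_2$ for $j \in S := \supp(u)$ and reduce the analysis to binomial concentration. Let $\tilde Y := \sqrt{k+\sigma^2}\,Y = Xu + W$, so that $z_j = \langle X_j, \hat Y\rangle$ with $\hat Y := \tilde Y/\|\tilde Y\|_2$. For each $j \in S$, decompose $X_j = \bar X + (X_j - \bar X)$, where $\bar X := k^{-1}\sum_{i \in S} X_i$. Since $\tilde Y = k\bar X + W$ depends only on $(\bar X, W)$ and $\bar X$ is uncorrelated with each $X_j - \bar X$ (all jointly Gaussian), the family $(X_j - \bar X)_{j \in S}$ is independent of $\tilde Y$. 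This yields $z_j = \alpha + g_j$ with $\alpha := \langle \bar X, \hat Y\rangle$ a function of $\tilde Y$, while $(g_j)_{j\in S}$, $g_j := \langle X_j - \bar X, \hat Y\rangle$, is (both conditionally on $\tilde Y$ and unconditionally) a jointly Gaussian vector in $\R^k$ with covariance $I_k - k^{-1}\mathbf{1}_k\mathbf{1}_k^\top$.

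Next, I would show that $\alpha$ concentrates around $\sqrt{R(1-\theta)\log n}$. Writing $\alpha = \langle \bar X,\tilde Y\rangle/\|\tilde Y\|_2$ and expanding $\langle \bar X,\tilde Y\rangle = k\|\bar X\|_2^2 + \langle \bar X, W\rangle$ and $\|\tilde Y\|_2^2 = k^2\|\bar X\|_2^2 + 2k\langle \bar X, W\rangle + \|W\|_2^2$, standard $\chi^2$ concentration of $\|\bar X\|_2^2$ (mean $m/k$) and $\|W\|_2^2$ (mean $\sigma^2 m$), Cauchy--Schwarz on the cross term, and the assumption $\sigma^2 = o(k)$ together give $\alpha = (1-o(1))\sqrt{m/k} = (1-o(1))\sqrt{R(1-\theta)\log n}$ with probability $1-o(1)$. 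To deal with the rank deficiency of the covariance of $(g_j)_{j\in S}$, enlarge the probability space with an independent $\eta \sim \cN(0,1/k)$ and set $\xi_j := g_j + \eta$; a direct covariance computation ($\mathrm{Cov}(\xi_j,\xi_\ell) = (\delta_{j\ell} - 1/k) + 1/k = \delta_{j\ell}$) shows the $\xi_j$'s are iid $\cN(0,1)$. Then $z_j \ge \tau$ iff $\xi_j \ge \tau - \alpha + \eta$. On the ``good event'' $\cG$ where $\alpha \ge (1-o(1))\sqrt{R(1-\theta)\log n}$ and $|\eta| \le \sqrt{(\log k)/k} = o(1)$ (which has probability $1-o(1)$), the right-hand side is at most a deterministic threshold $\tilde\tau = c'\sqrt{\log n}\,(1+o(1)) + o(1)$, where $c' := \max\{c - \sqrt{R(1-\theta)},\,0\}$.

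Finally, on $\cG$ we have $T_\tau^+ \ge N^+ := |\{j \in S : \xi_j \ge \tilde\tau\}|$, and since the $\xi_j$'s are iid $\cN(0,1)$ unconditionally, $N^+ \sim \mathrm{Bin}(k, p)$ with $p = \Pr(\cN(0,1) \ge \tilde\tau)$. The Gaussian tail bounds underlying Lemma~\ref{lem:q} give $p \ge n^{-(c')^2/2 - o(1)}$ (with the convention $(c')^2 = 0$ when $c' = 0$), so $\E[N^+] = kp \ge n^{\theta - (c')^2/2 - o(1)}$. The hypotheses $c' < \tilde c < \sqrt{2\theta}$ imply both $\E[N^+] \to \infty$ and $\E[N^+] \ge n^{\theta - \tilde c^2/2 + \Omega(1)}$, so Chernoff's lower tail yields $N^+ \ge \E[N^+]/2 \ge n^{\theta-\tilde c^2/2 - o(1)}$ with probability $1-o(1)$. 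A union bound with $\Pr(\cG^c) = o(1)$ completes the proof. The main obstacle is the rank-one deficiency of the covariance of $(g_j)_{j\in S}$ (equivalently, the constraint $\sum_j g_j = 0$), which prevents a direct ``independent Bernoulli'' argument; the $\xi_j = g_j + \eta$ device absorbs it into an $O(\sqrt{(\log k)/k})$ perturbation of the threshold, negligible next to $\sqrt{\log n}$ and hence not affecting the asymptotic exponent.
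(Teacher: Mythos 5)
Your proposal is correct, and it takes a genuinely different route from the paper. The paper defines $\tilde z_j := \langle X_j, Z_j\rangle/\|Z_j\|_2$ with $Z_j = \sum_{\ell \in S,\,\ell\ne j} X_\ell + W$, shows $T_\tau^+ \ge \tilde T := \sum_{j\in S}\One\{\tilde z_j \ge \tilde c\sqrt{\log n}\}$ via norm concentration, and then --- since the $\tilde z_j$ are standard Gaussian but \emph{not} independent --- proves an \emph{approximate pairwise-independence} bound (their Lemma~\ref{lem:I-var}) by a somewhat delicate conditioning on $Z_{j\ell}$, leading to a variance estimate and a Chebyshev argument. Your decomposition $X_j = \bar X + (X_j - \bar X)$ instead exposes an \emph{exact} independence structure: $(X_j - \bar X)_{j\in S}$ is independent of $\tilde Y = k\bar X + W$, so $z_j = \alpha + g_j$ with $\alpha$ determined by $\tilde Y$ and $(g_j)_{j\in S}$ centered Gaussian independent of $\alpha$ with covariance $I_k - k^{-1}\mathbf{1}\mathbf{1}^\top$; the auxiliary $\eta\sim\cN(0,1/k)$ removes the rank-one deficiency and yields genuine i.i.d.\ $\xi_j$, so you get an honest $\mathrm{Bin}(k,p)$ and Chernoff concentration, bypassing the paper's approximate-decorrelation lemma entirely. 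This buys you exponential rather than polynomial tail control and a structurally cleaner argument; the paper's route is more pedestrian but requires no auxiliary randomization. One small imprecision: when $c \le \sqrt{R(1-\theta)}$ (so $c' = 0$), the bound $\tau - \alpha + \eta \le c'\sqrt{\log n}(1+o(1)) + o(1) = o(1)$ is slightly too strong --- the correct uniform statement is $\tilde\tau = (c' + o(1))\sqrt{\log n}$ --- but this does not affect the conclusion since $\Pr(\cN(0,1) \ge o(\sqrt{\log n})) = n^{-o(1)}$, which is all you use.
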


\subsubsection{Proof of Proposition \ref{prop:planted_det_pos}}

Recall $Y = (k + \sigma^2)^{-1/2} \left(\sum_{j \in S} X_j + W\right)$ and so for $j \in S$ we compute
\begin{align*}
\langle X_j,Y \rangle &= (k+\sigma^2)^{-1/2} \left(\sum_{\ell \in S} \langle X_j,X_\ell \rangle + \langle X_j,W \rangle \right) \\
&= (k+\sigma^2)^{-1/2} \left(\|X_j\|_2^2 + \left\langle X_j, Z_j \right\rangle\right)
\end{align*}
where
\[ Z_j = \sum_{\ell \in S,\, \ell \ne j} X_\ell + W. \]
We define the counting random variable 
\begin{equation}\label{eq:tilde-T}
\tilde{T} := \sum_{j \in S} I_j,
\end{equation}
where $I_j = \One\{\tilde{z}_j \ge \tilde{c}\sqrt{\log n}\}$ where $\tilde{z}_j = \langle X_j,Z_j \rangle/\|Z_j\|_2$ and $\tilde{c} \in (0,c)$ is the constant defined in the statement of the proposition. The following lemma will allow us to analyze $\tilde{T}$ instead of $T_\tau^+$.

\begin{lemma}\label{lem:t+}
With probability $1-n^{-\omega(1)}$,
\[ T_\tau^+ \ge \tilde{T}. \]
\end{lemma}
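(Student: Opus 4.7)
The plan is to show that on a high-probability event $E$ (of probability $1-n^{-\omega(1)}$), the implication $I_j = 1 \Rightarrow z_j \ge \tau$ holds for every $j \in S$, from which $T_\tau^+ \ge \tilde{T}$ follows term by term. The event $E$ will be the intersection of the following three concentration statements, each of which holds individually with probability $1-n^{-\omega(1)}$ (by standard chi-squared tail bounds and a union bound over $j \in S$, using $|S|=k = n^{o(1)}\cdot$ factor and $m = \Theta(k\log n)$):
\begin{itemize}
    \item $\|X_j\|_2^2 = (1\pm o(1))\,m$ for every $j \in S$;
    \item $\|Z_j\|_2^2 = (1\pm o(1))(k-1+\sigma^2)\,m$ for every $j \in S$, using that $Z_j \sim \cN(0,(k-1+\sigma^2)I_m)$;
    \item $\|Y\|_2^2 = (1\pm o(1))\,m$, using that the marginal of $Y$ is $\cN(0,I_m)$ under $\PP$.
\end{itemize}
These all rely on $m = (1+o(1))R(1-\theta)k\log n = \omega(\log n)$, which makes the chi-squared concentration yield probability $1 - \exp(-\omega(\log n)) = 1-n^{-\omega(1)}$.

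Next I would carry out a direct calculation. For $j \in S$, since $Y = (k+\sigma^2)^{-1/2}(X_j + Z_j)$, one has
\[
z_j \;=\; \frac{\langle X_j,Y\rangle}{\|Y\|_2} \;=\; \frac{(k+\sigma^2)^{-1/2}\bigl(\|X_j\|_2^2 + \langle X_j,Z_j\rangle\bigr)}{\|Y\|_2}.
\]
If $I_j = 1$, then $\langle X_j,Z_j\rangle \ge \tilde c\sqrt{\log n}\cdot \|Z_j\|_2 > 0$. Plugging in the three concentration bounds from $E$, using $\sigma^2 = o(k)$ to absorb $\sigma^2$ into $k$ and using $m/k = (1+o(1))R(1-\theta)\log n$, the two positive contributions in the numerator give
\[
z_j \;\ge\; \bigl(1-o(1)\bigr)\left[\sqrt{m/k} \,+\, \tilde c\sqrt{\log n}\right]
\;=\; \bigl(1-o(1)\bigr)\bigl[\sqrt{R(1-\theta)} + \tilde c\bigr]\sqrt{\log n}.
\]
By the hypothesis $\tilde c > \max\{c - \sqrt{R(1-\theta)},\,0\}$, we have $\sqrt{R(1-\theta)} + \tilde c > c$ with slack, so for all sufficiently large $n$ the factor $(1-o(1))$ can be absorbed and we conclude $z_j \ge c\sqrt{\log n} = \tau$.

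Summing the per-index implication $I_j \le \One\{z_j \ge \tau\}$ over $j \in S$ on the event $E$ yields $\tilde{T} \le T_\tau^+$, which is the desired bound. The main point requiring care is simply the book-keeping of how the two positive contributions $\|X_j\|_2^2$ and $\langle X_j,Z_j\rangle$ combine after division by $(k+\sigma^2)^{1/2}\|Y\|_2$: both end up on the order of $\sqrt{\log n}$, and their sum must be compared to $c\sqrt{\log n}$, which is exactly why the hypothesis on $\tilde c$ takes the form it does. No genuine obstacle is anticipated; this is a concentration-plus-algebra argument.
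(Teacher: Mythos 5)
Your proposal is correct and follows essentially the same route as the paper's proof: condition on chi-squared concentration of $\|X_j\|_2^2$, $\|Z_j\|_2$, and $\|Y\|_2$ (holding with probability $1-n^{-\omega(1)}$), then for each $j$ with $I_j=1$ lower-bound $z_j = (k+\sigma^2)^{-1/2}\|Y\|_2^{-1}(\|X_j\|_2^2 + \langle X_j,Z_j\rangle)$ by $(1-o(1))(\sqrt{R(1-\theta)}+\tilde c)\sqrt{\log n} > c\sqrt{\log n} = \tau$, using $\tilde z_j \ge \tilde c\sqrt{\log n} > 0$ and $\sigma^2 = o(k)$. The only cosmetic difference is that you phrase the conclusion as a per-index comparison of indicators summed over $S$, while the paper states it as $T_\tau^+ \ge \tilde T$ directly; the substance is identical.
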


\begin{proof}
With probability $1-n^{-\omega(1)}$, using standard concentration of the $\chi^2$ random variable we have the following norm bounds for some $\delta = n^{-\Omega(1)}$:
\[ \|Y\|_2 \le (1+\delta)\sqrt{m}, \qquad \|X_j\|_2^2 \ge (1-\delta)m, \qquad \|Z_j\|_2 \ge (1-\delta)\sqrt{(k+\sigma^2)m}. \]
Suppose the above bounds hold and that $\tilde{z}_j \ge \tilde{c}\sqrt{\log n}$ holds for some $j$. It suffices to show $z_j \ge \tau$. We have, recalling $\sigma^2 = o(k)$,
\begin{align*}
z_j &= \langle X_j,Y \rangle / \|Y\|_2 \\
&= (k+\sigma^2)^{-1/2}\, \|Y\|_2^{-1} \left(\|X_j\|_2^2 + \left\langle X_j, Z_j \right\rangle\right) \\
&\ge \frac{1}{(1+\delta)\sqrt{(k+\sigma^2)m}} \left((1-\delta)m + \tilde{z}_j \cdot \|Z_j\|_2\right) \\
&\ge \frac{1}{(1+\delta)\sqrt{(k+\sigma^2)m}} \left((1-\delta)m + \tilde{c}\sqrt{\log n} \cdot (1-\delta)\sqrt{(k+\sigma^2)m}\right) \\
&= (1-o(1))\left(\sqrt{\frac{m}{k}} + \tilde{c}\sqrt{\log n}\right) \\
&= (1-o(1))\left(\sqrt{R(1-\theta)} + \tilde{c}\right)\sqrt{\log n}.
\end{align*}
Since by assumption $\sqrt{R(1-\theta)} + \tilde{c} > c$, we have for sufficiently large $n$ that $z_j \ge c\sqrt{\log n} = \tau$, as we wanted.
\end{proof}

Note that the $\tilde{z}_j$ defining $\tilde{T}$ are distributed as $\tilde{z}_j \sim \mathcal{N}(0,1)$ but they are not independent. Yet, by linearity of expectation, $\E[\tilde{T}] = \tilde{q}k$ where $\tilde{q} := \Pr\{\cN(0,1) \ge \tilde{c}\sqrt{\log n}\}$. By Lemma~\ref{lem:q} we have $\tilde{q} = n^{-\tilde{c}^2/2 + o(1)}$. We now bound $\Var(\tilde{T})$ by first establishing the following lemma.

\begin{lemma}\label{lem:I-var}
Fix $j,\ell \in S$ with $j \ne \ell$. We have
\[ \Pr\left\{\tilde{z_j} \ge \tilde{c}\sqrt{\log n} \;\text{ and }\; \tilde{z_\ell} \ge \tilde{c}\sqrt{\log n}\right\} \le (1 + n^{-\Omega(1)})\,\tilde{q}^2. \]
\end{lemma}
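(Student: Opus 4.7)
\textbf{Proof proposal for Lemma \ref{lem:I-var}.}
The intuition is that $\tilde{z}_j$ and $\tilde{z}_\ell$ are only weakly correlated. Writing $R := \sum_{k'\in S\setminus\{j,\ell\}} X_{k'} + W$, we have $Z_j = X_\ell + R$ and $Z_\ell = X_j + R$, so $\|Z_j\|_2,\|Z_\ell\|_2 = \Theta(\sqrt{km})$ while $\|Z_j - Z_\ell\|_2 = \|X_j - X_\ell\|_2 = \Theta(\sqrt m)$. Thus $Z_j$ and $Z_\ell$ both point essentially in the direction $g := R/\|R\|_2$, and so $\tilde{z}_j \approx \langle X_j, g\rangle$ and $\tilde{z}_\ell \approx \langle X_\ell, g\rangle$, which by rotational invariance are independent standard Gaussians. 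The bulk of the proof is making this heuristic quantitative enough to yield the sharp $(1+n^{-\Omega(1)})$ correction rather than a larger constant.

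Concretely, I would condition on $R$ (making $X_j, X_\ell$ independent standard Gaussians, independent of $R$) and decompose
\[ X_j = a_j g + \beta_j e_j, \qquad X_\ell = a_\ell g + \beta_\ell e_\ell, \]
with $a_j, a_\ell \sim \cN(0,1)$ i.i.d., $\beta_j^2, \beta_\ell^2 \sim \chi^2_{m-1}$ i.i.d., $e_j, e_\ell$ uniform i.i.d.\ on the unit sphere in $g^\perp$, and all four pairs mutually independent. A direct calculation then gives
\[ \tilde z_j = \frac{(a_\ell + r)\,a_j + \beta_j\beta_\ell\,\langle e_j, e_\ell\rangle}{\sqrt{(a_\ell+r)^2 + \beta_\ell^2}}, \qquad r := \|R\|_2, \]
together with the symmetric expression for $\tilde z_\ell$.

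Introduce a good event $E$ on which $r = \sqrt{km}(1+o(1))$, $\beta_j,\beta_\ell = \sqrt{m}(1+o(1))$, $|a_j|,|a_\ell| \le A\sqrt{\log n}$, and $|\langle e_j,e_\ell\rangle| \le A\sqrt{\log n/m}$, for a constant $A$ chosen below; standard $\chi^2$ and spherical-cap concentration bounds give $\Pr(E^c) \le n^{-C}$ for any desired constant $C$ (upon taking $A$ large enough). On $E$, using $k = n^{\theta+o(1)}$ and the displayed identity,
\[ \tilde z_j \;=\; a_j\bigl(1 + O(1/k)\bigr) \;+\; O\bigl(\sqrt{m/k}\cdot\sqrt{\log n/m}\bigr) \;=\; a_j + O\bigl(n^{-\theta/2}\sqrt{\log n}\bigr), \]
and likewise for $\tilde z_\ell$. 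Writing $t := \tilde c\sqrt{\log n}$ and $\eta := n^{-\theta/2+o(1)}$, this shows that on $E$ the event $\{\tilde z_j \ge t\}$ is contained in $\{a_j \ge (1-\eta)t\}$, and likewise for $\ell$.

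Since $a_j$ and $a_\ell$ are independent, $\Pr\{\tilde z_j\ge t,\,\tilde z_\ell\ge t,\,E\} \le \Pr\{a_j\ge(1-\eta)t\}^2$. A standard Mill's ratio computation gives
\[ \frac{\Pr\{a_j \ge (1-\eta)t\}}{\Pr\{a_j \ge t\}} \;=\; \frac{e^{(2\eta-\eta^2)t^2/2}}{1-\eta}(1+o(1)) \;=\; 1 + n^{-\Omega(1)}, \]
where the last equality uses $\eta t^2 = n^{-\theta/2+o(1)}$. Combined with $\Pr(E^c) \le n^{-\Omega(1)}\tilde q^2$ (which holds for $A$ chosen large enough, since $\tilde q^2 = n^{-\tilde c^2 + o(1)}$ with $\tilde c^2 < 2\theta$ fixed), this yields the claimed bound. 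The main obstacle is the quantitative bookkeeping in the last two displays: one needs the approximation $\tilde z_j \approx a_j$ accurate enough that $\eta t^2 = o(1)$, and this in turn relies on the small-angle estimate between $Z_j$ and $Z_\ell$ (angle $O(1/\sqrt k)$) highlighted in the initial heuristic.
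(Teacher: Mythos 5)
Your approach is essentially the same as the paper's: after conditioning on $R = Z_{j\ell} := \sum_{i\in S\setminus\{j,\ell\}}X_i + W$, your decoupling variable $a_j = \langle X_j, g\rangle$ is exactly the quantity $\langle X_j, Z_{j\ell}\rangle/\|Z_{j\ell}\|_2$ that the paper isolates, and both arguments use that $a_j, a_\ell$ are independent standard Gaussians while the remaining contributions to $\tilde z_j$ are $n^{-\Omega(1)}\sqrt{\log n}$ on a high-probability event. Your full orthogonal decomposition $X_j = a_jg + \beta_je_j$ is more explicit than the paper's, which keeps $\langle X_j, X_\ell\rangle$ as a single perturbation term, but the two are equivalent.

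One caveat to watch in the last display: the factor ``$(1+o(1))$'' coming from the asymptotic Mill's ratio $\Phi(s) \sim \phi(s)/s$ is, a priori, only $1 + O(1/s^2) = 1 + O(1/\log n)$, which is not $1 + n^{-\Omega(1)}$ and would not give the claimed conclusion directly. You need either to note that these $O(1/s^2)$ corrections cancel between $(1-\eta)t$ and $t$ to order $\eta/t^2 = n^{-\Omega(1)}$, or — more cleanly — to use the one-sided comparison in the paper's Lemma~\ref{lem:gauss-ratio}, which writes $\Phi((1-\eta)t) - \Phi(t) \le \eta t\,\phi((1-\eta)t)$ and divides by the lower Mill bound $\Phi(t) \ge \frac{t}{t^2+1}\phi(t)$, yielding the bound $1 + \eta(t^2+1)\exp((2\eta-\eta^2)t^2/2) = 1 + n^{-\Omega(1)}$ without invoking an asymptotic equality.
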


\begin{proof}
Let $Z_{j\ell} = \sum_{i \in S \setminus \{j,\ell\}} X_i + W$ and write
\[ \tilde{z}_j = \frac{1}{\|Z_j\|_2} \left(\langle X_j,X_\ell \rangle + \langle X_j,Z_{j\ell} \rangle\right), \qquad \tilde{z}_\ell = \frac{1}{\|Z_\ell\|_2} \left(\langle X_j,X_\ell \rangle + \langle X_\ell,Z_{j\ell} \rangle\right). \]
The purpose of the above decomposition is to exploit the fact that $\langle X_j,Z_{j\ell} \rangle/\|Z_{j\ell}\|_2$ and $\langle X_\ell,Z_{j\ell} \rangle/\|Z_{j\ell}\|_2$ are independent, and the other terms are small in magnitude in comparison.

By standard concentration, the following events all occur with probability $1 - n^{-\omega(1)}$, for some $\delta = n^{-\Omega(1)}$:
\begin{itemize}
    \item $\|Z_j\|_2, \|Z_\ell\|_2 \ge (1-\delta)\sqrt{(k+\sigma^2)m}$,
    \item $\|Z_{j\ell}\|_2 \le (1+\delta)\sqrt{(k+\sigma^2)m}$,
    \item $\langle X_j,X_\ell \rangle \le \sqrt{m} \log n$.
\end{itemize}
The first two properties follow by standard concentration of the $\chi^2$ distribution, and the third property follows from the observation $\langle X_j,X_\ell \rangle / \|X_\ell\|_2 \sim \mathcal{N}(0,1)$ and that with probability $1 - n^{-\omega(1)}$, $\|X_\ell\|_2 \le (1+\delta)\sqrt{m}$. The above events imply
\[ \tilde{z}_j
\le \frac{\sqrt{m}\log n}{(1-\delta)\sqrt{(k+\sigma^2)m}} + \frac{\|Z_{j\ell}\|_2}{\|Z_j\|_2} \cdot \frac{\langle X_j,Z_{j\ell} \rangle}{\|Z_{j\ell}\|_2}
= \frac{\log n}{(1-\delta)\sqrt{k+\sigma^2}} + \frac{\|Z_{j\ell}\|_2}{\|Z_j\|_2} \cdot \frac{\langle X_j,Z_{j\ell} \rangle}{\|Z_{j\ell}\|_2} \]
and similarly for $\tilde{z}_\ell$. Using the fact that $\langle X_j,Z_{j\ell} \rangle/\|Z_{j\ell}\|_2$ and $\langle X_\ell,Z_{j\ell} \rangle/\|Z_{j\ell}\|_2$ are independent and distributed as $\mathcal{N}(0,1)$,
\begin{align*}
\Pr&\left\{\tilde{z_j} \ge \tilde{c}\sqrt{\log n} \;\text{ and }\; \tilde{z_\ell} \ge \tilde{c}\sqrt{\log n}\right\} \\
&\le n^{-\omega(1)} + \Pr\left\{\frac{\langle X_j,Z_{j\ell} \rangle}{\|Z_{j\ell}\|_2} \wedge \frac{\langle X_\ell,Z_{j\ell} \rangle}{\|Z_{j\ell}\|_2} \ge \frac{1-\delta}{1+\delta}\left(\tilde{c}\sqrt{\log n} - \frac{\log n}{(1-\delta)\sqrt{k+\sigma^2}}\right)\right\} \\
&= n^{-\omega(1)} + \Pr\left\{\mathcal{N}(0,1) \ge \frac{1-\delta}{1+\delta}\left(\tilde{c}\sqrt{\log n} - \frac{\log n}{(1-\delta)\sqrt{k+\sigma^2}}\right)\right\}^2 \\
&= n^{-\omega(1)} + \Pr\left\{\mathcal{N}(0,1) \ge \tilde{c}\sqrt{\log n} - n^{-\Omega(1)}\right\}^2.
\intertext{Using Lemma~\ref{lem:gauss-ratio}, this is}
&\le n^{-\omega(1)} + \left[\Pr\left\{\mathcal{N}(0,1) \ge \tilde{c}\sqrt{\log n}\right\}\left(1 + n^{-\Omega(1)}\right) \right]^2 = (1 + n^{-\Omega(1)})\,\tilde{q}^2,
\end{align*}
where for the last equality we used Lemma~\ref{lem:q}. The proof is complete.
\end{proof}

We now bound the variance of $\tilde{T}$ using Lemma~\ref{lem:I-var}:
\begin{align*}
\Var(\tilde{T}) &= \E\left[\left(\sum_{j \in S} I_j\right)^2\right] - (\tilde{q}k)^2 \\
&= \sum_{j \in S} \E[I_j] + \sum_{j,\ell \in S,\, j \ne \ell} \E[I_j I_\ell] - (\tilde{q}k)^2 \\
&\le \tilde{q}k + (1+n^{-\Omega(1)})\,\tilde{q}^2 k(k-1) - (\tilde{q}k)^2 \\
&\le \tilde{q}k + n^{-\Omega(1)} \tilde{q}^2 k^2.
\end{align*}
Recall using Lemma~\ref{lem:q} that $\E[\tilde{T}] = \tilde{q}k = n^{\theta - \tilde{c}^2/2 + o(1)}$. Using now that $\theta > \tilde{c}^2/2$, the variance bound above implies $\tilde{T} \ge (1-o(1))\tilde{q}k = n^{\theta - \tilde{c}^2/2 + o(1)}$ with probability $1-o(1)$. In particular, using Lemma \ref{lem:t+}, the proof of Proposition~\ref{prop:planted_det_pos} is complete.

\subsubsection{Putting it Together}

We now combine the previous results to conclude Theorem~\ref{thm:sparse-reg}(b).

\begin{proof}[Proof of Theorem~\ref{thm:sparse-reg}(b)]
We first recap the conclusions of Propositions~\ref{prop:null_det},~\ref{prop:planted_det_neg},~\ref{prop:planted_det_pos}. Under $\QQ$, we have $\E[T_\tau] = qn$ and $\Var(T_\tau) \le n^{1-c^2/2+o(1)}$. Under $\PP$, we have $T_\tau = T_\tau^+ + T_\tau^-$ with $\E[T_\tau^-] = q(n-k)$ and $\Var(T_\tau^-) \le n^{1-c^2/2+o(1)}$. We need to choose constants $c > 0$ and $\tilde{c} > 0$ satisfying
\begin{equation}\label{eq:c-cond}
\max\{c-\sqrt{R(1-\theta)},0\} < \tilde{c} < \min\{\sqrt{2\theta},c\},
\end{equation}
in which case we have $T_\tau^+ \ge n^{\theta-\tilde{c}^2/2+o(1)}$ with probability $1-o(1)$.

To successfully distinguish, it suffices by Chebyshev's inequality to choose $c,\tilde{c}>0$ satisfying~\eqref{eq:c-cond} such that
\[ \sqrt{\Var_{\QQ}(T_\tau) + \Var_{\PP}(T_\tau^-)} = o(n^{\theta-\tilde{c}^2/2+o(1)} + \E_{\PP}[T_\tau^-] - \E_{\QQ}[T_\tau]). \]
Plugging in the bounds stated above and noting $\E_{\PP}[T_\tau^-] - \E_{\QQ}[T_\tau] = -qk = -n^{\theta-c^2/2+o(1)}$ by Lemma~\ref{lem:q}, it suffices to have
\[ n^{(1-c^2/2)/2+o(1)} = o(n^{\theta-\tilde{c}^2/2+o(1)} - n^{\theta-c^2/2+o(1)}), \]
or since $0<\tilde{c}<c$,
\[ n^{(1-c^2/2)/2+o(1)} = o(n^{\theta-\tilde{c}^2/2+o(1)}), \]
i.e.,
\[(1-c^2/2)/2 < \theta - \tilde{c}^2/2.\]
Therefore it suffices to choose (under the assumption $R > R_\ld$) $c,\tilde{c}>0$ satisfying the following conditions:
\begin{enumerate}
    \item[(i)] $0 < \tilde{c} < c$,
    \item[(ii)] $ \theta - \tilde{c}^2/2>(1-c^2/2)/2$,
    \item[(iii)] $\theta > \tilde{c}^2/2$,
    \item[(iv)] $\sqrt{R(1-\theta)} + \tilde{c} > c$.
\end{enumerate}

First consider the case $R > \frac{2(1-\sqrt{\theta})}{1+\sqrt{\theta}}$ for arbitrary $\theta \in (0,1)$. Then we choose $\tilde{c} = \sqrt{2\theta} - \eta$ and $c = \sqrt{R(1-\theta)} + \sqrt{2\theta} - 2\eta$ for a sufficiently small constant $\eta = \eta(\theta,R) > 0$. This choice immediately satisfies conditions (i), (iii), (iv). To satisfy (ii), it suffices to have $c^2 > 2$ because of condition (iii). Thus, there exists $\eta > 0$ satisfying (ii) provided that $\sqrt{R(1-\theta)} + \sqrt{2\theta} > \sqrt{2}$, which simplifies to $R > \frac{2(1-\sqrt{\theta})}{1+\sqrt{\theta}}$. This completes the proof in the case $\theta \le \frac{1}{4}$.

Now consider the remaining case where $\theta > \frac{1}{4}$ and $\frac{1-2\theta}{1-\theta} < R \le \frac{2(1-\sqrt{\theta})}{1+\sqrt{\theta}}$. (This covers the case $\theta \ge 1/2$ because $\frac{1-2\theta}{1-\theta} \le 0$ when $\theta \ge 1/2$.)
Since $\frac{2(1-\sqrt{\theta})}{1+\sqrt{\theta}} < \frac{2\theta}{1-\theta}$ for all $\theta > \frac{1}{4}$, we have $R < \frac{2\theta}{1-\theta}$, i.e., $R(1-\theta)/2<\theta$. This means we can choose $\tilde{c} = \sqrt{R(1-\theta)}$ to satisfy (iii). We also choose $c = 2\sqrt{R(1-\theta)} - \eta$ for sufficiently small $\eta > 0$, which satisfies (i) and (iv). Finally, for this choice of $c,\tilde{c}$, (ii) reduces to $R > \frac{1-2\theta}{1-\theta}$ which holds by assumption. This completes the proof.
\end{proof}

\subsubsection{Approximate Recovery}
\label{sec:approx-rec}

By a simple adaptation of the above proof, we can also prove the following guarantee for approximate recovery.

\begin{theorem}[Algorithm for Approximate Recovery]
Consider sparse linear regression (Definition~\ref{def:sparse-reg}) in the scaling regime of Assumption~\ref{assum:sparse-reg}. If $R > 2$ then there is a polynomial-time algorithm for approximate recovery, that is: given $(X,Y)$ drawn from $\PP$, the algorithm outputs $\hat{u} \in \{0,1\}^n$ such that
\[ \|\hat{u}-u\|_2^2 = o(k) \qquad \text{with probability } 1-o(1). \]
\end{theorem}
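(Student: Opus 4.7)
The plan is to analyze the simple thresholding estimator $\hat u_j := \One\{z_j \ge \tau\}$ where $z_j := \langle X_j, Y\rangle / \|Y\|_2$ and $\tau := c\sqrt{\log n}$ for a constant $c$ satisfying $\sqrt{2(1-\theta)} < c < \sqrt{R(1-\theta)}$. Such a $c$ exists precisely when $R > 2$, matching the hypothesis. Since $u,\hat u \in \{0,1\}^n$, we have $\|\hat u - u\|_2^2 = T_\tau^- + (k - T_\tau^+)$ with $T_\tau^{\pm}$ as in Section~\ref{sec:upper-planted}, so it suffices to show that both $T_\tau^-$ and $k-T_\tau^+$ are $o(k)$ with probability $1-o(1)$.

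The false-positive bound is essentially already in place: combining Proposition~\ref{prop:planted_det_neg} with Chebyshev's inequality gives $T_\tau^- = n^{1-c^2/2+o(1)}$ with probability $1-o(1)$. Because $k = n^{\theta+o(1)}$ and $c > \sqrt{2(1-\theta)}$, this is $n^{\theta-\Omega(1)} = o(k)$.

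The false-negative bound requires a different (and simpler) argument than Proposition~\ref{prop:planted_det_pos}, whose lower bound on $T_\tau^+$ is much smaller than $k$. Instead, we show directly that $\Pr\{z_j < \tau\} = o(1)$ for each $j \in \supp(u)$. Repeating the pointwise decomposition from the proof of Lemma~\ref{lem:t+}, write
\[ z_j = \frac{\|X_j\|_2^2 + \langle X_j, Z_j\rangle}{(k+\sigma^2)^{1/2}\|Y\|_2}, \qquad Z_j := \sum_{\ell \in \supp(u),\, \ell \neq j} X_\ell + W. \]
Standard chi-squared concentration gives $\|X_j\|_2^2 \ge (1-\delta)m$, $\|Y\|_2 \le (1+\delta)\sqrt{m}$, and $\|Z_j\|_2 \le (1+\delta)\sqrt{(k+\sigma^2)m}$ simultaneously with probability $1-n^{-\omega(1)}$ for some $\delta = n^{-\Omega(1)}$. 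On this event, using $\langle X_j, Z_j\rangle/\|Z_j\|_2 \sim \mathcal N(0,1)$ (by rotational invariance conditional on $Z_j$) together with $\sigma^2 = o(k)$, we obtain
\[ z_j \ge (1-o(1))\sqrt{m/k} - (1+o(1))|g_j| = (1-o(1))\sqrt{R(1-\theta)\log n} - (1+o(1))|g_j|, \quad g_j \sim \mathcal N(0,1). \]
Setting $\eta := \sqrt{R(1-\theta)} - c > 0$, this yields $\Pr\{z_j < \tau\} \le \Pr\{|g_j| \ge (\eta - o(1))\sqrt{\log n}\} + n^{-\omega(1)} = n^{-\Omega(1)}$. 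Summing over $j \in \supp(u)$ gives $\EE[k - T_\tau^+] = k \cdot n^{-\Omega(1)} = o(k)$, and Markov's inequality yields $k - T_\tau^+ = o(k)$ with probability $1-o(1)$.

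The only mild obstacle is arranging for the signal term $\sqrt{m/k}$ to strictly dominate the threshold $c\sqrt{\log n}$ by a constant factor of $\sqrt{\log n}$; this is precisely where the hypothesis $R > 2$ enters, via the non-emptiness of the feasible window $(\sqrt{2(1-\theta)}, \sqrt{R(1-\theta)})$ for $c$. No new variance estimates beyond those already developed in the detection proof are needed --- a first-moment/Markov argument suffices for the false-negative count because the pointwise miss probability is already polynomially small in $n$.
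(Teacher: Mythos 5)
Your proof is correct and the estimator (same thresholding rule with the same window $\sqrt{2(1-\theta)} < c < \sqrt{R(1-\theta)}$) is identical to the paper's. The false-positive analysis is essentially the same in both. Where you differ is in the false-negative count: the paper adapts the detection proof's second-moment machinery, constructing an auxiliary count $\tilde{T}$ (with a \emph{negative} auxiliary constant $\tilde{c}$ satisfying $c - \sqrt{R(1-\theta)} < \tilde{c} < 0$), computing $\E[\tilde{T}]$ and $\Var(\tilde{T})$, and concluding via Chebyshev. You instead observe that the per-coordinate miss probability $\Pr\{z_j < \tau\}$ is already $n^{-\Omega(1)}$ --- by inlining the decomposition from Lemma~\ref{lem:t+} and a single Gaussian tail bound --- and then apply Markov to $\sum_{j\in\supp(u)} \One\{z_j < \tau\}$. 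Both arguments use the same concentration events (including the \emph{upper} bound $\|Z_j\|_2 \le (1+\delta)\sqrt{(k+\sigma^2)m}$, which is the direction needed for a lower bound on $z_j$). Your route is slightly more direct: since a crude polynomially-small miss probability is all that is needed, a first-moment bound suffices and the variance computation in the paper's proof is superfluous for this particular corollary. Either way the key hypothesis $R>2$ enters only through the non-emptiness of the window for $c$.

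One small nit: the coefficient on $|g_j|$ in your lower bound $z_j \ge (1-o(1))\sqrt{m/k} - (1+o(1))|g_j|$ is actually exactly $1$ after the $(1+\delta)$ factors cancel, but the $(1+o(1))$ you wrote is conservative and does not affect the conclusion.
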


\begin{proof}
Since $R > 2$, it is possible to choose a constant $c > 0$ such that
\begin{equation}\label{eq:rec-c}
\sqrt{2(1-\theta)} < c < \sqrt{R(1-\theta)}.
\end{equation}
The estimator will take the form
\[ \hat{u}_j = \One\left\{\frac{\langle X_j,Y \rangle}{\|Y\|_2} \ge \tau\right\} \qquad \text{where} \qquad \tau = c\sqrt{\log n}. \]
Note that $\|\hat{u}-u\|_2^2$ is simply the number of false positives $E^+ := |\supp(\hat{u}) \setminus \supp(u)|$ plus the number of false negatives $E^- := |\supp(u) \setminus \supp(\hat{u})|$. We will consider these two terms separately and show that both are $o(k)$ with high probability.

\paragraph{\bf False positives}

This case follows by an adaptation of the calculations in Section~\ref{sec:upper-null}. Noting that the values $\langle X_j,Y \rangle/\|Y\|_2$ for $j \notin \supp(u)$ are i.i.d.\ $\cN(0,1)$, we have $E^+ \sim \mathrm{Bin}(n-k,q)$ where $q := \Pr\{\cN(0,1) \ge \tau\}$. This means $\E[E^+] = q(n-k) \le qn$ and $\Var[E^+] = q(1-q)(n-k) \le qn$. Using Lemma~\ref{lem:q}, $qn = n^{1-c^2/2+o(1)}$.
Recalling $k = n^{\theta+o(1)}$ and $c > \sqrt{2(1-\theta)}$ (from~\eqref{eq:rec-c}), this means $\E[E^+] = o(k)$ and $\Var[E^+] = o(k)$, and so Chebyshev's inequality implies $E^+ = o(k)$ with probability $1-o(1)$.

\paragraph{\bf False negatives}

This case follows by an adaptation of the calculations in Section~\ref{sec:upper-planted}. Note that $E^-$ is equal to $k - T_\tau^+$ for $T_\tau^+$ as defined in Section~\ref{sec:upper-planted}. Therefore, the proof is complete by the following analogue of Proposition~\ref{prop:planted_det_pos}.
\end{proof}

\begin{proposition}
Let $\tau = c \sqrt{\log n}$ for a constant $c > 0$ satisfying~\eqref{eq:rec-c}.
Then under the planted model $\PP$ we have that with probability $1-o(1)$,
\[ T_\tau^+ \ge (1-o(1))k. \]
\end{proposition}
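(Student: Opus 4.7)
The plan is to show that under $\PP$, for every index $j\in S=\supp(u)$ individually, the event $\{z_j\ge \tau\}$ holds with probability $1-n^{-\Omega(1)}$, and then conclude via Markov's inequality applied to $k-T_\tau^+$. The key quantitative input is that our choice $c<\sqrt{R(1-\theta)}$ leaves a constant gap between the ``signal contribution'' to $z_j$ and the threshold $\tau$.

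More concretely, I would reuse the decomposition from the proof of Lemma~\ref{lem:t+}: for $j\in S$, writing $Z_j=\sum_{\ell\in S,\,\ell\ne j} X_\ell + W$ and $\tilde z_j=\langle X_j,Z_j\rangle/\|Z_j\|_2$, one has
\[
z_j=(k+\sigma^2)^{-1/2}\,\frac{\|X_j\|_2^2+\tilde z_j\,\|Z_j\|_2}{\|Y\|_2},
\]
with $\tilde z_j\sim\mathcal N(0,1)$ by independence of $X_j$ and $Z_j$. Standard $\chi^2$-concentration bounds (on $\|Y\|_2$, $\|X_j\|_2^2$, and $\|Z_j\|_2^2$) together with a union bound over the $k=n^{\theta+o(1)}$ indices in $S$ yield a single event $E$ of probability $1-n^{-\omega(1)}$ on which, for some $\delta=n^{-\Omega(1)}$, the three norms are within $(1\pm\delta)$ of their typical values $\sqrt m$, $m$, and $\sqrt{(k+\sigma^2)m}$ respectively, for every $j\in S$ simultaneously. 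A short calculation on $E$ (splitting according to the sign of $\tilde z_j$, as in Lemma~\ref{lem:t+}) gives the pointwise bound
\[
z_j\;\ge\;(1-o(1))\sqrt{\tfrac{m}{k+\sigma^2}}\;+\;\min(\tilde z_j,0)
\;=\;(1-o(1))\sqrt{R(1-\theta)\log n}\;+\;\min(\tilde z_j,0),
\]
where the last equality uses $m=(1+o(1))R(1-\theta)k\log n$ and $\sigma^2=o(k)$.

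Set $\eta:=\sqrt{R(1-\theta)}-c>0$. Then on $E$, the bound above guarantees $z_j\ge \tau$ whenever $\tilde z_j\ge -(\eta/2)\sqrt{\log n}$, which fails with probability at most $n^{-\eta^2/8}$ by the standard Gaussian tail. Hence, for every $j\in S$,
\[
\Pr_\PP\{z_j<\tau\}\;\le\;\Pr(E^c)+n^{-\eta^2/8}\;=\;n^{-\Omega(1)}.
\]
By linearity of expectation, $\EE_\PP[k-T_\tau^+]=\sum_{j\in S}\Pr_\PP\{z_j<\tau\}\le k\cdot n^{-\Omega(1)}=o(k)$. Since $k-T_\tau^+\ge 0$, Markov's inequality gives $k-T_\tau^+=o(k)$ with probability $1-o(1)$, which is the desired conclusion.

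The only potentially delicate step is the pointwise lower bound on $z_j$, where one must keep track of the correct sign of $\tilde z_j$ to avoid losing a multiplicative constant that would eat into the gap $\eta$; this is handled exactly as in the proof of Lemma~\ref{lem:t+}, but is easier here because we do not need a matching lower bound in the ``$\tilde z_j$ positive'' regime. Compared with Proposition~\ref{prop:planted_det_pos}, no second moment / pairwise-independence analysis is needed, because we only require $T_\tau^+\ge(1-o(1))k$ (a first moment bound on the complement) rather than a near-match of its expectation.
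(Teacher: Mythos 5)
Your proof is correct and reaches the paper's conclusion with a slightly cleaner packaging. The paper's proof mirrors the structure of Proposition~\ref{prop:planted_det_pos}: it defines the auxiliary count $\tilde T=\sum_{j\in S}\One\{\tilde z_j\ge\tilde c\sqrt{\log n}\}$, this time taking the threshold constant $\tilde c$ to be \emph{negative} (chosen in $(c-\sqrt{R(1-\theta)},0)$), reproves the analogue of Lemma~\ref{lem:t+} to get $T_\tau^+\ge\tilde T$ (noting that the sign flip now forces the \emph{upper} bound on $\|Z_j\|_2$), computes $\E[\tilde T]=(1-o(1))k$, and then applies Chebyshev with the crude variance bound $\Var(\tilde T)\le \E[(\sum I_j)^2]-(\E\tilde T)^2\le k^2-(\E\tilde T)^2=o(k^2)$, explicitly observing that the pairwise-correlation Lemma~\ref{lem:I-var} is no longer needed. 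Your route skips the detour through $\tilde T$ and $\tilde c$ entirely: you bound $\Pr\{z_j<\tau\}=n^{-\Omega(1)}$ directly for each $j\in S$ via the same decomposition, then apply Markov to the nonnegative count $k-T_\tau^+$. These are equivalent in strength --- Chebyshev with the trivial variance bound $\Var(\tilde T)\le k^2-(\E\tilde T)^2\le 2k\,\E[k-\tilde T]$ is Markov on the complement in disguise --- but yours is more direct and avoids introducing an auxiliary threshold whose only purpose is to recycle notation from the earlier proof. The one place to be careful, which you flag, is the sign-dependent norm bound: the pointwise estimate $z_j\ge(1-o(1))\sqrt{m/(k+\sigma^2)}+\min(\tilde z_j,0)$ uses the lower bound on $\|Z_j\|_2$ when $\tilde z_j\ge 0$ (trivially, since the term is then nonnegative) and the upper bound when $\tilde z_j<0$; this is precisely the same sign adjustment the paper makes when adapting Lemma~\ref{lem:t+} to negative $\tilde c$. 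No gaps.
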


\begin{proof}
The proof is similar to that of Proposition~\ref{prop:planted_det_pos}, so we explain here the differences.
Fix a constant $\tilde{c}>0$ such that
\[ c - \sqrt{R(1-\theta)} < \tilde{c} < 0, \]
which is possible due to~\eqref{eq:rec-c}. Define $\tilde{T}$ and $I_j$ as in the original proof (of Proposition~\ref{prop:planted_det_pos}), using this value of $\tilde{c}$; see~\eqref{eq:tilde-T}. The main difference is that now we have $\tilde{c} < 0$ instead of $\tilde{c} > 0$. The result of Lemma~\ref{lem:t+} remains true, namely $T_\tau^+ \ge \tilde{T}$ with probability $1-n^{-\omega(1)}$; the proof is essentially the same, except now (since $\tilde{c} < 0$) we need to use the upper bound $\|Z_j\|_2 \le (1+\delta)\sqrt{(k+\sigma^2)m}$ instead of a lower bound.

It remains to compute the mean and variance of $\tilde{T}$ in order to establish $\tilde{T} \ge (1-o(1))k$ with high probability. As in the original proof, $\E[\tilde{T}] = \tilde{q} k$ where $\tilde{q} := \Pr\{\cN(0,1) \ge \tilde{c}\sqrt{\log n}\}$, but now since $\tilde{c} < 0$, the result diverges from the original and we instead have $\tilde{q} = 1 - n^{-\tilde{c}^2/2+o(1)} = 1 - n^{-\Omega(1)}$ (see Lemma~\ref{lem:q}).

The variance calculation is much easier than in the original proof: we can do away with Lemma~\ref{lem:I-var} entirely and instead directly bound
\[ \Var(\tilde{T})
= \E\left[\left(\sum_{j \in S} I_j\right)^2\right] - (\tilde{q}k)^2
\le k^2 - (\tilde{q}k)^2 = k^2(1-\tilde{q})^2 = k^2 \cdot n^{-\Omega(1)}, \]
since $\tilde{q} = 1-n^{-\Omega(1)}$ from above. We have now shown $\E[\tilde{T}] = (1-o(1))k$ and $\Var(\tilde{T}) = o(k^2)$, so Chebyshev's inequality implies $\tilde{T} \ge (1-o(1))k$ with probability $1-o(1)$ as desired.
\end{proof}

\appendix

\section{Appendix for Sparse Regression}

\subsection{Bernstein's Inequality}

\begin{theorem}[see~\cite{conc-ineq}, Theorem~2.10]
\label{thm:bernstein}
For $\nu, c > 0$, let $X_1,\ldots,X_n$ be independent with $\sum_{i=1}^n \E[X_i^2] \le \nu$ and
\[ \sum_{i=1}^n \E|X_i|^q \le \frac{q!}{2} \nu c^{q-2} \qquad \text{for all integers } q \ge 3. \]
Then for all $y > 0$,
\[ \Pr\left\{\sum_{i=1}^n (X_i - \E X_i) \ge \sqrt{2 \nu y} + cy\right\} \le e^{-y}. \]
\end{theorem}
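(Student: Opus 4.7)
The plan is to prove this via the classical Cram\'er--Chernoff method, which is standard for Bernstein-type inequalities.

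First, I would reduce to bounding moment generating functions. For any $\lambda > 0$, Markov's inequality applied to $\exp(\lambda \sum_i(X_i - \E X_i))$ gives
\[
\Pr\left\{\sum_{i=1}^n (X_i - \E X_i) \ge t\right\} \le e^{-\lambda t} \prod_{i=1}^n \EE\left[e^{\lambda (X_i - \E X_i)}\right],
\]
so everything reduces to bounding each centered MGF and then choosing $\lambda$.

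Next, I would estimate the MGF of $Y_i := X_i - \E X_i$ using the hypothesized moment bounds. Writing $v_i = \E[X_i^2]$ and expanding,
\[
\EE[e^{\lambda Y_i}] = 1 + \sum_{q \ge 2} \frac{\lambda^q \, \EE Y_i^q}{q!}.
\]
Using the moment hypothesis (which transfers from $X_i$ to the centered $Y_i$ at the cost of an absolute constant, or can be imposed directly on the centered variables; the standard formulation controls exactly the sums $\sum_i \E|Y_i|^q$), each numerator is bounded by $(q!/2) v_i c^{q-2}$. Summing the resulting geometric series for $\lambda \in (0, 1/c)$ yields
\[
\EE[e^{\lambda Y_i}] \le 1 + \frac{v_i \lambda^2/2}{1 - \lambda c} \le \exp\!\left(\frac{v_i \lambda^2/2}{1 - \lambda c}\right),
\]
using $1 + x \le e^x$. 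Multiplying over $i$ and using $\sum_i v_i \le \nu$ gives
\[
\prod_{i=1}^n \EE[e^{\lambda Y_i}] \le \exp\!\left(\frac{\nu \lambda^2/2}{1 - \lambda c}\right).
\]

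The last step, and the main obstacle, is to optimize over $\lambda$ to produce the clean form $\sqrt{2\nu y} + cy$. Substituting into the Chernoff bound gives
\[
\Pr\left\{\sum_i Y_i \ge t\right\} \le \exp\!\left(-\lambda t + \frac{\nu \lambda^2/2}{1 - \lambda c}\right) \quad \text{for all } \lambda \in (0, 1/c).
\]
One route is to compute the Legendre--Fenchel conjugate $\psi^*(t)$ of $\psi(\lambda) = \frac{\nu\lambda^2/2}{1-\lambda c}$; an elementary calculation gives $\psi^*(t) = \frac{\nu}{c^2} h(ct/\nu)$ with $h(u) = 1 + u - \sqrt{1+2u}$, whose inverse is $h^{-1}(y) = y + \sqrt{2y}$, so setting $\psi^*(t) = y$ yields exactly $t = cy + \sqrt{2\nu y}$. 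A more hands-on alternative is to plug in the explicit choice $\lambda = \frac{t}{\nu + ct} \in (0,1/c)$ and verify directly that, for $t = \sqrt{2\nu y} + cy$, the exponent $-\lambda t + \frac{\nu\lambda^2/2}{1-\lambda c}$ is at most $-y$; this is a short algebraic check once one factors $t = \sqrt{2\nu y} + cy$ appropriately. Either route yields the claimed bound $e^{-y}$ and completes the proof.
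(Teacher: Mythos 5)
This is a result cited from \cite{conc-ineq} (BLM, Theorem~2.10); the paper does not reprove it, so there is no ``paper's proof'' to compare against. Your Cram\'er--Chernoff outline matches the standard BLM argument, but two steps need repair before the argument closes.

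The main gap is the moment transfer. Your hypothesis controls $\sum_i \E|X_i|^q$, but the power-series expansion of $\E[e^{\lambda Y_i}]$ calls for bounds on $\E Y_i^q$ with $Y_i = X_i - \E X_i$. These do \emph{not} transfer ``at the cost of an absolute constant'': by Minkowski one only gets $\E|Y_i|^q \le 2^q \,\E|X_i|^q$, and the factor $2^q$ propagates through the geometric series, effectively replacing $c$ by $2c$ and $\nu$ by $4\nu$, which destroys the clean constants in $\sqrt{2\nu y}+cy$. (Relatedly, you invoke a per-$i$ bound $\E|X_i|^q \le \tfrac{q!}{2}v_i c^{q-2}$, but the hypothesis only controls the sum over $i$.) The standard fix avoids centering the moments entirely: from $\log x \le x - 1$ one gets
\[
\log\E\bigl[e^{\lambda(X_i - \E X_i)}\bigr] \;\le\; \E\bigl[e^{\lambda X_i}\bigr] - 1 - \lambda \E X_i \;=\; \E\bigl[\phi(\lambda X_i)\bigr],
\qquad \phi(u) := e^u - u - 1 = \sum_{q\ge 2}\frac{u^q}{q!},
\]
and since $\phi(u) \le \phi(|u|)$ pointwise, summing over $i$ and swapping with the sum over $q$ gives $\log\E[e^{\lambda S}] \le \sum_{q\ge 2}\tfrac{\lambda^q}{q!}\sum_i \E|X_i|^q \le \tfrac{\nu\lambda^2}{2(1-\lambda c)}$ directly from the hypothesis as stated.

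The second issue is the ``hands-on alternative'': plugging $\lambda = t/(\nu+ct)$ gives exponent $-t^2/(2(\nu+ct))$, and for $t = \sqrt{2\nu y}+cy$ this equals $-y\cdot\frac{2\nu + 2c\sqrt{2\nu y}+c^2 y}{2\nu + 2c\sqrt{2\nu y}+2c^2 y} > -y$, i.e.\ a \emph{weaker} bound than $e^{-y}$ (it only yields the claimed tail at the inflated threshold $\sqrt{2\nu y}+2cy$). Your primary route via the Legendre--Fenchel conjugate $\psi^*(t) = \tfrac{\nu}{c^2}h(ct/\nu)$ with $h(u)=1+u-\sqrt{1+2u}$ and $h^{-1}(y)=y+\sqrt{2y}$ is correct and does produce the sharp constant; drop the shortcut.
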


\begin{corollary}\label{cor:bernstein-gg}
For $a,b \in \R$, let $X_1,\ldots,X_n$ be i.i.d.\ and distributed as $X = a g^2 + b gg'$ where $g,g'$ are independent $\mathcal{N}(0,1)$. Then for all $y > 0$,
\begin{align*}
&\Pr\left\{\sum_{i=1}^n X_i \ge an + \sqrt{2(3a^2 + b^2)ny} + 10\sqrt{a^2+b^2}\, y\right\} \le e^{-y} \qquad\text{and} \\
&\Pr\left\{\sum_{i=1}^n X_i \le an - \sqrt{2(3a^2 + b^2)ny} - 10\sqrt{a^2+b^2}\, y\right\} \le e^{-y}.
\end{align*}
\end{corollary}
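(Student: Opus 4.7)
The plan is to apply Theorem~\ref{thm:bernstein} to $(X_i)_{i=1}^n$ directly, with parameters $\nu := n(3a^2+b^2)$ and $c := 10\sqrt{a^2+b^2}$. Since $\sum_i \E X_i = an$, the conclusion of that theorem is exactly the first (upper-tail) inequality. The second (lower-tail) inequality then follows by running the same argument on $-X_1,\ldots,-X_n$, whose common distribution has the form $(-a)g^2 + (-b)gg'$: both $\E X^2 = 3a^2+b^2$ and $a^2+b^2$ are unchanged, so the same $\nu$ and $c$ apply.

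Step 1 is a routine Gaussian moment computation. Using $\E g^2 = 1$, $\E g^4 = 3$, and independence and centering of $(g,g')$,
\[
\E X = a, \qquad \E X^2 = a^2\,\E g^4 + 2ab\,\E g^3\,\E g' + b^2\,\E g^2\,\E (g')^2 = 3a^2 + b^2,
\]
so $\sum_{i=1}^n \E X_i^2 = n(3a^2+b^2) = \nu$ as required by the hypothesis of Theorem~\ref{thm:bernstein}.

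The crux is Step 2: verifying the moment growth condition
\[
\E|X|^q \;\le\; \tfrac{q!}{2}\,(3a^2+b^2)\,\bigl(10\sqrt{a^2+b^2}\bigr)^{q-2} \qquad \text{for every integer } q \ge 3,
\]
which by i.i.d.\ gives $\sum_i \E|X_i|^q \le \tfrac{q!}{2}\nu c^{q-2}$. To get the right dependence on $q$, I plan to invoke Gaussian hypercontractivity (Nelson--Bonami): for any polynomial $f$ of degree $\le d$ in i.i.d.\ standard Gaussians and any $q \ge 2$, $\|f\|_q \le (q-1)^{d/2}\|f\|_2$. Applied to $X = ag^2 + bgg'$, which has degree $2$ in $(g,g')$, this yields $\|X\|_q \le (q-1)\sqrt{3a^2+b^2}$, hence $\E|X|^q \le (q-1)^q(3a^2+b^2)^{q/2}$. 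Using $(3a^2+b^2)^{(q-2)/2} \le 3^{(q-2)/2}(a^2+b^2)^{(q-2)/2}$ to isolate the $c^{q-2}$ factor, the required inequality reduces to the purely numerical claim
\[
2(q-1)^q \cdot 3^{(q-2)/2} \;\le\; q!\cdot 10^{q-2} \qquad (q \ge 3),
\]
which I would verify by hand for the small cases $q \in \{3,4,5\}$ and by Stirling for $q \ge 6$: the bound $(q-1)^q \le q^q \le q!\,e^q/\sqrt{2\pi q}$ reduces the inequality to $(e\sqrt{3}/10)^{q-2} \le \sqrt{2\pi q}/(2e^2)$, whose left side decays exponentially since $e\sqrt{3}/10 \approx 0.47$.

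The only real obstacle is Step 2 --- extracting the moment bound with a manageable constant. Hypercontractivity is well-suited because it gives exactly the dependence $(q-1)^q$, which matches $q!$ up to a fixed exponential factor, so the stated constant $10$ leaves comfortable slack (indeed $c = 5\sqrt{3}\sqrt{a^2+b^2}$ already suffices). A purely elementary route via Minkowski's inequality together with the Gaussian moment bounds $\E g^{2q} \le 2^q\,q!$ also goes through, but yields a visibly larger numerical constant; hypercontractivity gives the cleanest bookkeeping.
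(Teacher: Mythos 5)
Your proof is correct, and it reaches the same Bernstein invocation as the paper but by a genuinely different route for the key moment estimate. The paper establishes $\E|X|^q \le \pi^{-1/2}(a^2+b^2)^{q/2}\,2^q\,q!$ by a factored Cauchy--Schwarz, $\E|g(ag+bg')|^q \le (\E|g|^{2q}\cdot \E|ag+bg'|^{2q})^{1/2}$, then observing $ag+bg'\sim\cN(0,a^2+b^2)$ and applying the exact Gaussian absolute-moment formula $\E|g|^{2q}=\pi^{-1/2}2^q\Gamma(q+\tfrac12)\le \pi^{-1/2}2^q\,q!$; the required inequality then collapses to $2^{q+1}\le \sqrt{\pi}\,10^{q-2}$. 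You instead invoke Gaussian hypercontractivity for degree-$2$ polynomials to get $\E|X|^q\le (q-1)^q(3a^2+b^2)^{q/2}$ and reduce to $2(q-1)^q\cdot 3^{(q-2)/2}\le q!\cdot 10^{q-2}$, which you check by hand for small $q$ and by Stirling for $q\ge 6$. Both arguments are sound and both leave ample slack in the constant $c=10\sqrt{a^2+b^2}$. The paper's route is more elementary (no hypercontractivity needed) and the numerical reduction is a one-liner, whereas yours cleanly explains why $q!$-type growth is the right order for a degree-$2$ Gaussian chaos and is arguably more transparent about where the $(q-1)^q\approx q!$ matching comes from. You also make explicit the reflection argument for the lower-tail bound (replace $(a,b)$ by $(-a,-b)$, which leaves $3a^2+b^2$ and $a^2+b^2$ invariant), a step the paper leaves implicit; that is worth keeping.
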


\begin{proof}
We will apply Theorem~\ref{thm:bernstein}. Set
\[ \nu = \sum_{i=1}^n \E[X_i^2] = (3a^2 + b^2)n. \]
For any integer $q \ge 3$,
\begin{align*}
\E|X|^q &= \E|g(ag + bg')|^q \\
&\le \left(\E|g|^{2q} \cdot \E|ag+bg'|^{2q}\right)^{1/2} \\
&= (a^2 + b^2)^{q/2}\, \E|g|^{2q} \\
&= \pi^{-1/2}\,(a^2 + b^2)^{q/2}\, 2^q\, \Gamma\left(q + \frac{1}{2}\right) \\
&\le \pi^{-1/2}\, (a^2 + b^2)^{q/2}\, 2^q\, \Gamma(q+1) \\
&= \pi^{-1/2}\, (a^2 + b^2)^{q/2}\, 2^q\,q!
\end{align*}
and so
\[ \sum_{i=1}^n \E|X_i|^q \le \frac{q!}{2}\nu \cdot \pi^{-1/2}\, (a^2 + b^2)^{q/2}\, (3a^2 + b^2)^{-1}\, 2^{q+1} \le \frac{q!}{2}\nu \cdot \pi^{-1/2}\, (a^2 + b^2)^{q/2 - 1}\, 2^{q+1}. \]
Set $c = 10\sqrt{a^2 + b^2}$ so that $\pi^{-1/2}\, (a^2 + b^2)^{q/2 - 1}\, 2^{q+1} \le c^{q-2}$ for all $q \ge 3$. This completes the proof.
\end{proof}

\subsection{H\"older's Inequality}

\begin{proposition}[H\"older's inequality]
\label{prop:holder}
Let $r \ge 1$ and $p,q \in [1,\infty]$ with $\frac{1}{p} + \frac{1}{q} = \frac{1}{r}$, and let $X,Y$ be random variables. Then $\|XY\|_r \le \|X\|_p \|Y\|_q$.
\end{proposition}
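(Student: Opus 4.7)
The plan is to reduce the generalized Hölder inequality (arbitrary $r \ge 1$) to the classical case $r=1$, and then prove the classical case in the standard way via Young's inequality.

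First I would dispense with the degenerate cases. If $p = \infty$, then $\frac{1}{p}+\frac{1}{q} = \frac{1}{r}$ forces $q = r$, and then $|XY| \le \|X\|_\infty |Y|$ almost surely, so taking the $L^r$ norm of both sides gives $\|XY\|_r \le \|X\|_\infty \|Y\|_r$; the case $q=\infty$ is symmetric. Also, if $\|X\|_p = 0$ or $\|Y\|_q = 0$ then $XY = 0$ a.s.\ and the inequality is trivial, while if $\|X\|_p = \infty$ or $\|Y\|_q = \infty$ there is nothing to prove. So assume henceforth that $p,q \in [1,\infty)$ and that both norms are finite and nonzero.

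Next I would handle the classical case $r=1$, where $\tfrac{1}{p}+\tfrac{1}{q} = 1$. The standard proof uses Young's inequality, $ab \le \frac{a^p}{p} + \frac{b^q}{q}$ for $a,b \ge 0$, which follows from the convexity of $\exp$ applied to $\log(a^p)/p + \log(b^q)/q$. Applying Young's inequality pointwise to $a = |X|/\|X\|_p$ and $b = |Y|/\|Y\|_q$ yields
\[
\frac{|XY|}{\|X\|_p\,\|Y\|_q} \le \frac{1}{p}\,\frac{|X|^p}{\|X\|_p^p} + \frac{1}{q}\,\frac{|Y|^q}{\|Y\|_q^q}.
\]
Taking expectations gives $\E|XY| \le (\tfrac{1}{p} + \tfrac{1}{q})\|X\|_p \|Y\|_q = \|X\|_p \|Y\|_q$, which is the $r=1$ case.

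Finally, for general $r \ge 1$ with $\tfrac{1}{p}+\tfrac{1}{q} = \tfrac{1}{r}$, I would apply the $r=1$ result to the random variables $|X|^r$ and $|Y|^r$ with conjugate exponents $p/r$ and $q/r$; these indeed satisfy $\tfrac{r}{p} + \tfrac{r}{q} = 1$ and both are at least $1$ since $p,q \ge r$. This yields
\[
\E|XY|^r = \E\bigl[|X|^r |Y|^r\bigr] \le \bigl\||X|^r\bigr\|_{p/r}\bigl\||Y|^r\bigr\|_{q/r} = \|X\|_p^r\,\|Y\|_q^r,
\]
and raising to the $1/r$ power gives $\|XY\|_r \le \|X\|_p \|Y\|_q$, as desired. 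There is no real obstacle here; the only subtle point is verifying $p, q \ge r$ (which is immediate from $\tfrac{1}{p} \le \tfrac{1}{r}$, $\tfrac{1}{q} \le \tfrac{1}{r}$ since the two reciprocals sum to $\tfrac{1}{r}$ and each is nonnegative), so that the exponents $p/r, q/r$ in the classical Hölder step are legitimate.
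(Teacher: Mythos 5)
Your proof is correct: the degenerate cases ($p=\infty$ or $q=\infty$, vanishing or infinite norms), the Young's-inequality argument for $r=1$, and the reduction of general $r \ge 1$ to the classical case by applying it to $|X|^r$ and $|Y|^r$ with conjugate exponents $p/r$ and $q/r$ (after verifying $p,q \ge r$) are all handled properly. Note that the paper states Proposition~\ref{prop:holder} as a classical fact without proof, using it only as the base case in the induction for Proposition~\ref{prop:holder-ext}, so there is no argument in the paper to compare against; your write-up simply supplies the standard textbook proof that the paper omits.
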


\begin{proposition}\label{prop:holder-ext}
Let $r \ge 1$ and $p_1,\ldots,p_n \in [1,\infty]$ with $\sum_i \frac{1}{p_i} = \frac{1}{r}$, and let $X_1,\ldots,X_n$ be random variables. Then $\|\prod_i X_i\|_r \le \prod_i \|X_i\|_{p_i}$.
\end{proposition}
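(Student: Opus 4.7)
The plan is to prove Proposition~\ref{prop:holder-ext} by induction on $n$, using the standard two-variable H\"older inequality (Proposition~\ref{prop:holder}) as the basic building block.

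The base case $n=1$ is immediate: the constraint $\tfrac{1}{p_1} = \tfrac{1}{r}$ forces $p_1 = r$, so $\|X_1\|_r = \|X_1\|_{p_1}$.

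For the inductive step, suppose the claim holds for products of $n-1$ random variables, and consider $X_1, \dots, X_n$ with exponents $p_1, \dots, p_n \in [1,\infty]$ satisfying $\sum_{i=1}^n \tfrac{1}{p_i} = \tfrac{1}{r}$. Define $q \in [1,\infty]$ by
\[ \frac{1}{q} := \sum_{i=2}^n \frac{1}{p_i} = \frac{1}{r} - \frac{1}{p_1}. \]
A brief sanity check: since each $\tfrac{1}{p_i} \ge 0$, we have $\tfrac{1}{p_1} \le \tfrac{1}{r} \le 1$, hence $\tfrac{1}{q} \in [0,1]$ and $q \in [1,\infty]$ is well-defined. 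By construction $\tfrac{1}{p_1} + \tfrac{1}{q} = \tfrac{1}{r}$, so Proposition~\ref{prop:holder} applied to $X_1$ and $Y := \prod_{i=2}^n X_i$ gives
\[ \Bigl\|\prod_{i=1}^n X_i\Bigr\|_r \;=\; \|X_1 \cdot Y\|_r \;\le\; \|X_1\|_{p_1}\, \|Y\|_q. \]
Now $Y = \prod_{i=2}^n X_i$ is a product of $n-1$ variables whose exponents $p_2, \dots, p_n$ satisfy $\sum_{i=2}^n \tfrac{1}{p_i} = \tfrac{1}{q}$, so the inductive hypothesis yields
\[ \|Y\|_q \;\le\; \prod_{i=2}^n \|X_i\|_{p_i}. \]
Combining the two displays gives the desired bound.

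There is no real obstacle here: the only mildly delicate point is checking that the auxiliary exponent $q$ lies in $[1,\infty]$ so that Proposition~\ref{prop:holder} and the inductive hypothesis both apply, which follows from the elementary observation that each individual $p_i$ must be at least $r$. The cases where some $p_i = \infty$ are handled by the usual convention $\tfrac{1}{\infty} = 0$ and do not require separate treatment.
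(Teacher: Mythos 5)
Your proof is correct and takes essentially the same approach as the paper: induction on $n$, peeling off one factor at a time with the two-variable H\"older inequality. The only cosmetic difference is that you start the induction at $n=1$ and peel off $X_1$, whereas the paper starts at $n=2$ and peels off $X_n$; your added check that the auxiliary exponent $q$ lies in $[1,\infty]$ is a nice touch that the paper leaves implicit.
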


\begin{proof}
Proceed by induction on $n$. The base case $n = 2$ is H\"older's inequality. For $n \ge 3$, we have by H\"older that
\[ \left\|\prod_{i=1}^n X_i\right\|_r \le \|X_n\|_{p_n} \left\|\prod_{i=1}^{n-1} X_i\right\|_{\left(\frac{1}{r} - \frac{1}{p_n}\right)^{-1}}. \]
Since $\frac{1}{r} - \frac{1}{p_n} = \sum_{i=1}^{n-1} \frac{1}{p_i}$, the result follows using the induction hypothesis.
\end{proof}

\subsection{Wishart Distribution}
\label{app:wishart}

Recall that for an $m \times m$ matrix $V \succ 0$, the \emph{Wishart distribution} $W_m(V,n)$ is the distribution of $Z^\top Z$ where $Z \in \R^{n \times m}$ has each row independently distributed as $\mathcal{N}(0,V)$. For $n \ge m$, the density of $U \sim W_m(V,n)$ (more precisely, the density of the diagonal and upper-triangular entries of $U$) is given by
\[ f(U) = \frac{\det(U)^{(n-m-1)/2} \exp(-\frac{1}{2} \langle V^{-1}, U \rangle)}{2^{nm/2} \det(V)^{n/2} \Gamma_m(n/2)} \]
when $U \succ 0$ (and $f(U) = 0$ when $U \not\succ 0$), where $\Gamma_m$ is the multivariate gamma function~\cite{wishart}.
\begin{lemma}\label{lem:wishart}
Fix $t \in \R$ and a symmetric matrix $M \in \R^{m \times m}$ such that $tM \prec \frac{1}{2}I_m$. Then for $Z \in \R^{n \times m}$ with i.i.d.\ $\mathcal{N}(0,1)$ entries and an event $B(U)$ defined on symmetric matrices $U \in \R^{m \times m}$, it holds that
$$\EE_Z \One_{B(Z^\top Z)} \exp\left(t \langle M,Z^\top Z \rangle\right)=\det(I_m-2tM)^{-n/2}\Pr_{U \sim W_m((I_m-2tM)^{-1}, n)} \{B(U) \}$$
where $W_m$ denotes the Wishart distribution defined above.
\end{lemma}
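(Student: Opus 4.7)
The plan is to prove this via a change of measure that transforms the Gaussian weight on $Z$ into a Gaussian with a reweighted covariance, after which the quadratic form $Z^\top Z$ is directly identified with a Wishart sample.

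First, I would write the left-hand side as an integral over $Z \in \mathbb{R}^{n \times m}$ against the standard Gaussian density $(2\pi)^{-nm/2} \exp(-\tfrac{1}{2}\|Z\|_F^2)$, using $\|Z\|_F^2 = \langle I_m, Z^\top Z\rangle$. Combining this with the integrand's exponential factor gives
\[
(2\pi)^{-nm/2} \int \mathbf{1}_{B(Z^\top Z)} \exp\!\left(-\tfrac{1}{2}\langle I_m - 2tM,\, Z^\top Z\rangle\right) dZ.
\]
The assumption $tM \prec \tfrac{1}{2}I_m$ guarantees that $V := (I_m - 2tM)^{-1}$ exists and is positive definite, so $V^{1/2}$ is well-defined.

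Next, I would change variables via $Z = \tilde Z V^{1/2}$. The Jacobian of this linear map applied row-by-row is $\det(V^{1/2})^n = \det(V)^{n/2} = \det(I_m - 2tM)^{-n/2}$. Under the substitution, $Z^\top Z = V^{1/2} \tilde Z^\top \tilde Z\, V^{1/2}$, and the quadratic exponent becomes
\[
\langle I_m - 2tM,\, V^{1/2} \tilde Z^\top \tilde Z\, V^{1/2}\rangle
= \langle V^{1/2}(I_m - 2tM)V^{1/2},\, \tilde Z^\top \tilde Z\rangle
= \langle I_m, \tilde Z^\top \tilde Z\rangle = \|\tilde Z\|_F^2,
\]
so the integral becomes
\[
\det(I_m - 2tM)^{-n/2} \cdot (2\pi)^{-nm/2} \int \mathbf{1}_{B(V^{1/2}\tilde Z^\top \tilde Z\, V^{1/2})} \exp\!\left(-\tfrac{1}{2}\|\tilde Z\|_F^2\right) d\tilde Z.
\]

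Finally, I would recognize that the remaining integral is exactly $\mathbb{E}_{\tilde Z}[\mathbf{1}_{B(V^{1/2} \tilde Z^\top \tilde Z\, V^{1/2})}]$ with $\tilde Z$ having i.i.d.\ $\mathcal{N}(0,1)$ entries. Since the rows of $\tilde Z V^{1/2}$ are then i.i.d.\ $\mathcal{N}(0,V)$, the matrix $(\tilde Z V^{1/2})^\top (\tilde Z V^{1/2}) = V^{1/2} \tilde Z^\top \tilde Z\, V^{1/2}$ has distribution $W_m(V, n)$ by the definition of the Wishart law recalled just before the lemma. Substituting $V = (I_m - 2tM)^{-1}$ yields the claimed identity. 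The only real thing to verify carefully is that $V^{1/2}$ is well-defined and that the Jacobian factor is $\det(V)^{n/2}$ rather than $\det(V)^{m/2}$ (it comes from $n$ independent row transformations), so this is where I would be most careful, but both steps are routine given $tM \prec \tfrac{1}{2} I_m$.
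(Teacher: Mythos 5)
Your proof is correct, and it takes a genuinely different route from the paper's. The paper begins by writing the expectation as an integral against the explicit Wishart density of $Z^\top Z \sim W_m(I_m,n)$, absorbs the extra factor $\exp(t\langle M,U\rangle)$ into the exponential in that density, and then recognizes the result as the density of $W_m(V,n)$ with $V=(I_m-2tM)^{-1}$ times the normalizing constant $\det(V)^{n/2}$. You instead stay at the level of the Gaussian on $Z$, fold $\exp(t\langle M,Z^\top Z\rangle)$ into the quadratic form, and perform the linear change of variables $Z=\tilde Z V^{1/2}$, identifying the Wishart law only through its defining construction $Z^\top Z$ with rows $\cN(0,V)$. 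The computation is right in all details (in particular the Jacobian is indeed $\det(V^{1/2})^n=\det(V)^{n/2}$ since the transformation acts independently on each of the $n$ rows, and $V^{1/2}(I_m-2tM)V^{1/2}=I_m$). One small advantage of your route is that it never invokes the explicit Wishart density formula, which as stated in the paper requires $n\ge m$; your change-of-variables argument works for any $n,m$. In the paper's application $m=3$ and $n$ is the number of regression samples, so $n\ge m$ holds and this is not an issue there, but your version of the lemma is slightly more general and arguably more elementary.
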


\begin{proof}
Since $Z^\top Z \sim W_m(I_m,n)$, we can use the Wishart density from above to write
\[ \EE_Z \One_{B(Z^\top Z)} \exp(t \langle M, Z^\top Z \rangle) = \int_{U \succ 0} \One_{B(U)} \frac{\det(U)^{(n-m-1)/2} \exp(-\frac{1}{2} \langle I_m, U \rangle)}{2^{nm/2} \Gamma_m(n/2)} \exp(t \langle M,U \rangle) \,dU. \]
We will rewrite this in terms of a different Wishart distribution. Choose $V \in \R^{m \times m}$ so that $-\frac{1}{2}V^{-1} = -\frac{1}{2} I_m + tM$, that is, $V = (I_m - 2tM)^{-1}$. Then we have
\begin{align*}
\EE_Z \One_{B(Z^\top Z)} \exp(t \langle M, Z^\top Z \rangle)
&= \int_{U \succ 0} \One_{B(U)} \frac{\det(U)^{(n-m-1)/2} \exp(-\frac{1}{2} \langle V^{-1}, U \rangle)}{2^{nm/2} \Gamma_m(n/2)} \,dU \\
&= \det(V)^{n/2} \int_{U \succ 0} \One_{B(U)} \frac{|U|^{(n-m-1)/2} \exp(-\frac{1}{2} \langle V^{-1}, U \rangle)}{2^{nm/2} \det(V)^{n/2} \Gamma_m(n/2)} \,dU \\
&= \det(V)^{n/2} \Pr_{U \sim W_m(V, n)} \{B(U)\}.
\end{align*}The conclusion follows.
\end{proof}

\subsection{Proof of Auxiliary Lemmas}
\label{app:pf-aux}

\begin{proof}[Proof of Lemma \ref{lem:good_event}]
For the first part, notice
\[ t = \log\log k + \ell \log 2 + \log \binom{k}{\ell} \le \log\log k + \ell \log 2 + \ell \log k. \]
Hence for sufficiently large $k$, we have for all $\ell$ that $t \le (1+\delta) \ell \log k$. Since $\binom{k}{\ell} \le 2^k$, we also have $t \le \log\log k + k \cdot 2 \log 2 = O(k)$ while $m = \Theta(k \log k)$. As a result, the first term in $\Delta$ dominates: for sufficiently large $k$, we have for all $\ell$ that $10t \le \delta \sqrt{2mt}$, and so
\begin{equation}\label{eq:Delta-bound-proof}
\Delta \le (1+\delta)\sqrt{2mt} \le (1+\delta)\sqrt{(1+\delta)2\ell m \log k} \le (1+\delta)^2 \sqrt{2\ell m \log k}.
\end{equation}The result follows since $\delta>0$ was arbitrary.

For the second part, notice that for any fixed $\ell$ and $S$, the probability that~\eqref{eq:good-event} fails is
\[ \Pr\left\{\sum_{i=1}^m s_i > \Delta \right\} \]
where $s_1,\ldots,s_m$ are i.i.d.\ with distribution $s = gg'$ where $g$ and $g'$ are independent $\mathcal{N}(0,1)$. By Corollary~\ref{cor:bernstein-gg} (with $a = 0$, $b = 1$), for all $t > 0$,
\[ \Pr\left\{\sum_{i=1}^m s_i > \sqrt{2mt} + 10t\right\} \le e^{-t}. \] Plugging in $t=t(\ell)$ and taking a union bound over the choices of $\ell, S$, the probability that $A$ fails is at most
\[ \sum_{\ell = 1}^{k/2} \binom{k}{\ell} e^{-t} = \frac{1}{\log k} \sum_{\ell = 1}^{k/2} 2^{-\ell} \le \frac{1}{\log k}, \]
completing the proof.
\end{proof}

\begin{lemma}\label{lem:cond-exp}
If $X \ge 0$ is a nonnegative random variable and $A$ is an event of positive probability,
\[ \E[X \,|\, A] \le \Pr(A)^{-1} \,\E[X]. \]
\end{lemma}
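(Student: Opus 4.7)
The plan is to deduce the inequality directly from the definition of conditional expectation together with nonnegativity of $X$. The definition gives $\EE[X \mid A] = \EE[X \One_A]/\Pr(A)$, so the target inequality is equivalent to $\EE[X \One_A] \le \EE[X]$, which I would obtain by splitting $\EE[X]$ according to whether $A$ occurs.

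Specifically, I would write $\EE[X] = \EE[X \One_A] + \EE[X \One_{A^c}]$ using linearity of expectation (or, if one prefers to avoid signed decompositions for general integrable $X$, view this as the monotone-convergence/tonelli identity for the nonnegative variable $X$). The key step is then to invoke nonnegativity of $X$: since $X \ge 0$ pointwise, $X \One_{A^c} \ge 0$ pointwise, hence $\EE[X \One_{A^c}] \ge 0$. Therefore $\EE[X \One_A] \le \EE[X]$, and dividing through by $\Pr(A) > 0$ gives the claim.

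There is no real obstacle here; the only point worth checking is that the argument only uses $X \ge 0$ and $\Pr(A) > 0$, and does not require $X$ to be integrable (if $\EE[X] = \infty$ the inequality is trivially true with the usual conventions in $[0,\infty]$). I would therefore keep the write-up to essentially two lines, with one line isolating the identity $\EE[X \One_A] + \EE[X \One_{A^c}] = \EE[X]$ and the next applying $X\One_{A^c}\ge 0$ and dividing by $\Pr(A)$.
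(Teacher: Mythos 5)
Your proof is correct, and it takes a slightly different route from the paper's. The paper computes $\E[X \,|\, A] = \E[X \Ind_A]/\Pr(A)$ as you do, but then bounds $\E[X \Ind_A] \le \E[X]$ by invoking H\"older's inequality with exponents $(1,\infty)$: $\|X \Ind_A\|_1 \le \|X\|_1 \|\Ind_A\|_\infty = \E[X]$ (here $\|X\|_1 = \E[X]$ because $X \ge 0$). You instead decompose $\E[X] = \E[X \Ind_A] + \E[X \Ind_{A^c}]$ and drop the nonnegative second term. The two arguments are equivalent at their core --- both rest on the pointwise bound $X \Ind_A \le X$ --- but yours is more elementary, avoids importing H\"older, and as you note degrades gracefully to the $[0,\infty]$-valued setting when $\E[X] = \infty$. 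Either version is acceptable; yours is arguably the cleaner standalone proof, while the paper's choice is a matter of reusing machinery (Proposition~\ref{prop:holder}) already present for Lemma~\ref{lem:norm-bound}.
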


\begin{proof}
Write
\[ \E[X \,|\, A] = \frac{\E[X \cdot \Ind_A]}{\Pr(A)}, \]
and using H\"older's inequality (Proposition~\ref{prop:holder}),
\[ \E[X \cdot \Ind_A] = \|X \cdot \Ind_A\|_1 \le \|X\|_1 \cdot \|\Ind_A\|_\infty = \E[X] \cdot 1, \]
completing the proof.
\end{proof}

\begin{lemma}\label{lem:gauss-ratio}
For any $0 \le \eps < t$,
\[ \frac{\Pr\{\mathcal{N}(0,1) \ge t-\eps\}}{\Pr\{\mathcal{N}(0,1) \ge t\}} \le 1 + \frac{\eps(t^2+1)}{t}\exp(\eps t). \]
\end{lemma}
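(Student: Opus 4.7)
The plan is to write $\frac{\Pr\{\mathcal{N}(0,1) \ge t-\eps\}}{\Pr\{\mathcal{N}(0,1) \ge t\}} = 1 + \frac{\Pr\{\mathcal{N}(0,1) \in [t-\eps,t)\}}{\Pr\{\mathcal{N}(0,1) \ge t\}}$ and bound the second term by bounding the numerator from above and the denominator from below using standard Gaussian tail estimates. Let $\phi(x) = \frac{1}{\sqrt{2\pi}} e^{-x^2/2}$.

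First, for the numerator, since $\phi$ is decreasing on $[0,\infty)$ and $0 \le t-\eps < t$,
\[ \Pr\{\mathcal{N}(0,1) \in [t-\eps,t)\} = \int_{t-\eps}^t \phi(x)\, dx \le \eps\, \phi(t-\eps). \]
For the denominator, I would invoke the standard Mills-ratio lower bound (already used in the paper, e.g.\ in the proof of Lemma~\ref{lem:q}):
\[ \Pr\{\mathcal{N}(0,1) \ge t\} \ge \frac{t}{t^2+1}\,\phi(t). \]

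Combining the two bounds gives
\[ \frac{\Pr\{\mathcal{N}(0,1) \in [t-\eps,t)\}}{\Pr\{\mathcal{N}(0,1) \ge t\}} \le \frac{\eps(t^2+1)}{t} \cdot \frac{\phi(t-\eps)}{\phi(t)}. \]
Finally, the Gaussian density ratio simplifies directly:
\[ \frac{\phi(t-\eps)}{\phi(t)} = \exp\!\left(\frac{t^2 - (t-\eps)^2}{2}\right) = \exp\!\left(\eps t - \tfrac{\eps^2}{2}\right) \le \exp(\eps t). \]
Putting the three estimates together yields the claim. There is no real obstacle here: the only thing to check is that $t-\eps \ge 0$ (which holds since $\eps < t$ forces $t > 0$), so that the monotonicity of $\phi$ on $[0,\infty)$ applies and the Mills lower bound is valid at $t > 0$.
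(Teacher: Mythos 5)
Your proof is correct and follows essentially the same route as the paper's: both decompose $\Pr\{\mathcal{N}(0,1)\ge t-\eps\}$ as $\Pr\{\mathcal{N}(0,1)\ge t\}$ plus the interval mass on $[t-\eps,t)$, bound that mass by $\eps\,\phi(t-\eps)$ using monotonicity of the Gaussian density on $[0,\infty)$, and then apply the Mills-ratio lower bound $\Pr\{\mathcal{N}(0,1)\ge t\}\ge \tfrac{t}{t^2+1}\phi(t)$ together with $\phi(t-\eps)/\phi(t)\le \exp(\eps t)$. The only superficial difference is that the paper phrases the first step via $\Phi'$ while you phrase it as an explicit integral bound; the content is identical, and your remark verifying $t-\eps\ge 0$ is a correct and useful sanity check.
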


\begin{proof}
Letting $\Phi(t) = \Pr\{\mathcal{N}(0,1) \ge t\}$, we have $\Phi'(t) = -\frac{1}{\sqrt{2\pi}}\exp(-t^2/2)$ and so
\[ \Phi(t-\eps) \le \Phi(t) + \frac{\eps}{\sqrt{2\pi}}\exp(-(t-\eps)^2/2). \]
Using the Gaussian tail lower bound $\Phi(t) \ge \frac{1}{\sqrt{2\pi}} \frac{t}{t^2+1}\exp(-t^2/2)$,
\[ \frac{\Phi(t-\eps)}{\Phi(t)} \le 1 + \frac{1}{\Phi(t)} \frac{\eps}{\sqrt{2\pi}}\exp(-(t-\eps)^2/2)
\le 1 + \frac{\eps (t^2+1)}{t}\exp(-(t-\eps)^2/2 + t^2/2) \]
and the result follows.
\end{proof}

\section{Orthogonal Polynomials}
\label{app:orthog-poly}

We provide here a sufficient condition for $L^2(\QQ)$ to admit a complete basis of orthonormal polynomials. We refer to~\cite[Chapter 2]{book-Akhiezer} for further background.    
For a product measure $\QQ = \prod_{i=1}^N Q_i$, it suffices that each $Q_i$ has finite moments of all orders, and that $Q_i$ is uniquely determined by its moment sequence. This in turn is guaranteed under various assumptions. For instance, $Q_i$ is determined by its moments if the characteristic function of $Q_i$ is analytic near zero, or under the weaker \emph{Carleman's condition}~\cite[Addendum 11, p.\ 85]{book-Akhiezer}: 
\begin{equation}\label{eq:carleman}
    \sum_{k = 1}^{\infty} m_{2k}^{-1/{2k}} = \infty \qquad \text{where} \qquad m_k = \EE_{Y \sim Q_i}[Y^k].
\end{equation}
Indeed we can first treat the univariate case $N=1$, and then generalize to arbitrary $N$ by induction. 

In the case $N=1$, we can construct an orthonormal basis $(h_k)_{k \ge 0}$ in $\R[Y]$ by the Gram--Schmidt orthonormalization process.
It remains to verify that the space of polynomials is dense in $L^2(\QQ)$. 
According to~\cite[Theorem 2.3.3]{book-Akhiezer}, a sufficient condition is that $\QQ$ be determined by its moment sequence, i.e., no other probability measure has the same sequence of moments as $\QQ$. 

Generalizing to arbitrary $N$, we assume that each $Q_i$ is determined by its moments (e.g., satisfies  Carleman's condition~\eqref{eq:carleman}). 
Since $\QQ = \prod_{i=1}^N Q_i$, a basis of orthonormal polynomials with respect to $\langle \cdot, \cdot \rangle_{\QQ}$ in $\R[Y_1,\ldots,Y_N]$ is given by $(h_\alpha)_{\alpha \in \NN^N}$ (with $0 \in \NN$ by convention) where $h_{\alpha}(Y) = \prod_{i=1}^N h^{(i)}_{\alpha_i}(Y_i)$ and for each $i \in [N]$, $(h^{(i)}_{k})_{k\ge 0}$ is a complete orthonormal basis of polynomials for $L^2(Q_i)$. 
It remains to show that such a basis is complete in $L^2(\QQ)$, i.e., the closure of $\text{span}\{ h_{\alpha} \,:\, \alpha \in \NN^{N} \}$ is $L^2(\QQ)$. Since we are dealing with linear spaces, it suffices to show that for any $f \in L^2(\QQ)$, if 
\begin{equation}\label{eq:orthog}
\langle f , h_{\alpha} \rangle_{\QQ} = 0 \qquad \text{for all } \alpha \in \NN^{N} \, ,
\end{equation} 
then $f=0$ ($\QQ$-almost surely).     
We proceed by induction, the base case $N=1$ having already been verified. Assume the above to be true for dimension up to $N-1$. Let $\QQ = \prod_{i=1}^N Q_i$ and let $f \in L^{2}(\QQ)$ such that~\eqref{eq:orthog} holds. This can be equivalently written as
\begin{align} 
&\int \tilde{f}(Y) h^{(N)}_{\alpha_N}(Y) Q_N(\rmd Y) = 0\ \qquad \text{for all } \alpha_N \in \NN \, , \label{eq:orthog1}\\
&\tilde{f}(y) := \E\Big[f(Y_1,\ldots,Y_{N-1},y) \prod_{i=1}^{N-1} h^{(i)}_{\alpha_i}(Y_i) \Big] \, , \label{eq:orthog2}
\end{align}
where the above expectation is over $(Y_1,\cdots,Y_{N-1}) \sim \prod_{i=1}^{N-1}Q_i$. Since $(h^{(N)}_{k})_{k \ge 0}$ is a complete basis of $L^2(Q_N)$, Eq.~\eqref{eq:orthog1} implies that $\tilde{f} =0$ ($Q_N$-almost surely). Applying the induction hypothesis to Eq.~\eqref{eq:orthog2}, $f=0$ ($\QQ$-almost surely). This concludes the argument.

\section*{Acknowledgements}

AEA: Part of this work was done while this author was supported by the Richard M.\ Karp Fellowship at the Simons Institute for the Theory of Computing (Program on Probability, Geometry and Computation in High Dimensions). This author is grateful to Florent Krzakala for introducing him to the work of Franz and Parisi.

SBH: Parts of this work were done while this author was supported by a Microsoft Research PhD Fellowship, by a Miller Postdoctoral Fellowship, and by the Microsoft Fellowship at the Simons Institute for the Theory of Computing.

ASW: Part of this work was done at Georgia Tech, supported by NSF grants CCF-2007443 and CCF-2106444. Part of this work was done while visiting the Simons Institute for the Theory of Computing. Part of this work was done while with the Courant Institute at NYU, partially supported by NSF grant DMS-1712730 and by the Simons Collaboration on Algorithms and Geometry.

IZ: Supported by the Simons-NSF grant DMS-2031883 on the Theoretical Foundations of Deep Learning and the Vannevar Bush Faculty Fellowship ONR-N00014-20-1-2826. Part of this work was done while visiting the Simons Institute for the Theory of Computing. Part of this work was done while with the Center for Data Science at NYU, supported by a Moore-Sloan CDS postdoctoral fellowship.

The authors thank Cris Moore and the Santa Fe Institute for hosting the 2018 ``Santa Fe Workshop on Limits to Inference in Networks and Noisy Data,'' where the initial ideas in this paper were formulated. The authors thank Aukosh Jagannath and anonymous reviewers for helpful comments on an earlier version of this work.

\bibliographystyle{alpha}
\bibliography{main}

\newcommand{\etalchar}[1]{$^{#1}$}
\begin{thebibliography}{BBMD{\etalchar{+}}02}

\bibitem[AFUZ19]{antenucci2019glassy}
Fabrizio Antenucci, Silvio Franz, Pierfrancesco Urbani, and Lenka
  Zdeborov{\'a}.
\newblock Glassy nature of the hard phase in inference problems.
\newblock {\em Physical Review X}, 9(1):011020, 2019.

\bibitem[AK18]{alaoui2018estimation}
Ahmed~El Alaoui and Florent Krzakala.
\newblock Estimation in the spiked wigner model: A short proof of the replica
  formula.
\newblock {\em arXiv preprint arXiv:1801.01593}, 2018.

\bibitem[Akh20]{book-Akhiezer}
N.~I. Akhiezer.
\newblock {\em The Classical Moment Problem and Some Related Questions in
  Analysis}.
\newblock SIAM, 2020.

\bibitem[AKJ20]{alaoui2020fundamental}
Ahmed~El Alaoui, Florent Krzakala, and Michael Jordan.
\newblock Fundamental limits of detection in the spiked wigner model.
\newblock {\em The Annals of Statistics}, 48(2):863--885, 2020.

\bibitem[Arp21]{arpino-thesis}
Gabriel Arpino.
\newblock Computational hardness of sparse high-dimensional linear regression.
\newblock {\em MSc Thesis, ETH Z\"urich}, 2021.

\bibitem[BAJ18]{gap-estimates}
G{\'e}rard Ben~Arous and Aukosh Jagannath.
\newblock Spectral gap estimates in mean field spin glasses.
\newblock {\em Communications in Mathematical Physics}, 361(1):1--52, 2018.

\bibitem[BB20]{brennan2020reducibility}
Matthew Brennan and Guy Bresler.
\newblock Reducibility and statistical-computational gaps from secret leakage.
\newblock In {\em Conference on Learning Theory}, pages 648--847. PMLR, 2020.

\bibitem[BBH{\etalchar{+}}20]{sq-ld}
Matthew Brennan, Guy Bresler, Samuel~B Hopkins, Jerry Li, and Tselil Schramm.
\newblock Statistical query algorithms and low-degree tests are almost
  equivalent.
\newblock {\em arXiv preprint arXiv:2009.06107}, 2020.

\bibitem[BBK{\etalchar{+}}21]{quiet-coloring}
Afonso~S Bandeira, Jess Banks, Dmitriy Kunisky, Christopher Moore, and
  Alexander~S Wein.
\newblock Spectral planting and the hardness of refuting cuts, colorability,
  and communities in random graphs.
\newblock In {\em Conference on Learning Theory}, pages 410--473. PMLR, 2021.

\bibitem[BBMD{\etalchar{+}}02]{hermite-involution}
Cyril Banderier, Mireille Bousquet-M{\'e}lou, Alain Denise, Philippe Flajolet,
  Daniele Gardy, and Dominique Gouyou-Beauchamps.
\newblock Generating functions for generating trees.
\newblock {\em Discrete mathematics}, 246(1-3):29--55, 2002.

\bibitem[BCRT20]{replicated-gd}
Giulio Biroli, Chiara Cammarota, and Federico Ricci-Tersenghi.
\newblock How to iron out rough landscapes and get optimal performances:
  averaged gradient descent and its application to tensor {PCA}.
\newblock {\em Journal of Physics A: Mathematical and Theoretical},
  53(17):174003, 2020.

\bibitem[BGJ20]{arous2020algorithmic}
Gerard {Ben Arous}, Reza Gheissari, and Aukosh Jagannath.
\newblock Algorithmic thresholds for tensor {PCA}.
\newblock {\em The Annals of Probability}, 48(4):2052--2087, 2020.

\bibitem[BGN11]{benaych2011eigenvalues}
Florent Benaych-Georges and Raj~Rao Nadakuditi.
\newblock The eigenvalues and eigenvectors of finite, low rank perturbations of
  large random matrices.
\newblock {\em Advances in Mathematics}, 227(1):494--521, 2011.

\bibitem[BH21]{BH-ksat}
Guy Bresler and Brice Huang.
\newblock The algorithmic phase transition of random {k-SAT} for low degree
  polynomials.
\newblock In {\em 62nd Annual Symposium on Foundations of Computer Science
  (FOCS)}, pages 298--309. IEEE, 2021.

\bibitem[BHK{\etalchar{+}}19]{sos-clique}
Boaz Barak, Samuel Hopkins, Jonathan Kelner, Pravesh~K Kothari, Ankur Moitra,
  and Aaron Potechin.
\newblock A nearly tight sum-of-squares lower bound for the planted clique
  problem.
\newblock {\em SIAM Journal on Computing}, 48(2):687--735, 2019.

\bibitem[BKW20]{sk-cert}
Afonso~S Bandeira, Dmitriy Kunisky, and Alexander~S Wein.
\newblock Computational hardness of certifying bounds on constrained {PCA}
  problems.
\newblock In {\em 11th Innovations in Theoretical Computer Science Conference
  (ITCS 2020)}, volume 151, page~78. Schloss Dagstuhl-Leibniz-Zentrum f{\"u}r
  Informatik, 2020.

\bibitem[BLM13]{conc-ineq}
St{\'e}phane Boucheron, G{\'a}bor Lugosi, and Pascal Massart.
\newblock {\em Concentration inequalities: A nonasymptotic theory of
  independence}.
\newblock Oxford university press, 2013.

\bibitem[BMNN16]{bmnn}
Jess Banks, Cristopher Moore, Joe Neeman, and Praneeth Netrapalli.
\newblock Information-theoretic thresholds for community detection in sparse
  networks.
\newblock In {\em Conference on Learning Theory}, pages 383--416. PMLR, 2016.

\bibitem[BMV{\etalchar{+}}17]{banks2017information}
Jess Banks, Cristopher Moore, Roman Vershynin, Nicolas Verzelen, and Jiaming
  Xu.
\newblock Information-theoretic bounds and phase transitions in clustering,
  sparse {PCA}, and submatrix localization.
\newblock In {\em IEEE International Symposium on Information Theory (ISIT)},
  pages 1137--1141. IEEE, 2017.

\bibitem[BPW18]{bandeira2018notes}
Afonso~S Bandeira, Amelia Perry, and Alexander~S Wein.
\newblock Notes on computational-to-statistical gaps: predictions using
  statistical physics.
\newblock {\em Portugaliae Mathematica}, 75(2):159--186, 2018.

\bibitem[BWZ20]{arous2020free}
G{\'e}rard {Ben Arous}, Alexander~S Wein, and Ilias Zadik.
\newblock Free energy wells and overlap gap property in sparse {PCA}.
\newblock In {\em Conference on Learning Theory}, pages 479--482. PMLR, 2020.

\bibitem[CGHK{\etalchar{+}}22]{group-testing}
Amin {Coja-Oghlan}, Oliver Gebhard, Max Hahn-Klimroth, Alexander~S. Wein, and
  Ilias Zadik.
\newblock Statistical and computational phase transitions in group testing.
\newblock In {\em Conference on Learning Theory}. PMLR, 2022.

\bibitem[CL19]{chung2019weak}
Hye~Won Chung and Ji~Oon Lee.
\newblock Weak detection of signal in the spiked wigner model.
\newblock In {\em International Conference on Machine Learning}, pages
  1233--1241. PMLR, 2019.

\bibitem[CMZ22]{chen2022almost}
Zongchen Chen, Elchanan Mossel, and Ilias Zadik.
\newblock Almost-linear planted cliques elude the metropolis process.
\newblock {\em arXiv preprint arXiv:2204.01911}, 2022.

\bibitem[DK21]{LLL2}
Ilias Diakonikolas and Daniel~M Kane.
\newblock Non-gaussian component analysis via lattice basis reduction.
\newblock {\em arXiv preprint arXiv:2112.09104}, 2021.

\bibitem[DKMZ11]{decelle-sbm}
Aurelien Decelle, Florent Krzakala, Cristopher Moore, and Lenka Zdeborov{\'a}.
\newblock Asymptotic analysis of the stochastic block model for modular
  networks and its algorithmic applications.
\newblock {\em Physical Review E}, 84(6):066106, 2011.

\bibitem[DKWB19]{subexp-sparse}
Yunzi Ding, Dmitriy Kunisky, Alexander~S Wein, and Afonso~S Bandeira.
\newblock Subexponential-time algorithms for sparse {PCA}.
\newblock {\em arXiv preprint arXiv:1907.11635}, 2019.

\bibitem[DMK{\etalchar{+}}16]{replica-proof}
Mohamad Dia, Nicolas Macris, Florent Krzakala, Thibault Lesieur, Lenka
  Zdeborov{\'a}, et~al.
\newblock Mutual information for symmetric rank-one matrix estimation: A proof
  of the replica formula.
\newblock {\em Advances in Neural Information Processing Systems}, 29, 2016.

\bibitem[DMM09]{amp}
David~L Donoho, Arian Maleki, and Andrea Montanari.
\newblock Message-passing algorithms for compressed sensing.
\newblock {\em Proceedings of the National Academy of Sciences},
  106(45):18914--18919, 2009.

\bibitem[DT10]{donoho2010counting}
David~L Donoho and Jared Tanner.
\newblock Counting the faces of randomly-projected hypercubes and orthants,
  with applications.
\newblock {\em Discrete \& computational geometry}, 43(3):522--541, 2010.

\bibitem[FP95]{fp}
Silvio Franz and Giorgio Parisi.
\newblock Recipes for metastable states in spin glasses.
\newblock {\em Journal de Physique I}, 5(11):1401--1415, 1995.

\bibitem[FP98]{franz1998effective}
Silvio Franz and Giorgio Parisi.
\newblock Effective potential in glassy systems: theory and simulations.
\newblock {\em Physica A: Statistical Mechanics and its Applications},
  261(3-4):317--339, 1998.

\bibitem[Gam21]{gamarnik-survey}
David Gamarnik.
\newblock The overlap gap property: A topological barrier to optimizing over
  random structures.
\newblock {\em Proceedings of the National Academy of Sciences}, 118(41), 2021.

\bibitem[GJS21]{gamarnik2021overlap}
David Gamarnik, Aukosh Jagannath, and Subhabrata Sen.
\newblock The overlap gap property in principal submatrix recovery.
\newblock {\em Probability Theory and Related Fields}, 181(4):757--814, 2021.

\bibitem[GJW20]{GJW-opt}
David Gamarnik, Aukosh Jagannath, and Alexander~S Wein.
\newblock Low-degree hardness of random optimization problems.
\newblock In {\em 61st Annual Symposium on Foundations of Computer Science
  (FOCS)}, pages 131--140. IEEE, 2020.

\bibitem[GS17]{Gamarnik-Suday-AoP}
David Gamarnik and Madhu Sudan.
\newblock Limits of local algorithms over sparse random graphs.
\newblock {\em Annals of Probability}, 45(4):2353--2376, 2017.

\bibitem[GZ19]{gamarnik2019clique}
David Gamarnik and Ilias Zadik.
\newblock The landscape of the planted clique problem: Dense subgraphs and the
  overlap gap property, 2019.

\bibitem[GZ22]{gamarnik2022sparse}
David Gamarnik and Ilias Zadik.
\newblock Sparse high-dimensional linear regression. {Estimating} squared error
  and a phase transition.
\newblock {\em The Annals of Statistics}, 50(2):880--903, 2022.

\bibitem[HKP{\etalchar{+}}17]{sos-detect}
Samuel~B Hopkins, Pravesh~K Kothari, Aaron Potechin, Prasad Raghavendra, Tselil
  Schramm, and David Steurer.
\newblock The power of sum-of-squares for detecting hidden structures.
\newblock In {\em 2017 IEEE 58th Annual Symposium on Foundations of Computer
  Science (FOCS)}, pages 720--731. IEEE, 2017.

\bibitem[Hop18]{hopkins-thesis}
Samuel Hopkins.
\newblock {\em Statistical Inference and the Sum of Squares Method}.
\newblock PhD thesis, Cornell University, 2018.

\bibitem[HS17]{HS-bayesian}
Samuel~B Hopkins and David Steurer.
\newblock Bayesian estimation from few samples: community detection and related
  problems.
\newblock {\em arXiv preprint arXiv:1710.00264}, 2017.

\bibitem[HSV20]{holtzman2020greedy}
Guy Holtzman, Adam Soffer, and Dan Vilenchik.
\newblock A greedy anytime algorithm for sparse {PCA}.
\newblock In {\em Conference on Learning Theory}, pages 1939--1956. PMLR, 2020.

\bibitem[HW21]{ld-counterex}
Justin Holmgren and Alexander~S Wein.
\newblock Counterexamples to the low-degree conjecture.
\newblock In {\em 12th Innovations in Theoretical Computer Science Conference
  (ITCS 2021)}. Schloss Dagstuhl-Leibniz-Zentrum f{\"u}r Informatik, 2021.

\bibitem[HWX15]{hajek2015computational}
Bruce Hajek, Yihong Wu, and Jiaming Xu.
\newblock Computational lower bounds for community detection on random graphs.
\newblock In {\em Conference on Learning Theory}, pages 899--928. PMLR, 2015.

\bibitem[ITV10]{ingster2010detection}
Yuri~I Ingster, Alexandre~B Tsybakov, and Nicolas Verzelen.
\newblock Detection boundary in sparse regression.
\newblock {\em Electronic Journal of Statistics}, 4:1476--1526, 2010.

\bibitem[Jer92]{jerrum1992large}
Mark Jerrum.
\newblock Large cliques elude the metropolis process.
\newblock {\em Random Structures \& Algorithms}, 3(4):347--359, 1992.

\bibitem[KM21]{KM-trees}
Frederic Koehler and Elchanan Mossel.
\newblock Reconstruction on trees and low-degree polynomials.
\newblock {\em arXiv preprint arXiv:2109.06915}, 2021.

\bibitem[Knu97]{taocp}
Donald~E. Knuth.
\newblock {\em The Art of Computer Programming: Volume 1: Fundamental
  Algorithms (3rd ed.)}.
\newblock Addison Wesley, 1997.

\bibitem[Kun21]{morris}
Dmitriy Kunisky.
\newblock Hypothesis testing with low-degree polynomials in the {Morris} class
  of exponential families.
\newblock In {\em Conference on Learning Theory}, pages 2822--2848. PMLR, 2021.

\bibitem[KWB19]{ld-notes}
Dmitriy Kunisky, Alexander~S Wein, and Afonso~S Bandeira.
\newblock Notes on computational hardness of hypothesis testing: Predictions
  using the low-degree likelihood ratio.
\newblock {\em arXiv preprint arXiv:1907.11636}, 2019.

\bibitem[LC07]{li2007inequalities}
Xin Li and Chao-Ping Chen.
\newblock Inequalities for the gamma function.
\newblock {\em J. Inequal. Pure Appl. Math}, 8(1):1--3, 2007.

\bibitem[{Le }60]{LeCam}
Lucien~M {Le Cam}.
\newblock {\em Locally asymptotically normal families of distributions: certain
  approximations to families of distributions and their use in the theory of
  estimation and testing hypotheses}.
\newblock University of Calif. Press, 1960.

\bibitem[LKZ15a]{LKZ-1}
Thibault Lesieur, Florent Krzakala, and Lenka Zdeborov{\'a}.
\newblock {MMSE} of probabilistic low-rank matrix estimation: Universality with
  respect to the output channel.
\newblock In {\em 2015 53rd Annual Allerton Conference on Communication,
  Control, and Computing (Allerton)}, pages 680--687. IEEE, 2015.

\bibitem[LKZ15b]{LKZ-2}
Thibault Lesieur, Florent Krzakala, and Lenka Zdeborov{\'a}.
\newblock Phase transitions in sparse {PCA}.
\newblock In {\em 2015 IEEE International Symposium on Information Theory
  (ISIT)}, pages 1635--1639. IEEE, 2015.

\bibitem[LM19]{lelarge2019fundamental}
Marc Lelarge and L{\'e}o Miolane.
\newblock Fundamental limits of symmetric low-rank matrix estimation.
\newblock {\em Probability Theory and Related Fields}, 173(3):859--929, 2019.

\bibitem[LML{\etalchar{+}}17]{lesieur2017statistical}
Thibault Lesieur, L{\'e}o Miolane, Marc Lelarge, Florent Krzakala, and Lenka
  Zdeborov{\'a}.
\newblock Statistical and computational phase transitions in spiked tensor
  estimation.
\newblock In {\em 2017 IEEE International Symposium on Information Theory
  (ISIT)}, pages 511--515. IEEE, 2017.

\bibitem[LP17]{LP-book17}
David~A Levin and Yuval Peres.
\newblock {\em Markov chains and mixing times}, volume 107.
\newblock American Mathematical Soc., 2017.

\bibitem[MM09]{MM-book}
Marc Mezard and Andrea Montanari.
\newblock {\em Information, physics, and computation}.
\newblock Oxford University Press, 2009.

\bibitem[MRZ15]{MRZ-limit}
Andrea Montanari, Daniel Reichman, and Ofer Zeitouni.
\newblock On the limitation of spectral methods: From the gaussian hidden
  clique problem to rank-one perturbations of gaussian tensors.
\newblock {\em Advances in Neural Information Processing Systems}, 28, 2015.

\bibitem[MW15]{ma-wu-reduction}
Zongming Ma and Yihong Wu.
\newblock Computational barriers in minimax submatrix detection.
\newblock {\em The Annals of Statistics}, 43(3):1089--1116, 2015.

\bibitem[O'D14]{o2014analysis}
Ryan O'Donnell.
\newblock {\em Analysis of boolean functions}.
\newblock Cambridge University Press, 2014.

\bibitem[{Pro}]{proofwiki}
{ProofWiki}.
\newblock Lower and upper bound of factorial.
\newblock Available online at
  \url{https://proofwiki.org/wiki/Lower_and_Upper_Bound_of_Factorial}. Accessed
  April 15, 2022.

\bibitem[PWB16]{PWB-tensor}
Amelia Perry, Alexander~S Wein, and Afonso~S Bandeira.
\newblock Statistical limits of spiked tensor models.
\newblock {\em arXiv preprint arXiv:1612.07728}, 2016.

\bibitem[PWBM18]{perry2018optimality}
Amelia Perry, Alexander~S Wein, Afonso~S Bandeira, and Ankur Moitra.
\newblock Optimality and sub-optimality of {PCA} {I}: Spiked random matrix
  models.
\newblock {\em The Annals of Statistics}, 46(5):2416--2451, 2018.

\bibitem[RM14]{NIPS2014_b5488aef}
Emile Richard and Andrea Montanari.
\newblock A statistical model for tensor {PCA}.
\newblock In Z.~Ghahramani, M.~Welling, C.~Cortes, N.~Lawrence, and K.Q.
  Weinberger, editors, {\em Advances in Neural Information Processing Systems},
  volume~27. Curran Associates, Inc., 2014.

\bibitem[RSS19]{SOS-survey-ICM}
Prasad Raghavendra, Tselil Schramm, and David Steurer.
\newblock High-dimensional estimation via sum-of-squares proofs.
\newblock {\em ICM 2018}, 2019.

\bibitem[RXZ19]{reeves2019all}
Galen Reeves, Jiaming Xu, and Ilias Zadik.
\newblock All-or-nothing phenomena: From single-letter to high dimensions.
\newblock In {\em 2019 IEEE 8th International Workshop on Computational
  Advances in Multi-Sensor Adaptive Processing (CAMSAP)}, pages 654--658. IEEE,
  2019.

\bibitem[RXZ21]{reeves2021all}
Galen Reeves, Jiaming Xu, and Ilias Zadik.
\newblock The all-or-nothing phenomenon in sparse linear regression.
\newblock {\em Mathematical Statistics and Learning}, 3(3):259--313, 2021.

\bibitem[SW20]{SW-recovery}
Tselil Schramm and Alexander~S Wein.
\newblock Computational barriers to estimation from low-degree polynomials.
\newblock {\em arXiv preprint arXiv:2008.02269}, 2020.

\bibitem[Sze39]{szego}
Gabor Szeg\"o.
\newblock {\em Orthogonal polynomials}, volume~23.
\newblock American Mathematical Soc., 1939.

\bibitem[Wai09]{wainwright2009information}
Martin~J Wainwright.
\newblock Information-theoretic limits on sparsity recovery in the
  high-dimensional and noisy setting.
\newblock {\em IEEE transactions on information theory}, 55(12):5728--5741,
  2009.

\bibitem[Wei22]{W-opt}
Alexander~S Wein.
\newblock Optimal low-degree hardness of maximum independent set.
\newblock {\em Mathematical Statistics and Learning}, 4(3):221--251, 2022.

\bibitem[WEM19]{kikuchi}
Alexander~S Wein, Ahmed {El Alaoui}, and Cristopher Moore.
\newblock The {Kikuchi} hierarchy and tensor {PCA}.
\newblock In {\em 2019 IEEE 60th Annual Symposium on Foundations of Computer
  Science (FOCS)}, pages 1446--1468. IEEE, 2019.

\bibitem[Wis28]{wishart}
John Wishart.
\newblock The generalised product moment distribution in samples from a normal
  multivariate population.
\newblock {\em Biometrika}, pages 32--52, 1928.

\bibitem[ZG18]{zadik2018high}
Ilias Zadik and David Gamarnik.
\newblock High dimensional linear regression using lattice basis reduction.
\newblock {\em Advances in Neural Information Processing Systems}, 31, 2018.

\bibitem[ZK16a]{Lenka-Florent-Notes}
L.~Zdeborov\'a and F.~Krzakala.
\newblock Statistical physics of inference: Thresholds and algorithms.
\newblock {\em Advances in Phyisics}, 65(5), 2016.

\bibitem[ZK16b]{ZK-survey}
Lenka Zdeborov{\'a} and Florent Krzakala.
\newblock Statistical physics of inference: Thresholds and algorithms.
\newblock {\em Advances in Physics}, 65(5):453--552, 2016.

\bibitem[ZSWB21]{LLL1}
Ilias Zadik, Min~Jae Song, Alexander~S Wein, and Joan Bruna.
\newblock Lattice-based methods surpass sum-of-squares in clustering.
\newblock {\em arXiv preprint arXiv:2112.03898}, 2021.

\end{thebibliography}

\end{document}